\definecolor{orange}{rgb}   {0.75,   0.,   0.}
\definecolor{vert}{rgb}   {0.,  0.5,    0.25}
\global\let\figforTeXisloaded=\relax\fi
\def\ctr@ln@m#1{\ifx#1\undefined\else%
    \immediate\write16{*** Fig4TeX WARNING : \string#1 already defined.}\fi}
\def\ctr@ld@f#1#2{\ctr@ln@m#2#1#2}
\def\ctr@ln@w#1#2{\ctr@ln@m#2\csname#1\endcsname#2}
{\catcode`\/=0 \catcode`/\=12 /ctr@ld@f/gdef/BS@{\}}
\ctr@ld@f\def\ctr@lcsn@m#1{\expandafter\ifx\csname#1\endcsname\relax\else%
    \immediate\write16{*** Fig4TeX WARNING : \BS@\expandafter\string#1\space already defined.}\fi}
\ctr@ld@f\edef\colonc@tcode{\the\catcode`\:}
\ctr@ld@f\edef\semicolonc@tcode{\the\catcode`\;}
\ctr@ld@f\def\t@stc@tcodech@nge{{\let\c@tcodech@nged=\z@%
    \ifnum\colonc@tcode=\the\catcode`\:\else\let\c@tcodech@nged=\@ne\fi%
    \ifnum\semicolonc@tcode=\the\catcode`\;\else\let\c@tcodech@nged=\@ne\fi%
    \ifx\c@tcodech@nged\@ne%
    \immediate\write16{}
    \immediate\write16{!!!=============================================================!!!}
    \immediate\write16{ Fig4TeX WARNING:}
    \immediate\write16{ The category code of some characters has been changed, which will}
    \immediate\write16{ result in an error (message "Runaway argument?").}
    \immediate\write16{ This probably comes from another package that changed the category}
    \immediate\write16{ code after Fig4TeX was loaded. If that proves to be exact, the}
    \immediate\write16{ solution is to exchange the loading commands on top of your file}
    \immediate\write16{ so that Fig4TeX is loaded last. For example, in LaTeX, we should}
    \immediate\write16{ say :}
    \immediate\write16{\BS@ usepackage[french]{babel}}
    \immediate\write16{\BS@ usepackage{fig4tex}}
    \immediate\write16{!!!=============================================================!!!}
    \immediate\write16{}
    \fi}}
\ctr@ld@f\def\FigforTeX{F\kern-.05em i\kern-.05em g\kern-.1em\raise-.14em\hbox{4}\kern-.19em\TeX}
\ctr@ld@f\def\W@rnmesoldA#1{\W@rnmesold}
\ctr@ld@f\def\W@rnmesoldAB#1(#2){\W@rnmesold}
\ctr@ld@f\def\W@rnmesold{%
    \immediate\write16{}
    \immediate\write16{!!!=============================================================!!!}
    \immediate\write16{ Fig4TeX WARNING:}
    \immediate\write16{ The file to be compiled is not compatible with the current version}
    \immediate\write16{ of Fig4TeX. To fix that, upgrade the source file (mainly change \BS@ ps*}
    \immediate\write16{ macros by \BS@ fig* macros), or use fig4tex184.tex instead (\BS@ input fig4tex184}
    \immediate\write16{ or \BS@ usepackage{fig4tex184}).}
    \immediate\write16{!!!=============================================================!!!}
    \immediate\write16{}}
\ctr@ln@m\psbeginfig\let\psbeginfig\W@rnmesoldA
\ctr@ln@m\psset\let\psset\W@rnmesoldAB
\ctr@ln@m\pssetdefault\let\pssetdefault\W@rnmesoldAB
\ctr@ln@m\pssetupdate\let\pssetupdate\W@rnmesoldA
\ctr@ln@w{newdimen}\epsil@n\epsil@n=0.00005pt
\ctr@ln@w{newdimen}\Cepsil@n\Cepsil@n=0.005pt
\ctr@ln@w{newdimen}\dcq@\dcq@=254pt
\ctr@ln@w{newdimen}\PI@\PI@=3.141592pt
\ctr@ln@w{newdimen}\DemiPI@deg\DemiPI@deg=90pt
\ctr@ln@w{newdimen}\PI@deg\PI@deg=180pt
\ctr@ln@w{newdimen}\DePI@deg\DePI@deg=360pt
\ctr@ld@f\chardef\t@n=10
\ctr@ld@f\chardef\c@nt=100
\ctr@ld@f\chardef\@lxxiv=74
\ctr@ld@f\chardef\@xci=91
\ctr@ld@f\mathchardef\@nMnCQn=9949
\ctr@ld@f\chardef\@vi=6
\ctr@ld@f\chardef\@xxx=30
\ctr@ld@f\chardef\@lvi=56
\ctr@ld@f\chardef\@@lxxi=71
\ctr@ld@f\chardef\@lxxxv=85
\ctr@ld@f\mathchardef\@@mmmmlxviii=4068
\ctr@ld@f\mathchardef\@ccclx=360
\ctr@ld@f\mathchardef\@dccxx=720
\ctr@ln@w{newcount}\p@rtent \ctr@ln@w{newcount}\f@ctech \ctr@ln@w{newcount}\result@tent
\ctr@ln@w{newdimen}\v@lmin \ctr@ln@w{newdimen}\v@lmax \ctr@ln@w{newdimen}\v@leur
\ctr@ln@w{newdimen}\result@t\ctr@ln@w{newdimen}\result@@t
\ctr@ln@w{newdimen}\mili@u \ctr@ln@w{newdimen}\c@rre \ctr@ln@w{newdimen}\delt@
\ctr@ld@f\def\degT@rd{0.017453 }  % pi/180
\ctr@ld@f\def\rdT@deg{57.295779 } % 180/pi
\ctr@ln@m\v@leurseule
{\catcode`p=12 \catcode`t=12 \gdef\v@leurseule#1pt{#1}}
\ctr@ld@f\def\repdecn@mb#1{\expandafter\v@leurseule\the#1\space}
\ctr@ld@f\def\arct@n#1(#2,#3){{\v@lmin=#2\v@lmax=#3%
    \maxim@m{\mili@u}{-\v@lmin}{\v@lmin}\maxim@m{\c@rre}{-\v@lmax}{\v@lmax}%
    \delt@=\mili@u\m@ech\mili@u%
    \ifdim\c@rre>\@nMnCQn\mili@u\divide\v@lmax\tw@\c@lATAN\v@leur(\z@,\v@lmax)% DY > 9949 DX
    \else%
    \maxim@m{\mili@u}{-\v@lmin}{\v@lmin}\maxim@m{\c@rre}{-\v@lmax}{\v@lmax}%
    \m@ech\c@rre%
    \ifdim\mili@u>\@nMnCQn\c@rre\divide\v@lmin\tw@% DX > 9949 DY
    \maxim@m{\mili@u}{-\v@lmin}{\v@lmin}\c@lATAN\v@leur(\mili@u,\z@)%
    \else\c@lATAN\v@leur(\delt@,\v@lmax)\fi\fi%
    \ifdim\v@lmin<\z@\v@leur=-\v@leur\ifdim\v@lmax<\z@\advance\v@leur-\PI@%
    \else\advance\v@leur\PI@\fi\fi%
    \global\result@t=\v@leur}#1=\result@t}
\ctr@ld@f\def\m@ech#1{\ifdim#1>1.646pt\divide\mili@u\t@n\divide\c@rre\t@n\m@ech#1\fi}
\ctr@ld@f\def\c@lATAN#1(#2,#3){{\v@lmin=#2\v@lmax=#3\v@leur=\z@\delt@=\tw@ pt%
    \un@iter{0.785398}{\v@lmax<}%
    \un@iter{0.463648}{\v@lmax<}%
    \un@iter{0.244979}{\v@lmax<}%
    \un@iter{0.124355}{\v@lmax<}%
    \un@iter{0.062419}{\v@lmax<}%
    \un@iter{0.031240}{\v@lmax<}%
    \un@iter{0.015624}{\v@lmax<}%
    \un@iter{0.007812}{\v@lmax<}%
    \un@iter{0.003906}{\v@lmax<}%
    \un@iter{0.001953}{\v@lmax<}%
    \un@iter{0.000976}{\v@lmax<}%
    \un@iter{0.000488}{\v@lmax<}%
    \un@iter{0.000244}{\v@lmax<}%
    \un@iter{0.000122}{\v@lmax<}%
    \un@iter{0.000061}{\v@lmax<}%
    \un@iter{0.000030}{\v@lmax<}%
    \un@iter{0.000015}{\v@lmax<}%
    \global\result@t=\v@leur}#1=\result@t}
\ctr@ld@f\def\un@iter#1#2{%
    \divide\delt@\tw@\edef\dpmn@{\repdecn@mb{\delt@}}%
    \mili@u=\v@lmin%
    \ifdim#2\z@%
      \advance\v@lmin-\dpmn@\v@lmax\advance\v@lmax\dpmn@\mili@u%
      \advance\v@leur-#1pt%
    \else%
      \advance\v@lmin\dpmn@\v@lmax\advance\v@lmax-\dpmn@\mili@u%
      \advance\v@leur#1pt%
    \fi}
\ctr@ld@f\def\c@ssin#1#2#3{\expandafter\ifx\csname COS@\number#3\endcsname\relax\c@lCS{#3pt}%
    \expandafter\xdef\csname COS@\number#3\endcsname{\repdecn@mb\result@t}%
    \expandafter\xdef\csname SIN@\number#3\endcsname{\repdecn@mb\result@@t}\fi%
    \edef#1{\csname COS@\number#3\endcsname}\edef#2{\csname SIN@\number#3\endcsname}}
\ctr@ld@f\def\c@lCS#1{{\mili@u=#1\p@rtent=\@ne%
    \relax\ifdim\mili@u<\z@\red@ng<-\else\red@ng>+\fi\f@ctech=\p@rtent%
    \relax\ifdim\mili@u<\z@\mili@u=-\mili@u\f@ctech=-\f@ctech\fi\c@@lCS}}
\ctr@ld@f\def\c@@lCS{\v@lmin=\mili@u\c@rre=-\mili@u\advance\c@rre\DemiPI@deg\v@lmax=\c@rre%
    \mili@u\@@lxxi\mili@u\divide\mili@u\@@mmmmlxviii%
    \edef\v@larg{\repdecn@mb{\mili@u}}\mili@u=-\v@larg\mili@u%
    \edef\v@lmxde{\repdecn@mb{\mili@u}}%
    \c@rre\@@lxxi\c@rre\divide\c@rre\@@mmmmlxviii%
    \edef\v@largC{\repdecn@mb{\c@rre}}\c@rre=-\v@largC\c@rre%
    \edef\v@lmxdeC{\repdecn@mb{\c@rre}}%
    \fctc@s\mili@u\v@lmin\global\result@t\p@rtent\v@leur%
    \let\t@mp=\v@larg\let\v@larg=\v@largC\let\v@largC=\t@mp%
    \let\t@mp=\v@lmxde\let\v@lmxde=\v@lmxdeC\let\v@lmxdeC=\t@mp%
    \fctc@s\c@rre\v@lmax\global\result@@t\f@ctech\v@leur}
\ctr@ld@f\def\fctc@s#1#2{\v@leur=#1\relax\ifdim#2<\@lxxxv\p@\cosser@h\else\sinser@t\fi}
\ctr@ld@f\def\cosser@h{\advance\v@leur\@lvi\p@\divide\v@leur\@lvi%
    \v@leur=\v@lmxde\v@leur\advance\v@leur\@xxx\p@%
    \v@leur=\v@lmxde\v@leur\advance\v@leur\@ccclx\p@%
    \v@leur=\v@lmxde\v@leur\advance\v@leur\@dccxx\p@\divide\v@leur\@dccxx}
\ctr@ld@f\def\sinser@t{\v@leur=\v@lmxdeC\p@\advance\v@leur\@vi\p@%
    \v@leur=\v@largC\v@leur\divide\v@leur\@vi}
\ctr@ld@f\def\red@ng#1#2{\relax\ifdim\mili@u#1#2\DemiPI@deg\advance\mili@u#2-\PI@deg%
    \p@rtent=-\p@rtent\red@ng#1#2\fi}
\ctr@ld@f\def\pr@c@lCS#1#2#3{\ctr@lcsn@m{COS@\number#3 }%
    \expandafter\xdef\csname COS@\number#3\endcsname{#1}%
    \expandafter\xdef\csname SIN@\number#3\endcsname{#2}}
\pr@c@lCS{1}{0}{0}
\pr@c@lCS{0.7071}{0.7071}{45}\pr@c@lCS{0.7071}{-0.7071}{-45}
\pr@c@lCS{0}{1}{90}          \pr@c@lCS{0}{-1}{-90}
\pr@c@lCS{-1}{0}{180}        \pr@c@lCS{-1}{0}{-180}
\pr@c@lCS{0}{-1}{270}        \pr@c@lCS{0}{1}{-270}
\ctr@ld@f\def\invers@#1#2{{\v@leur=#2\maxim@m{\v@lmax}{-\v@leur}{\v@leur}%
    \f@ctech=\@ne\m@inv@rs%
    \multiply\v@leur\f@ctech\edef\v@lv@leur{\repdecn@mb{\v@leur}}%
    \p@rtentiere{\p@rtent}{\v@leur}\v@lmin=\p@\divide\v@lmin\p@rtent%
    \inv@rs@\multiply\v@lmax\f@ctech\global\result@t=\v@lmax}#1=\result@t}
\ctr@ld@f\def\m@inv@rs{\ifdim\v@lmax<\p@\multiply\v@lmax\t@n\multiply\f@ctech\t@n\m@inv@rs\fi}
\ctr@ld@f\def\inv@rs@{\v@lmax=-\v@lmin\v@lmax=\v@lv@leur\v@lmax%
    \advance\v@lmax\tw@ pt\v@lmax=\repdecn@mb{\v@lmin}\v@lmax%
    \delt@=\v@lmax\advance\delt@-\v@lmin\ifdim\delt@<\z@\delt@=-\delt@\fi%
    \ifdim\delt@>\epsil@n\v@lmin=\v@lmax\inv@rs@\fi}
\ctr@ld@f\def\minim@m#1#2#3{\relax\ifdim#2<#3#1=#2\else#1=#3\fi}
\ctr@ld@f\def\maxim@m#1#2#3{\relax\ifdim#2>#3#1=#2\else#1=#3\fi}
\ctr@ld@f\def\p@rtentiere#1#2{#1=#2\divide#1by65536 }
\ctr@ld@f\def\r@undint#1#2{{\v@leur=#2\divide\v@leur\t@n\p@rtentiere{\p@rtent}{\v@leur}%
    \v@leur=\p@rtent pt\global\result@t=\t@n\v@leur}#1=\result@t}
\ctr@ld@f\def\sqrt@#1#2{{\v@leur=#2%
    \minim@m{\v@lmin}{\p@}{\v@leur}\maxim@m{\v@lmax}{\p@}{\v@leur}%
    \f@ctech=\@ne\m@sqrt@\sqrt@@%
    \mili@u=\v@lmin\advance\mili@u\v@lmax\divide\mili@u\tw@\multiply\mili@u\f@ctech%
    \global\result@t=\mili@u}#1=\result@t}
\ctr@ld@f\def\m@sqrt@{\ifdim\v@leur>\dcq@\divide\v@leur\c@nt\v@lmax=\v@leur%
    \multiply\f@ctech\t@n\m@sqrt@\fi}
\ctr@ld@f\def\sqrt@@{\mili@u=\v@lmin\advance\mili@u\v@lmax\divide\mili@u\tw@%
    \c@rre=\repdecn@mb{\mili@u}\mili@u%
    \ifdim\c@rre<\v@leur\v@lmin=\mili@u\else\v@lmax=\mili@u\fi%
    \delt@=\v@lmax\advance\delt@-\v@lmin\ifdim\delt@>\epsil@n\sqrt@@\fi}
\ctr@ld@f\def\extrairelepremi@r#1\de#2{\expandafter\lepremi@r#2@#1#2}
\ctr@ld@f\def\lepremi@r#1,#2@#3#4{\def#3{#1}\def#4{#2}\ignorespaces}
\ctr@ld@f\def\@cfor#1:=#2\do#3{%
  \edef\@fortemp{#2}%
  \ifx\@fortemp\empty\else\@cforloop#2,\@nil,\@nil\@@#1{#3}\fi}
\ctr@ln@m\@nextwhile
\ctr@ld@f\def\@cforloop#1,#2\@@#3#4{%
  \def#3{#1}%
  \ifx#3\Fig@nnil\let\@nextwhile=\Fig@fornoop\else#4\relax\let\@nextwhile=\@cforloop\fi%
  \@nextwhile#2\@@#3{#4}}

\ctr@ld@f\def\@ecfor#1:=#2\do#3{%
  \def\@@cfor{\@cfor#1:=}%
  \edef\@@@cfor{#2}%
  \expandafter\@@cfor\@@@cfor\do{#3}}
\ctr@ld@f\def\Fig@nnil{\@nil}
\ctr@ld@f\def\Fig@fornoop#1\@@#2#3{}
\ctr@ln@m\list@@rg
\ctr@ld@f\def\trtlis@rg#1#2{\def\list@@rg{#1}%
    \@ecfor\p@rv@l:=\list@@rg\do{\expandafter#2\p@rv@l|}}
\ctr@ld@f\def\trtlis@rgtok#1{\let@xte={}\let\n@xt\addt@t@xt\addt@t@xt #1}
\ctr@ln@m\M@cro
\ctr@ln@m\n@xt
\ctr@ld@f\def\addt@t@xt#1{\if#1|\let\n@xt\relax\else%
    \if#1,\expandafter\M@cro\the\let@xte|\let@xte={}%
    \else\let@xte=\expandafter{\the\let@xte #1}\fi\fi\n@xt}
\ctr@ln@w{newbox}\b@xvisu
\ctr@ln@w{newtoks}\let@xte
\ctr@ln@w{newif}\ifitis@K
\ctr@ln@w{newcount}\s@mme
\ctr@ln@w{newcount}\l@mbd@un \ctr@ln@w{newcount}\l@mbd@de
\ctr@ln@w{newcount}\superc@ntr@l\superc@ntr@l=\@ne        % Controle impose
\ctr@ln@w{newcount}\typec@ntr@l\typec@ntr@l=\superc@ntr@l % Controle souhaite
\ctr@ln@w{newdimen}\v@lX  \ctr@ln@w{newdimen}\v@lY  \ctr@ln@w{newdimen}\v@lZ
\ctr@ln@w{newdimen}\v@lXa \ctr@ln@w{newdimen}\v@lYa \ctr@ln@w{newdimen}\v@lZa
\ctr@ln@w{newdimen}\unit@\unit@=\p@ % Initialisation a la valeur par defaut.
\ctr@ld@f\def\unit@util{pt}
\ctr@ld@f\def\ptT@ptps{0.996264 }
\ctr@ld@f\def\ptpsT@pt{1.00375 }
\ctr@ld@f\def\ptT@unit@{1} % Initialisation correspondant a la valeur par defaut de \unit@
\ctr@ld@f\def\setunit@#1{\def\unit@util{#1}\setunit@@#1:\invers@{\result@t}{\unit@}%
    \edef\ptT@unit@{\repdecn@mb\result@t}}
\ctr@ld@f\def\setunit@@#1#2:{\ifcat#1a\unit@=\@ne#1#2\else\unit@=#1#2\fi}
\ctr@ld@f\def\d@fm@cdim#1#2{{\v@leur=#2\v@leur=\ptT@unit@\v@leur\xdef#1{\repdecn@mb\v@leur}}}
\ctr@ln@w{newif}\ifBdingB@x\BdingB@xtrue
\ctr@ln@w{newdimen}\c@@rdXmin \ctr@ln@w{newdimen}\c@@rdYmin  % Dimensions de la BoundingBox
\ctr@ln@w{newdimen}\c@@rdXmax \ctr@ln@w{newdimen}\c@@rdYmax
\ctr@ld@f\def\b@undb@x#1#2{\ifBdingB@x%
    \relax\ifdim#1<\c@@rdXmin\global\c@@rdXmin=#1\fi%
    \relax\ifdim#2<\c@@rdYmin\global\c@@rdYmin=#2\fi%
    \relax\ifdim#1>\c@@rdXmax\global\c@@rdXmax=#1\fi%
    \relax\ifdim#2>\c@@rdYmax\global\c@@rdYmax=#2\fi\fi}
\ctr@ld@f\def\b@undb@xP#1{{\Figg@tXY{#1}\b@undb@x{\v@lX}{\v@lY}}}
\ctr@ld@f\def\ellBB@x#1;#2,#3(#4,#5,#6){{\s@uvc@ntr@l\et@tellBB@x%
    \setc@ntr@l{2}\figptell-2::#1;#2,#3(#4,#6)\b@undb@xP{-2}%
    \figptell-2::#1;#2,#3(#5,#6)\b@undb@xP{-2}%
    \c@ssin{\C@}{\S@}{#6}\v@lmin=\C@ pt\v@lmax=\S@ pt%
    \mili@u=#3\v@lmin\delt@=#2\v@lmax\arct@n\v@leur(\delt@,\mili@u)%
    \mili@u=-#3\v@lmax\delt@=#2\v@lmin\arct@n\c@rre(\delt@,\mili@u)%
    \v@leur=\rdT@deg\v@leur\advance\v@leur-\DePI@deg%
    \c@rre=\rdT@deg\c@rre\advance\c@rre-\DePI@deg%
    \v@lmin=#4pt\v@lmax=#5pt%
    \loop\ifdim\v@leur<\v@lmax\ifdim\v@leur>\v@lmin%
    \edef\@ngle{\repdecn@mb\v@leur}\figptell-2::#1;#2,#3(\@ngle,#6)%
    \b@undb@xP{-2}\fi\advance\v@leur\PI@deg\repeat%
    \loop\ifdim\c@rre<\v@lmax\ifdim\c@rre>\v@lmin%
    \edef\@ngle{\repdecn@mb\c@rre}\figptell-2::#1;#2,#3(\@ngle,#6)%
    \b@undb@xP{-2}\fi\advance\c@rre\PI@deg\repeat%
    \resetc@ntr@l\et@tellBB@x}\ignorespaces}
\ctr@ld@f\def\initb@undb@x{\c@@rdXmin=\maxdimen\c@@rdYmin=\maxdimen%
    \c@@rdXmax=-\maxdimen\c@@rdYmax=-\maxdimen}
\ctr@ld@f\def\c@ntr@lnum#1{%
    \relax\ifnum\typec@ntr@l=\@ne%
    \ifnum#1<\z@%
    \immediate\write16{*** Forbidden point number (#1). Abort.}\end\fi\fi%
    \set@bjc@de{#1}}
\ctr@ln@m\objc@de
\ctr@ld@f\def\set@bjc@de#1{\edef\objc@de{@BJ\ifnum#1<\z@ M\romannumeral-#1\else\romannumeral#1\fi}}
\s@mme=\m@ne\loop\ifnum\s@mme>-19
  \set@bjc@de{\s@mme}\ctr@lcsn@m\objc@de\ctr@lcsn@m{\objc@de T}
\advance\s@mme\m@ne\repeat
\s@mme=\@ne\loop\ifnum\s@mme<6
  \set@bjc@de{\s@mme}\ctr@lcsn@m\objc@de\ctr@lcsn@m{\objc@de T}
\advance\s@mme\@ne\repeat
\ctr@ld@f\def\setc@ntr@l#1{\ifnum\superc@ntr@l>#1\typec@ntr@l=\superc@ntr@l%
    \else\typec@ntr@l=#1\fi}
\ctr@ld@f\def\resetc@ntr@l#1{\global\superc@ntr@l=#1\setc@ntr@l{#1}}
\ctr@ld@f\def\s@uvc@ntr@l#1{\edef#1{\the\superc@ntr@l}}
\ctr@ln@m\c@lproscal
\ctr@ld@f\def\c@lproscalDD#1[#2,#3]{{\Figg@tXY{#2}%
    \edef\Xu@{\repdecn@mb{\v@lX}}\edef\Yu@{\repdecn@mb{\v@lY}}\Figg@tXY{#3}%
    \global\result@t=\Xu@\v@lX\global\advance\result@t\Yu@\v@lY}#1=\result@t}
\ctr@ld@f\def\c@lproscalTD#1[#2,#3]{{\Figg@tXY{#2}\edef\Xu@{\repdecn@mb{\v@lX}}%
    \edef\Yu@{\repdecn@mb{\v@lY}}\edef\Zu@{\repdecn@mb{\v@lZ}}%
    \Figg@tXY{#3}\global\result@t=\Xu@\v@lX\global\advance\result@t\Yu@\v@lY%
    \global\advance\result@t\Zu@\v@lZ}#1=\result@t}
\ctr@ld@f\def\c@lprovec#1{%
    \det@rmC\v@lZa(\v@lX,\v@lY,\v@lmin,\v@lmax)%
    \det@rmC\v@lXa(\v@lY,\v@lZ,\v@lmax,\v@leur)%
    \det@rmC\v@lYa(\v@lZ,\v@lX,\v@leur,\v@lmin)%
    \Figv@ctCreg#1(\v@lXa,\v@lYa,\v@lZa)}
\ctr@ld@f\def\det@rm#1[#2,#3]{{\Figg@tXY{#2}\Figg@tXYa{#3}%
    \delt@=\repdecn@mb{\v@lX}\v@lYa\advance\delt@-\repdecn@mb{\v@lY}\v@lXa%
    \global\result@t=\delt@}#1=\result@t}
\ctr@ld@f\def\det@rmC#1(#2,#3,#4,#5){{\global\result@t=\repdecn@mb{#2}#5%
    \global\advance\result@t-\repdecn@mb{#3}#4}#1=\result@t}
\ctr@ld@f\def\getredf@ctDD#1(#2,#3){{\maxim@m{\v@lXa}{-#2}{#2}\maxim@m{\v@lYa}{-#3}{#3}%
    \maxim@m{\v@lXa}{\v@lXa}{\v@lYa}% \v@lXa = ||X||inf
    \ifdim\v@lXa>\@xci pt\divide\v@lXa\@xci%
    \p@rtentiere{\p@rtent}{\v@lXa}\advance\p@rtent\@ne\else\p@rtent=\@ne\fi%
    \global\result@tent=\p@rtent}#1=\result@tent\ignorespaces}
\ctr@ld@f\def\getredf@ctTD#1(#2,#3,#4){{\maxim@m{\v@lXa}{-#2}{#2}\maxim@m{\v@lYa}{-#3}{#3}%
    \maxim@m{\v@lZa}{-#4}{#4}\maxim@m{\v@lXa}{\v@lXa}{\v@lYa}%
    \maxim@m{\v@lXa}{\v@lXa}{\v@lZa}% \v@lXa = ||X||inf
    \ifdim\v@lXa>\@lxxiv pt\divide\v@lXa\@lxxiv%
    \p@rtentiere{\p@rtent}{\v@lXa}\advance\p@rtent\@ne\else\p@rtent=\@ne\fi%
    \global\result@tent=\p@rtent}#1=\result@tent\ignorespaces}
\ctr@ln@m\getredf@ctB
\ctr@ld@f\def\getredf@ctBDD#1{\getredf@ctDD#1(\v@lX,\v@lY)}
\ctr@ld@f\def\getredf@ctBTD#1{\getredf@ctTD#1(\v@lX,\v@lY,\v@lZ)}
\ctr@ld@f\def\FigptintercircB@zDD#1:#2:#3,#4[#5,#6,#7,#8]{{\s@uvc@ntr@l\et@tfigptintercircB@zDD%
    \setc@ntr@l{2}\figvectPDD-1[#5,#8]\Figg@tXY{-1}\getredf@ctDD\f@ctech(\v@lX,\v@lY)%
    \mili@u=#4\unit@\divide\mili@u\f@ctech\c@rre=\repdecn@mb{\mili@u}\mili@u%
    \figptBezierDD-5::#3[#5,#6,#7,#8]%
    \v@lmin=#3\p@\v@lmax=\v@lmin\advance\v@lmax0.1\p@%
    \loop\edef\T@{\repdecn@mb{\v@lmax}}\figptBezierDD-2::\T@[#5,#6,#7,#8]%
    \figvectPDD-1[-5,-2]\n@rmeucCDD{\delt@}{-1}\ifdim\delt@<\c@rre\v@lmin=\v@lmax%
    \advance\v@lmax0.1\p@\repeat%
    \loop\mili@u=\v@lmin\advance\mili@u\v@lmax%
    \divide\mili@u\tw@\edef\T@{\repdecn@mb{\mili@u}}\figptBezierDD-2::\T@[#5,#6,#7,#8]%
    \figvectPDD-1[-5,-2]\n@rmeucCDD{\delt@}{-1}\ifdim\delt@>\c@rre\v@lmax=\mili@u%
    \else\v@lmin=\mili@u\fi\v@leur=\v@lmax\advance\v@leur-\v@lmin%
    \ifdim\v@leur>\epsil@n\repeat\figptcopyDD#1:#2/-2/%
    \resetc@ntr@l\et@tfigptintercircB@zDD}\ignorespaces}
\ctr@ln@m\figptinterlines
\ctr@ld@f\def\inters@cDD#1:#2[#3,#4;#5,#6]{{\s@uvc@ntr@l\et@tinters@cDD%
    \setc@ntr@l{2}\vecunit@{-1}{#4}\vecunit@{-2}{#6}%
    \Figg@tXY{-1}\setc@ntr@l{1}\Figg@tXYa{#3}%
    \edef\A@{\repdecn@mb{\v@lX}}\edef\B@{\repdecn@mb{\v@lY}}%
    \v@lmin=\B@\v@lXa\advance\v@lmin-\A@\v@lYa%
    \Figg@tXYa{#5}\setc@ntr@l{2}\Figg@tXY{-2}%
    \edef\C@{\repdecn@mb{\v@lX}}\edef\D@{\repdecn@mb{\v@lY}}%
    \v@lmax=\D@\v@lXa\advance\v@lmax-\C@\v@lYa%
    \delt@=\A@\v@lY\advance\delt@-\B@\v@lX%
    \invers@{\v@leur}{\delt@}\edef\v@ldelta{\repdecn@mb{\v@leur}}%
    \v@lXa=\A@\v@lmax\advance\v@lXa-\C@\v@lmin%
    \v@lYa=\B@\v@lmax\advance\v@lYa-\D@\v@lmin%
    \v@lXa=\v@ldelta\v@lXa\v@lYa=\v@ldelta\v@lYa%
    \setc@ntr@l{1}\Figp@intregDD#1:{#2}(\v@lXa,\v@lYa)%
    \resetc@ntr@l\et@tinters@cDD}\ignorespaces}
\ctr@ld@f\def\inters@cTD#1:#2[#3,#4;#5,#6]{{\s@uvc@ntr@l\et@tinters@cTD%
    \setc@ntr@l{2}\figvectNVTD-1[#4,#6]\figvectNVTD-2[#6,-1]\figvectPTD-1[#3,#5]%
    \r@pPSTD\v@leur[-2,-1,#4]\edef\v@lcoef{\repdecn@mb{\v@leur}}%
    \figpttraTD#1:{#2}=#3/\v@lcoef,#4/\resetc@ntr@l\et@tinters@cTD}\ignorespaces}
\ctr@ld@f\def\r@pPSTD#1[#2,#3,#4]{{\Figg@tXY{#2}\edef\Xu@{\repdecn@mb{\v@lX}}%
    \edef\Yu@{\repdecn@mb{\v@lY}}\edef\Zu@{\repdecn@mb{\v@lZ}}%
    \Figg@tXY{#3}\v@lmin=\Xu@\v@lX\advance\v@lmin\Yu@\v@lY\advance\v@lmin\Zu@\v@lZ%
    \Figg@tXY{#4}\v@lmax=\Xu@\v@lX\advance\v@lmax\Yu@\v@lY\advance\v@lmax\Zu@\v@lZ%
    \invers@{\v@leur}{\v@lmax}\global\result@t=\repdecn@mb{\v@leur}\v@lmin}%
    #1=\result@t}
\ctr@ln@m\n@rminf
\ctr@ld@f\def\n@rminfDD#1#2{{\Figg@tXY{#2}\maxim@m{\v@lX}{\v@lX}{-\v@lX}%
    \maxim@m{\v@lY}{\v@lY}{-\v@lY}\maxim@m{\global\result@t}{\v@lX}{\v@lY}}%
    #1=\result@t}
\ctr@ld@f\def\n@rminfTD#1#2{{\Figg@tXY{#2}\maxim@m{\v@lX}{\v@lX}{-\v@lX}%
    \maxim@m{\v@lY}{\v@lY}{-\v@lY}\maxim@m{\v@lZ}{\v@lZ}{-\v@lZ}%
    \maxim@m{\v@lX}{\v@lX}{\v@lY}\maxim@m{\global\result@t}{\v@lX}{\v@lZ}}%
    #1=\result@t}
\ctr@ln@m\n@rmeucC
\ctr@ld@f\def\n@rmeucCDD#1#2{\Figg@tXY{#2}\divide\v@lX\f@ctech\divide\v@lY\f@ctech%
    #1=\repdecn@mb{\v@lX}\v@lX\v@lX=\repdecn@mb{\v@lY}\v@lY\advance#1\v@lX}
\ctr@ld@f\def\n@rmeucCTD#1#2{\Figg@tXY{#2}%
    \divide\v@lX\f@ctech\divide\v@lY\f@ctech\divide\v@lZ\f@ctech%
    #1=\repdecn@mb{\v@lX}\v@lX\v@lX=\repdecn@mb{\v@lY}\v@lY\advance#1\v@lX%
    \v@lX=\repdecn@mb{\v@lZ}\v@lZ\advance#1\v@lX}
\ctr@ln@m\n@rmeucSV
\ctr@ld@f\def\n@rmeucSVDD#1#2{{\Figg@tXY{#2}%
    \v@lXa=\repdecn@mb{\v@lX}\v@lX\v@lYa=\repdecn@mb{\v@lY}\v@lY%
    \advance\v@lXa\v@lYa\sqrt@{\global\result@t}{\v@lXa}}#1=\result@t}
\ctr@ld@f\def\n@rmeucSVTD#1#2{{\Figg@tXY{#2}\v@lXa=\repdecn@mb{\v@lX}\v@lX%
    \v@lYa=\repdecn@mb{\v@lY}\v@lY\v@lZa=\repdecn@mb{\v@lZ}\v@lZ%
    \advance\v@lXa\v@lYa\advance\v@lXa\v@lZa\sqrt@{\global\result@t}{\v@lXa}}#1=\result@t}
\ctr@ln@m\n@rmeuc
\ctr@ld@f\def\n@rmeucDD#1#2{{\Figg@tXY{#2}\getredf@ctDD\f@ctech(\v@lX,\v@lY)%
    \divide\v@lX\f@ctech\divide\v@lY\f@ctech%
    \v@lXa=\repdecn@mb{\v@lX}\v@lX\v@lYa=\repdecn@mb{\v@lY}\v@lY%
    \advance\v@lXa\v@lYa\sqrt@{\global\result@t}{\v@lXa}%
    \global\multiply\result@t\f@ctech}#1=\result@t}
\ctr@ld@f\def\n@rmeucTD#1#2{{\Figg@tXY{#2}\getredf@ctTD\f@ctech(\v@lX,\v@lY,\v@lZ)%
    \divide\v@lX\f@ctech\divide\v@lY\f@ctech\divide\v@lZ\f@ctech%
    \v@lXa=\repdecn@mb{\v@lX}\v@lX%
    \v@lYa=\repdecn@mb{\v@lY}\v@lY\v@lZa=\repdecn@mb{\v@lZ}\v@lZ%
    \advance\v@lXa\v@lYa\advance\v@lXa\v@lZa\sqrt@{\global\result@t}{\v@lXa}%
    \global\multiply\result@t\f@ctech}#1=\result@t}
\ctr@ln@m\vecunit@
\ctr@ld@f\def\vecunit@DD#1#2{{\Figg@tXY{#2}\getredf@ctDD\f@ctech(\v@lX,\v@lY)%
    \divide\v@lX\f@ctech\divide\v@lY\f@ctech%
    \Figv@ctCreg#1(\v@lX,\v@lY)\n@rmeucSV{\v@lYa}{#1}%
    \invers@{\v@lXa}{\v@lYa}\edef\v@lv@lXa{\repdecn@mb{\v@lXa}}%
    \v@lX=\v@lv@lXa\v@lX\v@lY=\v@lv@lXa\v@lY%
    \Figv@ctCreg#1(\v@lX,\v@lY)\multiply\v@lYa\f@ctech\global\result@t=\v@lYa}}
\ctr@ld@f\def\vecunit@TD#1#2{{\Figg@tXY{#2}\getredf@ctTD\f@ctech(\v@lX,\v@lY,\v@lZ)%
    \divide\v@lX\f@ctech\divide\v@lY\f@ctech\divide\v@lZ\f@ctech%
    \Figv@ctCreg#1(\v@lX,\v@lY,\v@lZ)\n@rmeucSV{\v@lYa}{#1}%
    \invers@{\v@lXa}{\v@lYa}\edef\v@lv@lXa{\repdecn@mb{\v@lXa}}%
    \v@lX=\v@lv@lXa\v@lX\v@lY=\v@lv@lXa\v@lY\v@lZ=\v@lv@lXa\v@lZ%
    \Figv@ctCreg#1(\v@lX,\v@lY,\v@lZ)\multiply\v@lYa\f@ctech\global\result@t=\v@lYa}}
\ctr@ld@f\def\vecunitC@TD[#1,#2]{\Figg@tXYa{#1}\Figg@tXY{#2}%
    \advance\v@lX-\v@lXa\advance\v@lY-\v@lYa\advance\v@lZ-\v@lZa\c@lvecunitTD}
\ctr@ld@f\def\vecunitCV@TD#1{\Figg@tXY{#1}\c@lvecunitTD}
\ctr@ld@f\def\c@lvecunitTD{\getredf@ctTD\f@ctech(\v@lX,\v@lY,\v@lZ)%
    \divide\v@lX\f@ctech\divide\v@lY\f@ctech\divide\v@lZ\f@ctech%
    \v@lXa=\repdecn@mb{\v@lX}\v@lX%
    \v@lYa=\repdecn@mb{\v@lY}\v@lY\v@lZa=\repdecn@mb{\v@lZ}\v@lZ%
    \advance\v@lXa\v@lYa\advance\v@lXa\v@lZa\sqrt@{\v@lYa}{\v@lXa}%
    \invers@{\v@lXa}{\v@lYa}\edef\v@lv@lXa{\repdecn@mb{\v@lXa}}%
    \v@lX=\v@lv@lXa\v@lX\v@lY=\v@lv@lXa\v@lY\v@lZ=\v@lv@lXa\v@lZ}
\ctr@ln@m\figgetangle
\ctr@ld@f\def\figgetangleDD#1[#2,#3,#4]{\ifGR@cri{\s@uvc@ntr@l\et@tfiggetangleDD\setc@ntr@l{2}%
    \figvectPDD-1[#2,#3]\figvectPDD-2[#2,#4]\vecunit@{-1}{-1}%
    \c@lproscalDD\delt@[-2,-1]\figvectNVDD-1[-1]\c@lproscalDD\v@leur[-2,-1]%
    \arct@n\v@lmax(\delt@,\v@leur)\v@lmax=\rdT@deg\v@lmax%
    \ifdim\v@lmax<\z@\advance\v@lmax\DePI@deg\fi\xdef#1{\repdecn@mb{\v@lmax}}%
    \resetc@ntr@l\et@tfiggetangleDD}\ignorespaces\fi}
\ctr@ld@f\def\figgetangleTD#1[#2,#3,#4,#5]{\ifGR@cri{\s@uvc@ntr@l\et@tfiggetangleTD\setc@ntr@l{2}%
    \figvectPTD-1[#2,#3]\figvectPTD-2[#2,#5]\figvectNVTD-3[-1,-2]%
    \figvectPTD-2[#2,#4]\figvectNVTD-4[-3,-1]%
    \vecunit@{-1}{-1}\c@lproscalTD\delt@[-2,-1]\c@lproscalTD\v@leur[-2,-4]%
    \arct@n\v@lmax(\delt@,\v@leur)\v@lmax=\rdT@deg\v@lmax%
    \ifdim\v@lmax<\z@\advance\v@lmax\DePI@deg\fi\xdef#1{\repdecn@mb{\v@lmax}}%
    \resetc@ntr@l\et@tfiggetangleTD}\ignorespaces\fi}    
\ctr@ld@f\def\figgetdist#1[#2,#3]{\ifGR@cri{\s@uvc@ntr@l\et@tfiggetdist\setc@ntr@l{2}%
    \figvectP-1[#2,#3]\n@rmeuc{\v@lX}{-1}\v@lX=\ptT@unit@\v@lX\xdef#1{\repdecn@mb{\v@lX}}%
    \resetc@ntr@l\et@tfiggetdist}\ignorespaces\fi}
\ctr@ld@f\def\figget#1=#2[#3]{\keln@mun#1|%
    \def\n@mref{a}\ifx\l@debut\n@mref\figgetangle#2[#3]\else% angle
    \def\n@mref{d}\ifx\l@debut\n@mref\figgetdist#2[#3]\else% distance
    \W@rnmeskwd{figget}{#1}\fi\fi\ignorespaces}
\ctr@ld@f\def\Figg@tT#1{\c@ntr@lnum{#1}%
    {\expandafter\expandafter\expandafter\extr@ctT\csname\objc@de\endcsname:%
     \ifnum\B@@ltxt=\z@\ptn@me{#1}\else\csname\objc@de T\endcsname\fi}}
\ctr@ld@f\def\extr@ctT#1,#2,#3/#4:{\def\B@@ltxt{#3}}
\ctr@ld@f\def\Figg@tXY#1{\c@ntr@lnum{#1}%
    \expandafter\expandafter\expandafter\extr@ctC\csname\objc@de\endcsname:}
\ctr@ln@m\extr@ctC
\ctr@ld@f\def\extr@ctCDD#1/#2,#3,#4:{\v@lX=#2\v@lY=#3}
\ctr@ld@f\def\extr@ctCTD#1/#2,#3,#4:{\v@lX=#2\v@lY=#3\v@lZ=#4}
\ctr@ld@f\def\Figg@tXYa#1{\c@ntr@lnum{#1}%
    \expandafter\expandafter\expandafter\extr@ctCa\csname\objc@de\endcsname:}
\ctr@ln@m\extr@ctCa
\ctr@ld@f\def\extr@ctCaDD#1/#2,#3,#4:{\v@lXa=#2\v@lYa=#3}
\ctr@ld@f\def\extr@ctCaTD#1/#2,#3,#4:{\v@lXa=#2\v@lYa=#3\v@lZa=#4}
\ctr@ln@m\t@xt@
\ctr@ld@f\def\figinit#1{\t@stc@tcodech@nge\initpr@lim\Figinit@#1,:\initpss@ttings\ignorespaces}
\ctr@ld@f\def\Figinit@#1,#2:{\setunit@{#1}\def\t@xt@{#2}\ifx\t@xt@\empty\else\Figinit@@#2:\fi}
\ctr@ld@f\def\Figinit@@#1#2:{\if#12 \else\Figs@tproj{#1}\initTD@\fi}
\ctr@ln@w{newif}\ifTr@isDim
\ctr@ld@f\def\UnD@fined{UNDEFINED}
\ctr@ln@m\@utoFN
\ctr@ln@m\@utoFInDone
\ctr@ln@m\disob@unit
\ctr@ld@f\def\initpr@lim{\initb@undb@x\figsetmark{}\figsetptname{$A_{##1}$}\def\Sc@leFact{1}%
    \initDD@\figsetroundcoord{yes}\GR@critrue\expandafter\setupd@te\D@FTupdate:%
    \edef\disob@unit{\UnD@fined}\edef\t@rgetpt{\UnD@fined}\gdef\@utoFInDone{1}\gdef\@utoFN{0}}
\ctr@ld@f\def\initDD@{\Tr@isDimfalse%
    \ifPDFm@ke%
     \let\Ps@rcerc=\Ps@rcercBz%
     \let\Ps@rell=\Ps@rellBz%
    \fi
    \let\c@lDCUn=\c@lDCUnDD%
    \let\c@lDCDeux=\c@lDCDeuxDD%
    \let\c@ldefproj=\relax%
    \let\c@lproscal=\c@lproscalDD%
    \let\c@lprojSP=\relax%
    \let\extr@ctC=\extr@ctCDD%
    \let\extr@ctCa=\extr@ctCaDD%
    \let\extr@ctCF=\extr@ctCFDD%
    \let\Figp@intreg=\Figp@intregDD%
    \let\Figpts@xes=\Figpts@xesDD%
    \let\getredf@ctB=\getredf@ctBDD%
    \let\n@rmeucSV=\n@rmeucSVDD\let\n@rmeuc=\n@rmeucDD\let\n@rmeucC\n@rmeucCDD\let\n@rminf=\n@rminfDD%
    \let\pr@dMatV=\pr@dMatVDD%
    \let\Q@@xes=\Q@@xesDD%
    \let\vecunit@=\vecunit@DD%
    \let\figcoord=\figcoordDD%
    \let\figgetangle=\figgetangleDD%
    \let\figpt=\figptDD%
    \let\figptBezier=\figptBezierDD%
    \let\figptbary=\figptbaryDD%
    \let\figptcirc=\figptcircDD%
    \let\figptcircumcenter=\figptcircumcenterDD%
    \let\figptcopy=\figptcopyDD%
    \let\figptcurvcenter=\figptcurvcenterDD%
    \let\figptell=\figptellDD%
    \let\figptendnormal=\figptendnormalDD%
    \let\figptinterlineplane=\figptinterlineplaneDD%
    \let\figptinterlines=\inters@cDD%
    \let\figptorthocenter=\figptorthocenterDD%
    \let\figptorthoprojline=\figptorthoprojlineDD%
    \let\figptorthoprojplane=\figptorthoprojplaneDD%
    \let\figptrot=\figptrotDD%
    \let\figptscontrol=\figptscontrolDD%
    \let\figptsintercirc=\figptsintercircDD%
    \let\figptsinterlinell=\figptsinterlinellDD%
    \let\figptsorthoprojline=\figptsorthoprojlineDD%
    \let\figptorthoprojplane=\figptorthoprojplaneDD%
    \let\figptsrot=\figptsrotDD%
    \let\figptssym=\figptssymDD%
    \let\figptstra=\figptstraDD%
    \let\figptsym=\figptsymDD%
    \let\figpttraC=\figpttraCDD%
    \let\figpttra=\figpttraDD%
    \let\figptvisilimSL=\figptvisilimSLDD%
    \let\figsetobdist=\figsetobdistDD%
    \let\figsettarget=\figsettargetDD%
    \let\figsetview=\figsetviewDD%
    \let\figvectDBezier=\figvectDBezierDD%
    \let\figvectN=\figvectNDD%
    \let\figvectNV=\figvectNVDD%
    \let\figvectP=\figvectPDD%
    \let\figvectU=\figvectUDD%
    \let\figdrawarccircP=\Q@arccircPDD%
    \let\figdrawarccirc=\Q@arccircDD%
    \let\figdrawarcell=\Q@arcellDD%
    \let\figdrawarcellPA=\Q@arcellPADD%
    \let\figdrawarrowBezier=\Q@arrowBezierDD%
    \let\figdrawarrowcircP=\Q@arrowcircPDD%
    \let\figdrawarrowcirc=\Q@arrowcircDD%
    \let\figdrawarrowhead=\Q@arrowheadDD%
    \let\figdrawarrow=\Q@arrowDD%
    \let\figdrawBezier=\Q@BezierDD%
    \let\figdrawcirc=\Q@circDD%
    \let\figdrawcurve=\Q@curveDD%
    \let\figdrawnormal=\Q@normalDD%
    }
\ctr@ld@f\def\initTD@{\Tr@isDimtrue\initb@undb@xTD\newt@rgetptfalse\newdis@bfalse%
    \let\c@lDCUn=\c@lDCUnTD%
    \let\c@lDCDeux=\c@lDCDeuxTD%
    \let\c@ldefproj=\c@ldefprojTD%
    \let\c@lproscal=\c@lproscalTD%
    \let\extr@ctC=\extr@ctCTD%
    \let\extr@ctCa=\extr@ctCaTD%
    \let\extr@ctCF=\extr@ctCFTD%
    \let\Figp@intreg=\Figp@intregTD%
    \let\Figpts@xes=\Figpts@xesTD%
    \let\getredf@ctB=\getredf@ctBTD%
    \let\n@rmeucSV=\n@rmeucSVTD\let\n@rmeuc=\n@rmeucTD\let\n@rmeucC\n@rmeucCTD\let\n@rminf=\n@rminfTD%
    \let\pr@dMatV=\pr@dMatVTD%
    \let\Q@@xes=\Q@@xesTD%
    \let\vecunit@=\vecunit@TD%
    \let\figcoord=\figcoordTD%
    \let\figgetangle=\figgetangleTD%
    \let\figpt=\figptTD%
    \let\figptBezier=\figptBezierTD%
    \let\figptbary=\figptbaryTD%
    \let\figptcirc=\figptcircTD%
    \let\figptcircumcenter=\figptcircumcenterTD%
    \let\figptcopy=\figptcopyTD%
    \let\figptcurvcenter=\figptcurvcenterTD%
    \let\figptinterlineplane=\figptinterlineplaneTD%
    \let\figptinterlines=\inters@cTD%
    \let\figptorthocenter=\figptorthocenterTD%
    \let\figptorthoprojline=\figptorthoprojlineTD%
    \let\figptorthoprojplane=\figptorthoprojplaneTD%
    \let\figptrot=\figptrotTD%
    \let\figptscontrol=\figptscontrolTD%
    \let\figptsintercirc=\figptsintercircTD%
    \let\figptsorthoprojline=\figptsorthoprojlineTD%
    \let\figptsorthoprojplane=\figptsorthoprojplaneTD%
    \let\figptsrot=\figptsrotTD%
    \let\figptssym=\figptssymTD%
    \let\figptstra=\figptstraTD%
    \let\figptsym=\figptsymTD%
    \let\figpttraC=\figpttraCTD%
    \let\figpttra=\figpttraTD%
    \let\figptvisilimSL=\figptvisilimSLTD%
    \let\figsetobdist=\figsetobdistTD%
    \let\figsettarget=\figsettargetTD%
    \let\figsetview=\figsetviewTD%
    \let\figvectDBezier=\figvectDBezierTD%
    \let\figvectN=\figvectNTD%
    \let\figvectNV=\figvectNVTD%
    \let\figvectP=\figvectPTD%
    \let\figvectU=\figvectUTD%
    \let\figdrawarccircP=\Q@arccircPTD%
    \let\figdrawarccirc=\Q@arccircTD%
    \let\figdrawarcell=\Q@arcellTD%
    \let\figdrawarcellPA=\Q@arcellPATD%
    \let\figdrawarrowBezier=\Q@arrowBezierTD%
    \let\figdrawarrowcircP=\Q@arrowcircPTD%
    \let\figdrawarrowcirc=\Q@arrowcircTD%
    \let\figdrawarrowhead=\Q@arrowheadTD%
    \let\figdrawarrow=\Q@arrowTD%
    \let\figdrawBezier=\Q@BezierTD%
    \let\figdrawcirc=\Q@circTD%
    \let\figdrawcurve=\Q@curveTD%
    }
\ctr@ld@f\def\un@v@ilable#1{\immediate\write16{*** The macro #1 is not available in the current context.}}
\ctr@ld@f\def\figinsert#1{{\def\t@xt@{#1}\relax%
    \ifx\t@xt@\empty\ifnum\@utoFInDone>\z@\Figinsert@\DefGIfilen@me,:\fi%
    \else\expandafter\FiginsertNu@#1 :\fi}\ignorespaces}
\ctr@ld@f\def\FiginsertNu@#1 #2:{\def\t@xt@{#1}\relax\ifx\t@xt@\empty\def\t@xt@{#2}%
    \ifx\t@xt@\empty\ifnum\@utoFInDone>\z@\Figinsert@\DefGIfilen@me,:\fi%
    \else\FiginsertNu@#2:\fi\else\expandafter\FiginsertNd@#1 #2:\fi}
\ctr@ld@f\def\FiginsertNd@#1#2:{\ifcat#1a\Figinsert@#1#2,:\else%
    \ifnum\@utoFInDone>\z@\Figinsert@\DefGIfilen@me,#1#2,:\fi\fi}
\ctr@ln@m\Sc@leFact
\ctr@ld@f\def\Figinsert@#1,#2:{\def\t@xt@{#2}\ifx\t@xt@\empty\xdef\Sc@leFact{1}\else%
    \X@rgdeux@#2\xdef\Sc@leFact{\@rgdeux}\fi%
    \Figdisc@rdLTS{#1}{\t@xt@}\@psfgetbb{\t@xt@}%
    \v@lX=\@psfllx\p@\v@lX=\ptpsT@pt\v@lX\v@lX=\Sc@leFact\v@lX%
    \v@lY=\@psflly\p@\v@lY=\ptpsT@pt\v@lY\v@lY=\Sc@leFact\v@lY%
    \b@undb@x{\v@lX}{\v@lY}%
    \v@lX=\@psfurx\p@\v@lX=\ptpsT@pt\v@lX\v@lX=\Sc@leFact\v@lX%
    \v@lY=\@psfury\p@\v@lY=\ptpsT@pt\v@lY\v@lY=\Sc@leFact\v@lY%
    \b@undb@x{\v@lX}{\v@lY}%
    \ifPDFm@ke\Figinclud@PDF{\t@xt@}{\Sc@leFact}\else%
    \v@lX=\c@nt pt\v@lX=\Sc@leFact\v@lX\edef\F@ct{\repdecn@mb{\v@lX}}%
    \ifx\TeXturesonMacOSltX\special{postscriptfile #1 vscale=\F@ct\space hscale=\F@ct}%
    \else\includegraphics{#1}\fi\fi%
    \message{[\t@xt@]}\ignorespaces}
\ctr@ld@f\def\Figdisc@rdLTS#1#2{\expandafter\Figdisc@rdLTS@#1 :#2}
\ctr@ld@f\def\Figdisc@rdLTS@#1 #2:#3{\def#3{#1}\relax\ifx#3\empty\expandafter\Figdisc@rdLTS@#2:#3\fi}
\ctr@ld@f\def\figinsertE#1{\FiginsertE@#1,:\ignorespaces}
\ctr@ld@f\def\FiginsertE@#1,#2:{{\def\t@xt@{#2}\ifx\t@xt@\empty\xdef\Sc@leFact{1}\else%
    \X@rgdeux@#2\xdef\Sc@leFact{\@rgdeux}\fi%
    \Figdisc@rdLTS{#1}{\t@xt@}\pdfximage{\t@xt@}%
    \setbox\Gb@x=\hbox{\pdfrefximage\pdflastximage}%
    \v@lX=\z@\v@lY=-\Sc@leFact\dp\Gb@x\b@undb@x{\v@lX}{\v@lY}%
    \advance\v@lX\Sc@leFact\wd\Gb@x\advance\v@lY\Sc@leFact\dp\Gb@x%
    \advance\v@lY\Sc@leFact\ht\Gb@x\b@undb@x{\v@lX}{\v@lY}%
    \v@lX=\Sc@leFact\wd\Gb@x\pdfximage width \v@lX {\t@xt@}%
    \rlap{\pdfrefximage\pdflastximage}\message{[\t@xt@]}}\ignorespaces}
\ctr@ld@f\def\X@rgdeux@#1,{\edef\@rgdeux{#1}}
\ctr@ln@m\figpt
\ctr@ld@f\def\figptDD#1:#2(#3,#4){\ifGR@cri\c@ntr@lnum{#1}%
    {\v@lX=#3\unit@\v@lY=#4\unit@\Fig@dmpt{#2}{\z@}}\ignorespaces\fi}
\ctr@ld@f\def\Fig@dmpt#1#2{\def\t@xt@{#1}\ifx\t@xt@\empty\def\B@@ltxt{\z@}%
    \else\expandafter\gdef\csname\objc@de T\endcsname{#1}\def\B@@ltxt{\@ne}\fi%
    \expandafter\xdef\csname\objc@de\endcsname{\ifitis@vect@r\C@dCl@svect%
    \else\C@dCl@spt\fi,\z@,\B@@ltxt/\the\v@lX,\the\v@lY,#2}}
\ctr@ld@f\def\C@dCl@spt{P}
\ctr@ld@f\def\C@dCl@svect{V}
\ctr@ln@m\c@@rdYZ
\ctr@ln@m\c@@rdY
\ctr@ld@f\def\figptTD#1:#2(#3,#4){\ifGR@cri\c@ntr@lnum{#1}%
    \def\c@@rdYZ{#4,0,0}\extrairelepremi@r\c@@rdY\de\c@@rdYZ%
    \extrairelepremi@r\c@@rdZ\de\c@@rdYZ%
    {\v@lX=#3\unit@\v@lY=\c@@rdY\unit@\v@lZ=\c@@rdZ\unit@\Fig@dmpt{#2}{\the\v@lZ}%
    \b@undb@xTD{\v@lX}{\v@lY}{\v@lZ}}\ignorespaces\fi}
\ctr@ln@m\Figp@intreg
\ctr@ld@f\def\Figp@intregDD#1:#2(#3,#4){\c@ntr@lnum{#1}%
    {\result@t=#4\v@lX=#3\v@lY=\result@t\Fig@dmpt{#2}{\z@}}\ignorespaces}
\ctr@ld@f\def\Figp@intregTD#1:#2(#3,#4){\c@ntr@lnum{#1}%
    \def\c@@rdYZ{#4,\z@,\z@}\extrairelepremi@r\c@@rdY\de\c@@rdYZ%
    \extrairelepremi@r\c@@rdZ\de\c@@rdYZ%
    {\v@lX=#3\v@lY=\c@@rdY\v@lZ=\c@@rdZ\Fig@dmpt{#2}{\the\v@lZ}%
    \b@undb@xTD{\v@lX}{\v@lY}{\v@lZ}}\ignorespaces}
\ctr@ln@m\figptBezier
\ctr@ld@f\def\figptBezierDD#1:#2:#3[#4,#5,#6,#7]{\ifGR@cri{\s@uvc@ntr@l\et@tfigptBezierDD%
    \FigptBezier@#3[#4,#5,#6,#7]\Figp@intregDD#1:{#2}(\v@lX,\v@lY)%
    \resetc@ntr@l\et@tfigptBezierDD}\ignorespaces\fi}
\ctr@ld@f\def\figptBezierTD#1:#2:#3[#4,#5,#6,#7]{\ifGR@cri{\s@uvc@ntr@l\et@tfigptBezierTD%
    \FigptBezier@#3[#4,#5,#6,#7]\Figp@intregTD#1:{#2}(\v@lX,\v@lY,\v@lZ)%
    \resetc@ntr@l\et@tfigptBezierTD}\ignorespaces\fi}
\ctr@ld@f\def\FigptBezier@#1[#2,#3,#4,#5]{\setc@ntr@l{2}%
    \edef\T@{#1}\v@leur=\p@\advance\v@leur-#1pt\edef\UNmT@{\repdecn@mb{\v@leur}}%
    \figptcopy-4:/#2/\figptcopy-3:/#3/\figptcopy-2:/#4/\figptcopy-1:/#5/%
    \l@mbd@un=-4 \l@mbd@de=-\thr@@\p@rtent=\m@ne\c@lDecast%
    \l@mbd@un=-4 \l@mbd@de=-\thr@@\p@rtent=-\tw@\c@lDecast%
    \l@mbd@un=-4 \l@mbd@de=-\thr@@\p@rtent=-\thr@@\c@lDecast\Figg@tXY{-4}}
\ctr@ln@m\c@lDCUn
\ctr@ld@f\def\c@lDCUnDD#1#2{\Figg@tXY{#1}\v@lX=\UNmT@\v@lX\v@lY=\UNmT@\v@lY%
    \Figg@tXYa{#2}\advance\v@lX\T@\v@lXa\advance\v@lY\T@\v@lYa%
    \Figp@intregDD#1:(\v@lX,\v@lY)}
\ctr@ld@f\def\c@lDCUnTD#1#2{\Figg@tXY{#1}\v@lX=\UNmT@\v@lX\v@lY=\UNmT@\v@lY\v@lZ=\UNmT@\v@lZ%
    \Figg@tXYa{#2}\advance\v@lX\T@\v@lXa\advance\v@lY\T@\v@lYa\advance\v@lZ\T@\v@lZa%
    \Figp@intregTD#1:(\v@lX,\v@lY,\v@lZ)}
\ctr@ld@f\def\c@lDecast{\relax\ifnum\l@mbd@un<\p@rtent\c@lDCUn{\l@mbd@un}{\l@mbd@de}%
    \advance\l@mbd@un\@ne\advance\l@mbd@de\@ne\c@lDecast\fi}
\ctr@ld@f\def\figptmap#1:#2=#3/#4/#5/{\ifGR@cri{\s@uvc@ntr@l\et@tfigptmap%
    \setc@ntr@l{2}\figvectP-1[#4,#3]\Figg@tXY{-1}%
    \pr@dMatV/#5/\figpttra#1:{#2}=#4/1,-1/%
    \resetc@ntr@l\et@tfigptmap}\ignorespaces\fi}
\ctr@ln@m\pr@dMatV
\ctr@ld@f\def\pr@dMatVDD/#1,#2;#3,#4/{\v@lXa=#1\v@lX\advance\v@lXa#2\v@lY%
    \v@lYa=#3\v@lX\advance\v@lYa#4\v@lY\Figv@ctCreg-1(\v@lXa,\v@lYa)}
\ctr@ld@f\def\pr@dMatVTD/#1,#2,#3;#4,#5,#6;#7,#8,#9/{%
    \v@lXa=#1\v@lX\advance\v@lXa#2\v@lY\advance\v@lXa#3\v@lZ%
    \v@lYa=#4\v@lX\advance\v@lYa#5\v@lY\advance\v@lYa#6\v@lZ%
    \v@lZa=#7\v@lX\advance\v@lZa#8\v@lY\advance\v@lZa#9\v@lZ%
    \Figv@ctCreg-1(\v@lXa,\v@lYa,\v@lZa)}
\ctr@ln@m\figptbary
\ctr@ld@f\def\figptbaryDD#1:#2[#3;#4]{\ifGR@cri{\edef\list@num{#3}\extrairelepremi@r\p@int\de\list@num%
    \s@mme=\z@\@ecfor\c@ef:=#4\do{\advance\s@mme\c@ef}%
    \edef\listec@ef{#4,0}\extrairelepremi@r\c@ef\de\listec@ef%
    \Figg@tXY{\p@int}\divide\v@lX\s@mme\divide\v@lY\s@mme%
    \multiply\v@lX\c@ef\multiply\v@lY\c@ef%
    \@ecfor\p@int:=\list@num\do{\extrairelepremi@r\c@ef\de\listec@ef%
           \Figg@tXYa{\p@int}\divide\v@lXa\s@mme\divide\v@lYa\s@mme%
           \multiply\v@lXa\c@ef\multiply\v@lYa\c@ef%
           \advance\v@lX\v@lXa\advance\v@lY\v@lYa}%
    \Figp@intregDD#1:{#2}(\v@lX,\v@lY)}\ignorespaces\fi}
\ctr@ld@f\def\figptbaryTD#1:#2[#3;#4]{\ifGR@cri{\edef\list@num{#3}\extrairelepremi@r\p@int\de\list@num%
    \s@mme=\z@\@ecfor\c@ef:=#4\do{\advance\s@mme\c@ef}%
    \edef\listec@ef{#4,0}\extrairelepremi@r\c@ef\de\listec@ef%
    \Figg@tXY{\p@int}\divide\v@lX\s@mme\divide\v@lY\s@mme\divide\v@lZ\s@mme%
    \multiply\v@lX\c@ef\multiply\v@lY\c@ef\multiply\v@lZ\c@ef%
    \@ecfor\p@int:=\list@num\do{\extrairelepremi@r\c@ef\de\listec@ef%
           \Figg@tXYa{\p@int}\divide\v@lXa\s@mme\divide\v@lYa\s@mme\divide\v@lZa\s@mme%
           \multiply\v@lXa\c@ef\multiply\v@lYa\c@ef\multiply\v@lZa\c@ef%
           \advance\v@lX\v@lXa\advance\v@lY\v@lYa\advance\v@lZ\v@lZa}%
    \Figp@intregTD#1:{#2}(\v@lX,\v@lY,\v@lZ)}\ignorespaces\fi}
\ctr@ld@f\def\figptbaryR#1:#2[#3;#4]{\ifGR@cri{%
    \v@leur=\z@\@ecfor\c@ef:=#4\do{\maxim@m{\v@lmax}{\c@ef pt}{-\c@ef pt}%
    \ifdim\v@lmax>\v@leur\v@leur=\v@lmax\fi}%
    \ifdim\v@leur<\p@\f@ctech=\@M\else\ifdim\v@leur<\t@n\p@\f@ctech=\@m\else%
    \ifdim\v@leur<\c@nt\p@\f@ctech=\c@nt\else\ifdim\v@leur<\@m\p@\f@ctech=\t@n\else%
    \f@ctech=\@ne\fi\fi\fi\fi%
    \def\listec@ef{0}%
    \@ecfor\c@ef:=#4\do{\sc@lec@nvRI{\c@ef pt}\edef\listec@ef{\listec@ef,\the\s@mme}}%
    \extrairelepremi@r\c@ef\de\listec@ef\figptbary#1:#2[#3;\listec@ef]}\ignorespaces\fi}
\ctr@ld@f\def\sc@lec@nvRI#1{\v@leur=#1\p@rtentiere{\s@mme}{\v@leur}\advance\v@leur-\s@mme\p@%
    \multiply\v@leur\f@ctech\p@rtentiere{\p@rtent}{\v@leur}%
    \multiply\s@mme\f@ctech\advance\s@mme\p@rtent}
\ctr@ln@m\figptcirc
\ctr@ld@f\def\figptcircDD#1:#2:#3;#4(#5){\ifGR@cri{\s@uvc@ntr@l\et@tfigptcircDD%
    \c@lptellDD#1:{#2}:#3;#4,#4(#5)\resetc@ntr@l\et@tfigptcircDD}\ignorespaces\fi}
\ctr@ld@f\def\figptcircTD#1:#2:#3,#4,#5;#6(#7){\ifGR@cri{\s@uvc@ntr@l\et@tfigptcircTD%
    \setc@ntr@l{2}\c@lExtAxes#3,#4,#5(#6)\figptellP#1:{#2}:#3,-4,-5(#7)%
    \resetc@ntr@l\et@tfigptcircTD}\ignorespaces\fi}
\ctr@ln@m\figptcircumcenter
\ctr@ld@f\def\figptcircumcenterDD#1:#2[#3,#4,#5]{\ifGR@cri{\s@uvc@ntr@l\et@tfigptcircumcenterDD%
    \setc@ntr@l{2}\figvectNDD-5[#3,#4]\figptbaryDD-3:[#3,#4;1,1]%
                  \figvectNDD-6[#4,#5]\figptbaryDD-4:[#4,#5;1,1]%
    \resetc@ntr@l{2}\inters@cDD#1:{#2}[-3,-5;-4,-6]%
    \resetc@ntr@l\et@tfigptcircumcenterDD}\ignorespaces\fi}
\ctr@ld@f\def\figptcircumcenterTD#1:#2[#3,#4,#5]{\ifGR@cri{\s@uvc@ntr@l\et@tfigptcircumcenterTD%
    \setc@ntr@l{2}\figvectNTD-1[#3,#4,#5]%
    \figvectPTD-3[#3,#4]\figvectNVTD-5[-1,-3]\figptbaryTD-3:[#3,#4;1,1]%
    \figvectPTD-4[#4,#5]\figvectNVTD-6[-1,-4]\figptbaryTD-4:[#4,#5;1,1]%
    \resetc@ntr@l{2}\inters@cTD#1:{#2}[-3,-5;-4,-6]%
    \resetc@ntr@l\et@tfigptcircumcenterTD}\ignorespaces\fi}
\ctr@ln@m\figptcopy
\ctr@ld@f\def\figptcopyDD#1:#2/#3/{\ifGR@cri{\Figg@tXY{#3}%
    \Figp@intregDD#1:{#2}(\v@lX,\v@lY)}\ignorespaces\fi}
\ctr@ld@f\def\figptcopyTD#1:#2/#3/{\ifGR@cri{\Figg@tXY{#3}%
    \Figp@intregTD#1:{#2}(\v@lX,\v@lY,\v@lZ)}\ignorespaces\fi}
\ctr@ln@m\figptcurvcenter
\ctr@ld@f\def\figptcurvcenterDD#1:#2:#3[#4,#5,#6,#7]{\ifGR@cri{\s@uvc@ntr@l\et@tfigptcurvcenterDD%
    \setc@ntr@l{2}\c@lcurvradDD#3[#4,#5,#6,#7]\edef\Sprim@{\repdecn@mb{\result@t}}%
    \figptBezierDD-1::#3[#4,#5,#6,#7]\figpttraDD#1:{#2}=-1/\Sprim@,-5/%
    \resetc@ntr@l\et@tfigptcurvcenterDD}\ignorespaces\fi}
\ctr@ld@f\def\figptcurvcenterTD#1:#2:#3[#4,#5,#6,#7]{\ifGR@cri{\s@uvc@ntr@l\et@tfigptcurvcenterTD%
    \setc@ntr@l{2}\figvectDBezierTD -5:1,#3[#4,#5,#6,#7]%
    \figvectDBezierTD -6:2,#3[#4,#5,#6,#7]\vecunit@TD{-5}{-5}%
    \edef\Sprim@{\repdecn@mb{\result@t}}\figvectNVTD-1[-6,-5]%
    \figvectNVTD-5[-5,-1]\c@lproscalTD\v@leur[-6,-5]%
    \invers@{\v@leur}{\v@leur}\v@leur=\Sprim@\v@leur\v@leur=\Sprim@\v@leur%
    \figptBezierTD-1::#3[#4,#5,#6,#7]\edef\Sprim@{\repdecn@mb{\v@leur}}%
    \figpttraTD#1:{#2}=-1/\Sprim@,-5/\resetc@ntr@l\et@tfigptcurvcenterTD}\ignorespaces\fi}
\ctr@ld@f\def\c@lcurvradDD#1[#2,#3,#4,#5]{{\figvectDBezierDD -5:1,#1[#2,#3,#4,#5]%
    \figvectDBezierDD -6:2,#1[#2,#3,#4,#5]\vecunit@DD{-5}{-5}%
    \edef\Sprim@{\repdecn@mb{\result@t}}\figvectNVDD-5[-5]\c@lproscalDD\v@leur[-6,-5]%
    \invers@{\v@leur}{\v@leur}\v@leur=\Sprim@\v@leur\v@leur=\Sprim@\v@leur%
    \global\result@t=\v@leur}}
\ctr@ln@m\figptell
\ctr@ld@f\def\figptellDD#1:#2:#3;#4,#5(#6,#7){\ifGR@cri{\s@uvc@ntr@l\et@tfigptell%
    \c@lptellDD#1::#3;#4,#5(#6)\figptrotDD#1:{#2}=#1/#3,#7/%
    \resetc@ntr@l\et@tfigptell}\ignorespaces\fi}
\ctr@ld@f\def\c@lptellDD#1:#2:#3;#4,#5(#6){\c@ssin{\C@}{\S@}{#6}\v@lmin=\C@ pt\v@lmax=\S@ pt%
    \v@lmin=#4\v@lmin\v@lmax=#5\v@lmax%
    \edef\Xc@mp{\repdecn@mb{\v@lmin}}\edef\Yc@mp{\repdecn@mb{\v@lmax}}%
    \setc@ntr@l{2}\figvectC-1(\Xc@mp,\Yc@mp)\figpttraDD#1:{#2}=#3/1,-1/}
\ctr@ld@f\def\figptellP#1:#2:#3,#4,#5(#6){\ifGR@cri{\s@uvc@ntr@l\et@tfigptellP%
    \setc@ntr@l{2}\figvectP-1[#3,#4]\figvectP-2[#3,#5]%
    \v@leur=#6pt\c@lptellP{#3}{-1}{-2}\figptcopy#1:{#2}/-3/%
    \resetc@ntr@l\et@tfigptellP}\ignorespaces\fi}
\ctr@ln@m\@ngle
\ctr@ld@f\def\c@lptellP#1#2#3{\edef\@ngle{\repdecn@mb\v@leur}\c@ssin{\C@}{\S@}{\@ngle}%
    \figpttra-3:=#1/\C@,#2/\figpttra-3:=-3/\S@,#3/}
\ctr@ln@m\figptendnormal
\ctr@ld@f\def\figptendnormalDD#1:#2:#3,#4[#5,#6]{\ifGR@cri{\s@uvc@ntr@l\et@tfigptendnormal%
    \Figg@tXYa{#5}\Figg@tXY{#6}%
    \advance\v@lX-\v@lXa\advance\v@lY-\v@lYa%
    \setc@ntr@l{2}\Figv@ctCreg-1(\v@lX,\v@lY)\vecunit@{-1}{-1}\Figg@tXY{-1}%
    \delt@=#3\unit@\maxim@m{\delt@}{\delt@}{-\delt@}\edef\l@ngueur{\repdecn@mb{\delt@}}%
    \v@lX=\l@ngueur\v@lX\v@lY=\l@ngueur\v@lY%
    \delt@=\p@\advance\delt@-#4pt\edef\l@ngueur{\repdecn@mb{\delt@}}%
    \figptbaryR-1:[#5,#6;#4,\l@ngueur]\Figg@tXYa{-1}%
    \advance\v@lXa\v@lY\advance\v@lYa-\v@lX%
    \setc@ntr@l{1}\Figp@intregDD#1:{#2}(\v@lXa,\v@lYa)\resetc@ntr@l\et@tfigptendnormal}%
    \ignorespaces\fi}
\ctr@ld@f\def\figptexcenter#1:#2[#3,#4,#5]{\ifGR@cri{\let@xte={-}% for computational macro
    \Figptexinsc@nter#1:#2[#3,#4,#5]}\ignorespaces\fi}
\ctr@ld@f\def\figptincenter#1:#2[#3,#4,#5]{\ifGR@cri{\let@xte={}% for computational macro
    \Figptexinsc@nter#1:#2[#3,#4,#5]}\ignorespaces\fi}
\ctr@ld@f% for compatibility with older versions
\ctr@ld@f\def\Figptexinsc@nter#1:#2[#3,#4,#5]{%
    \figgetdist\LA@[#4,#5]\figgetdist\LB@[#3,#5]\figgetdist\LC@[#3,#4]%
    \figptbaryR#1:{#2}[#3,#4,#5;\the\let@xte\LA@,\LB@,\LC@]}
\ctr@ln@m\figptinterlineplane
\ctr@ld@f\def\figptinterlineplaneDD{\un@v@ilable{figptinterlineplane}}
\ctr@ld@f\def\figptinterlineplaneTD#1:#2[#3,#4;#5,#6]{\ifGR@cri{\s@uvc@ntr@l\et@tfigptinterlineplane%
    \setc@ntr@l{2}\figvectPTD-1[#3,#5]\vecunit@TD{-2}{#6}%
    \r@pPSTD\v@leur[-2,-1,#4]\edef\v@lcoef{\repdecn@mb{\v@leur}}%
    \figpttraTD#1:{#2}=#3/\v@lcoef,#4/\resetc@ntr@l\et@tfigptinterlineplane}\ignorespaces\fi}
\ctr@ln@m\figptorthocenter
\ctr@ld@f\def\figptorthocenterDD#1:#2[#3,#4,#5]{\ifGR@cri{\s@uvc@ntr@l\et@tfigptorthocenterDD%
    \setc@ntr@l{2}\figvectNDD-3[#3,#4]\figvectNDD-4[#4,#5]%
    \resetc@ntr@l{2}\inters@cDD#1:{#2}[#5,-3;#3,-4]%
    \resetc@ntr@l\et@tfigptorthocenterDD}\ignorespaces\fi}
\ctr@ld@f\def\figptorthocenterTD#1:#2[#3,#4,#5]{\ifGR@cri{\s@uvc@ntr@l\et@tfigptorthocenterTD%
    \setc@ntr@l{2}\figvectNTD-1[#3,#4,#5]%
    \figvectPTD-2[#3,#4]\figvectNVTD-3[-1,-2]%
    \figvectPTD-2[#4,#5]\figvectNVTD-4[-1,-2]%
    \resetc@ntr@l{2}\inters@cTD#1:{#2}[#5,-3;#3,-4]%
    \resetc@ntr@l\et@tfigptorthocenterTD}\ignorespaces\fi}
\ctr@ln@m\figptorthoprojline
\ctr@ld@f\def\figptorthoprojlineDD#1:#2=#3/#4,#5/{\ifGR@cri{\s@uvc@ntr@l\et@tfigptorthoprojlineDD%
    \setc@ntr@l{2}\figvectPDD-3[#4,#5]\figvectNVDD-4[-3]\resetc@ntr@l{2}%
    \inters@cDD#1:{#2}[#3,-4;#4,-3]\resetc@ntr@l\et@tfigptorthoprojlineDD}\ignorespaces\fi}
\ctr@ld@f\def\figptorthoprojlineTD#1:#2=#3/#4,#5/{\ifGR@cri{\s@uvc@ntr@l\et@tfigptorthoprojlineTD%
    \setc@ntr@l{2}\figvectPTD-1[#4,#3]\figvectPTD-2[#4,#5]\vecunit@TD{-2}{-2}%
    \c@lproscalTD\v@leur[-1,-2]\edef\v@lcoef{\repdecn@mb{\v@leur}}%
    \figpttraTD#1:{#2}=#4/\v@lcoef,-2/\resetc@ntr@l\et@tfigptorthoprojlineTD}\ignorespaces\fi}
\ctr@ln@m\figptorthoprojplane
\ctr@ld@f\def\figptorthoprojplaneDD{\un@v@ilable{figptorthoprojplane}}
\ctr@ld@f\def\figptorthoprojplaneTD#1:#2=#3/#4,#5/{\ifGR@cri{\s@uvc@ntr@l\et@tfigptorthoprojplane%
    \setc@ntr@l{2}\figvectPTD-1[#3,#4]\vecunit@TD{-2}{#5}%
    \c@lproscalTD\v@leur[-1,-2]\edef\v@lcoef{\repdecn@mb{\v@leur}}%
    \figpttraTD#1:{#2}=#3/\v@lcoef,-2/\resetc@ntr@l\et@tfigptorthoprojplane}\ignorespaces\fi}
\ctr@ld@f\def\figpthom#1:#2=#3/#4,#5/{\ifGR@cri{\s@uvc@ntr@l\et@tfigpthom%
    \setc@ntr@l{2}\figvectP-1[#4,#3]\figpttra#1:{#2}=#4/#5,-1/%
    \resetc@ntr@l\et@tfigpthom}\ignorespaces\fi}
\ctr@ld@f\def\figptinv#1:#2=#3/#4,#5/{\ifGR@cri{\s@uvc@ntr@l\et@tfigptinv%
    \setc@ntr@l{2}\figvectP-1[#4,#3]\Figg@tXY{-1}%
    \getredf@ctB\f@ctech\n@rmeucC{\delt@}{-1}%
    \delt@=\ptT@unit@\delt@\delt@=\ptT@unit@\delt@%
    \invers@{\delt@}{\delt@}\multiply\f@ctech\f@ctech\divide\delt@\f@ctech%
    \delt@=#5\delt@\edef\v@lcoef{\repdecn@mb{\delt@}}\figpttra#1:{#2}=#4/\v@lcoef,-1/%
    \resetc@ntr@l\et@tfigptinv}\ignorespaces\fi}
\ctr@ln@m\figptrot
\ctr@ld@f\def\figptrotDD#1:#2=#3/#4,#5/{\ifGR@cri{\s@uvc@ntr@l\et@tfigptrotDD%
    \c@ssin{\C@}{\S@}{#5}\setc@ntr@l{2}\figvectPDD-1[#4,#3]\Figg@tXY{-1}%
    \v@lXa=\C@\v@lX\advance\v@lXa-\S@\v@lY%
    \v@lYa=\S@\v@lX\advance\v@lYa\C@\v@lY%
    \Figv@ctCreg-1(\v@lXa,\v@lYa)\figpttraDD#1:{#2}=#4/1,-1/%
    \resetc@ntr@l\et@tfigptrotDD}\ignorespaces\fi}
\ctr@ld@f\def\figptrotTD#1:#2=#3/#4,#5,#6/{\ifGR@cri{\s@uvc@ntr@l\et@tfigptrotTD%
    \c@ssin{\C@}{\S@}{#5}%
    \setc@ntr@l{2}\figptorthoprojplaneTD-3:=#4/#3,#6/\figvectPTD-2[-3,#3]%
    \n@rmeucTD\v@leur{-2}\ifdim\v@leur<\Cepsil@n\Figg@tXYa{#3}\else%
    \edef\v@lcoef{\repdecn@mb{\v@leur}}\figvectNVTD-1[#6,-2]%
    \Figg@tXYa{-1}\v@lXa=\v@lcoef\v@lXa\v@lYa=\v@lcoef\v@lYa\v@lZa=\v@lcoef\v@lZa%
    \v@lXa=\S@\v@lXa\v@lYa=\S@\v@lYa\v@lZa=\S@\v@lZa\Figg@tXY{-2}%
    \advance\v@lXa\C@\v@lX\advance\v@lYa\C@\v@lY\advance\v@lZa\C@\v@lZ%
    \Figg@tXY{-3}\advance\v@lXa\v@lX\advance\v@lYa\v@lY\advance\v@lZa\v@lZ\fi%
    \Figp@intregTD#1:{#2}(\v@lXa,\v@lYa,\v@lZa)\resetc@ntr@l\et@tfigptrotTD}\ignorespaces\fi}
\ctr@ln@m\figptsym
\ctr@ld@f\def\figptsymDD#1:#2=#3/#4,#5/{\ifGR@cri{\s@uvc@ntr@l\et@tfigptsymDD%
    \resetc@ntr@l{2}\figptorthoprojlineDD-5:=#3/#4,#5/\figvectPDD-2[#3,-5]%
    \figpttraDD#1:{#2}=#3/2,-2/\resetc@ntr@l\et@tfigptsymDD}\ignorespaces\fi}
\ctr@ld@f\def\figptsymTD#1:#2=#3/#4,#5/{\ifGR@cri{\s@uvc@ntr@l\et@tfigptsymTD%
    \resetc@ntr@l{2}\figptorthoprojplaneTD-3:=#3/#4,#5/\figvectPTD-2[#3,-3]%
    \figpttraTD#1:{#2}=#3/2,-2/\resetc@ntr@l\et@tfigptsymTD}\ignorespaces\fi}
\ctr@ln@m\figpttra
\ctr@ld@f\def\figpttraDD#1:#2=#3/#4,#5/{\ifGR@cri{\Figg@tXYa{#5}\v@lXa=#4\v@lXa\v@lYa=#4\v@lYa%
    \Figg@tXY{#3}\advance\v@lX\v@lXa\advance\v@lY\v@lYa%
    \Figp@intregDD#1:{#2}(\v@lX,\v@lY)}\ignorespaces\fi}
\ctr@ld@f\def\figpttraTD#1:#2=#3/#4,#5/{\ifGR@cri{\Figg@tXYa{#5}\v@lXa=#4\v@lXa\v@lYa=#4\v@lYa%
    \v@lZa=#4\v@lZa\Figg@tXY{#3}\advance\v@lX\v@lXa\advance\v@lY\v@lYa%
    \advance\v@lZ\v@lZa\Figp@intregTD#1:{#2}(\v@lX,\v@lY,\v@lZ)}\ignorespaces\fi}
\ctr@ln@m\figpttraC
\ctr@ld@f\def\figpttraCDD#1:#2=#3/#4,#5/{\ifGR@cri{\v@lXa=#4\unit@\v@lYa=#5\unit@%
    \Figg@tXY{#3}\advance\v@lX\v@lXa\advance\v@lY\v@lYa%
    \Figp@intregDD#1:{#2}(\v@lX,\v@lY)}\ignorespaces\fi}
\ctr@ld@f\def\figpttraCTD#1:#2=#3/#4,#5,#6/{\ifGR@cri{\v@lXa=#4\unit@\v@lYa=#5\unit@\v@lZa=#6\unit@%
    \Figg@tXY{#3}\advance\v@lX\v@lXa\advance\v@lY\v@lYa\advance\v@lZ\v@lZa%
    \Figp@intregTD#1:{#2}(\v@lX,\v@lY,\v@lZ)}\ignorespaces\fi}
\ctr@ld@f\def\figptsaxes#1:#2(#3){\ifGR@cri{\an@lys@xes#3,:\ifx\t@xt@\empty%
    \ifTr@isDim\Figpts@xes#1:#2(0,#3,0,#3,0,#3)\else\Figpts@xes#1:#2(0,#3,0,#3)\fi%
    \else\Figpts@xes#1:#2(#3)\fi}\ignorespaces\fi}
\ctr@ln@m\Figpts@xes
\ctr@ld@f\def\Figpts@xesDD#1:#2(#3,#4,#5,#6){%
    \s@mme=#1\figpttraC\the\s@mme:$x$=#2/#4,0/%
    \advance\s@mme\@ne\figpttraC\the\s@mme:$y$=#2/0,#6/}
\ctr@ld@f\def\Figpts@xesTD#1:#2(#3,#4,#5,#6,#7,#8){%
    \s@mme=#1\figpttraC\the\s@mme:$x$=#2/#4,0,0/%
    \advance\s@mme\@ne\figpttraC\the\s@mme:$y$=#2/0,#6,0/%
    \advance\s@mme\@ne\figpttraC\the\s@mme:$z$=#2/0,0,#8/}
\ctr@ld@f\def\figptsmap#1=#2/#3/#4/{\ifGR@cri{\s@uvc@ntr@l\et@tfigptsmap%
    \setc@ntr@l{2}\def\list@num{#2}\s@mme=#1%
    \@ecfor\p@int:=\list@num\do{\figvectP-1[#3,\p@int]\Figg@tXY{-1}%
    \pr@dMatV/#4/\figpttra\the\s@mme:=#3/1,-1/\advance\s@mme\@ne}%
    \resetc@ntr@l\et@tfigptsmap}\ignorespaces\fi}
\ctr@ln@m\figptscontrol
\ctr@ld@f\def\figptscontrolDD#1[#2,#3,#4,#5]{\ifGR@cri{\s@uvc@ntr@l\et@tfigptscontrolDD\setc@ntr@l{2}%
    \v@lX=\z@\v@lY=\z@\Figtr@nptDD{-5}{#2}\Figtr@nptDD{2}{#5}%
    \divide\v@lX\@vi\divide\v@lY\@vi%
    \Figtr@nptDD{3}{#3}\Figtr@nptDD{-1.5}{#4}\Figp@intregDD-1:(\v@lX,\v@lY)%
    \v@lX=\z@\v@lY=\z@\Figtr@nptDD{2}{#2}\Figtr@nptDD{-5}{#5}%
    \divide\v@lX\@vi\divide\v@lY\@vi\Figtr@nptDD{-1.5}{#3}\Figtr@nptDD{3}{#4}%
    \s@mme=#1\advance\s@mme\@ne\Figp@intregDD\the\s@mme:(\v@lX,\v@lY)%
    \figptcopyDD#1:/-1/\resetc@ntr@l\et@tfigptscontrolDD}\ignorespaces\fi}
\ctr@ld@f\def\figptscontrolTD#1[#2,#3,#4,#5]{\ifGR@cri{\s@uvc@ntr@l\et@tfigptscontrolTD\setc@ntr@l{2}%
    \v@lX=\z@\v@lY=\z@\v@lZ=\z@\Figtr@nptTD{-5}{#2}\Figtr@nptTD{2}{#5}%
    \divide\v@lX\@vi\divide\v@lY\@vi\divide\v@lZ\@vi%
    \Figtr@nptTD{3}{#3}\Figtr@nptTD{-1.5}{#4}\Figp@intregTD-1:(\v@lX,\v@lY,\v@lZ)%
    \v@lX=\z@\v@lY=\z@\v@lZ=\z@\Figtr@nptTD{2}{#2}\Figtr@nptTD{-5}{#5}%
    \divide\v@lX\@vi\divide\v@lY\@vi\divide\v@lZ\@vi\Figtr@nptTD{-1.5}{#3}\Figtr@nptTD{3}{#4}%
    \s@mme=#1\advance\s@mme\@ne\Figp@intregTD\the\s@mme:(\v@lX,\v@lY,\v@lZ)%
    \figptcopyTD#1:/-1/\resetc@ntr@l\et@tfigptscontrolTD}\ignorespaces\fi}
\ctr@ld@f\def\Figtr@nptDD#1#2{\Figg@tXYa{#2}\v@lXa=#1\v@lXa\v@lYa=#1\v@lYa%
    \advance\v@lX\v@lXa\advance\v@lY\v@lYa}
\ctr@ld@f\def\Figtr@nptTD#1#2{\Figg@tXYa{#2}\v@lXa=#1\v@lXa\v@lYa=#1\v@lYa\v@lZa=#1\v@lZa%
    \advance\v@lX\v@lXa\advance\v@lY\v@lYa\advance\v@lZ\v@lZa}
\ctr@ld@f\def\figptscontrolcurve#1,#2[#3]{\ifGR@cri{\s@uvc@ntr@l\et@tfigptscontrolcurve%
    \def\list@num{#3}\extrairelepremi@r\Ak@\de\list@num%
    \extrairelepremi@r\Ai@\de\list@num\extrairelepremi@r\Aj@\de\list@num%
    \s@mme=#1\figptcopy\the\s@mme:/\Ai@/%
    \setc@ntr@l{2}\figvectP -1[\Ak@,\Aj@]%
    \@ecfor\Ak@:=\list@num\do{\advance\s@mme\@ne\figpttra\the\s@mme:=\Ai@/\curv@roundness,-1/%
       \figvectP -1[\Ai@,\Ak@]\advance\s@mme\@ne\figpttra\the\s@mme:=\Aj@/-\curv@roundness,-1/%
       \advance\s@mme\@ne\figptcopy\the\s@mme:/\Aj@/%
       \edef\Ai@{\Aj@}\edef\Aj@{\Ak@}}\advance\s@mme-#1\divide\s@mme\thr@@%
       \xdef#2{\the\s@mme}%
    \resetc@ntr@l\et@tfigptscontrolcurve}\ignorespaces\fi}
\ctr@ln@m\figptsintercirc
\ctr@ld@f\def\figptsintercircDD#1[#2,#3;#4,#5]{\ifGR@cri{\s@uvc@ntr@l\et@tfigptsintercircDD%
    \setc@ntr@l{2}\let\c@lNVintc=\c@lNVintcDD\Figptsintercirc@#1[#2,#3;#4,#5]%    
    \resetc@ntr@l\et@tfigptsintercircDD}\ignorespaces\fi}
\ctr@ld@f\def\figptsintercircTD#1[#2,#3;#4,#5;#6]{\ifGR@cri{\s@uvc@ntr@l\et@tfigptsintercircTD%
    \setc@ntr@l{2}\let\c@lNVintc=\c@lNVintcTD\vecunitC@TD[#2,#6]%
    \Figv@ctCreg-3(\v@lX,\v@lY,\v@lZ)\Figptsintercirc@#1[#2,#3;#4,#5]%
    \resetc@ntr@l\et@tfigptsintercircTD}\ignorespaces\fi}
\ctr@ld@f\def\Figptsintercirc@#1[#2,#3;#4,#5]{\figvectP-1[#2,#4]%
    \vecunit@{-1}{-1}\delt@=\result@t\f@ctech=\result@tent%
    \s@mme=#1\advance\s@mme\@ne\figptcopy#1:/#2/\figptcopy\the\s@mme:/#4/%
    \ifdim\delt@=\z@\else%
    \v@lmin=#3\unit@\v@lmax=#5\unit@\v@leur=\v@lmin\advance\v@leur\v@lmax%
    \ifdim\v@leur>\delt@%
    \v@leur=\v@lmin\advance\v@leur-\v@lmax\maxim@m{\v@leur}{\v@leur}{-\v@leur}%
    \ifdim\v@leur<\delt@%
    \divide\v@lmin\f@ctech\divide\v@lmax\f@ctech\divide\delt@\f@ctech%
    \v@lmin=\repdecn@mb{\v@lmin}\v@lmin\v@lmax=\repdecn@mb{\v@lmax}\v@lmax%
    \invers@{\v@leur}{\delt@}\advance\v@lmax-\v@lmin%
    \v@lmax=-\repdecn@mb{\v@leur}\v@lmax\advance\delt@\v@lmax\delt@=.5\delt@%
    \v@lmax=\delt@\multiply\v@lmax\f@ctech%
    \edef\t@ille{\repdecn@mb{\v@lmax}}\figpttra-2:=#2/\t@ille,-1/%
    \delt@=\repdecn@mb{\delt@}\delt@\advance\v@lmin-\delt@%
    \sqrt@{\v@leur}{\v@lmin}\multiply\v@leur\f@ctech\edef\t@ille{\repdecn@mb{\v@leur}}%
    \c@lNVintc\figpttra#1:=-2/-\t@ille,-1/\figpttra\the\s@mme:=-2/\t@ille,-1/\fi\fi\fi}
\ctr@ld@f\def\c@lNVintcDD{\Figg@tXY{-1}\Figv@ctCreg-1(-\v@lY,\v@lX)} % <=> \figvectNVDD-1[-1]
\ctr@ld@f\def\c@lNVintcTD{{\Figg@tXY{-3}\v@lmin=\v@lX\v@lmax=\v@lY\v@leur=\v@lZ%
    \Figg@tXY{-1}\c@lprovec{-3}\vecunit@{-3}{-3}% <=> \figvectNVTD-3[-1,-3]\vecunit@{-3}{-3}
    \Figg@tXY{-1}\v@lmin=\v@lX\v@lmax=\v@lY%
    \v@leur=\v@lZ\Figg@tXY{-3}\c@lprovec{-1}}} % <=> \figvectNVTD-1[-3,-1]
\ctr@ln@m\figptsinterlinell
\ctr@ld@f\def\figptsinterlinellDD#1[#2,#3,#4,#5;#6,#7]{\ifGR@cri{\s@uvc@ntr@l\et@tfigptsinterlinellDD%
    \figptcopy#1:/#6/\s@mme=#1\advance\s@mme\@ne\figptcopy\the\s@mme:/#7/%
    \v@lmin=#3\unit@\v@lmax=#4\unit@% a, b
    \setc@ntr@l{2}\figptbaryDD-4:[#6,#7;1,1]\figptsrotDD-3=-4,#7/#2,-#5/% D et rotation
    \Figg@tXY{-3}\Figg@tXYa{#2}\advance\v@lX-\v@lXa\advance\v@lY-\v@lYa% alpha, beta
    \figvectP-1[-3,-2]\Figg@tXYa{-1}\figvectP-3[-4,#7]\Figptsint@rLE{#1}% u1, u2
    \resetc@ntr@l\et@tfigptsinterlinellDD}\ignorespaces\fi}
\ctr@ld@f\def\figptsinterlinellP#1[#2,#3,#4;#5,#6]{\ifGR@cri{\s@uvc@ntr@l\et@tfigptsinterlinellP%
    \figptcopy#1:/#5/\s@mme=#1\advance\s@mme\@ne\figptcopy\the\s@mme:/#6/\setc@ntr@l{2}%
    \figvectP-1[#2,#3]\vecunit@{-1}{-1}\v@lmin=\result@t% a
    \figvectP-2[#2,#4]\vecunit@{-2}{-2}\v@lmax=\result@t% b
    \figptbary-4:[#5,#6;1,1]% D
    \figvectP-3[#2,-4]\c@lproscal\v@lX[-3,-1]\c@lproscal\v@lY[-3,-2]% alpha, beta
    \figvectP-3[-4,#6]\c@lproscal\v@lXa[-3,-1]\c@lproscal\v@lYa[-3,-2]% u1, u2
    \Figptsint@rLE{#1}\resetc@ntr@l\et@tfigptsinterlinellP}\ignorespaces\fi}
\ctr@ld@f\def\Figptsint@rLE#1{%
    \getredf@ctDD\f@ctech(\v@lmin,\v@lmax)%
    \getredf@ctDD\p@rtent(\v@lX,\v@lY)\ifnum\p@rtent>\f@ctech\f@ctech=\p@rtent\fi%
    \getredf@ctDD\p@rtent(\v@lXa,\v@lYa)\ifnum\p@rtent>\f@ctech\f@ctech=\p@rtent\fi%
    \divide\v@lmin\f@ctech\divide\v@lmax\f@ctech\divide\v@lX\f@ctech\divide\v@lY\f@ctech%
    \divide\v@lXa\f@ctech\divide\v@lYa\f@ctech%
    \c@rre=\repdecn@mb\v@lXa\v@lmax\mili@u=\repdecn@mb\v@lYa\v@lmin%
    \getredf@ctDD\f@ctech(\c@rre,\mili@u)%
    \c@rre=\repdecn@mb\v@lX\v@lmax\mili@u=\repdecn@mb\v@lY\v@lmin%
    \getredf@ctDD\p@rtent(\c@rre,\mili@u)\ifnum\p@rtent>\f@ctech\f@ctech=\p@rtent\fi%
    \divide\v@lmin\f@ctech\divide\v@lmax\f@ctech\divide\v@lX\f@ctech\divide\v@lY\f@ctech%
    \divide\v@lXa\f@ctech\divide\v@lYa\f@ctech%
    \v@lmin=\repdecn@mb{\v@lmin}\v@lmin\v@lmax=\repdecn@mb{\v@lmax}\v@lmax%
    \edef\G@xde{\repdecn@mb\v@lmin}\edef\P@xde{\repdecn@mb\v@lmax}%
    \c@rre=-\v@lmax\v@leur=\repdecn@mb\v@lY\v@lY\advance\c@rre\v@leur\c@rre=\G@xde\c@rre%
    \v@leur=\repdecn@mb\v@lX\v@lX\v@leur=\P@xde\v@leur\advance\c@rre\v@leur% C
    \v@lmin=\repdecn@mb\v@lYa\v@lmin\v@lmax=\repdecn@mb\v@lXa\v@lmax%
    \mili@u=\repdecn@mb\v@lX\v@lmax\advance\mili@u\repdecn@mb\v@lY\v@lmin% B
    \v@lmax=\repdecn@mb\v@lXa\v@lmax\advance\v@lmax\repdecn@mb\v@lYa\v@lmin% A
    \ifdim\v@lmax>\epsil@n%
    \maxim@m{\v@leur}{\c@rre}{-\c@rre}\maxim@m{\v@lmin}{\mili@u}{-\mili@u}%
    \maxim@m{\v@leur}{\v@leur}{\v@lmin}\maxim@m{\v@lmin}{\v@lmax}{-\v@lmax}%
    \maxim@m{\v@leur}{\v@leur}{\v@lmin}\p@rtentiere{\p@rtent}{\v@leur}\advance\p@rtent\@ne%
    \divide\c@rre\p@rtent\divide\mili@u\p@rtent\divide\v@lmax\p@rtent%
    \delt@=\repdecn@mb{\mili@u}\mili@u\v@leur=\repdecn@mb{\v@lmax}\c@rre%
    \advance\delt@-\v@leur\ifdim\delt@<\z@\else\sqrt@\delt@\delt@%
    \invers@\v@lmax\v@lmax\edef\Uns@rAp{\repdecn@mb\v@lmax}%
    \v@leur=-\mili@u\advance\v@leur-\delt@\v@leur=\Uns@rAp\v@leur%
    \edef\t@ille{\repdecn@mb\v@leur}\figpttra#1:=-4/\t@ille,-3/\s@mme=#1\advance\s@mme\@ne%
    \v@leur=-\mili@u\advance\v@leur\delt@\v@leur=\Uns@rAp\v@leur%
    \edef\t@ille{\repdecn@mb\v@leur}\figpttra\the\s@mme:=-4/\t@ille,-3/\fi\fi}
\ctr@ln@m\figptsorthoprojline
\ctr@ld@f\def\figptsorthoprojlineDD#1=#2/#3,#4/{\ifGR@cri{\s@uvc@ntr@l\et@tfigptsorthoprojlineDD%
    \setc@ntr@l{2}\figvectPDD-3[#3,#4]\figvectNVDD-4[-3]\resetc@ntr@l{2}%
    \def\list@num{#2}\s@mme=#1\@ecfor\p@int:=\list@num\do{%
    \inters@cDD\the\s@mme:[\p@int,-4;#3,-3]\advance\s@mme\@ne}%
    \resetc@ntr@l\et@tfigptsorthoprojlineDD}\ignorespaces\fi}
\ctr@ld@f\def\figptsorthoprojlineTD#1=#2/#3,#4/{\ifGR@cri{\s@uvc@ntr@l\et@tfigptsorthoprojlineTD%
    \setc@ntr@l{2}\figvectPTD-2[#3,#4]\vecunit@TD{-2}{-2}%
    \def\list@num{#2}\s@mme=#1\@ecfor\p@int:=\list@num\do{%
    \figvectPTD-1[#3,\p@int]\c@lproscalTD\v@leur[-1,-2]%
    \edef\v@lcoef{\repdecn@mb{\v@leur}}\figpttraTD\the\s@mme:=#3/\v@lcoef,-2/%
    \advance\s@mme\@ne}\resetc@ntr@l\et@tfigptsorthoprojlineTD}\ignorespaces\fi}
\ctr@ln@m\figptsorthoprojplane
\ctr@ld@f\def\figptsorthoprojplaneDD{\un@v@ilable{figptsorthoprojplane}}
\ctr@ld@f\def\figptsorthoprojplaneTD#1=#2/#3,#4/{\ifGR@cri{\s@uvc@ntr@l\et@tfigptsorthoprojplane%
    \setc@ntr@l{2}\vecunit@TD{-2}{#4}%
    \def\list@num{#2}\s@mme=#1\@ecfor\p@int:=\list@num\do{\figvectPTD-1[\p@int,#3]%
    \c@lproscalTD\v@leur[-1,-2]\edef\v@lcoef{\repdecn@mb{\v@leur}}%
    \figpttraTD\the\s@mme:=\p@int/\v@lcoef,-2/\advance\s@mme\@ne}%
    \resetc@ntr@l\et@tfigptsorthoprojplane}\ignorespaces\fi}
\ctr@ld@f\def\figptshom#1=#2/#3,#4/{\ifGR@cri{\s@uvc@ntr@l\et@tfigptshom%
    \setc@ntr@l{2}\def\list@num{#2}\s@mme=#1%
    \@ecfor\p@int:=\list@num\do{\figvectP-1[#3,\p@int]%
    \figpttra\the\s@mme:=#3/#4,-1/\advance\s@mme\@ne}%
    \resetc@ntr@l\et@tfigptshom}\ignorespaces\fi}
\ctr@ld@f\def\figptsinv#1=#2/#3,#4/{\ifGR@cri{\s@uvc@ntr@l\et@tfigptsinv%
    \setc@ntr@l{2}\def\list@num{#2}\s@mme=#1%
    \@ecfor\p@int:=\list@num\do{\figvectP-1[#3,\p@int]\Figg@tXY{-1}%
    \getredf@ctB\f@ctech\n@rmeucC{\delt@}{-1}%
    \delt@=\ptT@unit@\delt@\delt@=\ptT@unit@\delt@%
    \invers@{\delt@}{\delt@}\multiply\f@ctech\f@ctech\divide\delt@\f@ctech%
    \delt@=#4\delt@\edef\v@lcoef{\repdecn@mb{\delt@}}\figpttra\the\s@mme:=#3/\v@lcoef,-1/%
    \advance\s@mme\@ne}\resetc@ntr@l\et@tfigptsinv}\ignorespaces\fi}
\ctr@ln@m\figptsrot
\ctr@ld@f\def\figptsrotDD#1=#2/#3,#4/{\ifGR@cri{\s@uvc@ntr@l\et@tfigptsrotDD%
    \c@ssin{\C@}{\S@}{#4}\setc@ntr@l{2}\def\list@num{#2}\s@mme=#1%
    \@ecfor\p@int:=\list@num\do{\figvectPDD-1[#3,\p@int]\Figg@tXY{-1}%
    \v@lXa=\C@\v@lX\advance\v@lXa-\S@\v@lY%
    \v@lYa=\S@\v@lX\advance\v@lYa\C@\v@lY%
    \Figv@ctCreg-1(\v@lXa,\v@lYa)\figpttraDD\the\s@mme:=#3/1,-1/\advance\s@mme\@ne}%
    \resetc@ntr@l\et@tfigptsrotDD}\ignorespaces\fi}
\ctr@ld@f\def\figptsrotTD#1=#2/#3,#4,#5/{\ifGR@cri{\s@uvc@ntr@l\et@tfigptsrotTD%
    \c@ssin{\C@}{\S@}{#4}%
    \setc@ntr@l{2}\def\list@num{#2}\s@mme=#1%
    \@ecfor\p@int:=\list@num\do{\figptorthoprojplaneTD-3:=#3/\p@int,#5/%
    \figvectPTD-2[-3,\p@int]%
    \figvectNVTD-1[#5,-2]\n@rmeucTD\v@leur{-2}\edef\v@lcoef{\repdecn@mb{\v@leur}}%
    \Figg@tXYa{-1}\v@lXa=\v@lcoef\v@lXa\v@lYa=\v@lcoef\v@lYa\v@lZa=\v@lcoef\v@lZa%
    \v@lXa=\S@\v@lXa\v@lYa=\S@\v@lYa\v@lZa=\S@\v@lZa\Figg@tXY{-2}%
    \advance\v@lXa\C@\v@lX\advance\v@lYa\C@\v@lY\advance\v@lZa\C@\v@lZ%
    \Figg@tXY{-3}\advance\v@lXa\v@lX\advance\v@lYa\v@lY\advance\v@lZa\v@lZ%
    \Figp@intregTD\the\s@mme:(\v@lXa,\v@lYa,\v@lZa)\advance\s@mme\@ne}%
    \resetc@ntr@l\et@tfigptsrotTD}\ignorespaces\fi}
\ctr@ln@m\figptssym
\ctr@ld@f\def\figptssymDD#1=#2/#3,#4/{\ifGR@cri{\s@uvc@ntr@l\et@tfigptssymDD%
    \setc@ntr@l{2}\figvectPDD-3[#3,#4]\Figg@tXY{-3}\Figv@ctCreg-4(-\v@lY,\v@lX)%
    \resetc@ntr@l{2}\def\list@num{#2}\s@mme=#1%
    \@ecfor\p@int:=\list@num\do{\inters@cDD-5:[#3,-3;\p@int,-4]\figvectPDD-2[\p@int,-5]%
    \figpttraDD\the\s@mme:=\p@int/2,-2/\advance\s@mme\@ne}%
    \resetc@ntr@l\et@tfigptssymDD}\ignorespaces\fi}
\ctr@ld@f\def\figptssymTD#1=#2/#3,#4/{\ifGR@cri{\s@uvc@ntr@l\et@tfigptssymTD%
    \setc@ntr@l{2}\vecunit@TD{-2}{#4}\def\list@num{#2}\s@mme=#1%
    \@ecfor\p@int:=\list@num\do{\figvectPTD-1[\p@int,#3]%
    \c@lproscalTD\v@leur[-1,-2]\v@leur=2\v@leur\edef\v@lcoef{\repdecn@mb{\v@leur}}%
    \figpttraTD\the\s@mme:=\p@int/\v@lcoef,-2/\advance\s@mme\@ne}%
    \resetc@ntr@l\et@tfigptssymTD}\ignorespaces\fi}
\ctr@ln@m\figptstra
\ctr@ld@f\def\figptstraDD#1=#2/#3,#4/{\ifGR@cri{\Figg@tXYa{#4}\v@lXa=#3\v@lXa\v@lYa=#3\v@lYa%
    \def\list@num{#2}\s@mme=#1\@ecfor\p@int:=\list@num\do{\Figg@tXY{\p@int}%
    \advance\v@lX\v@lXa\advance\v@lY\v@lYa%
    \Figp@intregDD\the\s@mme:(\v@lX,\v@lY)\advance\s@mme\@ne}}\ignorespaces\fi}
\ctr@ld@f\def\figptstraTD#1=#2/#3,#4/{\ifGR@cri{\Figg@tXYa{#4}\v@lXa=#3\v@lXa\v@lYa=#3\v@lYa%
    \v@lZa=#3\v@lZa\def\list@num{#2}\s@mme=#1\@ecfor\p@int:=\list@num\do{\Figg@tXY{\p@int}%
    \advance\v@lX\v@lXa\advance\v@lY\v@lYa\advance\v@lZ\v@lZa%
    \Figp@intregTD\the\s@mme:(\v@lX,\v@lY,\v@lZ)\advance\s@mme\@ne}}\ignorespaces\fi}
\ctr@ln@m\figptvisilimSL
\ctr@ld@f\def\figptvisilimSLDD{\un@v@ilable{figptvisilimSL}}
\ctr@ld@f\def\figptvisilimSLTD#1:#2[#3,#4;#5,#6]{\ifGR@cri{\s@uvc@ntr@l\et@tfigptvisilimSLTD%
    \setc@ntr@l{2}\figvectP-1[#3,#4]\n@rminf{\delt@}{-1}%
    \ifcase\CUR@proj\v@lX=\cxa@\p@\v@lY=-\p@\v@lZ=\cxb@\p@% Proj cav
    \Figv@ctCreg-2(\v@lX,\v@lY,\v@lZ)\figvectP-3[#5,#6]\figvectNV-1[-2,-3]%
    \or\figvectP-1[#5,#6]\vecunitCV@TD{-1}\v@lmin=\v@lX\v@lmax=\v@lY% Proj ortho
    \v@leur=\v@lZ\v@lX=\cza@\p@\v@lY=\czb@\p@\v@lZ=\czc@\p@\c@lprovec{-1}%
    \or\c@ley@pt{-2}\figvectN-1[#5,#6,-2]\fi% Proj rea
    \edef\Ai@{#3}\edef\Aj@{#4}\figvectP-2[#5,\Ai@]\c@lproscal\v@leur[-1,-2]%
    \ifdim\v@leur>\z@\p@rtent=\@ne\else\p@rtent=\m@ne\fi%
    \figvectP-2[#5,\Aj@]\c@lproscal\v@leur[-1,-2]%
    \ifdim\p@rtent\v@leur>\z@\figptcopy#1:#2/#3/%
    \message{*** \BS@ figptvisilimSL: points are on the same side.}\else%
    \figptcopy-3:/#3/\figptcopy-4:/#4/%
    \loop\figptbary-5:[-3,-4;1,1]\figvectP-2[#5,-5]\c@lproscal\v@leur[-1,-2]%
    \ifdim\p@rtent\v@leur>\z@\figptcopy-3:/-5/\else\figptcopy-4:/-5/\fi%
    \divide\delt@\tw@\ifdim\delt@>\epsil@n\repeat%
    \figptbary#1:#2[-3,-4;1,1]\fi\resetc@ntr@l\et@tfigptvisilimSLTD}\ignorespaces\fi}
\ctr@ld@f\def\c@ley@pt#1{\t@stp@r\ifitis@K\v@lX=\cza@\p@\v@lY=\czb@\p@\v@lZ=\czc@\p@%
    \Figv@ctCreg-1(\v@lX,\v@lY,\v@lZ)\Figp@intreg-2:(\wd\Bt@rget,\ht\Bt@rget,\dp\Bt@rget)%
    \figpttra#1:=-2/-\disob@intern,-1/\else\end\fi}
\ctr@ld@f\def\t@stp@r{\itis@Ktrue\ifnewt@rgetpt\else\itis@Kfalse%
    \message{*** \BS@ figptvisilimXX: target point undefined.}\fi\ifnewdis@b\else%
    \itis@Kfalse\message{*** \BS@ figptvisilimXX: observation distance undefined.}\fi%
    \ifitis@K\else\message{*** This macro must be called after \BS@ figdrawbegin or after
    having set the missing parameter(s) with \BS@ figset proj()}\fi}
\ctr@ld@f\def\figscan#1(#2,#3){{\s@uvc@ntr@l\et@tfigscan\@psfgetbb{#1}\if@psfbbfound\else%
    \def\@psfllx{0}\def\@psflly{20}\def\@psfurx{540}\def\@psfury{640}\fi\figscan@{#2}{#3}%
    \resetc@ntr@l\et@tfigscan}\ignorespaces}
\ctr@ld@f\def\figscan@#1#2{%
    \unit@=\@ne bp\setc@ntr@l{2}\figsetmark{}%
    \def\minst@p{20pt}%
    \v@lX=\@psfllx\p@\v@lX=\Sc@leFact\v@lX\r@undint\v@lX\v@lX%
    \v@lY=\@psflly\p@\v@lY=\Sc@leFact\v@lY\ifdim\v@lY>\z@\r@undint\v@lY\v@lY\fi%
    \delt@=\@psfury\p@\delt@=\Sc@leFact\delt@%
    \advance\delt@-\v@lY\v@lXa=\@psfurx\p@\v@lXa=\Sc@leFact\v@lXa\v@leur=\minst@p%
    \edef\valv@lY{\repdecn@mb{\v@lY}}\edef\LgTr@it{\the\delt@}%
    \loop\ifdim\v@lX<\v@lXa\edef\valv@lX{\repdecn@mb{\v@lX}}%
    \figptDD -1:(\valv@lX,\valv@lY)\figwriten -1:\hbox{\vrule height\LgTr@it}(0)%
    \ifdim\v@leur<\minst@p\else\figsetmark{\raise-8bp\hbox{$\scriptscriptstyle\triangle$}}%
    \figwrites -1:\@ffichnb{0}{\valv@lX}(6)\v@leur=\z@\figsetmark{}\fi%
    \advance\v@leur#1pt\advance\v@lX#1pt\repeat%
    \def\minst@p{10pt}%
    \v@lX=\@psfllx\p@\v@lX=\Sc@leFact\v@lX\ifdim\v@lX>\z@\r@undint\v@lX\v@lX\fi%
    \v@lY=\@psflly\p@\v@lY=\Sc@leFact\v@lY\r@undint\v@lY\v@lY%
    \delt@=\@psfurx\p@\delt@=\Sc@leFact\delt@%
    \advance\delt@-\v@lX\v@lYa=\@psfury\p@\v@lYa=\Sc@leFact\v@lYa\v@leur=\minst@p%
    \edef\valv@lX{\repdecn@mb{\v@lX}}\edef\LgTr@it{\the\delt@}%
    \loop\ifdim\v@lY<\v@lYa\edef\valv@lY{\repdecn@mb{\v@lY}}%
    \figptDD -1:(\valv@lX,\valv@lY)\figwritee -1:\vbox{\hrule width\LgTr@it}(0)%
    \ifdim\v@leur<\minst@p\else\figsetmark{$\triangleright$\kern4bp}%
    \figwritew -1:\@ffichnb{0}{\valv@lY}(6)\v@leur=\z@\figsetmark{}\fi%
    \advance\v@leur#2pt\advance\v@lY#2pt\repeat}
\ctr@ld@f
\ctr@ld@f\def\figscan@E#1(#2,#3){{\s@uvc@ntr@l\et@tfigscan@E%
    \Figdisc@rdLTS{#1}{\t@xt@}\pdfximage{\t@xt@}%
    \setbox\Gb@x=\hbox{\pdfrefximage\pdflastximage}%
    \edef\@psfllx{0}\v@lY=-\dp\Gb@x\edef\@psflly{\repdecn@mb{\v@lY}}%
    \edef\@psfurx{\repdecn@mb{\wd\Gb@x}}%
    \v@lY=\dp\Gb@x\advance\v@lY\ht\Gb@x\edef\@psfury{\repdecn@mb{\v@lY}}%
    \figscan@{#2}{#3}\resetc@ntr@l\et@tfigscan@E}\ignorespaces}
\ctr@ld@f\def\figshowpts[#1,#2]{{\figsetmark{$\bullet$}\figsetptname{\bf ##1}%
    \p@rtent=#2\relax\ifnum\p@rtent<\z@\p@rtent=\z@\fi%
    \s@mme=#1\relax\ifnum\s@mme<\z@\s@mme=\z@\fi%
    \loop\ifnum\s@mme<\p@rtent\pt@rvect{\s@mme}%
    \ifitis@K\figwriten{\the\s@mme}:(4pt)\fi\advance\s@mme\@ne\repeat%
    \pt@rvect{\s@mme}\ifitis@K\figwriten{\the\s@mme}:(4pt)\fi}\ignorespaces}
\ctr@ld@f\def\pt@rvect#1{\set@bjc@de{#1}%
    \expandafter\expandafter\expandafter\inqpt@rvec\csname\objc@de\endcsname:}
\ctr@ld@f\def\inqpt@rvec#1#2:{\if#1\C@dCl@spt\itis@Ktrue\else\itis@Kfalse\fi}
\ctr@ld@f\def\figshowsettings{{%
    \immediate\write16{====================================================================}%
    \immediate\write16{ Current settings are (DDV means "with dynamic default value"):}%
    \immediate\write16{ --- GENERAL ---}%
    \immediate\write16{Scale factor and Unit = \unit@util\space (\the\unit@)
     \space -> \BS@ figinit{ScaleFactorUnit}}%
    \immediate\write16{Update mode = \ifGRupdatem@de yes\else no\fi
     \space-> \BS@ figset(update=yes/no) or \BS@ figsetdefault(update=yes/no)}%
    \immediate\write16{ --- WRITING ---}%
    \immediate\write16{Implicit point name = \ptn@me{i} \space-> \BS@ figset write(ptname={Name})}%
    \immediate\write16{Point marker = \the\c@nsymb \space -> \BS@ figset write(mark=Mark)}%
    \immediate\write16{Print rounded coordinates = \ifr@undcoord yes\else no\fi
     \space-> \BS@ figset write(roundcoord=yes/no)}%
    \immediate\write16{ --- GRAPHICAL (general) ---}%
    \immediate\write16{Color = \CUR@color \space-> \BS@ figset(color=ColorDefinition)}%
    \immediate\write16{Filling mode = \iffillm@de yes\else no\fi
     \space-> \BS@ figset(fillmode=yes/no)}%
    \immediate\write16{Line join = \CUR@join \space-> \BS@ figset(join=miter/round/bevel)}%
    \immediate\write16{Line style = \CUR@dash \space-> \BS@ figset(dash=Index/Pattern)}%
    \immediate\write16{Line width = \CUR@width
     \space-> \BS@ figset(width=real in PostScript units)}%
    \immediate\write16{ --- GRAPHICAL (specific) ---}%
    \immediate\write16{Altitude (all the following attributes are DDV):}%
    \immediate\write16{ Base line color =
     \ifx\DDV@blcolor\D@FTref general color\else\DDV@blcolor\fi
     \space-> \BS@ figset altitude(blcolor=ColorDefinition)}%
    \immediate\write16{ Base line style =
     \ifx\DDV@bldash\D@FTref general style\else\DDV@bldash\fi
     \space-> \BS@ figset altitude(bldash=Index/Pattern)}%
    \immediate\write16{ Base line width =
     \ifx\DDV@blwidth\D@FTref general width\else\DDV@blwidth\fi
     \space-> \BS@ figset altitude(blwidth=real in PostScript units)}%
    \immediate\write16{ Square line color =
     \ifx\DDV@sqcolor\D@FTref general color\else\DDV@sqcolor\fi
     \space-> \BS@ figset altitude(sqcolor=ColorDefinition)}%
    \immediate\write16{ Square line style =
     \ifx\DDV@sqdash\D@FTref general style\else\DDV@sqdash\fi
     \space-> \BS@ figset altitude(sqdash=Index/Pattern)}%
    \immediate\write16{ Square line width =
     \ifx\DDV@sqwidth\D@FTref general width\else\DDV@sqwidth\fi
     \space-> \BS@ figset altitude(sqwidth=real in PostScript units)}%
    \immediate\write16{Arrowhead:}%
    \immediate\write16{ (half-)Angle = \@rrowheadangle
     \space-> \BS@ figset arrowhead(angle=real in degrees)}%
    \immediate\write16{ Filling mode = \if@rrowhfill yes\else no\fi
     \space-> \BS@ figset arrowhead(fillmode=yes/no)}%
    \immediate\write16{ "Outside" = \if@rrowhout yes\else no\fi
     \space-> \BS@ figset arrowhead(out=yes/no)}%
    \immediate\write16{ Length = \@rrowheadlength
     \if@rrowratio\space(not active)\else\space(active)\fi
     \space-> \BS@ figset arrowhead(length=real in user coord.)}%
    \immediate\write16{ Ratio = \@rrowheadratio
     \if@rrowratio\space(active)\else\space(not active)\fi
     \space-> \BS@ figset arrowhead(ratio=real in [0,1])}%
    \immediate\write16{Curve:}%
    \immediate\write16{ Roundness = \curv@roundness
     \space-> \BS@ figset curve(roundness=real in [0,0.5])}%
    \immediate\write16{Flow chart:}%
    \immediate\write16{ Arrow position = \@rrowp@s
     \space-> \BS@ figset flowchart(arrowposition=real in [0,1])}%
    \immediate\write16{ Arrow reference point = \ifcase\@rrowr@fpt start\else end\fi
     \space-> \BS@ figset flowchart(arrowrefpt = start/end)}%     
    \immediate\write16{ Background color = \fcbgc@lor
     \space-> \BS@ figset flowchart(bgcolor=ColorDefinition)}%
    \immediate\write16{ Line type = \ifcase\fclin@typ@ curve\else polygon\fi
     \space-> \BS@ figset flowchart(line=polygon/curve)}%
    \immediate\write16{ Padding = (\Xp@dd, \Yp@dd)
     \space-> \BS@ figset flowchart(padding = real in user coord.)}%
    \immediate\write16{\space\space\space\space(or
     \BS@ figset flowchart(xpadding=real, ypadding=real) )}%
    \immediate\write16{ Radius = \fclin@r@d
     \space-> \BS@ figset flowchart(radius=positive real in user coord.)}%
    \immediate\write16{ Shape = \fcsh@pe
     \space-> \BS@ figset flowchart(shape = rectangle, ellipse or lozenge)}%
    \immediate\write16{ Thickness color (DDV) = 
     \ifx\DDV@thickcolor\D@FTref general color\else\DDV@thickcolor\fi
     \space-> \BS@ figset flowchart(thickcolor=ColorDefinition)}%
    \immediate\write16{ Thickness = \thickn@ss
     \space-> \BS@ figset flowchart(thickness = real in user coord.)}%
    \immediate\write16{Mesh:}%
    \immediate\write16{ Diagonal = \c@ntrolmesh
     \space-> \BS@ figset mesh(diag=integer in {-1,0,1})}%
    \immediate\write16{ Lines color (DDV) =
     \ifx\DDV@meshcolor\D@FTref general color\else\DDV@meshcolor\fi
     \space-> \BS@ figset mesh(color=ColorDefinition)}%
    \immediate\write16{ Lines style (DDV) =
     \ifx\DDV@meshdash\D@FTref general style\else\DDV@meshdash\fi
     \space-> \BS@ figset mesh(dash=Index/Pattern)}%
    \immediate\write16{ Lines width (DDV) =
     \ifx\DDV@meshwidth\D@FTref general width\else\DDV@meshwidth\fi
     \space-> \BS@ figset mesh(width=real in PostScript units)}%
    \immediate\write16{Trimesh:}%
    \immediate\write16{ Lines color (DDV) =
     \ifx\DDV@tmeshcolor\D@FTref general color\else\DDV@tmeshcolor\fi
     \space-> \BS@ figset trimesh(color=ColorDefinition)}%
    \immediate\write16{ Lines style (DDV) =
     \ifx\DDV@tmeshdash\D@FTref general style\else\DDV@tmeshdash\fi
     \space-> \BS@ figset trimesh(dash=Index/Pattern)}%
    \immediate\write16{ Lines width (DDV) =
     \ifx\DDV@tmeshwidth\D@FTref general width\else\DDV@tmeshwidth\fi
     \space-> \BS@ figset trimesh(width=real in PostScript units)}%
    \ifTr@isDim%
    \immediate\write16{ --- 3D to 2D PROJECTION ---}%
    \immediate\write16{Projection : \typ@proj \space-> \BS@ figinit{ScaleFactorUnit, ProjType}}%
    \immediate\write16{Longitude (psi) = \v@lPsi \space-> \BS@ figset proj(psi=real in degrees)}%
    \ifcase\CUR@proj\immediate\write16{Depth coeff. (Lambda)
     \space = \v@lTheta \space-> \BS@ figset proj(lambda=real in [0,1])}%
    \else\immediate\write16{Latitude (theta)
     \space = \v@lTheta \space-> \BS@ figset proj(theta=real in degrees)}%
    \fi%
    \ifnum\CUR@proj=\tw@%
    \immediate\write16{Observation distance = \disob@unit
     \space-> \BS@ figset proj(dist=real in user coord.)}%
    \immediate\write16{Target point = \t@rgetpt \space-> \BS@ figset proj(targetpt=pt number)}%
     \v@lX=\ptT@unit@\wd\Bt@rget\v@lY=\ptT@unit@\ht\Bt@rget\v@lZ=\ptT@unit@\dp\Bt@rget%
    \immediate\write16{ Its coordinates are
     (\repdecn@mb{\v@lX}, \repdecn@mb{\v@lY}, \repdecn@mb{\v@lZ})}%
    \fi%
    \fi%
    \immediate\write16{====================================================================}%
    \ignorespaces}}
\ctr@ln@w{newif}\ifitis@vect@r
\ctr@ld@f\def\figvectC#1(#2,#3){{\itis@vect@rtrue\figpt#1:(#2,#3)}\ignorespaces}
\ctr@ld@f\def\Figv@ctCreg#1(#2,#3){{\itis@vect@rtrue\Figp@intreg#1:(#2,#3)}\ignorespaces}
\ctr@ln@m\figvectDBezier
\ctr@ld@f\def\figvectDBezierDD#1:#2,#3[#4,#5,#6,#7]{\ifGR@cri{\s@uvc@ntr@l\et@tfigvectDBezierDD%
    \FigvectDBezier@#2,#3[#4,#5,#6,#7]\v@lX=\c@ef\v@lX\v@lY=\c@ef\v@lY%
    \Figv@ctCreg#1(\v@lX,\v@lY)\resetc@ntr@l\et@tfigvectDBezierDD}\ignorespaces\fi}
\ctr@ld@f\def\figvectDBezierTD#1:#2,#3[#4,#5,#6,#7]{\ifGR@cri{\s@uvc@ntr@l\et@tfigvectDBezierTD%
    \FigvectDBezier@#2,#3[#4,#5,#6,#7]\v@lX=\c@ef\v@lX\v@lY=\c@ef\v@lY\v@lZ=\c@ef\v@lZ%
    \Figv@ctCreg#1(\v@lX,\v@lY,\v@lZ)\resetc@ntr@l\et@tfigvectDBezierTD}\ignorespaces\fi}
\ctr@ld@f\def\FigvectDBezier@#1,#2[#3,#4,#5,#6]{\setc@ntr@l{2}%
    \edef\T@{#2}\v@leur=\p@\advance\v@leur-#2pt\edef\UNmT@{\repdecn@mb{\v@leur}}%
    \ifnum#1=\tw@\def\c@ef{6}\else\def\c@ef{3}\fi%
    \figptcopy-4:/#3/\figptcopy-3:/#4/\figptcopy-2:/#5/\figptcopy-1:/#6/%
    \l@mbd@un=-4 \l@mbd@de=-\thr@@\p@rtent=\m@ne\c@lDecast%
    \ifnum#1=\tw@\c@lDCDeux{-4}{-3}\c@lDCDeux{-3}{-2}\c@lDCDeux{-4}{-3}\else%
    \l@mbd@un=-4 \l@mbd@de=-\thr@@\p@rtent=-\tw@\c@lDecast%
    \c@lDCDeux{-4}{-3}\fi\Figg@tXY{-4}}
\ctr@ln@m\c@lDCDeux
\ctr@ld@f\def\c@lDCDeuxDD#1#2{\Figg@tXY{#2}\Figg@tXYa{#1}%
    \advance\v@lX-\v@lXa\advance\v@lY-\v@lYa\Figp@intregDD#1:(\v@lX,\v@lY)}
\ctr@ld@f\def\c@lDCDeuxTD#1#2{\Figg@tXY{#2}\Figg@tXYa{#1}\advance\v@lX-\v@lXa%
    \advance\v@lY-\v@lYa\advance\v@lZ-\v@lZa\Figp@intregTD#1:(\v@lX,\v@lY,\v@lZ)}
\ctr@ln@m\figvectN
\ctr@ld@f\def\figvectNDD#1[#2,#3]{\ifGR@cri{\Figg@tXYa{#2}\Figg@tXY{#3}%
    \advance\v@lX-\v@lXa\advance\v@lY-\v@lYa%
    \Figv@ctCreg#1(-\v@lY,\v@lX)}\ignorespaces\fi}
\ctr@ld@f\def\figvectNTD#1[#2,#3,#4]{\ifGR@cri{\vecunitC@TD[#2,#4]\v@lmin=\v@lX\v@lmax=\v@lY%
    \v@leur=\v@lZ\vecunitC@TD[#2,#3]\c@lprovec{#1}}\ignorespaces\fi}
\ctr@ln@m\figvectNV
\ctr@ld@f\def\figvectNVDD#1[#2]{\ifGR@cri{\Figg@tXY{#2}\Figv@ctCreg#1(-\v@lY,\v@lX)}\ignorespaces\fi}
\ctr@ld@f\def\figvectNVTD#1[#2,#3]{\ifGR@cri{\vecunitCV@TD{#3}\v@lmin=\v@lX\v@lmax=\v@lY%
    \v@leur=\v@lZ\vecunitCV@TD{#2}\c@lprovec{#1}}\ignorespaces\fi}
\ctr@ln@m\figvectP
\ctr@ld@f\def\figvectPDD#1[#2,#3]{\ifGR@cri{\Figg@tXYa{#2}\Figg@tXY{#3}%
    \advance\v@lX-\v@lXa\advance\v@lY-\v@lYa%
    \Figv@ctCreg#1(\v@lX,\v@lY)}\ignorespaces\fi}
\ctr@ld@f\def\figvectPTD#1[#2,#3]{\ifGR@cri{\Figg@tXYa{#2}\Figg@tXY{#3}%
    \advance\v@lX-\v@lXa\advance\v@lY-\v@lYa\advance\v@lZ-\v@lZa%
    \Figv@ctCreg#1(\v@lX,\v@lY,\v@lZ)}\ignorespaces\fi}
\ctr@ln@m\figvectU
\ctr@ld@f\def\figvectUDD#1[#2]{\ifGR@cri{\n@rmeuc\v@leur{#2}\invers@\v@leur\v@leur%
    \delt@=\repdecn@mb{\v@leur}\unit@\edef\v@ldelt@{\repdecn@mb{\delt@}}%
    \Figg@tXY{#2}\v@lX=\v@ldelt@\v@lX\v@lY=\v@ldelt@\v@lY%
    \Figv@ctCreg#1(\v@lX,\v@lY)}\ignorespaces\fi}
\ctr@ld@f\def\figvectUTD#1[#2]{\ifGR@cri{\n@rmeuc\v@leur{#2}\invers@\v@leur\v@leur%
    \delt@=\repdecn@mb{\v@leur}\unit@\edef\v@ldelt@{\repdecn@mb{\delt@}}%
    \Figg@tXY{#2}\v@lX=\v@ldelt@\v@lX\v@lY=\v@ldelt@\v@lY\v@lZ=\v@ldelt@\v@lZ%
    \Figv@ctCreg#1(\v@lX,\v@lY,\v@lZ)}\ignorespaces\fi}
\ctr@ld@f\def\figvisu#1#2#3{\c@ldefproj\initb@undb@x\xdef\figforTeXFigno{\figforTeXnextFigno}%
    \s@mme=\figforTeXnextFigno\advance\s@mme\@ne\xdef\figforTeXnextFigno{\number\s@mme}%
    \setbox\b@xvisu=\hbox{\ifnum\@utoFN>\z@\figinsert{}\gdef\@utoFInDone{0}\fi\ignorespaces#3}%
    \gdef\@utoFInDone{1}\gdef\@utoFN{0}%
    \v@lXa=-\c@@rdYmin\v@lYa=\c@@rdYmax\advance\v@lYa-\c@@rdYmin%
    \v@lX=\c@@rdXmax\advance\v@lX-\c@@rdXmin%
    \setbox#1=\hbox{#2}\v@lY=-\v@lX\maxim@m{\v@lX}{\v@lX}{\wd#1}%
    \advance\v@lY\v@lX\divide\v@lY\tw@\advance\v@lY-\c@@rdXmin%
    \setbox#1=\vbox{\parindent\z@\hsize=\v@lX\vskip\v@lYa%
    \rlap{\hskip\v@lY\smash{\raise\v@lXa\box\b@xvisu}}%
    \def\t@xt@{#2}\ifx\t@xt@\empty\else\medskip\centerline{#2}\fi}\wd#1=\v@lX}
\ctr@ld@f\def\figDecrementFigno{{\xdef\figforTeXnextFigno{\figforTeXFigno}%
    \s@mme=\figforTeXFigno\advance\s@mme\m@ne\xdef\figforTeXFigno{\number\s@mme}}}
\ctr@ln@w{newbox}\Bt@rget\setbox\Bt@rget=\null
\ctr@ln@w{newbox}\BminTD@\setbox\BminTD@=\null
\ctr@ln@w{newbox}\BmaxTD@\setbox\BmaxTD@=\null
\ctr@ln@w{newif}\ifnewt@rgetpt\ctr@ln@w{newif}\ifnewdis@b
\ctr@ld@f\def\b@undb@xTD#1#2#3{%
    \relax\ifdim#1<\wd\BminTD@\global\wd\BminTD@=#1\fi%
    \relax\ifdim#2<\ht\BminTD@\global\ht\BminTD@=#2\fi%
    \relax\ifdim#3<\dp\BminTD@\global\dp\BminTD@=#3\fi%
    \relax\ifdim#1>\wd\BmaxTD@\global\wd\BmaxTD@=#1\fi%
    \relax\ifdim#2>\ht\BmaxTD@\global\ht\BmaxTD@=#2\fi%
    \relax\ifdim#3>\dp\BmaxTD@\global\dp\BmaxTD@=#3\fi}
\ctr@ld@f\def\c@ldefdisob{{\ifdim\wd\BminTD@<\maxdimen\v@leur=\wd\BmaxTD@\advance\v@leur-\wd\BminTD@%
    \delt@=\ht\BmaxTD@\advance\delt@-\ht\BminTD@\maxim@m{\v@leur}{\v@leur}{\delt@}%
    \delt@=\dp\BmaxTD@\advance\delt@-\dp\BminTD@\maxim@m{\v@leur}{\v@leur}{\delt@}%
    \v@leur=5\v@leur\else\v@leur=800pt\fi\c@ldefdisob@{\v@leur}}}
\ctr@ln@m\disob@intern
\ctr@ln@m\disob@
\ctr@ln@m\divf@ctproj
\ctr@ld@f\def\c@ldefdisob@#1{{\v@leur=#1\ifdim\v@leur<\p@\v@leur=800pt\fi%
    \xdef\disob@intern{\repdecn@mb{\v@leur}}%
    \delt@=\ptT@unit@\v@leur\xdef\disob@unit{\repdecn@mb{\delt@}}%
    \f@ctech=\@ne\loop\ifdim\v@leur>\t@n pt\divide\v@leur\t@n\multiply\f@ctech\t@n\repeat%
    \xdef\disob@{\repdecn@mb{\v@leur}}\xdef\divf@ctproj{\the\f@ctech}}%
    \global\newdis@btrue}
\ctr@ln@m\t@rgetpt
\ctr@ld@f\def\c@ldeft@rgetpt{\newt@rgetpttrue\def\t@rgetpt{CenterBoundBox}{%
    \delt@=\wd\BmaxTD@\advance\delt@-\wd\BminTD@\divide\delt@\tw@%
    \v@leur=\wd\BminTD@\advance\v@leur\delt@\global\wd\Bt@rget=\v@leur%
    \delt@=\ht\BmaxTD@\advance\delt@-\ht\BminTD@\divide\delt@\tw@%
    \v@leur=\ht\BminTD@\advance\v@leur\delt@\global\ht\Bt@rget=\v@leur%
    \delt@=\dp\BmaxTD@\advance\delt@-\dp\BminTD@\divide\delt@\tw@%
    \v@leur=\dp\BminTD@\advance\v@leur\delt@\global\dp\Bt@rget=\v@leur}}
\ctr@ln@m\c@ldefproj
\ctr@ld@f\def\c@ldefprojTD{\ifnewt@rgetpt\else\c@ldeft@rgetpt\fi\ifnewdis@b\else\c@ldefdisob\fi}
\ctr@ld@f\def\c@lprojcav{% Projection cavaliere : X = x + y L cos t, Y = z + y L sin t
    \v@lZa=\cxa@\v@lY\advance\v@lX\v@lZa%
    \v@lZa=\cxb@\v@lY\v@lY=\v@lZ\advance\v@lY\v@lZa\ignorespaces}
\ctr@ln@m\v@lcoef
\ctr@ld@f\def\c@lprojrea{% Projection realiste
    \advance\v@lX-\wd\Bt@rget\advance\v@lY-\ht\Bt@rget\advance\v@lZ-\dp\Bt@rget%
    \v@lZa=\cza@\v@lX\advance\v@lZa\czb@\v@lY\advance\v@lZa\czc@\v@lZ%
    \divide\v@lZa\divf@ctproj\advance\v@lZa\disob@ pt\invers@{\v@lZa}{\v@lZa}%
    \v@lZa=\disob@\v@lZa\edef\v@lcoef{\repdecn@mb{\v@lZa}}%
    \v@lXa=\cxa@\v@lX\advance\v@lXa\cxb@\v@lY\v@lXa=\v@lcoef\v@lXa%
    \v@lY=\cyb@\v@lY\advance\v@lY\cya@\v@lX\advance\v@lY\cyc@\v@lZ%
    \v@lY=\v@lcoef\v@lY\v@lX=\v@lXa\ignorespaces}
\ctr@ld@f\def\c@lprojort{% Projection orthogonale
    \v@lXa=\cxa@\v@lX\advance\v@lXa\cxb@\v@lY%
    \v@lY=\cyb@\v@lY\advance\v@lY\cya@\v@lX\advance\v@lY\cyc@\v@lZ%
    \v@lX=\v@lXa\ignorespaces}
\ctr@ld@f\def\Figptpr@j#1:#2/#3/{{\Figg@tXY{#3}\superc@lprojSP%
    \Figp@intregDD#1:{#2}(\v@lX,\v@lY)}\ignorespaces}
\ctr@ln@m\figsetobdist
\ctr@ld@f\def\figsetobdistDD{\un@v@ilable{figsetobdist}}
\ctr@ld@f\def\figsetobdistTD(#1){{\ifCUR@PS\W@rnmesIgn{figset proj(dist=...)}%
    \else\v@leur=#1\unit@\c@ldefdisob@{\v@leur}\fi}\ignorespaces}
\ctr@ln@m\c@lprojSP
\ctr@ln@m\CUR@proj
\ctr@ln@m\typ@proj
\ctr@ln@m\superc@lprojSP
\ctr@ld@f\def\Figs@tproj#1{%
    \if#13 \def@ultproj\else\if#1c\def@ultproj%
    \else\if#1o\xdef\CUR@proj{1}\xdef\typ@proj{orthogonal}%
         \figsetviewTD(\def@ultpsi,\def@ulttheta)%
         \global\let\c@lprojSP=\c@lprojort\global\let\superc@lprojSP=\c@lprojort%
    \else\if#1r\xdef\CUR@proj{2}\xdef\typ@proj{realistic}%
         \figsetviewTD(\def@ultpsi,\def@ulttheta)%
         \global\let\c@lprojSP=\c@lprojrea\global\let\superc@lprojSP=\c@lprojrea%
    \else\def@ultproj\message{*** Unknown projection. Cavalier projection assumed.}%
    \fi\fi\fi\fi}
\ctr@ld@f\def\def@ultproj{\xdef\CUR@proj{0}\xdef\typ@proj{cavalier}\figsetviewTD(\def@ultpsi,0.5)%
         \global\let\c@lprojSP=\c@lprojcav\global\let\superc@lprojSP=\c@lprojcav}
\ctr@ln@m\figsettarget
\ctr@ld@f\def\figsettargetDD{\un@v@ilable{figsettarget}}
\ctr@ld@f\def\figsettargetTD[#1]{{\ifCUR@PS\W@rnmesIgn{figset proj(targetpt=...)}%
    \else\global\newt@rgetpttrue\xdef\t@rgetpt{#1}\Figg@tXY{#1}\global\wd\Bt@rget=\v@lX%
    \global\ht\Bt@rget=\v@lY\global\dp\Bt@rget=\v@lZ\fi}\ignorespaces}
\ctr@ln@m\figsetview
\ctr@ld@f\def\figsetviewDD{\un@v@ilable{figsetview}}
\ctr@ld@f\def\figsetviewTD(#1){\ifCUR@PS\W@rnmesIgn{figset proj(Psi|Theta|Lambda=...)}%
     \else\Figsetview@#1,:\fi\ignorespaces}
\ctr@ld@f\def\Figsetview@#1,#2:{{\xdef\v@lPsi{#1}\def\t@xt@{#2}%
    \ifx\t@xt@\empty\def\@rgdeux{\v@lTheta}\else\X@rgdeux@#2\fi%
    \c@ssin{\costhet@}{\sinthet@}{#1}\v@lmin=\costhet@ pt\v@lmax=\sinthet@ pt%
    \ifcase\CUR@proj%
    \v@leur=\@rgdeux\v@lmin\xdef\cxa@{\repdecn@mb{\v@leur}}%
    \v@leur=\@rgdeux\v@lmax\xdef\cxb@{\repdecn@mb{\v@leur}}\v@leur=\@rgdeux pt%
    \relax\ifdim\v@leur>\p@\message{*** Lambda too large ! See \BS@ figset proj() !}\fi%
    \else%
    \v@lmax=-\v@lmax\xdef\cxa@{\repdecn@mb{\v@lmax}}\xdef\cxb@{\costhet@}%
    \ifx\t@xt@\empty\edef\@rgdeux{\def@ulttheta}\fi\c@ssin{\C@}{\S@}{\@rgdeux}%
    \v@lmax=-\S@ pt%
    \v@leur=\v@lmax\v@leur=\costhet@\v@leur\xdef\cya@{\repdecn@mb{\v@leur}}%
    \v@leur=\v@lmax\v@leur=\sinthet@\v@leur\xdef\cyb@{\repdecn@mb{\v@leur}}%
    \xdef\cyc@{\C@}\v@lmin=-\C@ pt%
    \v@leur=\v@lmin\v@leur=\costhet@\v@leur\xdef\cza@{\repdecn@mb{\v@leur}}%
    \v@leur=\v@lmin\v@leur=\sinthet@\v@leur\xdef\czb@{\repdecn@mb{\v@leur}}%
    \xdef\czc@{\repdecn@mb{\v@lmax}}\fi%
    \xdef\v@lTheta{\@rgdeux}}}
\ctr@ld@f\def\def@ultpsi{40}
\ctr@ld@f\def\def@ulttheta{25}
\ctr@ln@m\l@debut
\ctr@ln@m\n@mref
\ctr@ld@f\def\Figsetpr@j#1=#2|{\keln@mtr#1|%
    \def\n@mref{dep}\ifx\l@debut\n@mref\Figsetd@p{#2}\else% depth (lambda)
    \def\n@mref{dis}\ifx\l@debut\n@mref%
     \ifnum\CUR@proj=\tw@\figsetobdist(#2)\else\Figset@rr\fi\else% dist
    \def\n@mref{lam}\ifx\l@debut\n@mref\Figsetd@p{#2}\else% depth (lambda)
    \def\n@mref{lat}\ifx\l@debut\n@mref\Figsetth@{#2}\else% latitude (theta)
    \def\n@mref{lon}\ifx\l@debut\n@mref\figsetview(#2)\else% longitude (psi)
    \def\n@mref{psi}\ifx\l@debut\n@mref\figsetview(#2)\else% longitude (psi)
    \def\n@mref{tar}\ifx\l@debut\n@mref%
     \ifnum\CUR@proj=\tw@\figsettarget[#2]\else\Figset@rr\fi\else% target point
    \def\n@mref{the}\ifx\l@debut\n@mref\Figsetth@{#2}\else% latitude (theta)
    \W@rnmesAttr{figset proj}{#1}\fi\fi\fi\fi\fi\fi\fi\fi}
\ctr@ld@f\def\Figsetd@p#1{\ifnum\CUR@proj=\z@\figsetview(\v@lPsi,#1)\else\Figset@rr\fi}
\ctr@ld@f\def\Figsetth@#1{\ifnum\CUR@proj=\z@\Figset@rr\else\figsetview(\v@lPsi,#1)\fi}
\ctr@ld@f\def\Figset@rr{\message{*** \BS@ figset proj(): Attribute "\n@mref" ignored, incompatible
    with current projection}}
\ctr@ld@f\def\initb@undb@xTD{\wd\BminTD@=\maxdimen\ht\BminTD@=\maxdimen\dp\BminTD@=\maxdimen%
    \wd\BmaxTD@=-\maxdimen\ht\BmaxTD@=-\maxdimen\dp\BmaxTD@=-\maxdimen}
\ctr@ln@w{newbox}\Gb@x      % boite a tout faire
\ctr@ln@w{newbox}\Gb@xSC    % boite qui contient le point marker
\ctr@ln@w{newtoks}\c@nsymb  % the point marker
\ctr@ln@w{newif}\ifr@undcoord\ctr@ln@w{newif}\ifunitpr@sent
\ctr@ld@f\def\unssqrttw@{0.707106 }
\ctr@ld@f\def\figAst{\raise-1.15ex\hbox{$\ast$}}
\ctr@ld@f\def\figBullet{\raise-1.15ex\hbox{$\bullet$}}
\ctr@ld@f\def\figCirc{\raise-1.15ex\hbox{$\circ$}}
\ctr@ld@f\def\figDiamond{\raise-1.15ex\hbox{$\diamond$}}%
\ctr@ld@f\def\boxit#1#2{\leavevmode\hbox{\vrule\vbox{\hrule\vglue#1%
    \vtop{\hbox{\kern#1{#2}\kern#1}\vglue#1\hrule}}\vrule}}
\ctr@ld@f
\ctr@ld@f
\ctr@ld@f\def\c@nterpt{\ignorespaces%
    \kern-.5\wd\Gb@xSC%
    \raise-.5\ht\Gb@xSC\rlap{\hbox{\raise.5\dp\Gb@xSC\hbox{\copy\Gb@xSC}}}%
    \kern .5\wd\Gb@xSC\ignorespaces}
\ctr@ld@f\def\b@undb@xSC#1#2{{\v@lXa=#1\v@lYa=#2%
    \v@leur=\ht\Gb@xSC\advance\v@leur\dp\Gb@xSC%
    \advance\v@lXa-.5\wd\Gb@xSC\advance\v@lYa-.5\v@leur\b@undb@x{\v@lXa}{\v@lYa}%
    \advance\v@lXa\wd\Gb@xSC\advance\v@lYa\v@leur\b@undb@x{\v@lXa}{\v@lYa}}}
\ctr@ln@m\Dist@n
\ctr@ln@m\l@suite
\ctr@ld@f\def\@keldist#1#2{\edef\Dist@n{#2}\y@tiunit{\Dist@n}%
    \ifunitpr@sent#1=\Dist@n\else#1=\Dist@n\unit@\fi}
\ctr@ld@f\def\y@tiunit#1{\unitpr@sentfalse\expandafter\y@tiunit@#1:}
\ctr@ld@f\def\y@tiunit@#1#2:{\ifcat#1a\unitpr@senttrue\else\def\l@suite{#2}%
    \ifx\l@suite\empty\else\y@tiunit@#2:\fi\fi}
\ctr@ln@m\figcoord
\ctr@ld@f\def\figcoordDD#1{{\v@lX=\ptT@unit@\v@lX\v@lY=\ptT@unit@\v@lY%
    \ifr@undcoord\ifcase#1\v@leur=0.5pt\or\v@leur=0.05pt\or\v@leur=0.005pt%
    \or\v@leur=0.0005pt\else\v@leur=\z@\fi%
    \ifdim\v@lX<\z@\advance\v@lX-\v@leur\else\advance\v@lX\v@leur\fi%
    \ifdim\v@lY<\z@\advance\v@lY-\v@leur\else\advance\v@lY\v@leur\fi\fi%
    (\@ffichnb{#1}{\repdecn@mb{\v@lX}},\ifmmode\else\thinspace\fi%
    \@ffichnb{#1}{\repdecn@mb{\v@lY}})}}
\ctr@ld@f\def\@ffichnb#1#2{{\def\@@ffich{\@ffich#1(}\edef\n@mbre{#2}%
    \expandafter\@@ffich\n@mbre)}}
\ctr@ld@f\def\@ffich#1(#2.#3){{#2\ifnum#1>\z@.\fi\def\dig@ts{#3}\s@mme=\z@%
    \loop\ifnum\s@mme<#1\expandafter\@ffichdec\dig@ts:\advance\s@mme\@ne\repeat}}
\ctr@ld@f\def\@ffichdec#1#2:{\relax#1\def\dig@ts{#20}}
\ctr@ld@f\def\figcoordTD#1{{\v@lX=\ptT@unit@\v@lX\v@lY=\ptT@unit@\v@lY\v@lZ=\ptT@unit@\v@lZ%
    \ifr@undcoord\ifcase#1\v@leur=0.5pt\or\v@leur=0.05pt\or\v@leur=0.005pt%
    \or\v@leur=0.0005pt\else\v@leur=\z@\fi%
    \ifdim\v@lX<\z@\advance\v@lX-\v@leur\else\advance\v@lX\v@leur\fi%
    \ifdim\v@lY<\z@\advance\v@lY-\v@leur\else\advance\v@lY\v@leur\fi%
    \ifdim\v@lZ<\z@\advance\v@lZ-\v@leur\else\advance\v@lZ\v@leur\fi\fi%
    (\@ffichnb{#1}{\repdecn@mb{\v@lX}},\ifmmode\else\thinspace\fi%
     \@ffichnb{#1}{\repdecn@mb{\v@lY}},\ifmmode\else\thinspace\fi%
     \@ffichnb{#1}{\repdecn@mb{\v@lZ}})}}
\ctr@ld@f\def\figsetroundcoord#1{\expandafter\Figsetr@undcoord#1:\ignorespaces}
\ctr@ld@f\def\Figsetr@undcoord#1#2:{\if#1n\r@undcoordfalse\else\r@undcoordtrue\fi}
\ctr@ld@f\def\Figsetwr@te#1=#2|{\keln@mun#1|%
    \def\n@mref{m}\ifx\l@debut\n@mref\figsetmark{#2}\else% mark
    \def\n@mref{p}\ifx\l@debut\n@mref\figsetptname{#2}\else% ptname
    \def\n@mref{r}\ifx\l@debut\n@mref\figsetroundcoord{#2}\else% roundcoord
    \W@rnmesAttr{figset write}{#1}\fi\fi\fi}
\ctr@ld@f\def\figsetmark#1{\c@nsymb={#1}\setbox\Gb@xSC=\hbox{\the\c@nsymb}\ignorespaces}
\ctr@ln@m\ptn@me
\ctr@ld@f\def\figsetptname#1{\def\ptn@me##1{#1}\ignorespaces}
\ctr@ld@f\def\FigWrit@L#1:#2(#3,#4){\ignorespaces\@keldist\v@leur{#3}\@keldist\delt@{#4}%
    \C@rp@r@m\def\list@num{#1}\@ecfor\p@int:=\list@num\do{\FigWrit@pt{\p@int}{#2}}}
\ctr@ld@f\def\FigWrit@pt#1#2{\FigWp@r@m{#1}{#2}\Vc@rrect\figWp@si%
    \ifdim\wd\Gb@xSC>\z@\b@undb@xSC{\v@lX}{\v@lY}\fi\figWBB@x}
\ctr@ld@f\def\FigWp@r@m#1#2{\Figg@tXY{#1}%
    \setbox\Gb@x=\hbox{\def\t@xt@{#2}\ifx\t@xt@\empty\Figg@tT{#1}\else#2\fi}\c@lprojSP}
\ctr@ld@f\let\Vc@rrect=\relax
\ctr@ld@f\let\C@rp@r@m=\relax
\ctr@ld@f\def\figwrite[#1]#2{{\ignorespaces\def\list@num{#1}\@ecfor\p@int:=\list@num\do{%
    \setbox\Gb@x=\hbox{\def\t@xt@{#2}\ifx\t@xt@\empty\Figg@tT{\p@int}\else#2\fi}%
    \Figwrit@{\p@int}}}\ignorespaces}
\ctr@ld@f\def\Figwrit@#1{\Figg@tXY{#1}\c@lprojSP%
    \rlap{\kern\v@lX\raise\v@lY\hbox{\unhcopy\Gb@x}}\v@leur=\v@lY%
    \advance\v@lY\ht\Gb@x\b@undb@x{\v@lX}{\v@lY}\advance\v@lX\wd\Gb@x%
    \v@lY=\v@leur\advance\v@lY-\dp\Gb@x\b@undb@x{\v@lX}{\v@lY}}
\ctr@ld@f\def\figwritec[#1]#2{{\ignorespaces\def\list@num{#1}%
    \@ecfor\p@int:=\list@num\do{\Figwrit@c{\p@int}{#2}}}\ignorespaces}
\ctr@ld@f\def\Figwrit@c#1#2{\FigWp@r@m{#1}{#2}%
    \rlap{\kern\v@lX\raise\v@lY\hbox{\rlap{\kern-.5\wd\Gb@x%
    \raise-.5\ht\Gb@x\hbox{\raise.5\dp\Gb@x\hbox{\unhcopy\Gb@x}}}}}%
    \v@leur=\ht\Gb@x\advance\v@leur\dp\Gb@x%
    \advance\v@lX-.5\wd\Gb@x\advance\v@lY-.5\v@leur\b@undb@x{\v@lX}{\v@lY}%
    \advance\v@lX\wd\Gb@x\advance\v@lY\v@leur\b@undb@x{\v@lX}{\v@lY}}
\ctr@ld@f\def\figwritep[#1]{{\ignorespaces\def\list@num{#1}\setbox\Gb@x=\hbox{\c@nterpt}%
    \@ecfor\p@int:=\list@num\do{\Figwrit@{\p@int}}}\ignorespaces}
\ctr@ld@f\def\figwritew#1:#2(#3){\figwritegcw#1:{#2}(#3,0pt)}
\ctr@ld@f\def\figwritee#1:#2(#3){\figwritegce#1:{#2}(#3,0pt)}
\ctr@ld@f\def\figwriten#1:#2(#3){{\def\Vc@rrect{\v@lZ=\v@leur\advance\v@lZ\dp\Gb@x}%
    \Figwrit@NS#1:{#2}(#3)}\ignorespaces}
\ctr@ld@f\def\figwrites#1:#2(#3){{\def\Vc@rrect{\v@lZ=-\v@leur\advance\v@lZ-\ht\Gb@x}%
    \Figwrit@NS#1:{#2}(#3)}\ignorespaces}
\ctr@ld@f\def\Figwrit@NS#1:#2(#3){\let\figWp@si=\FigWp@siNS\let\figWBB@x=\FigWBB@xNS%
    \FigWrit@L#1:{#2}(#3,0pt)}
\ctr@ld@f\def\FigWp@siNS{\rlap{\kern\v@lX\raise\v@lY\hbox{\rlap{\kern-.5\wd\Gb@x%
    \raise\v@lZ\hbox{\unhcopy\Gb@x}}\c@nterpt}}}
\ctr@ld@f\def\FigWBB@xNS{\advance\v@lY\v@lZ%
    \advance\v@lY-\dp\Gb@x\advance\v@lX-.5\wd\Gb@x\b@undb@x{\v@lX}{\v@lY}%
    \advance\v@lY\ht\Gb@x\advance\v@lY\dp\Gb@x%
    \advance\v@lX\wd\Gb@x\b@undb@x{\v@lX}{\v@lY}}
\ctr@ld@f\def\figwritenw#1:#2(#3){{\let\figWp@si=\FigWp@sigW\let\figWBB@x=\FigWBB@xgWE%
    \def\C@rp@r@m{\v@leur=\unssqrttw@\v@leur\delt@=\v@leur%
    \ifdim\delt@=\z@\delt@=\epsil@n\fi}\let@xte={-}\FigWrit@L#1:{#2}(#3,0pt)}\ignorespaces}
\ctr@ld@f\def\figwritesw#1:#2(#3){{\let\figWp@si=\FigWp@sigW\let\figWBB@x=\FigWBB@xgWE%
    \def\C@rp@r@m{\v@leur=\unssqrttw@\v@leur\delt@=-\v@leur%
    \ifdim\delt@=\z@\delt@=-\epsil@n\fi}\let@xte={-}\FigWrit@L#1:{#2}(#3,0pt)}\ignorespaces}
\ctr@ld@f\def\figwritene#1:#2(#3){{\let\figWp@si=\FigWp@sigE\let\figWBB@x=\FigWBB@xgWE%
    \def\C@rp@r@m{\v@leur=\unssqrttw@\v@leur\delt@=\v@leur%
    \ifdim\delt@=\z@\delt@=\epsil@n\fi}\let@xte={}\FigWrit@L#1:{#2}(#3,0pt)}\ignorespaces}
\ctr@ld@f\def\figwritese#1:#2(#3){{\let\figWp@si=\FigWp@sigE\let\figWBB@x=\FigWBB@xgWE%
    \def\C@rp@r@m{\v@leur=\unssqrttw@\v@leur\delt@=-\v@leur%
    \ifdim\delt@=\z@\delt@=-\epsil@n\fi}\let@xte={}\FigWrit@L#1:{#2}(#3,0pt)}\ignorespaces}
\ctr@ld@f\def\figwritegw#1:#2(#3,#4){{\let\figWp@si=\FigWp@sigW\let\figWBB@x=\FigWBB@xgWE%
    \let@xte={-}\FigWrit@L#1:{#2}(#3,#4)}\ignorespaces}
\ctr@ld@f\def\figwritege#1:#2(#3,#4){{\let\figWp@si=\FigWp@sigE\let\figWBB@x=\FigWBB@xgWE%
    \let@xte={}\FigWrit@L#1:{#2}(#3,#4)}\ignorespaces}
\ctr@ld@f\def\FigWp@sigW{\v@lXa=\z@\v@lYa=\ht\Gb@x\advance\v@lYa\dp\Gb@x%
    \ifdim\delt@>\z@\relax%
    \rlap{\kern\v@lX\raise\v@lY\hbox{\rlap{\kern-\wd\Gb@x\kern-\v@leur%
          \raise\delt@\hbox{\raise\dp\Gb@x\hbox{\unhcopy\Gb@x}}}\c@nterpt}}%
    \else\ifdim\delt@<\z@\relax\v@lYa=-\v@lYa%
    \rlap{\kern\v@lX\raise\v@lY\hbox{\rlap{\kern-\wd\Gb@x\kern-\v@leur%
          \raise\delt@\hbox{\raise-\ht\Gb@x\hbox{\unhcopy\Gb@x}}}\c@nterpt}}%
    \else\v@lXa=-.5\v@lYa%
    \rlap{\kern\v@lX\raise\v@lY\hbox{\rlap{\kern-\wd\Gb@x\kern-\v@leur%
          \raise-.5\ht\Gb@x\hbox{\raise.5\dp\Gb@x\hbox{\unhcopy\Gb@x}}}\c@nterpt}}%
    \fi\fi}
\ctr@ld@f\def\FigWp@sigE{\v@lXa=\z@\v@lYa=\ht\Gb@x\advance\v@lYa\dp\Gb@x%
    \ifdim\delt@>\z@\relax%
    \rlap{\kern\v@lX\raise\v@lY\hbox{\c@nterpt\kern\v@leur%
          \raise\delt@\hbox{\raise\dp\Gb@x\hbox{\unhcopy\Gb@x}}}}%
    \else\ifdim\delt@<\z@\relax\v@lYa=-\v@lYa%
    \rlap{\kern\v@lX\raise\v@lY\hbox{\c@nterpt\kern\v@leur%
          \raise\delt@\hbox{\raise-\ht\Gb@x\hbox{\unhcopy\Gb@x}}}}%
    \else\v@lXa=-.5\v@lYa%
    \rlap{\kern\v@lX\raise\v@lY\hbox{\c@nterpt\kern\v@leur%
          \raise-.5\ht\Gb@x\hbox{\raise.5\dp\Gb@x\hbox{\unhcopy\Gb@x}}}}%
    \fi\fi}
\ctr@ld@f\def\FigWBB@xgWE{\advance\v@lY\delt@%
    \advance\v@lX\the\let@xte\v@leur\advance\v@lY\v@lXa\b@undb@x{\v@lX}{\v@lY}%
    \advance\v@lX\the\let@xte\wd\Gb@x\advance\v@lY\v@lYa\b@undb@x{\v@lX}{\v@lY}}
\ctr@ld@f\def\figwritegcw#1:#2(#3,#4){{\let\figWp@si=\FigWp@sigcW\let\figWBB@x=\FigWBB@xgcWE%
    \let@xte={-}\FigWrit@L#1:{#2}(#3,#4)}\ignorespaces}
\ctr@ld@f\def\figwritegce#1:#2(#3,#4){{\let\figWp@si=\FigWp@sigcE\let\figWBB@x=\FigWBB@xgcWE%
    \let@xte={}\FigWrit@L#1:{#2}(#3,#4)}\ignorespaces}
\ctr@ld@f\def\FigWp@sigcW{\rlap{\kern\v@lX\raise\v@lY\hbox{\rlap{\kern-\wd\Gb@x\kern-\v@leur%
     \raise-.5\ht\Gb@x\hbox{\raise\delt@\hbox{\raise.5\dp\Gb@x\hbox{\unhcopy\Gb@x}}}}%
     \c@nterpt}}}
\ctr@ld@f\def\FigWp@sigcE{\rlap{\kern\v@lX\raise\v@lY\hbox{\c@nterpt\kern\v@leur%
    \raise-.5\ht\Gb@x\hbox{\raise\delt@\hbox{\raise.5\dp\Gb@x\hbox{\unhcopy\Gb@x}}}}}}
\ctr@ld@f\def\FigWBB@xgcWE{\v@lZ=\ht\Gb@x\advance\v@lZ\dp\Gb@x%
    \advance\v@lX\the\let@xte\v@leur\advance\v@lY\delt@\advance\v@lY.5\v@lZ%
    \b@undb@x{\v@lX}{\v@lY}%
    \advance\v@lX\the\let@xte\wd\Gb@x\advance\v@lY-\v@lZ\b@undb@x{\v@lX}{\v@lY}}
\ctr@ld@f\def\figwritebn#1:#2(#3){{\def\Vc@rrect{\v@lZ=\v@leur}\Figwrit@NS#1:{#2}(#3)}\ignorespaces}
\ctr@ld@f\def\figwritebs#1:#2(#3){{\def\Vc@rrect{\v@lZ=-\v@leur}\Figwrit@NS#1:{#2}(#3)}\ignorespaces}
\ctr@ld@f\def\figwritebw#1:#2(#3){{\let\figWp@si=\FigWp@sibW\let\figWBB@x=\FigWBB@xbWE%
    \let@xte={-}\FigWrit@L#1:{#2}(#3,0pt)}\ignorespaces}
\ctr@ld@f\def\figwritebe#1:#2(#3){{\let\figWp@si=\FigWp@sibE\let\figWBB@x=\FigWBB@xbWE%
    \let@xte={}\FigWrit@L#1:{#2}(#3,0pt)}\ignorespaces}
\ctr@ld@f\def\FigWp@sibW{\rlap{\kern\v@lX\raise\v@lY\hbox{\rlap{\kern-\wd\Gb@x\kern-\v@leur%
          \hbox{\unhcopy\Gb@x}}\c@nterpt}}}
\ctr@ld@f\def\FigWp@sibE{\rlap{\kern\v@lX\raise\v@lY\hbox{\c@nterpt\kern\v@leur%
          \hbox{\unhcopy\Gb@x}}}}
\ctr@ld@f\def\FigWBB@xbWE{\v@lZ=\ht\Gb@x\advance\v@lZ\dp\Gb@x%
    \advance\v@lX\the\let@xte\v@leur\advance\v@lY\ht\Gb@x\b@undb@x{\v@lX}{\v@lY}%
    \advance\v@lX\the\let@xte\wd\Gb@x\advance\v@lY-\v@lZ\b@undb@x{\v@lX}{\v@lY}}
\ctr@ln@w{newread}\frf@g  \ctr@ln@w{newwrite}\fwf@g
\ctr@ln@w{newif}\ifCUR@PS
\ctr@ln@w{newif}\ifGR@cri
\ctr@ln@w{newif}\ifUse@llipse
\ctr@ln@w{newif}\ifGRdebugm@de \GRdebugm@defalse 
\ctr@ln@w{newif}\ifPDFm@ke
\ifx\pdfliteral\undefined\else\ifnum\pdfoutput>\z@\PDFm@ketrue\fi\fi
\ctr@ld@f\def\initPDF@rDVI{%
\ifPDFm@ke
 \let\figscan=\figscan@E
 \let\newGr@FN=\newGr@FNPDF
 \ctr@ld@f\def\c@mcurveto{c}
 \ctr@ld@f\def\c@mfill{f}
 \ctr@ld@f\def\c@mgsave{q}
 \ctr@ld@f\def\c@mgrestore{Q}
 \ctr@ld@f\def\c@mlineto{l}
 \ctr@ld@f\def\c@mmoveto{m}
 \ctr@ld@f\def\c@msetgray{g}     \ctr@ld@f\def\c@msetgrayStroke{G}
 \ctr@ld@f\def\c@msetcmykcolor{k}\ctr@ld@f\def\c@msetcmykcolorStroke{K}
 \ctr@ld@f\def\c@msetrgbcolor{rg}\ctr@ld@f\def\c@msetrgbcolorStroke{RG}
 \ctr@ld@f\def\d@fprimarC@lor{\CUR@color\space\CUR@colorc@md%
               \space\CUR@color\space\CUR@colorc@mdStroke}
 \ctr@ld@f\def\c@msetdash{d}
 \ctr@ld@f\def\c@msetlinejoin{j}
 \ctr@ld@f\def\c@msetlinewidth{w}
 \ctr@ld@f\def\f@gclosestroke{\immediate\write\fwf@g{s}}
 \ctr@ld@f\def\f@gfill{\immediate\write\fwf@g{\fillc@md}}% Voir la def de \fillc@md ****
 \ctr@ld@f\def\f@gnewpath{}
 \ctr@ld@f\def\f@gstroke{\immediate\write\fwf@g{S}}
\else
 \let\figinsertE=\figinsert
 \let\newGr@FN=\newGr@FNDVI
 \ctr@ld@f\def\c@mcurveto{curveto}
 \ctr@ld@f\def\c@mfill{fill}
 \ctr@ld@f\def\c@mgsave{gsave}
 \ctr@ld@f\def\c@mgrestore{grestore}
 \ctr@ld@f\def\c@mlineto{lineto}
 \ctr@ld@f\def\c@mmoveto{moveto}
 \ctr@ld@f\def\c@msetgray{setgray}          \ctr@ld@f\def\c@msetgrayStroke{}
 \ctr@ld@f\def\c@msetcmykcolor{setcmykcolor}\ctr@ld@f\def\c@msetcmykcolorStroke{}
 \ctr@ld@f\def\c@msetrgbcolor{setrgbcolor}  \ctr@ld@f\def\c@msetrgbcolorStroke{}
 \ctr@ld@f\def\d@fprimarC@lor{\CUR@color\space\CUR@colorc@md}
 \ctr@ld@f\def\c@msetdash{setdash}
 \ctr@ld@f\def\c@msetlinejoin{setlinejoin}
 \ctr@ld@f\def\c@msetlinewidth{setlinewidth}
 \ctr@ld@f\def\f@gclosestroke{\immediate\write\fwf@g{closepath\space stroke}}
 \ctr@ld@f\def\f@gfill{\immediate\write\fwf@g{\fillc@md}}
 \ctr@ld@f\def\f@gnewpath{\immediate\write\fwf@g{newpath}}
 \ctr@ld@f\def\f@gstroke{\immediate\write\fwf@g{stroke}}
\fi}
\ctr@ld@f\def\c@pypsfile#1#2{\c@pyfil@{\immediate\write#1}{#2}}
\ctr@ld@f\def\Figinclud@PDF#1#2{\openin\frf@g=#1\pdfliteral{q #2 0 0 #2 0 0 cm}%
    \c@pyfil@{\pdfliteral}{\frf@g}\pdfliteral{Q}\closein\frf@g}
\ctr@ln@w{newif}\ifmored@ta
\ctr@ln@m\bl@nkline
\ctr@ld@f\def\c@pyfil@#1#2{\def\bl@nkline{\par}{\catcode`\%=12
    \loop\ifeof#2\mored@tafalse\else\mored@tatrue\immediate\read#2 to\tr@c
    \ifx\tr@c\bl@nkline\else#1{\tr@c}\fi\fi\ifmored@ta\repeat}}
\ctr@ld@f\def\keln@mun#1#2|{\def\l@debut{#1}\def\l@suite{#2}}
\ctr@ld@f\def\keln@mde#1#2#3|{\def\l@debut{#1#2}\def\l@suite{#3}}
\ctr@ld@f\def\keln@mtr#1#2#3#4|{\def\l@debut{#1#2#3}\def\l@suite{#4}}
\ctr@ld@f\def\keln@mqu#1#2#3#4#5|{\def\l@debut{#1#2#3#4}\def\l@suite{#5}}
\ctr@ld@f\let\@psffilein=\frf@g % file to \read
\ctr@ln@w{newif}\if@psffileok    % continue looking for the bounding box?
\ctr@ln@w{newif}\if@psfbbfound   % success?
\ctr@ln@w{newif}\if@psfverbose   % report what you're making?
\@psfverbosetrue
\ctr@ln@m\@psfllx \ctr@ln@m\@psflly
\ctr@ln@m\@psfurx \ctr@ln@m\@psfury
\ctr@ln@m\resetcolonc@tcode
\ctr@ld@f\def\@psfgetbb#1{\global\@psfbbfoundfalse%
\global\def\@psfllx{0}\global\def\@psflly{0}%
\global\def\@psfurx{30}\global\def\@psfury{30}%
\openin\@psffilein=#1\relax
\ifeof\@psffilein\errmessage{I couldn't open #1, will ignore it}\else
   \edef\resetcolonc@tcode{\catcode`\noexpand\:\the\catcode`\:\relax}%
   {\@psffileoktrue \chardef\other=12
    \def\do##1{\catcode`##1=\other}\dospecials \catcode`\ =10 \resetcolonc@tcode
    \loop
       \read\@psffilein to \@psffileline
       \ifeof\@psffilein\@psffileokfalse\else
          \expandafter\@psfaux\@psffileline:. \\%
       \fi
   \if@psffileok\repeat
   \if@psfbbfound\else
    \if@psfverbose\message{No bounding box comment in #1; using defaults}\fi\fi
   }\closein\@psffilein\fi}%
\ctr@ln@m\@psfbblit
\ctr@ln@m\@psfpercent
{\catcode`\%=12 \global\let\@psfpercent=%\global\def\@psfbblit{%BoundingBox}}%
\ctr@ln@m\@psfaux
\long\def\@psfaux#1#2:#3\\{\ifx#1\@psfpercent
   \def\testit{#2}\ifx\testit\@psfbblit
      \@psfgrab #3 . . . \\%
      \@psffileokfalse
      \global\@psfbbfoundtrue
   \fi\else\ifx#1\par\else\@psffileokfalse\fi\fi}%
\ctr@ld@f\def\@psfempty{}%
\ctr@ld@f\def\@psfgrab #1 #2 #3 #4 #5\\{%
\global\def\@psfllx{#1}\ifx\@psfllx\@psfempty
      \@psfgrab #2 #3 #4 #5 .\\\else
   \global\def\@psflly{#2}%
   \global\def\@psfurx{#3}\global\def\@psfury{#4}\fi}%
\ctr@ld@f\def\PSwrit@cmd#1#2#3{{\Figg@tXY{#1}\c@lprojSP\b@undb@x{\v@lX}{\v@lY}%
    \v@lX=\ptT@ptps\v@lX\v@lY=\ptT@ptps\v@lY%
    \immediate\write#3{\repdecn@mb{\v@lX}\space\repdecn@mb{\v@lY}\space#2}}}
\ctr@ld@f\def\PSwrit@cmdS#1#2#3#4#5{{\Figg@tXY{#1}\c@lprojSP\b@undb@x{\v@lX}{\v@lY}%
    \global\result@t=\v@lX\global\result@@t=\v@lY%
    \v@lX=\ptT@ptps\v@lX\v@lY=\ptT@ptps\v@lY%
    \immediate\write#3{\repdecn@mb{\v@lX}\space\repdecn@mb{\v@lY}\space#2}}%
    \edef#4{\the\result@t}\edef#5{\the\result@@t}}
\ctr@ld@f\def\update@ttr#1#2#3{\Figdisc@rdLTS{#3}{\n@mref}%
    \ifx\n@mref\D@FTref#2{#1}\else#2{#3}\fi}
\ctr@ld@f\def\D@FTref{default}
\ctr@ld@f\def\W@rnmesAttr#1#2{%
    \immediate\write16{*** Unknown attribute: \BS@ #1(..., #2=...)}}
\ctr@ld@f\def\W@rnmeskwd#1#2{%
    \immediate\write16{*** Unknown keyword #2 in \BS@ #1}}
\ctr@ld@f\def\W@rnmesIgn#1{\immediate\write16{*** \BS@ #1 is ignored inside a
     \BS@ figdrawbegin-\BS@ figdrawend block.}}
\ctr@ld@f\def\Psset@lti#1=#2|{\keln@mtr#1|%
    \def\n@mref{blc}\ifx\l@debut\n@mref\update@ttr\D@FTref\P@setblcolor{#2}\else% base line color
    \def\n@mref{bld}\ifx\l@debut\n@mref\update@ttr\D@FTref\P@setbldash{#2}\else% base line dash
    \def\n@mref{blw}\ifx\l@debut\n@mref\update@ttr\D@FTref\P@setblwidth{#2}\else% base line width
    \def\n@mref{sqc}\ifx\l@debut\n@mref\update@ttr\D@FTref\P@setsqcolor{#2}\else% square color
    \def\n@mref{sqd}\ifx\l@debut\n@mref\update@ttr\D@FTref\P@setsqdash{#2}\else% square dash
    \def\n@mref{sqw}\ifx\l@debut\n@mref\update@ttr\D@FTref\P@setsqwidth{#2}\else% square width
    \W@rnmesAttr{figset altitude}{#1}\fi\fi\fi\fi\fi\fi}
\ctr@ln@m\DDV@blcolor
\ctr@ld@f\def\P@setblcolor#1{\edef\DDV@blcolor{#1}}
\ctr@ln@m\DDV@bldash
\ctr@ld@f\def\P@setbldash#1{\edef\DDV@bldash{#1}}
\ctr@ln@m\DDV@blwidth
\ctr@ld@f\def\P@setblwidth#1{\edef\DDV@blwidth{#1}}
\ctr@ln@m\DDV@sqcolor
\ctr@ld@f\def\P@setsqcolor#1{\edef\DDV@sqcolor{#1}}
\ctr@ln@m\DDV@sqdash
\ctr@ld@f\def\P@setsqdash#1{\edef\DDV@sqdash{#1}}
\ctr@ln@m\DDV@sqwidth
\ctr@ld@f\def\P@setsqwidth#1{\edef\DDV@sqwidth{#1}}
\ctr@ld@f\def\figdrawaltitude#1[#2,#3,#4]{{\ifCUR@PS\ifGR@cri%
    \PSc@mment{altitude Square Dim=#1, Triangle=[#2 / #3,#4]}%
    \s@uvc@ntr@l\et@tpsaltitude\resetc@ntr@l{2}\figptorthoprojline-5:=#2/#3,#4/%
    \figvectP -1[#3,#4]\n@rminf{\v@leur}{-1}\vecunit@{-3}{-1}%
    \figvectP -1[-5,#3]\n@rminf{\v@lmin}{-1}\figvectP -2[-5,#4]\n@rminf{\v@lmax}{-2}%
    \ifdim\v@lmin<\v@lmax\s@mme=#3\else\v@lmax=\v@lmin\s@mme=#4\fi%
    \figvectP -4[-5,#2]\vecunit@{-4}{-4}\delt@=#1\unit@%
    \edef\t@ille{\repdecn@mb{\delt@}}\figpttra-1:=-5/\t@ille,-3/%
    \figptstra-3=-5,-1/\t@ille,-4/\figdrawline[#2,-5]%
    \Pss@tspecifSt{color=\DDV@sqcolor,dash=\DDV@sqdash,width=\DDV@sqwidth}%
    \figdrawline[-1,-2,-3]%
    \Psrest@reSt{color=\DDV@sqcolor,dash=\DDV@sqdash,width=\DDV@sqwidth}%
    \ifdim\v@leur<\v@lmax%
    \Pss@tspecifSt{color=\DDV@blcolor,dash=\DDV@bldash,width=\DDV@blwidth}%
    \figdrawline[-5,\the\s@mme]%
    \Psrest@reSt{color=\DDV@blcolor,dash=\DDV@bldash,width=\DDV@blwidth}%
    \fi\PSc@mment{End altitude}\resetc@ntr@l\et@tpsaltitude\fi\fi}}
\ctr@ld@f\def\Ps@rcerc#1;#2(#3,#4){\ellBB@x#1;#2,#2(#3,#4,0)%
    \f@gnewpath{\delt@=#2\unit@\delt@=\ptT@ptps\delt@%
    \BdingB@xfalse%
    \PSwrit@cmd{#1}{\repdecn@mb{\delt@}\space #3\space #4\space arc}{\fwf@g}}}
\ctr@ln@m\figdrawarccirc
\ctr@ld@f\def\Q@arccircDD#1;#2(#3,#4){\ifCUR@PS\ifGR@cri%
    \PSc@mment{arccircDD Center=#1 ; Radius=#2 (Ang1=#3, Ang2=#4)}%
    \iffillm@de\Ps@rcerc#1;#2(#3,#4)%
    \f@gfill%
    \else\Ps@rcerc#1;#2(#3,#4)\f@gstroke\fi%
    \PSc@mment{End arccircDD}\fi\fi}
\ctr@ld@f\def\Q@arccircTD#1,#2,#3;#4(#5,#6){{\ifCUR@PS\ifGR@cri\s@uvc@ntr@l\et@tpsarccircTD%
    \PSc@mment{arccircTD Center=#1,P1=#2,P2=#3 ; Radius=#4 (Ang1=#5, Ang2=#6)}%
    \setc@ntr@l{2}\c@lExtAxes#1,#2,#3(#4)\Q@arcellPATD#1,-4,-5(#5,#6)%
    \PSc@mment{End arccircTD}\resetc@ntr@l\et@tpsarccircTD\fi\fi}}
\ctr@ld@f\def\c@lExtAxes#1,#2,#3(#4){%
    \figvectPTD-5[#1,#2]\vecunit@{-5}{-5}\figvectNTD-4[#1,#2,#3]\vecunit@{-4}{-4}%
    \figvectNVTD-3[-4,-5]\delt@=#4\unit@\edef\r@yon{\repdecn@mb{\delt@}}%
    \figpttra-4:=#1/\r@yon,-5/\figpttra-5:=#1/\r@yon,-3/}
\ctr@ln@m\figdrawarccircP
\ctr@ld@f\def\Q@arccircPDD#1;#2[#3,#4]{{\ifCUR@PS\ifGR@cri\s@uvc@ntr@l\et@tpsarccircPDD%
    \PSc@mment{arccircPDD Center=#1; Radius=#2, [P1=#3, P2=#4]}%
    \Ps@ngleparam#1;#2[#3,#4]\ifdim\v@lmin>\v@lmax\advance\v@lmax\DePI@deg\fi%
    \edef\@ngdeb{\repdecn@mb{\v@lmin}}\edef\@ngfin{\repdecn@mb{\v@lmax}}%
    \figdrawarccirc#1;\r@dius(\@ngdeb,\@ngfin)%
    \PSc@mment{End arccircPDD}\resetc@ntr@l\et@tpsarccircPDD\fi\fi}}
\ctr@ld@f\def\Q@arccircPTD#1;#2[#3,#4,#5]{{\ifCUR@PS\ifGR@cri\s@uvc@ntr@l\et@tpsarccircPTD%
    \PSc@mment{arccircPTD Center=#1; Radius=#2, [P1=#3, P2=#4, P3=#5]}%
    \setc@ntr@l{2}\c@lExtAxes#1,#3,#5(#2)\figdrawarcellPP#1,-4,-5[#3,#4]%
    \PSc@mment{End arccircPTD}\resetc@ntr@l\et@tpsarccircPTD\fi\fi}}
\ctr@ld@f\def\Ps@ngleparam#1;#2[#3,#4]{\setc@ntr@l{2}%
    \figvectPDD-1[#1,#3]\vecunit@{-1}{-1}\Figg@tXY{-1}\arct@n\v@lmin(\v@lX,\v@lY)%
    \figvectPDD-2[#1,#4]\vecunit@{-2}{-2}\Figg@tXY{-2}\arct@n\v@lmax(\v@lX,\v@lY)%
    \v@lmin=\rdT@deg\v@lmin\v@lmax=\rdT@deg\v@lmax%
    \v@leur=#2pt\maxim@m{\mili@u}{-\v@leur}{\v@leur}%
    \edef\r@dius{\repdecn@mb{\mili@u}}}
\ctr@ld@f\def\Ps@rcercBz#1;#2(#3,#4){\Ps@rellBz#1;#2,#2(#3,#4,0)}
\ctr@ld@f\def\Ps@rellBz#1;#2,#3(#4,#5,#6){%
    \ellBB@x#1;#2,#3(#4,#5,#6)\BdingB@xfalse%
    \c@lNbarcs{#4}{#5}\v@leur=#4pt\setc@ntr@l{2}\figptell-13::#1;#2,#3(#4,#6)%
    \f@gnewpath\PSwrit@cmd{-13}{\c@mmoveto}{\fwf@g}%
    \s@mme=\z@\bcl@rellBz#1;#2,#3(#6)\BdingB@xtrue}
\ctr@ld@f\def\bcl@rellBz#1;#2,#3(#4){\relax%
    \ifnum\s@mme<\p@rtent\advance\s@mme\@ne%
    \advance\v@leur\delt@\edef\@ngle{\repdecn@mb\v@leur}\figptell-14::#1;#2,#3(\@ngle,#4)%
    \advance\v@leur\delt@\edef\@ngle{\repdecn@mb\v@leur}\figptell-15::#1;#2,#3(\@ngle,#4)%
    \advance\v@leur\delt@\edef\@ngle{\repdecn@mb\v@leur}\figptell-16::#1;#2,#3(\@ngle,#4)%
    \figptscontrolDD-18[-13,-14,-15,-16]%
    \PSwrit@cmd{-18}{}{\fwf@g}\PSwrit@cmd{-17}{}{\fwf@g}%
    \PSwrit@cmd{-16}{\c@mcurveto}{\fwf@g}%
    \figptcopyDD-13:/-16/\bcl@rellBz#1;#2,#3(#4)\fi}
\ctr@ld@f\def\Ps@rell#1;#2,#3(#4,#5,#6){\ellBB@x#1;#2,#3(#4,#5,#6)%
    \f@gnewpath{\v@lmin=#2\unit@\v@lmin=\ptT@ptps\v@lmin%
    \v@lmax=#3\unit@\v@lmax=\ptT@ptps\v@lmax\BdingB@xfalse%
    \PSwrit@cmd{#1}%
    {#6\space\repdecn@mb{\v@lmin}\space\repdecn@mb{\v@lmax}\space #4\space #5\space ellipse}{\fwf@g}}%
    \global\Use@llipsetrue}
\ctr@ln@m\figdrawarcell
\ctr@ld@f\def\Q@arcellDD#1;#2,#3(#4,#5,#6){{\ifCUR@PS\ifGR@cri%
    \PSc@mment{arcellDD Center=#1 ; XRad=#2, YRad=#3 (Ang1=#4, Ang2=#5, Inclination=#6)}%
    \iffillm@de\Ps@rell#1;#2,#3(#4,#5,#6)%
    \f@gfill%
    \else\Ps@rell#1;#2,#3(#4,#5,#6)\f@gstroke\fi%
    \PSc@mment{End arcellDD}\fi\fi}}
\ctr@ld@f\def\Q@arcellTD#1;#2,#3(#4,#5,#6){{\ifCUR@PS\ifGR@cri\s@uvc@ntr@l\et@tpsarcellTD%
    \PSc@mment{arcellTD Center=#1 ; XRad=#2, YRad=#3 (Ang1=#4, Ang2=#5, Inclination=#6)}%
    \setc@ntr@l{2}\figpttraC -8:=#1/#2,0,0/\figpttraC -7:=#1/0,#3,0/%
    \figvectC -4(0,0,1)\figptsrot -8=-8,-7/#1,#6,-4/\Q@arcellPATD#1,-8,-7(#4,#5)%
    \PSc@mment{End arcellTD}\resetc@ntr@l\et@tpsarcellTD\fi\fi}}
\ctr@ln@m\figdrawarcellPA
\ctr@ld@f\def\Q@arcellPADD#1,#2,#3(#4,#5){{\ifCUR@PS\ifGR@cri\s@uvc@ntr@l\et@tpsarcellPADD%
    \PSc@mment{arcellPADD Center=#1,PtAxis1=#2,PtAxis2=#3 (Ang1=#4, Ang2=#5)}%
    \setc@ntr@l{2}\figvectPDD-1[#1,#2]\vecunit@DD{-1}{-1}\v@lX=\ptT@unit@\result@t%
    \edef\XR@d{\repdecn@mb{\v@lX}}\Figg@tXY{-1}\arct@n\v@lmin(\v@lX,\v@lY)%
    \v@lmin=\rdT@deg\v@lmin\edef\Inclin@{\repdecn@mb{\v@lmin}}%
    \figgetdist\YR@d[#1,#3]\Q@arcellDD#1;\XR@d,\YR@d(#4,#5,\Inclin@)%
    \PSc@mment{End arcellPADD}\resetc@ntr@l\et@tpsarcellPADD\fi\fi}}
\ctr@ld@f\def\Q@arcellPATD#1,#2,#3(#4,#5){{\ifCUR@PS\ifGR@cri\s@uvc@ntr@l\et@tpsarcellPATD%
    \PSc@mment{arcellPATD Center=#1,PtAxis1=#2,PtAxis2=#3 (Ang1=#4, Ang2=#5)}%
    \iffillm@de\Ps@rellPATD#1,#2,#3(#4,#5)%
    \f@gfill%
    \else\Ps@rellPATD#1,#2,#3(#4,#5)\f@gstroke\fi%
    \PSc@mment{End arcellPATD}\resetc@ntr@l\et@tpsarcellPATD\fi\fi}}
\ctr@ld@f\def\Ps@rellPATD#1,#2,#3(#4,#5){\let\c@lprojSP=\relax%
    \setc@ntr@l{2}\figvectPTD-1[#1,#2]\figvectPTD-2[#1,#3]\c@lNbarcs{#4}{#5}%
    \v@leur=#4pt\c@lptellP{#1}{-1}{-2}\Figptpr@j-5:/-3/%
    \f@gnewpath\PSwrit@cmdS{-5}{\c@mmoveto}{\fwf@g}{\X@un}{\Y@un}%
    \edef\C@nt@r{#1}\s@mme=\z@\bcl@rellPATD}
\ctr@ld@f\def\bcl@rellPATD{\relax%
    \ifnum\s@mme<\p@rtent\advance\s@mme\@ne%
    \advance\v@leur\delt@\c@lptellP{\C@nt@r}{-1}{-2}\Figptpr@j-4:/-3/%
    \advance\v@leur\delt@\c@lptellP{\C@nt@r}{-1}{-2}\Figptpr@j-6:/-3/%
    \advance\v@leur\delt@\c@lptellP{\C@nt@r}{-1}{-2}\Figptpr@j-3:/-3/%
    \v@lX=\z@\v@lY=\z@\Figtr@nptDD{-5}{-5}\Figtr@nptDD{2}{-3}%
    \divide\v@lX\@vi\divide\v@lY\@vi%
    \Figtr@nptDD{3}{-4}\Figtr@nptDD{-1.5}{-6}\v@lmin=\v@lX\v@lmax=\v@lY%
    \v@lX=\z@\v@lY=\z@\Figtr@nptDD{2}{-5}\Figtr@nptDD{-5}{-3}%
    \divide\v@lX\@vi\divide\v@lY\@vi\Figtr@nptDD{-1.5}{-4}\Figtr@nptDD{3}{-6}%
    \BdingB@xfalse%
    \Figp@intregDD-4:(\v@lmin,\v@lmax)\PSwrit@cmdS{-4}{}{\fwf@g}{\X@de}{\Y@de}%
    \Figp@intregDD-4:(\v@lX,\v@lY)\PSwrit@cmdS{-4}{}{\fwf@g}{\X@tr}{\Y@tr}%
    \BdingB@xtrue\PSwrit@cmdS{-3}{\c@mcurveto}{\fwf@g}{\X@qu}{\Y@qu}%
    \B@zierBB@x{1}{\Y@un}(\X@un,\X@de,\X@tr,\X@qu)%
    \B@zierBB@x{2}{\X@un}(\Y@un,\Y@de,\Y@tr,\Y@qu)%
    \edef\X@un{\X@qu}\edef\Y@un{\Y@qu}\figptcopyDD-5:/-3/\bcl@rellPATD\fi}
\ctr@ld@f\def\c@lNbarcs#1#2{%
    \delt@=#2pt\advance\delt@-#1pt\maxim@m{\v@lmax}{\delt@}{-\delt@}%
    \v@leur=\v@lmax\divide\v@leur45 \p@rtentiere{\p@rtent}{\v@leur}\advance\p@rtent\@ne%
    \s@mme=\p@rtent\multiply\s@mme\thr@@\divide\delt@\s@mme}
\ctr@ld@f\def\figdrawarcellPP#1,#2,#3[#4,#5]{{\ifCUR@PS\ifGR@cri\s@uvc@ntr@l\et@tpsarcellPP%
    \PSc@mment{arcellPP Center=#1,PtAxis1=#2,PtAxis2=#3 [Point1=#4, Point2=#5]}%
    \setc@ntr@l{2}\figvectP-2[#1,#3]\vecunit@{-2}{-2}\v@lmin=\result@t%
    \invers@{\v@lmax}{\v@lmin}%
    \figvectP-1[#1,#2]\vecunit@{-1}{-1}\v@leur=\result@t%
    \v@leur=\repdecn@mb{\v@lmax}\v@leur\edef\AsB@{\repdecn@mb{\v@leur}}% a/b
    \c@lAngle{#1}{#4}{\v@lmin}\edef\@ngdeb{\repdecn@mb{\v@lmin}}%
    \c@lAngle{#1}{#5}{\v@lmax}\ifdim\v@lmin>\v@lmax\advance\v@lmax\DePI@deg\fi%
    \edef\@ngfin{\repdecn@mb{\v@lmax}}\figdrawarcellPA#1,#2,#3(\@ngdeb,\@ngfin)%
    \PSc@mment{End arcellPP}\resetc@ntr@l\et@tpsarcellPP\fi\fi}}
\ctr@ld@f\def\c@lAngle#1#2#3{\figvectP-3[#1,#2]%
    \c@lproscal\delt@[-3,-1]\c@lproscal\v@leur[-3,-2]%
    \v@leur=\AsB@\v@leur\arct@n#3(\delt@,\v@leur)#3=\rdT@deg#3}
\ctr@ln@w{newif}\if@rrowratio\@rrowratiotrue
\ctr@ln@w{newif}\if@rrowhfill
\ctr@ln@w{newif}\if@rrowhout
\ctr@ld@f\def\Psset@rrowhe@d#1=#2|{\keln@mun#1|%
    \def\n@mref{a}\ifx\l@debut\n@mref\update@ttr\D@FTarrowheadangle\Q@s@tarrowheadangle{#2}\else% angle
    \def\n@mref{f}\ifx\l@debut\n@mref\update@ttr\D@FTarrowheadfill\Q@s@tarrowheadfill{#2}\else% fillmode
    \def\n@mref{l}\ifx\l@debut\n@mref\update@ttr\D@FTarrowheadlength\Q@s@tarrowheadlength{#2}\else% length
    \def\n@mref{o}\ifx\l@debut\n@mref\update@ttr\D@FTarrowheadout\Q@s@tarrowheadout{#2}\else% out
    \def\n@mref{r}\ifx\l@debut\n@mref\update@ttr\D@FTarrowheadratio\Q@s@tarrowheadratio{#2}\else% ratio
    \W@rnmesAttr{figset arrowhead}{#1}\fi\fi\fi\fi\fi}
\ctr@ln@m\@rrowheadangle
\ctr@ln@m\C@AHANG \ctr@ln@m\S@AHANG \ctr@ln@m\UNSS@N
\ctr@ld@f\def\Q@s@tarrowheadangle#1{\edef\@rrowheadangle{#1}{\c@ssin{\C@}{\S@}{#1}%
    \xdef\C@AHANG{\C@}\xdef\S@AHANG{\S@}\v@lmax=\S@ pt%
    \invers@{\v@leur}{\v@lmax}\maxim@m{\v@leur}{\v@leur}{-\v@leur}%
    \xdef\UNSS@N{\the\v@leur}}}
\ctr@ld@f\def\Q@s@tarrowheadfill#1{\expandafter\set@rrowhfill#1:}
\ctr@ld@f\def\set@rrowhfill#1#2:{\if#1n\@rrowhfillfalse\else\@rrowhfilltrue\fi}
\ctr@ld@f\def\Q@s@tarrowheadout#1{\expandafter\set@rrowhout#1:}
\ctr@ld@f\def\set@rrowhout#1#2:{\if#1n\@rrowhoutfalse\else\@rrowhouttrue\fi}
\ctr@ln@m\@rrowheadlength
\ctr@ld@f\def\Q@s@tarrowheadlength#1{\edef\@rrowheadlength{#1}\@rrowratiofalse}
\ctr@ln@m\@rrowheadratio
\ctr@ld@f\def\Q@s@tarrowheadratio#1{\edef\@rrowheadratio{#1}\@rrowratiotrue}
\ctr@ln@m\D@FTarrowheadlength
\ctr@ld@f\def\figresetarrowhead{%
    \Q@s@tarrowheadangle{\D@FTarrowheadangle}%
    \Q@s@tarrowheadfill{\D@FTarrowheadfill}%
    \Q@s@tarrowheadout{\D@FTarrowheadout}%
    \Q@s@tarrowheadratio{\D@FTarrowheadratio}%
    \d@fm@cdim\D@FTarrowheadlength{\D@FTh@rdahlength}% Valeur par defaut...
    \Q@s@tarrowheadlength{\D@FTarrowheadlength}}
\ctr@ld@f\def\D@FTarrowheadratio{0.1}
\ctr@ld@f\def\D@FTarrowheadangle{20}
\ctr@ld@f\def\D@FTarrowheadfill{no}
\ctr@ld@f\def\D@FTarrowheadout{no}
\ctr@ld@f\def\D@FTh@rdahlength{8pt}
\ctr@ln@m\figdrawarrow
\ctr@ld@f\def\Q@arrowDD[#1,#2]{{\ifCUR@PS\ifGR@cri\s@uvc@ntr@l\et@tpsarrow%
    \PSc@mment{arrowDD [Pt1,Pt2]=[#1,#2]}\Q@s@tfillmode{no}%
    \Q@arrowheadDD[#1,#2]\setc@ntr@l{2}\figdrawline[#1,-3]%
    \PSc@mment{End arrowDD}\resetc@ntr@l\et@tpsarrow\fi\fi}}
\ctr@ld@f\def\Q@arrowTD[#1,#2]{{\ifCUR@PS\ifGR@cri\s@uvc@ntr@l\et@tpsarrowTD%
    \PSc@mment{arrowTD [Pt1,Pt2]=[#1,#2]}\resetc@ntr@l{2}%
    \Figptpr@j-5:/#1/\Figptpr@j-6:/#2/\let\c@lprojSP=\relax\Q@arrowDD[-5,-6]%
    \PSc@mment{End arrowTD}\resetc@ntr@l\et@tpsarrowTD\fi\fi}}
\ctr@ln@m\figdrawarrowhead
\ctr@ld@f\def\Q@arrowheadDD[#1,#2]{{\ifCUR@PS\ifGR@cri\s@uvc@ntr@l\et@tpsarrowheadDD%
    \if@rrowhfill\def\@hangle{-\@rrowheadangle}\else\def\@hangle{\@rrowheadangle}\fi%
    \if@rrowratio%
    \if@rrowhout\def\@hratio{-\@rrowheadratio}\else\def\@hratio{\@rrowheadratio}\fi%
    \PSc@mment{arrowheadDD Ratio=\@hratio, Angle=\@hangle, [Pt1,Pt2]=[#1,#2]}%
    \Ps@rrowhead\@hratio,\@hangle[#1,#2]%
    \else%
    \if@rrowhout\def\@hlength{-\@rrowheadlength}\else\def\@hlength{\@rrowheadlength}\fi%
    \PSc@mment{arrowheadDD Length=\@hlength, Angle=\@hangle, [Pt1,Pt2]=[#1,#2]}%
    \Ps@rrowheadfd\@hlength,\@hangle[#1,#2]%
    \fi%
    \PSc@mment{End arrowheadDD}\resetc@ntr@l\et@tpsarrowheadDD\fi\fi}}
\ctr@ld@f\def\Q@arrowheadTD[#1,#2]{{\ifCUR@PS\ifGR@cri\s@uvc@ntr@l\et@tpsarrowheadTD%
    \PSc@mment{arrowheadTD [Pt1,Pt2]=[#1,#2]}\resetc@ntr@l{2}%
    \Figptpr@j-5:/#1/\Figptpr@j-6:/#2/\let\c@lprojSP=\relax\Q@arrowheadDD[-5,-6]%
    \PSc@mment{End arrowheadTD}\resetc@ntr@l\et@tpsarrowheadTD\fi\fi}}
\ctr@ld@f\def\Ps@rrowhead#1,#2[#3,#4]{\v@leur=#1\p@\maxim@m{\v@leur}{\v@leur}{-\v@leur}%
    \ifdim\v@leur>\Cepsil@n{% Arrow is not degenerated
    \PSc@mment{@rrowhead Ratio=#1, Angle=#2, [Pt1,Pt2]=[#3,#4]}\v@leur=\UNSS@N%
    \v@leur=\CUR@width\v@leur\v@leur=\ptpsT@pt\v@leur\delt@=.5\v@leur% = width / (2 sin(Angle))
    \setc@ntr@l{2}\figvectPDD-3[#4,#3]%
    \Figg@tXY{-3}\v@lX=#1\v@lX\v@lY=#1\v@lY\Figv@ctCreg-3(\v@lX,\v@lY)%
    \vecunit@{-4}{-3}\mili@u=\result@t%
    \ifdim#2pt>\z@\v@lXa=-\C@AHANG\delt@%
     \edef\c@ef{\repdecn@mb{\v@lXa}}\figpttraDD-3:=-3/\c@ef,-4/\fi%
    \edef\c@ef{\repdecn@mb{\delt@}}%
    \v@lXa=\mili@u\v@lXa=\C@AHANG\v@lXa%
    \v@lYa=\ptpsT@pt\p@\v@lYa=\CUR@width\v@lYa\v@lYa=\sDcc@ngle\v@lYa%
    \advance\v@lXa-\v@lYa\gdef\sDcc@ngle{0}%
    \ifdim\v@lXa>\v@leur\edef\c@efendpt{\repdecn@mb{\v@leur}}%
    \else\edef\c@efendpt{\repdecn@mb{\v@lXa}}\fi%
    \Figg@tXY{-3}\v@lmin=\v@lX\v@lmax=\v@lY%
    \v@lXa=\C@AHANG\v@lmin\v@lYa=\S@AHANG\v@lmax\advance\v@lXa\v@lYa%
    \v@lYa=-\S@AHANG\v@lmin\v@lX=\C@AHANG\v@lmax\advance\v@lYa\v@lX%
    \setc@ntr@l{1}\Figg@tXY{#4}\advance\v@lX\v@lXa\advance\v@lY\v@lYa%
    \setc@ntr@l{2}\Figp@intregDD-2:(\v@lX,\v@lY)%
    \v@lXa=\C@AHANG\v@lmin\v@lYa=-\S@AHANG\v@lmax\advance\v@lXa\v@lYa%
    \v@lYa=\S@AHANG\v@lmin\v@lX=\C@AHANG\v@lmax\advance\v@lYa\v@lX%
    \setc@ntr@l{1}\Figg@tXY{#4}\advance\v@lX\v@lXa\advance\v@lY\v@lYa%
    \setc@ntr@l{2}\Figp@intregDD-1:(\v@lX,\v@lY)%
    \ifdim#2pt<\z@\fillm@detrue\figdrawline[-2,#4,-1]% fill
    \else\figptstraDD-3=#4,-2,-1/\c@ef,-4/\s@uvdash{\typ@dash}\Q@s@tdash{\D@FTdash}%
    \figdrawline[-2,-3,-1]\Q@s@tdash{\typ@dash}\fi% no fill
    \ifdim#1pt>\z@\figpttraDD-3:=#4/\c@efendpt,-4/\else\figptcopyDD-3:/#4/\fi%
    \PSc@mment{End @rrowhead}}\fi}
\ctr@ld@f\def\sDcc@ngle{0}% Initialisation
\ctr@ld@f\def\Ps@rrowheadfd#1,#2[#3,#4]{{%
    \PSc@mment{@rrowheadfd Length=#1, Angle=#2, [Pt1,Pt2]=[#3,#4]}%
    \setc@ntr@l{2}\figvectPDD-1[#3,#4]\n@rmeucDD{\v@leur}{-1}\v@leur=\ptT@unit@\v@leur%
    \invers@{\v@leur}{\v@leur}\v@leur=#1\v@leur\edef\R@tio{\repdecn@mb{\v@leur}}%
    \Ps@rrowhead\R@tio,#2[#3,#4]\PSc@mment{End @rrowheadfd}}}
\ctr@ln@m\figdrawarrowBezier
\ctr@ld@f\def\Q@arrowBezierDD[#1,#2,#3,#4]{{\ifCUR@PS\ifGR@cri\s@uvc@ntr@l\et@tpsarrowBezierDD%
    \PSc@mment{arrowBezierDD Control points=#1,#2,#3,#4}\setc@ntr@l{2}%
    \if@rrowratio\c@larclengthDD\v@leur,10[#1,#2,#3,#4]\else\v@leur=\z@\fi%
    \Ps@rrowB@zDD\v@leur[#1,#2,#3,#4]%
    \PSc@mment{End arrowBezierDD}\resetc@ntr@l\et@tpsarrowBezierDD\fi\fi}}
\ctr@ld@f\def\Q@arrowBezierTD[#1,#2,#3,#4]{{\ifCUR@PS\ifGR@cri\s@uvc@ntr@l\et@tpsarrowBezierTD%
    \PSc@mment{arrowBezierTD Control points=#1,#2,#3,#4}\resetc@ntr@l{2}%
    \Figptpr@j-7:/#1/\Figptpr@j-8:/#2/\Figptpr@j-9:/#3/\Figptpr@j-10:/#4/%
    \let\c@lprojSP=\relax\ifnum\CUR@proj<\tw@\Q@arrowBezierDD[-7,-8,-9,-10]%
    \else\f@gnewpath\PSwrit@cmd{-7}{\c@mmoveto}{\fwf@g}%
    \if@rrowratio\c@larclengthDD\mili@u,10[-7,-8,-9,-10]\else\mili@u=\z@\fi%
    \p@rtent=\NBz@rcs\advance\p@rtent\m@ne\subB@zierTD\p@rtent[#1,#2,#3,#4]%
    \f@gstroke%
    \advance\v@lmin\p@rtent\delt@% Initialized in \subB@zierTD
    \v@leur=\v@lmin\advance\v@leur0.33333 \delt@\edef\unti@rs{\repdecn@mb{\v@leur}}%
    \v@leur=\v@lmin\advance\v@leur0.66666 \delt@\edef\deti@rs{\repdecn@mb{\v@leur}}%
    \figptcopyDD-8:/-10/\c@lsubBzarc\unti@rs,\deti@rs[#1,#2,#3,#4]%
    \figptcopyDD-8:/-4/\figptcopyDD-9:/-3/\Ps@rrowB@zDD\mili@u[-7,-8,-9,-10]\fi%
    \PSc@mment{End arrowBezierTD}\resetc@ntr@l\et@tpsarrowBezierTD\fi\fi}}
\ctr@ld@f\def\c@larclengthDD#1,#2[#3,#4,#5,#6]{{\p@rtent=#2\figptcopyDD-5:/#3/%
    \delt@=\p@\divide\delt@\p@rtent\c@rre=\z@\v@leur=\z@\s@mme=\z@%
    \loop\ifnum\s@mme<\p@rtent\advance\s@mme\@ne\advance\v@leur\delt@%
    \edef\T@{\repdecn@mb{\v@leur}}\figptBezierDD-6::\T@[#3,#4,#5,#6]%
    \figvectPDD-1[-5,-6]\n@rmeucDD{\mili@u}{-1}\advance\c@rre\mili@u%
    \figptcopyDD-5:/-6/\repeat\global\result@t=\ptT@unit@\c@rre}#1=\result@t}
\ctr@ld@f\def\Ps@rrowB@zDD#1[#2,#3,#4,#5]{{\Q@s@tfillmode{no}%
    \if@rrowratio\delt@=\@rrowheadratio#1\else\delt@=\@rrowheadlength pt\fi%
    \v@leur=\C@AHANG\delt@\edef\R@dius{\repdecn@mb{\v@leur}}%
    \FigptintercircB@zDD-5::0,\R@dius[#5,#4,#3,#2]%
    \Q@s@tarrowheadlength{\repdecn@mb{\delt@}}\Q@arrowheadDD[-5,#5]%
    \let\n@rmeuc=\n@rmeucDD\figgetdist\R@dius[#5,-3]%
    \FigptintercircB@zDD-6::0,\R@dius[#5,#4,#3,#2]%
    \figptBezierDD-5::0.33333[#5,#4,#3,#2]\figptBezierDD-3::0.66666[#5,#4,#3,#2]%
    \figptscontrolDD-5[-6,-5,-3,#2]\Q@BezierDD1[-6,-5,-4,#2]}}
\ctr@ln@m\figdrawarrowcirc
\ctr@ld@f\def\Q@arrowcircDD#1;#2(#3,#4){{\ifCUR@PS\ifGR@cri\s@uvc@ntr@l\et@tpsarrowcircDD%
    \PSc@mment{arrowcircDD Center=#1 ; Radius=#2 (Ang1=#3,Ang2=#4)}%
    \Q@s@tfillmode{no}\Pscirc@rrowhead#1;#2(#3,#4)%
    \setc@ntr@l{2}\figvectPDD -4[#1,-3]\vecunit@{-4}{-4}%
    \Figg@tXY{-4}\arct@n\v@lmin(\v@lX,\v@lY)%
    \v@lmin=\rdT@deg\v@lmin\v@leur=#4pt\advance\v@leur-\v@lmin%
    \maxim@m{\v@leur}{\v@leur}{-\v@leur}%
    \ifdim\v@leur>\DemiPI@deg\relax\ifdim\v@lmin<#4pt\advance\v@lmin\DePI@deg%
    \else\advance\v@lmin-\DePI@deg\fi\fi\edef\ar@ngle{\repdecn@mb{\v@lmin}}%
    \ifdim#3pt<#4pt\figdrawarccirc#1;#2(#3,\ar@ngle)\else\figdrawarccirc#1;#2(\ar@ngle,#3)\fi%
    \PSc@mment{End arrowcircDD}\resetc@ntr@l\et@tpsarrowcircDD\fi\fi}}
\ctr@ld@f\def\Q@arrowcircTD#1,#2,#3;#4(#5,#6){{\ifCUR@PS\ifGR@cri\s@uvc@ntr@l\et@tpsarrowcircTD%
    \PSc@mment{arrowcircTD Center=#1,P1=#2,P2=#3 ; Radius=#4 (Ang1=#5, Ang2=#6)}%
    \resetc@ntr@l{2}\c@lExtAxes#1,#2,#3(#4)\let\c@lprojSP=\relax%
    \figvectPTD-11[#1,-4]\figvectPTD-12[#1,-5]\c@lNbarcs{#5}{#6}%
    \if@rrowratio\v@lmax=\degT@rd\v@lmax\edef\D@lpha{\repdecn@mb{\v@lmax}}\fi%
    \advance\p@rtent\m@ne\mili@u=\z@%
    \v@leur=#5pt\c@lptellP{#1}{-11}{-12}\Figptpr@j-9:/-3/%
    \f@gnewpath\PSwrit@cmdS{-9}{\c@mmoveto}{\fwf@g}{\X@un}{\Y@un}%
    \edef\C@nt@r{#1}\s@mme=\z@\bcl@rcircTD\f@gstroke%
    \advance\v@leur\delt@\c@lptellP{#1}{-11}{-12}\Figptpr@j-5:/-3/%
    \advance\v@leur\delt@\c@lptellP{#1}{-11}{-12}\Figptpr@j-6:/-3/%
    \advance\v@leur\delt@\c@lptellP{#1}{-11}{-12}\Figptpr@j-10:/-3/%
    \figptscontrolDD-8[-9,-5,-6,-10]%
    \if@rrowratio\c@lcurvradDD0.5[-9,-8,-7,-10]\advance\mili@u\result@t%
    \maxim@m{\mili@u}{\mili@u}{-\mili@u}\mili@u=\ptT@unit@\mili@u%
    \mili@u=\D@lpha\mili@u\advance\p@rtent\@ne\divide\mili@u\p@rtent\fi%
    \Ps@rrowB@zDD\mili@u[-9,-8,-7,-10]%
    \PSc@mment{End arrowcircTD}\resetc@ntr@l\et@tpsarrowcircTD\fi\fi}}
\ctr@ld@f\def\bcl@rcircTD{\relax%
    \ifnum\s@mme<\p@rtent\advance\s@mme\@ne%
    \advance\v@leur\delt@\c@lptellP{\C@nt@r}{-11}{-12}\Figptpr@j-5:/-3/%
    \advance\v@leur\delt@\c@lptellP{\C@nt@r}{-11}{-12}\Figptpr@j-6:/-3/%
    \advance\v@leur\delt@\c@lptellP{\C@nt@r}{-11}{-12}\Figptpr@j-10:/-3/%
    \figptscontrolDD-8[-9,-5,-6,-10]\BdingB@xfalse%
    \PSwrit@cmdS{-8}{}{\fwf@g}{\X@de}{\Y@de}\PSwrit@cmdS{-7}{}{\fwf@g}{\X@tr}{\Y@tr}%
    \BdingB@xtrue\PSwrit@cmdS{-10}{\c@mcurveto}{\fwf@g}{\X@qu}{\Y@qu}%
    \if@rrowratio\c@lcurvradDD0.5[-9,-8,-7,-10]\advance\mili@u\result@t\fi%
    \B@zierBB@x{1}{\Y@un}(\X@un,\X@de,\X@tr,\X@qu)%
    \B@zierBB@x{2}{\X@un}(\Y@un,\Y@de,\Y@tr,\Y@qu)%
    \edef\X@un{\X@qu}\edef\Y@un{\Y@qu}\figptcopyDD-9:/-10/\bcl@rcircTD\fi}
\ctr@ld@f\def\Pscirc@rrowhead#1;#2(#3,#4){{%
    \PSc@mment{circ@rrowhead Center=#1 ; Radius=#2 (Ang1=#3,Ang2=#4)}%
    \v@leur=#2\unit@\edef\s@glen{\repdecn@mb{\v@leur}}\v@lY=\z@\v@lX=\v@leur%
    \resetc@ntr@l{2}\Figv@ctCreg-3(\v@lX,\v@lY)\figpttraDD-5:=#1/1,-3/%
    \figptrotDD-5:=-5/#1,#4/%
    \figvectPDD-3[#1,-5]\Figg@tXY{-3}\v@leur=\v@lX%
    \ifdim#3pt<#4pt\v@lX=\v@lY\v@lY=-\v@leur\else\v@lX=-\v@lY\v@lY=\v@leur\fi%
    \Figv@ctCreg-3(\v@lX,\v@lY)\vecunit@{-3}{-3}%
    \if@rrowratio\v@leur=#4pt\advance\v@leur-#3pt\maxim@m{\mili@u}{-\v@leur}{\v@leur}%
    \mili@u=\degT@rd\mili@u\v@leur=\s@glen\mili@u\edef\s@glen{\repdecn@mb{\v@leur}}%
    \mili@u=#2\mili@u\mili@u=\@rrowheadratio\mili@u\else\mili@u=\@rrowheadlength pt\fi%
    \figpttraDD-6:=-5/\s@glen,-3/\v@leur=#2pt\v@leur=2\v@leur%
    \invers@{\v@leur}{\v@leur}\c@rre=\repdecn@mb{\v@leur}\mili@u% = sin = L/(2R)
    \mili@u=\c@rre\mili@u=\repdecn@mb{\c@rre}\mili@u%
    \v@leur=\p@\advance\v@leur-\mili@u% \v@leur = cos*cos
    \invers@{\mili@u}{2\v@leur}\delt@=\c@rre\delt@=\repdecn@mb{\mili@u}\delt@%
    \xdef\sDcc@ngle{\repdecn@mb{\delt@}}% sin/(2*cos*cos) used in \Ps@rrowhead
    \sqrt@{\mili@u}{\v@leur}\arct@n\v@leur(\mili@u,\c@rre)%
    \v@leur=\rdT@deg\v@leur% \cor@ngle = atan(L/sqrt(4R*R-L*L))
    \ifdim#3pt<#4pt\v@leur=-\v@leur\fi%
    \if@rrowhout\v@leur=-\v@leur\fi\edef\cor@ngle{\repdecn@mb{\v@leur}}%
    \figptrotDD-6:=-6/-5,\cor@ngle/\Q@arrowheadDD[-6,-5]%
    \PSc@mment{End circ@rrowhead}}}
\ctr@ln@m\figdrawarrowcircP
\ctr@ld@f\def\Q@arrowcircPDD#1;#2[#3,#4]{{\ifCUR@PS\ifGR@cri%
    \PSc@mment{arrowcircPDD Center=#1; Radius=#2, [P1=#3,P2=#4]}%
    \s@uvc@ntr@l\et@tpsarrowcircPDD\Ps@ngleparam#1;#2[#3,#4]%
    \ifdim\v@leur>\z@\ifdim\v@lmin>\v@lmax\advance\v@lmax\DePI@deg\fi%
    \else\ifdim\v@lmin<\v@lmax\advance\v@lmin\DePI@deg\fi\fi%
    \edef\@ngdeb{\repdecn@mb{\v@lmin}}\edef\@ngfin{\repdecn@mb{\v@lmax}}%
    \figdrawarrowcirc#1;\r@dius(\@ngdeb,\@ngfin)%
    \PSc@mment{End arrowcircPDD}\resetc@ntr@l\et@tpsarrowcircPDD\fi\fi}}
\ctr@ld@f\def\Q@arrowcircPTD#1;#2[#3,#4,#5]{{\ifCUR@PS\ifGR@cri\s@uvc@ntr@l\et@tpsarrowcircPTD%
    \PSc@mment{arrowcircPTD Center=#1; Radius=#2, [P1=#3,P2=#4,P3=#5]}%
    \figgetangleTD\@ngfin[#1,#3,#4,#5]\v@leur=#2pt%
    \maxim@m{\mili@u}{-\v@leur}{\v@leur}\edef\r@dius{\repdecn@mb{\mili@u}}%
    \ifdim\v@leur<\z@\v@lmax=\@ngfin pt\advance\v@lmax-\DePI@deg%
    \edef\@ngfin{\repdecn@mb{\v@lmax}}\fi\Q@arrowcircTD#1,#3,#5;\r@dius(0,\@ngfin)%
    \PSc@mment{End arrowcircPTD}\resetc@ntr@l\et@tpsarrowcircPTD\fi\fi}}
\ctr@ld@f\def\figdrawaxes#1(#2){{\ifCUR@PS\ifGR@cri\s@uvc@ntr@l\et@tpsaxes%
    \PSc@mment{axes Origin=#1 Range=(#2)}\an@lys@xes#2,:\resetc@ntr@l{2}%
    \ifx\t@xt@\empty\ifTr@isDim\Q@@xes#1(0,#2,0,#2,0,#2)\else\Q@@xes#1(0,#2,0,#2)\fi%
    \else\Q@@xes#1(#2)\fi\PSc@mment{End axes}\resetc@ntr@l\et@tpsaxes\fi\fi}}
\ctr@ld@f\def\an@lys@xes#1,#2:{\def\t@xt@{#2}}
\ctr@ln@m\Q@@xes
\ctr@ld@f\def\Q@@xesDD#1(#2,#3,#4,#5){%
    \figpttraC-5:=#1/#2,0/\figpttraC-6:=#1/#3,0/\Q@arrowDD[-5,-6]%
    \figpttraC-5:=#1/0,#4/\figpttraC-6:=#1/0,#5/\Q@arrowDD[-5,-6]}
\ctr@ld@f\def\Q@@xesTD#1(#2,#3,#4,#5,#6,#7){%
    \figpttraC-7:=#1/#2,0,0/\figpttraC-8:=#1/#3,0,0/\Q@arrowTD[-7,-8]%
    \figpttraC-7:=#1/0,#4,0/\figpttraC-8:=#1/0,#5,0/\Q@arrowTD[-7,-8]%
    \figpttraC-7:=#1/0,0,#6/\figpttraC-8:=#1/0,0,#7/\Q@arrowTD[-7,-8]}
\ctr@ln@m\newGr@FN
\ctr@ld@f\def\newGr@FNPDF#1{\s@mme=\Gr@FNb\advance\s@mme\@ne\xdef\Gr@FNb{\number\s@mme}}
\ctr@ld@f\def\newGr@FNDVI#1{\newGr@FNPDF{}\xdef#1{\jobname GI\Gr@FNb.anx}}
\ctr@ld@f\def\figdrawbegin#1{\newGr@FN\DefGIfilen@me\gdef\@utoFN{0}%
    \def\t@xt@{#1}\relax\ifx\t@xt@\empty\GRupdatem@detrue%
    \gdef\@utoFN{1}\Psb@ginfig\DefGIfilen@me\else\expandafter\Psb@ginfigNu@#1 :\fi}
\ctr@ld@f\def\Psb@ginfigNu@#1 #2:{\def\t@xt@{#1}\relax\ifx\t@xt@\empty\def\t@xt@{#2}%
    \ifx\t@xt@\empty\GRupdatem@detrue\gdef\@utoFN{1}\Psb@ginfig\DefGIfilen@me%
    \else\Psb@ginfigNu@#2:\fi\else\Psb@ginfig{#1}\fi}
\ctr@ln@m\PSfilen@me \ctr@ln@m\auxfilen@me
\ctr@ld@f\def\Psb@ginfig#1{\ifCUR@PS\else%
    \edef\PSfilen@me{#1}\edef\auxfilen@me{\jobname.anx}%
    \ifGRupdatem@de\GR@critrue\else\openin\frf@g=\PSfilen@me\relax%
    \ifeof\frf@g\GR@critrue\else\GR@crifalse\fi\closein\frf@g\fi%
    \CUR@PStrue\c@ldefproj\expandafter\setupd@te\D@FTupdate:%
    \ifGR@cri\initb@undb@x%
    \immediate\openout\fwf@g=\auxfilen@me\initpss@ttings\fi%
    \fi}
\ctr@ld@f\def\Gr@FNb{0}
\ctr@ld@f\def\figforTeXFileno{\Gr@FNb}
\ctr@ld@f\def\figforTeXFigno{0 }
\ctr@ld@f\def\figforTeXnextFigno{1 }
\ctr@ld@f\edef\DefGIfilen@me{\jobname GI.anx}
\ctr@ld@f\def\initpss@ttings{\figreset{altitude,arrowhead,curve,general,flowchart,mesh,trimesh}%
    \Use@llipsefalse}
\ctr@ld@f\def\B@zierBB@x#1#2(#3,#4,#5,#6){{\c@rre=\t@n\epsil@n% Do not reduce this value
    \v@lmax=#4\advance\v@lmax-#5\v@lmax=\thr@@\v@lmax\advance\v@lmax#6\advance\v@lmax-#3%
    \mili@u=#4\mili@u=-\tw@\mili@u\advance\mili@u#3\advance\mili@u#5%
    \v@lmin=#4\advance\v@lmin-#3\maxim@m{\v@leur}{-\v@lmax}{\v@lmax}%
    \maxim@m{\delt@}{-\mili@u}{\mili@u}\maxim@m{\v@leur}{\v@leur}{\delt@}%
    \maxim@m{\delt@}{-\v@lmin}{\v@lmin}\maxim@m{\v@leur}{\v@leur}{\delt@}%
    \ifdim\v@leur>\c@rre\invers@{\v@leur}{\v@leur}\edef\Uns@rM@x{\repdecn@mb{\v@leur}}%
    \v@lmax=\Uns@rM@x\v@lmax\mili@u=\Uns@rM@x\mili@u\v@lmin=\Uns@rM@x\v@lmin%
    \maxim@m{\v@leur}{-\v@lmax}{\v@lmax}\ifdim\v@leur<\c@rre%
    \maxim@m{\v@leur}{-\mili@u}{\mili@u}\ifdim\v@leur<\c@rre\else%
    \invers@{\mili@u}{\mili@u}\v@leur=-0.5\v@lmin%
    \v@leur=\repdecn@mb{\mili@u}\v@leur\m@jBBB@x{\v@leur}{#1}{#2}(#3,#4,#5,#6)\fi%
    \else\delt@=\repdecn@mb{\mili@u}\mili@u\v@leur=\repdecn@mb{\v@lmax}\v@lmin%
    \advance\delt@-\v@leur\ifdim\delt@<\z@\else\invers@{\v@lmax}{\v@lmax}%
    \edef\Uns@rAp{\repdecn@mb{\v@lmax}}\sqrt@{\delt@}{\delt@}%
    \v@leur=-\mili@u\advance\v@leur\delt@\v@leur=\Uns@rAp\v@leur%
    \m@jBBB@x{\v@leur}{#1}{#2}(#3,#4,#5,#6)%
    \v@leur=-\mili@u\advance\v@leur-\delt@\v@leur=\Uns@rAp\v@leur%
    \m@jBBB@x{\v@leur}{#1}{#2}(#3,#4,#5,#6)\fi\fi\fi}}
\ctr@ld@f\def\m@jBBB@x#1#2#3(#4,#5,#6,#7){{\relax\ifdim#1>\z@\ifdim#1<\p@%
    \edef\T@{\repdecn@mb{#1}}\v@lX=\p@\advance\v@lX-#1\edef\UNmT@{\repdecn@mb{\v@lX}}%
    \v@lX=#4\v@lY=#5\v@lZ=#6\v@lXa=#7\v@lX=\UNmT@\v@lX\advance\v@lX\T@\v@lY%
    \v@lY=\UNmT@\v@lY\advance\v@lY\T@\v@lZ\v@lZ=\UNmT@\v@lZ\advance\v@lZ\T@\v@lXa%
    \v@lX=\UNmT@\v@lX\advance\v@lX\T@\v@lY\v@lY=\UNmT@\v@lY\advance\v@lY\T@\v@lZ%
    \v@lX=\UNmT@\v@lX\advance\v@lX\T@\v@lY%
    \ifcase#2\or\v@lY=#3\or\v@lY=\v@lX\v@lX=#3\fi\b@undb@x{\v@lX}{\v@lY}\fi\fi}}
\ctr@ld@f\def\PsB@zier#1[#2]{{\f@gnewpath%
    \s@mme=\z@\def\list@num{#2,0}\extrairelepremi@r\p@int\de\list@num%
    \PSwrit@cmdS{\p@int}{\c@mmoveto}{\fwf@g}{\X@un}{\Y@un}\p@rtent=#1\bclB@zier}}
\ctr@ld@f\def\bclB@zier{\relax%
    \ifnum\s@mme<\p@rtent\advance\s@mme\@ne\BdingB@xfalse%
    \extrairelepremi@r\p@int\de\list@num\PSwrit@cmdS{\p@int}{}{\fwf@g}{\X@de}{\Y@de}%
    \extrairelepremi@r\p@int\de\list@num\PSwrit@cmdS{\p@int}{}{\fwf@g}{\X@tr}{\Y@tr}%
    \BdingB@xtrue%
    \extrairelepremi@r\p@int\de\list@num\PSwrit@cmdS{\p@int}{\c@mcurveto}{\fwf@g}{\X@qu}{\Y@qu}%
    \B@zierBB@x{1}{\Y@un}(\X@un,\X@de,\X@tr,\X@qu)%
    \B@zierBB@x{2}{\X@un}(\Y@un,\Y@de,\Y@tr,\Y@qu)%
    \edef\X@un{\X@qu}\edef\Y@un{\Y@qu}\bclB@zier\fi}
\ctr@ln@m\figdrawBezier
\ctr@ld@f\def\Q@BezierDD#1[#2]{\ifCUR@PS\ifGR@cri%
    \PSc@mment{BezierDD N arcs=#1, Control points=#2}%
    \iffillm@de\PsB@zier#1[#2]%
    \f@gfill%
    \else\PsB@zier#1[#2]\f@gstroke\fi%
    \PSc@mment{End BezierDD}\fi\fi}
\ctr@ln@m\et@tpsBezierTD% ou doubler les {}
\ctr@ld@f\def\Q@BezierTD#1[#2]{\ifCUR@PS\ifGR@cri\s@uvc@ntr@l\et@tpsBezierTD%
    \PSc@mment{BezierTD N arcs=#1, Control points=#2}%
    \iffillm@de\PsB@zierTD#1[#2]%
    \f@gfill%
    \else\PsB@zierTD#1[#2]\f@gstroke\fi%
    \PSc@mment{End BezierTD}\resetc@ntr@l\et@tpsBezierTD\fi\fi}
\ctr@ld@f\def\PsB@zierTD#1[#2]{\ifnum\CUR@proj<\tw@\PsB@zier#1[#2]\else\PsB@zier@TD#1[#2]\fi}
\ctr@ld@f\def\PsB@zier@TD#1[#2]{{\f@gnewpath%
    \s@mme=\z@\def\list@num{#2,0}\extrairelepremi@r\p@int\de\list@num%
    \let\c@lprojSP=\relax\setc@ntr@l{2}\Figptpr@j-7:/\p@int/%
    \PSwrit@cmd{-7}{\c@mmoveto}{\fwf@g}%
    \loop\ifnum\s@mme<#1\advance\s@mme\@ne\extrairelepremi@r\p@intun\de\list@num%
    \extrairelepremi@r\p@intde\de\list@num\extrairelepremi@r\p@inttr\de\list@num%
    \subB@zierTD\NBz@rcs[\p@int,\p@intun,\p@intde,\p@inttr]\edef\p@int{\p@inttr}\repeat}}
\ctr@ld@f\def\subB@zierTD#1[#2,#3,#4,#5]{\delt@=\p@\divide\delt@\NBz@rcs\v@lmin=\z@%
    {\Figg@tXY{-7}\edef\X@un{\the\v@lX}\edef\Y@un{\the\v@lY}%
    \s@mme=\z@\loop\ifnum\s@mme<#1\advance\s@mme\@ne%
    \v@leur=\v@lmin\advance\v@leur0.33333 \delt@\edef\unti@rs{\repdecn@mb{\v@leur}}%
    \v@leur=\v@lmin\advance\v@leur0.66666 \delt@\edef\deti@rs{\repdecn@mb{\v@leur}}%
    \advance\v@lmin\delt@\edef\trti@rs{\repdecn@mb{\v@lmin}}%
    \figptBezierTD-8::\trti@rs[#2,#3,#4,#5]\Figptpr@j-8:/-8/%
    \c@lsubBzarc\unti@rs,\deti@rs[#2,#3,#4,#5]\BdingB@xfalse%
    \PSwrit@cmdS{-4}{}{\fwf@g}{\X@de}{\Y@de}\PSwrit@cmdS{-3}{}{\fwf@g}{\X@tr}{\Y@tr}%
    \BdingB@xtrue\PSwrit@cmdS{-8}{\c@mcurveto}{\fwf@g}{\X@qu}{\Y@qu}%
    \B@zierBB@x{1}{\Y@un}(\X@un,\X@de,\X@tr,\X@qu)%
    \B@zierBB@x{2}{\X@un}(\Y@un,\Y@de,\Y@tr,\Y@qu)%
    \edef\X@un{\X@qu}\edef\Y@un{\Y@qu}\figptcopyDD-7:/-8/\repeat}}
\ctr@ld@f\def\NBz@rcs{2}
\ctr@ld@f\def\c@lsubBzarc#1,#2[#3,#4,#5,#6]{\figptBezierTD-5::#1[#3,#4,#5,#6]%
    \figptBezierTD-6::#2[#3,#4,#5,#6]\Figptpr@j-4:/-5/\Figptpr@j-5:/-6/%
    \figptscontrolDD-4[-7,-4,-5,-8]}
\ctr@ln@m\figdrawcirc
\ctr@ld@f\def\Q@circDD#1(#2){\ifCUR@PS\ifGR@cri\PSc@mment{circDD Center=#1 (Radius=#2)}%
    \Q@arccircDD#1;#2(0,360)\PSc@mment{End circDD}\fi\fi}
\ctr@ld@f\def\Q@circTD#1,#2,#3(#4){\ifCUR@PS\ifGR@cri%
    \PSc@mment{circTD Center=#1,P1=#2,P2=#3 (Radius=#4)}%
    \Q@arccircTD#1,#2,#3;#4(0,360)\PSc@mment{End circTD}\fi\fi}
\ctr@ln@m\p@urcent
{\catcode`\%=12\gdef\p@urcent{%}}
\ctr@ld@f\def\PSc@mment#1{\ifGRdebugm@de\immediate\write\fwf@g{\p@urcent\space#1}\fi}
\ctr@ln@m\acc@louv \ctr@ln@m\acc@lfer
{\catcode`\[=1\catcode`\{=12\gdef\acc@louv[{}}
{\catcode`\]=2\catcode`\}=12\gdef\acc@lfer{}]]
\ctr@ld@f\def\PSdict@{\ifUse@llipse%
    \immediate\write\fwf@g{/ellipsedict 9 dict def ellipsedict /mtrx matrix put}%
    \immediate\write\fwf@g{/ellipse \acc@louv ellipsedict begin}%
    \immediate\write\fwf@g{ /endangle exch def /startangle exch def}%
    \immediate\write\fwf@g{ /yrad exch def /xrad exch def}%
    \immediate\write\fwf@g{ /rotangle exch def /y exch def /x exch def}%
    \immediate\write\fwf@g{ /savematrix mtrx currentmatrix def}%
    \immediate\write\fwf@g{ x y translate rotangle rotate xrad yrad scale}%
    \immediate\write\fwf@g{ 0 0 1 startangle endangle arc}%
    \immediate\write\fwf@g{ savematrix setmatrix end\acc@lfer def}%
    \fi\PShe@der{EndProlog}}
\ctr@ld@f\def\Pssetc@rve#1=#2|{\keln@mun#1|%
    \def\n@mref{r}\ifx\l@debut\n@mref\update@ttr\D@FTroundness\Q@s@troundness{#2}\else% roundness
    \W@rnmesAttr{figset curve}{#1}\fi}
\ctr@ln@m\curv@roundness
\ctr@ld@f\def\Q@s@troundness#1{\edef\curv@roundness{#1}}
\ctr@ld@f\def\D@FTroundness{0.2} % Valeur par defaut
\ctr@ln@m\figdrawcurve
\ctr@ld@f\def\Q@curveDD[#1]{{\ifCUR@PS\ifGR@cri\PSc@mment{curveDD Points=#1}%
    \s@uvc@ntr@l\et@tpscurveDD%
    \iffillm@de\Psc@rveDD\curv@roundness[#1]%
    \f@gfill%
    \else\Psc@rveDD\curv@roundness[#1]\f@gstroke\fi%
    \PSc@mment{End curveDD}\resetc@ntr@l\et@tpscurveDD\fi\fi}}
\ctr@ld@f\def\Q@curveTD[#1]{{\ifCUR@PS\ifGR@cri%
    \PSc@mment{curveTD Points=#1}\s@uvc@ntr@l\et@tpscurveTD\let\c@lprojSP=\relax%
    \iffillm@de\Psc@rveTD\curv@roundness[#1]%
    \f@gfill%
    \else\Psc@rveTD\curv@roundness[#1]\f@gstroke\fi%
    \PSc@mment{End curveTD}\resetc@ntr@l\et@tpscurveTD\fi\fi}}
\ctr@ld@f\def\Psc@rveDD#1[#2]{%
    \def\list@num{#2}\extrairelepremi@r\Ak@\de\list@num%
    \extrairelepremi@r\Ai@\de\list@num\extrairelepremi@r\Aj@\de\list@num%
    \f@gnewpath\PSwrit@cmdS{\Ai@}{\c@mmoveto}{\fwf@g}{\X@un}{\Y@un}%
    \setc@ntr@l{2}\figvectPDD -1[\Ak@,\Aj@]%
    \@ecfor\Ak@:=\list@num\do{\figpttraDD-2:=\Ai@/#1,-1/\BdingB@xfalse%
       \PSwrit@cmdS{-2}{}{\fwf@g}{\X@de}{\Y@de}%
       \figvectPDD -1[\Ai@,\Ak@]\figpttraDD-2:=\Aj@/-#1,-1/%
       \PSwrit@cmdS{-2}{}{\fwf@g}{\X@tr}{\Y@tr}\BdingB@xtrue%
       \PSwrit@cmdS{\Aj@}{\c@mcurveto}{\fwf@g}{\X@qu}{\Y@qu}%
       \B@zierBB@x{1}{\Y@un}(\X@un,\X@de,\X@tr,\X@qu)%
       \B@zierBB@x{2}{\X@un}(\Y@un,\Y@de,\Y@tr,\Y@qu)%
       \edef\X@un{\X@qu}\edef\Y@un{\Y@qu}\edef\Ai@{\Aj@}\edef\Aj@{\Ak@}}}
\ctr@ld@f\def\Psc@rveTD#1[#2]{\ifnum\CUR@proj<\tw@\Psc@rvePPTD#1[#2]\else\Psc@rveCPTD#1[#2]\fi}
\ctr@ld@f\def\Psc@rvePPTD#1[#2]{\setc@ntr@l{2}%
    \def\list@num{#2}\extrairelepremi@r\Ak@\de\list@num\Figptpr@j-5:/\Ak@/%
    \extrairelepremi@r\Ai@\de\list@num\Figptpr@j-3:/\Ai@/%
    \extrairelepremi@r\Aj@\de\list@num\Figptpr@j-4:/\Aj@/%
    \f@gnewpath\PSwrit@cmdS{-3}{\c@mmoveto}{\fwf@g}{\X@un}{\Y@un}%
    \figvectPDD -1[-5,-4]%
    \@ecfor\Ak@:=\list@num\do{\Figptpr@j-5:/\Ak@/\figpttraDD-2:=-3/#1,-1/%
       \BdingB@xfalse\PSwrit@cmdS{-2}{}{\fwf@g}{\X@de}{\Y@de}%
       \figvectPDD -1[-3,-5]\figpttraDD-2:=-4/-#1,-1/%
       \PSwrit@cmdS{-2}{}{\fwf@g}{\X@tr}{\Y@tr}\BdingB@xtrue%
       \PSwrit@cmdS{-4}{\c@mcurveto}{\fwf@g}{\X@qu}{\Y@qu}%
       \B@zierBB@x{1}{\Y@un}(\X@un,\X@de,\X@tr,\X@qu)%
       \B@zierBB@x{2}{\X@un}(\Y@un,\Y@de,\Y@tr,\Y@qu)%
       \edef\X@un{\X@qu}\edef\Y@un{\Y@qu}\figptcopyDD-3:/-4/\figptcopyDD-4:/-5/}}
\ctr@ld@f\def\Psc@rveCPTD#1[#2]{\setc@ntr@l{2}%
    \def\list@num{#2}\extrairelepremi@r\Ak@\de\list@num%
    \extrairelepremi@r\Ai@\de\list@num\extrairelepremi@r\Aj@\de\list@num%
    \Figptpr@j-7:/\Ai@/%
    \f@gnewpath\PSwrit@cmd{-7}{\c@mmoveto}{\fwf@g}%
    \figvectPTD -9[\Ak@,\Aj@]%
    \@ecfor\Ak@:=\list@num\do{\figpttraTD-10:=\Ai@/#1,-9/%
       \figvectPTD -9[\Ai@,\Ak@]\figpttraTD-11:=\Aj@/-#1,-9/%
       \subB@zierTD\NBz@rcs[\Ai@,-10,-11,\Aj@]\edef\Ai@{\Aj@}\edef\Aj@{\Ak@}}}
\ctr@ld@f\def\figdrawend{\ifCUR@PS\ifGR@cri\immediate\closeout\fwf@g%
    \immediate\openout\fwf@g=\PSfilen@me\relax%
    \ifPDFm@ke\PSBdingB@x\else%
    \immediate\write\fwf@g{\p@urcent\string!PS-Adobe-2.0 EPSF-2.0}%
    \PShe@der{Creator\string: TeX (fig4tex.tex)}%
    \PShe@der{Title\string: \PSfilen@me}%
    \PShe@der{CreationDate\string: \the\day/\the\month/\the\year}%
    \PSBdingB@x%
    \PShe@der{EndComments}\PSdict@\fi%
    \immediate\write\fwf@g{\c@mgsave}%
    \openin\frf@g=\auxfilen@me\c@pypsfile\fwf@g\frf@g\closein\frf@g%
    \immediate\write\fwf@g{\c@mgrestore}%
    \PSc@mment{End of file.}\immediate\closeout\fwf@g%
    \immediate\openout\fwf@g=\auxfilen@me\immediate\closeout\fwf@g%
    \immediate\write16{File \PSfilen@me\space created.}\fi\fi\CUR@PSfalse\GR@critrue}
\ctr@ld@f\def\PShe@der#1{\immediate\write\fwf@g{\p@urcent\p@urcent#1}}
\ctr@ld@f\def\PSBdingB@x{{\v@lX=\ptT@ptps\c@@rdXmin\v@lY=\ptT@ptps\c@@rdYmin%
     \v@lXa=\ptT@ptps\c@@rdXmax\v@lYa=\ptT@ptps\c@@rdYmax%
     \PShe@der{BoundingBox\string: \repdecn@mb{\v@lX}\space\repdecn@mb{\v@lY}%
     \space\repdecn@mb{\v@lXa}\space\repdecn@mb{\v@lYa}}}}
\ctr@ld@f\def\figdrawfcconnect[#1]{{\ifCUR@PS\ifGR@cri\PSc@mment{fcconnect Points=#1}%
    \Q@s@tfillmode{no}\s@uvc@ntr@l\et@tpsfcconnect\resetc@ntr@l{2}%
    \fcc@nnect@[#1]\resetc@ntr@l\et@tpsfcconnect\PSc@mment{End fcconnect}\fi\fi}}
\ctr@ld@f\def\fcc@nnect@[#1]{\let\N@rm=\n@rmeucDD\def\list@num{#1}%
    \extrairelepremi@r\Ai@\de\list@num\edef\pr@m{\Ai@}\v@leur=\z@\p@rtent=\@ne\c@llgtot%
    \ifcase\fclin@typ@\edef\list@num{[\pr@m,#1,\Ai@}\expandafter\figdrawcurve\list@num]%
    \else\ifdim\fclin@r@d\p@>\z@\Pslin@conge[#1]\else\figdrawline[#1]\fi\fi%
    \v@leur=\@rrowp@s\v@leur\edef\list@num{#1,\Ai@,0}%
    \extrairelepremi@r\Ai@\de\list@num\mili@u=\epsil@n\c@llgpart%
    \advance\mili@u-\epsil@n\advance\mili@u-\delt@\advance\v@leur-\mili@u%
    \ifcase\fclin@typ@\invers@\mili@u\delt@%
    \ifnum\@rrowr@fpt>\z@\advance\delt@-\v@leur\v@leur=\delt@\fi%
    \v@leur=\repdecn@mb\v@leur\mili@u\edef\v@lt{\repdecn@mb\v@leur}%
    \extrairelepremi@r\Ak@\de\list@num%
    \figvectPDD-1[\pr@m,\Aj@]\figpttraDD-6:=\Ai@/\curv@roundness,-1/%
    \figvectPDD-1[\Ak@,\Ai@]\figpttraDD-7:=\Aj@/\curv@roundness,-1/%
    \delt@=\@rrowheadlength\p@\delt@=\C@AHANG\delt@\edef\R@dius{\repdecn@mb{\delt@}}%
    \ifcase\@rrowr@fpt%
    \FigptintercircB@zDD-8::\v@lt,\R@dius[\Ai@,-6,-7,\Aj@]\Q@arrowheadDD[-5,-8]\else%
    \FigptintercircB@zDD-8::\v@lt,\R@dius[\Aj@,-7,-6,\Ai@]\Q@arrowheadDD[-8,-5]\fi%
    \else\advance\delt@-\v@leur%
    \p@rtentiere{\p@rtent}{\delt@}\edef\C@efun{\the\p@rtent}%
    \p@rtentiere{\p@rtent}{\v@leur}\edef\C@efde{\the\p@rtent}%
    \figptbaryDD-5:[\Ai@,\Aj@;\C@efun,\C@efde]\ifcase\@rrowr@fpt%
    \delt@=\@rrowheadlength\unit@\delt@=\C@AHANG\delt@\edef\t@ille{\repdecn@mb{\delt@}}%
    \figvectPDD-2[\Ai@,\Aj@]\vecunit@{-2}{-2}\figpttraDD-5:=-5/\t@ille,-2/\fi%
    \Q@arrowheadDD[\Ai@,-5]\fi}
\ctr@ld@f\def\c@llgtot{\@ecfor\Aj@:=\list@num\do{\figvectP-1[\Ai@,\Aj@]\N@rm\delt@{-1}%
    \advance\v@leur\delt@\advance\p@rtent\@ne\edef\Ai@{\Aj@}}}
\ctr@ld@f\def\c@llgpart{\extrairelepremi@r\Aj@\de\list@num\figvectP-1[\Ai@,\Aj@]\N@rm\delt@{-1}%
    \advance\mili@u\delt@\ifdim\mili@u<\v@leur\edef\pr@m{\Ai@}\edef\Ai@{\Aj@}\c@llgpart\fi}
\ctr@ld@f\def\Pslin@conge[#1]{\ifnum\p@rtent>\tw@{\def\list@num{#1}%
    \extrairelepremi@r\Ai@\de\list@num\extrairelepremi@r\Aj@\de\list@num%
    \figptcopy-6:/\Ai@/\figvectP-3[\Ai@,\Aj@]\vecunit@{-3}{-3}\v@lmax=\result@t%
    \@ecfor\Ak@:=\list@num\do{\figvectP-4[\Aj@,\Ak@]\vecunit@{-4}{-4}%
    \minim@m\v@lmin\v@lmax\result@t\v@lmax=\result@t%
    \det@rm\delt@[-3,-4]\maxim@m\mili@u{\delt@}{-\delt@}\ifdim\mili@u>\Cepsil@n%
    \ifdim\delt@>\z@\figgetangleDD\Angl@[\Aj@,\Ak@,\Ai@]\else%
    \figgetangleDD\Angl@[\Aj@,\Ai@,\Ak@]\fi%
    \v@leur=\PI@deg\advance\v@leur-\Angl@\p@\divide\v@leur\tw@%
    \edef\Angl@{\repdecn@mb\v@leur}\c@ssin{\C@}{\S@}{\Angl@}\v@leur=\fclin@r@d\unit@%
    \v@leur=\S@\v@leur\mili@u=\C@\p@\invers@\mili@u\mili@u%
    \v@leur=\repdecn@mb{\mili@u}\v@leur%
    \minim@m\v@leur\v@leur\v@lmin\edef\t@ille{\repdecn@mb{\v@leur}}%
    \figpttra-5:=\Aj@/-\t@ille,-3/\figdrawline[-6,-5]\figpttra-6:=\Aj@/\t@ille,-4/%
    \figvectNVDD-3[-3]\figvectNVDD-8[-4]\inters@cDD-7:[-5,-3;-6,-8]%
    \ifdim\delt@>\z@\figdrawarccircP-7;\fclin@r@d[-5,-6]\else\figdrawarccircP-7;\fclin@r@d[-6,-5]\fi%
    \else\figdrawline[-6,\Aj@]\figptcopy-6:/\Aj@/\fi% Points alignes
    \edef\Ai@{\Aj@}\edef\Aj@{\Ak@}\figptcopy-3:/-4/}\figdrawline[-6,\Aj@]}\else\figdrawline[#1]\fi}
\ctr@ld@f\def\figdrawfcnode[#1]#2{{\ifCUR@PS\ifGR@cri\PSc@mment{fcnode Points=#1}%
    \s@uvc@ntr@l\et@tpsfcnode\resetc@ntr@l{2}%
    \def\t@xt@{#2}\ifx\t@xt@\empty\def\g@tt@xt{\setbox\Gb@x=\hbox{\Figg@tT{\p@int}}}%
    \else\def\g@tt@xt{\setbox\Gb@x=\hbox{#2}}\fi%
    \v@lmin=\h@rdfcXp@dd\advance\v@lmin\Xp@dd\unit@\multiply\v@lmin\tw@%
    \v@lmax=\h@rdfcYp@dd\advance\v@lmax\Yp@dd\unit@\multiply\v@lmax\tw@%
    \Figv@ctCreg-8(\unit@,-\unit@)\def\list@num{#1}%
    \delt@=\CUR@width bp\divide\delt@\tw@%
    \fcn@de\PSc@mment{End fcnode}\resetc@ntr@l\et@tpsfcnode\fi\fi}}
\ctr@ld@f\def\d@butn@de{\g@tt@xt\v@lX=\wd\Gb@x%
    \v@lY=\ht\Gb@x\advance\v@lY\dp\Gb@x\advance\v@lX\v@lmin\advance\v@lY\v@lmax}
\ctr@ld@f\def\fcn@deE{%
    \@ecfor\p@int:=\list@num\do{\d@butn@de\v@lX=\unssqrttw@\v@lX\v@lY=\unssqrttw@\v@lY%
    \ifdim\thickn@ss\p@>\z@% Begin thickness
    \v@lXa=\v@lX\advance\v@lXa\delt@\v@lXa=\ptT@unit@\v@lXa\edef\XR@d{\repdecn@mb\v@lXa}%
    \v@lYa=\v@lY\advance\v@lYa\delt@\v@lYa=\ptT@unit@\v@lYa\edef\YR@d{\repdecn@mb\v@lYa}%
    \arct@n\v@leur(\v@lXa,\v@lYa)\v@leur=\rdT@deg\v@leur\edef\@nglde{\repdecn@mb\v@leur}%
    {\c@lptellDD-2::\p@int;\XR@d,\YR@d(\@nglde)}% \v@lmin & \v@lmax modified in \c@lptellDD
    \advance\v@leur-\PI@deg\edef\@nglun{\repdecn@mb\v@leur}%
    {\c@lptellDD-3::\p@int;\XR@d,\YR@d(\@nglun)}%
    \figptstra-6=-3,-2,\p@int/\thickn@ss,-8/\Q@s@tfillmode{yes}%
    \Pss@tspecifSt{color=\DDV@thickcolor}%
    \figdrawline[-2,-3,-6,-5]\figdrawarcell-4;\XR@d,\YR@d(\@nglun,\@nglde,0)%
    \Psrest@reSt{color=\DDV@thickcolor}\fi% End thickness
    \v@lX=\ptT@unit@\v@lX\v@lY=\ptT@unit@\v@lY%
    \edef\XR@d{\repdecn@mb\v@lX}\edef\YR@d{\repdecn@mb\v@lY}%
    \Q@s@tfillmode{yes}\Pss@tspecifSt{color=\fcbgc@lor}%
    \figdrawarcell\p@int;\XR@d,\YR@d(0,360,0)%
    \Q@s@tfillmode{no}\Psrest@reSt{color=\fcbgc@lor}\figdrawarcell\p@int;\XR@d,\YR@d(0,360,0)}}
\ctr@ld@f\def\fcn@deL{\delt@=\ptT@unit@\delt@\edef\t@ille{\repdecn@mb\delt@}%
    \@ecfor\p@int:=\list@num\do{\Figg@tXYa{\p@int}\d@butn@de%
    \ifdim\v@lX>\v@lY\itis@Ktrue\else\itis@Kfalse\fi%
    \advance\v@lXa-\v@lX\Figp@intreg-1:(\v@lXa,\v@lYa)%
    \advance\v@lXa\v@lX\advance\v@lYa-\v@lY\Figp@intreg-2:(\v@lXa,\v@lYa)%
    \advance\v@lXa\v@lX\advance\v@lYa\v@lY\Figp@intreg-3:(\v@lXa,\v@lYa)%
    \advance\v@lXa-\v@lX\advance\v@lYa\v@lY\Figp@intreg-4:(\v@lXa,\v@lYa)%
    \ifdim\thickn@ss\p@>\z@% Begin thickness
    \Figg@tXYa{\p@int}\Q@s@tfillmode{yes}\Pss@tspecifSt{color=\DDV@thickcolor}%
    \c@lpt@xt{-1}{-4}\c@lpt@xt@\v@lXa\v@lYa\v@lX\v@lY\c@rre\delt@%
    \Figp@intregDD-9:(\v@lZ,\v@lYa)\Figp@intregDD-11:(\v@lZa,\v@lYa)%
    \c@lpt@xt{-4}{-3}\c@lpt@xt@\v@lYa\v@lXa\v@lY\v@lX\delt@\c@rre%
    \Figp@intregDD-12:(\v@lXa,\v@lZ)\Figp@intregDD-10:(\v@lXa,\v@lZa)%
    \ifitis@K\figptstra-7=-9,-10,-11/\thickn@ss,-8/\figdrawline[-9,-11,-5,-6,-7]\else%
    \figptstra-7=-10,-11,-12/\thickn@ss,-8/\figdrawline[-10,-12,-5,-6,-7]\fi%
    \Psrest@reSt{color=\DDV@thickcolor}\fi% End thickness
    \Q@s@tfillmode{yes}\Pss@tspecifSt{color=\fcbgc@lor}\figdrawline[-1,-2,-3,-4]%
    \Q@s@tfillmode{no}\Psrest@reSt{color=\fcbgc@lor}\figdrawline[-1,-2,-3,-4,-1]}}
\ctr@ld@f\def\c@lpt@xt#1#2{\figvectN-7[#1,#2]\vecunit@{-7}{-7}\figpttra-5:=#1/\t@ille,-7/%
    \figvectP-7[#1,#2]\Figg@tXY{-7}\c@rre=\v@lX\delt@=\v@lY\Figg@tXY{-5}}
\ctr@ld@f\def\c@lpt@xt@#1#2#3#4#5#6{\v@lZ=#6\invers@{\v@lZ}{\v@lZ}\v@leur=\repdecn@mb{#5}\v@lZ%
    \v@lZ=#2\advance\v@lZ-#4\mili@u=\repdecn@mb{\v@leur}\v@lZ%
    \v@lZ=#3\advance\v@lZ\mili@u\v@lZa=-\v@lZ\advance\v@lZa\tw@#1}
\ctr@ld@f\def\fcn@deR{\@ecfor\p@int:=\list@num\do{\Figg@tXYa{\p@int}\d@butn@de%
    \advance\v@lXa-0.5\v@lX\advance\v@lYa-0.5\v@lY\Figp@intreg-1:(\v@lXa,\v@lYa)%
    \advance\v@lXa\v@lX\Figp@intreg-2:(\v@lXa,\v@lYa)%
    \advance\v@lYa\v@lY\Figp@intreg-3:(\v@lXa,\v@lYa)%
    \advance\v@lXa-\v@lX\Figp@intreg-4:(\v@lXa,\v@lYa)%
    \ifdim\thickn@ss\p@>\z@% Begin thickness
    \Q@s@tfillmode{yes}\Pss@tspecifSt{color=\DDV@thickcolor}%
    \Figv@ctCreg-5(-\delt@,-\delt@)\figpttra-9:=-1/1,-5/%
    \Figv@ctCreg-5(\delt@,-\delt@)\figpttra-10:=-2/1,-5/%
    \Figv@ctCreg-5(\delt@,\delt@)\figpttra-11:=-3/1,-5/%
    \figptstra-7=-9,-10,-11/\thickn@ss,-8/\figdrawline[-9,-11,-5,-6,-7]%
    \Psrest@reSt{color=\DDV@thickcolor}\fi% End thickness
    \Q@s@tfillmode{yes}\Pss@tspecifSt{color=\fcbgc@lor}\figdrawline[-1,-2,-3,-4]%
    \Q@s@tfillmode{no}\Psrest@reSt{color=\fcbgc@lor}\figdrawline[-1,-2,-3,-4,-1]}}
\ctr@ld@f\def\Pssetfl@wchart#1=#2|{\keln@mtr#1|%
    \def\n@mref{arr}\ifx\l@debut\n@mref\expandafter\keln@mtr\l@suite|%
     \def\n@mref{owp}\ifx\l@debut\n@mref\update@ttr\D@FTfcarrowposition\P@setfcarrowposition{#2}\else% arrowposition
     \def\n@mref{owr}\ifx\l@debut\n@mref\update@ttr\D@FTfcarrowrefpt\P@setfcarrowrefpt{#2}\else% arrowrefpt
     \W@rnmesAttr{figset flowchart}{#1}\fi\fi\else%
    \def\n@mref{bgc}\ifx\l@debut\n@mref\update@ttr\D@FTfcbgcolor\P@setfcbgcolor{#2}\else% background color
    \def\n@mref{lin}\ifx\l@debut\n@mref\update@ttr\D@FTfcline\P@setfcline{#2}\else% line
    \def\n@mref{pad}\ifx\l@debut\n@mref\update@ttr\D@FTfcxpadding\P@setfcxpadding{#2}%
                                       \update@ttr\D@FTfcypadding\P@setfcypadding{#2}\else% padding
    \def\n@mref{rad}\ifx\l@debut\n@mref\update@ttr\D@FTfcradius\P@setfcradius{#2}\else% connection radius
    \def\n@mref{sha}\ifx\l@debut\n@mref\update@ttr\D@FTfcshape\P@setfcshape{#2}\else% shape
    \def\n@mref{thi}\ifx\l@debut\n@mref\expandafter\keln@mtr\l@suite|%
     \def\n@mref{ckc}\ifx\l@debut\n@mref\update@ttr\D@FTref\P@setfcthickcolor{#2}\else% thickness color
     \def\n@mref{ckn}\ifx\l@debut\n@mref\update@ttr\D@FTfcthickness\P@setfcthickness{#2}\else% thickness
     \W@rnmesAttr{figset flowchart}{#1}\fi\fi\else%
    \def\n@mref{xpa}\ifx\l@debut\n@mref\update@ttr\D@FTfcxpadding\P@setfcxpadding{#2}\else% xpadding
    \def\n@mref{ypa}\ifx\l@debut\n@mref\update@ttr\D@FTfcypadding\P@setfcypadding{#2}\else% ypadding
    \W@rnmesAttr{figset flowchart}{#1}\fi\fi\fi\fi\fi\fi\fi\fi\fi}
\ctr@ln@m\@rrowp@s
\ctr@ld@f\def\P@setfcarrowposition#1{\edef\@rrowp@s{#1}}
\ctr@ln@m\@rrowr@fpt
\ctr@ld@f\def\P@setfcarrowrefpt#1{\setfcr@fpt#1|}
\ctr@ld@f\def\setfcr@fpt#1#2|{\if#1e\def\@rrowr@fpt{1}\else\def\@rrowr@fpt{0}\fi}
\ctr@ln@m\fcbgc@lor
\ctr@ld@f\def\P@setfcbgcolor#1{\edef\fcbgc@lor{#1}}
\ctr@ln@m\fclin@typ@
\ctr@ld@f\def\P@setfcline#1{\setfccurv@#1|}
\ctr@ld@f\def\setfccurv@#1#2|{\if#1c\def\fclin@typ@{0}\else\def\fclin@typ@{1}\fi}
\ctr@ln@m\fclin@r@d
\ctr@ld@f\def\P@setfcradius#1{\edef\fclin@r@d{#1}}
\ctr@ln@m\fcn@de \ctr@ln@m\fcsh@pe
\ctr@ln@m\h@rdfcXp@dd \ctr@ln@m\h@rdfcYp@dd
\ctr@ld@f\def\P@setfcshape#1{\setfcshap@#1|}
\ctr@ld@f\def\setfcshap@#1#2|{%
    \if#1e\let\fcn@de=\fcn@deE\def\h@rdfcXp@dd{4pt}\def\h@rdfcYp@dd{4pt}%
     \edef\fcsh@pe{ellipse}\else%
    \if#1l\let\fcn@de=\fcn@deL\def\h@rdfcXp@dd{4pt}\def\h@rdfcYp@dd{4pt}%
     \edef\fcsh@pe{lozenge}\else%
          \let\fcn@de=\fcn@deR\def\h@rdfcXp@dd{6pt}\def\h@rdfcYp@dd{6pt}%
     \edef\fcsh@pe{rectangle}\fi\fi}
\ctr@ln@m\DDV@thickcolor
\ctr@ld@f\def\P@setfcthickcolor#1{\edef\DDV@thickcolor{#1}}
\ctr@ln@m\thickn@ss
\ctr@ld@f\def\P@setfcthickness#1{\edef\thickn@ss{#1}}
\ctr@ln@m\Xp@dd
\ctr@ld@f\def\P@setfcxpadding#1{\edef\Xp@dd{#1}}
\ctr@ln@m\Yp@dd
\ctr@ld@f\def\P@setfcypadding#1{\edef\Yp@dd{#1}}
\ctr@ld@f\def\figdrawline[#1]{{\ifCUR@PS\ifGR@cri\PSc@mment{line Points=#1}%
    \let\figdrawlign@=\Pslign@P\Pslin@{#1}\PSc@mment{End line}\fi\fi}}
\ctr@ld@f\def\figdrawlineF#1{{\ifCUR@PS\ifGR@cri\PSc@mment{lineF Filename=#1}%
    \let\figdrawlign@=\Pslign@F\Pslin@{#1}\PSc@mment{End lineF}\fi\fi}}
\ctr@ld@f\def\figdrawlineC(#1){{\ifCUR@PS\ifGR@cri\PSc@mment{lineC}%
    \let\figdrawlign@=\Pslign@C\Pslin@{#1}\PSc@mment{End lineC}\fi\fi}}
\ctr@ld@f\def\Pslin@#1{\iffillm@de\figdrawlign@{#1}%
    \f@gfill%
    \else\figdrawlign@{#1}\ifx\derp@int\premp@int%
    \f@gclosestroke%
    \else\f@gstroke\fi\fi}
\ctr@ld@f\def\Pslign@P#1{\def\list@num{#1}\extrairelepremi@r\p@int\de\list@num%
    \edef\premp@int{\p@int}\f@gnewpath%
    \PSwrit@cmd{\p@int}{\c@mmoveto}{\fwf@g}%
    \@ecfor\p@int:=\list@num\do{\PSwrit@cmd{\p@int}{\c@mlineto}{\fwf@g}%
    \edef\derp@int{\p@int}}}
\ctr@ld@f\def\Pslign@F#1{\s@uvc@ntr@l\et@tPslign@F\setc@ntr@l{2}\openin\frf@g=#1\relax%
    \ifeof\frf@g\message{*** File #1 not found !}\end\else%
    \read\frf@g to\tr@c\edef\premp@int{\tr@c}\expandafter\extr@ctCF\tr@c:%
    \f@gnewpath\PSwrit@cmd{-1}{\c@mmoveto}{\fwf@g}%
    \loop\read\frf@g to\tr@c\ifeof\frf@g\mored@tafalse\else\mored@tatrue\fi%
    \ifmored@ta\expandafter\extr@ctCF\tr@c:\PSwrit@cmd{-1}{\c@mlineto}{\fwf@g}%
    \edef\derp@int{\tr@c}\repeat\fi\closein\frf@g\resetc@ntr@l\et@tPslign@F}
\ctr@ln@m\extr@ctCF
\ctr@ld@f\def\extr@ctCFDD#1 #2:{\v@lX=#1\unit@\v@lY=#2\unit@\Figp@intregDD-1:(\v@lX,\v@lY)}
\ctr@ld@f\def\extr@ctCFTD#1 #2 #3:{\v@lX=#1\unit@\v@lY=#2\unit@\v@lZ=#3\unit@%
    \Figp@intregTD-1:(\v@lX,\v@lY,\v@lZ)}
\ctr@ld@f\def\Pslign@C#1{\s@uvc@ntr@l\et@tPslign@C\setc@ntr@l{2}%
    \def\list@num{#1}\extrairelepremi@r\p@int\de\list@num%
    \edef\premp@int{\p@int}\f@gnewpath%
    \expandafter\Pslign@C@\p@int:\PSwrit@cmd{-1}{\c@mmoveto}{\fwf@g}%
    \@ecfor\p@int:=\list@num\do{\expandafter\Pslign@C@\p@int:%
    \PSwrit@cmd{-1}{\c@mlineto}{\fwf@g}\edef\derp@int{\p@int}}%
    \resetc@ntr@l\et@tPslign@C}
\ctr@ld@f\def\Pslign@C@#1 #2:{{\def\t@xt@{#1}\ifx\t@xt@\empty\Pslign@C@#2:% Discard leading spaces
    \else\extr@ctCF#1 #2:\fi}}
\ctr@ld@f\def\Pssetm@sh#1=#2|{\keln@mde#1|%
    \def\n@mref{co}\ifx\l@debut\n@mref\update@ttr\D@FTref\P@setmeshcolor{#2}\else% color
    \def\n@mref{da}\ifx\l@debut\n@mref\update@ttr\D@FTref\P@setmeshdash{#2}\else% dash
    \def\n@mref{di}\ifx\l@debut\n@mref\update@ttr\D@FTmeshdiag\Q@s@tmeshdiag{#2}\else% diag
    \def\n@mref{wi}\ifx\l@debut\n@mref\update@ttr\D@FTref\P@setmeshwidth{#2}\else% width
    \W@rnmesAttr{figset mesh}{#1}\fi\fi\fi\fi}
\ctr@ln@m\c@ntrolmesh
\ctr@ld@f\def\Q@s@tmeshdiag#1{\edef\c@ntrolmesh{#1}}
\ctr@ld@f\def\D@FTmeshdiag{0}    % Valeur par defaut
\ctr@ln@m\DDV@meshcolor
\ctr@ld@f\def\P@setmeshcolor#1{\edef\DDV@meshcolor{#1}}
\ctr@ln@m\DDV@meshdash
\ctr@ld@f\def\P@setmeshdash#1{\edef\DDV@meshdash{#1}}
\ctr@ln@m\DDV@meshwidth
\ctr@ld@f\def\P@setmeshwidth#1{\edef\DDV@meshwidth{#1}}
\ctr@ld@f\def\figdrawmesh#1,#2[#3,#4,#5,#6]{{\ifCUR@PS\ifGR@cri%
    \PSc@mment{mesh N1=#1, N2=#2, Quadrangle=[#3,#4,#5,#6]}\s@uvc@ntr@l\et@tpsmesh%
    \Pss@tspecifSt{color=\DDV@meshcolor,dash=\DDV@meshdash,width=\DDV@meshwidth}%
    \setc@ntr@l{2}%
    \ifnum#1>\@ne\Psmeshp@rt#1[#3,#4,#5,#6]\fi%
    \ifnum#2>\@ne\Psmeshp@rt#2[#4,#5,#6,#3]\fi%
    \ifnum\c@ntrolmesh>\z@\Psmeshdi@g#1,#2[#3,#4,#5,#6]\fi%
    \ifnum\c@ntrolmesh<\z@\Psmeshdi@g#2,#1[#4,#5,#6,#3]\fi%
    \Psrest@reSt{color=\DDV@meshcolor,dash=\DDV@meshdash,width=\DDV@meshwidth}%
    \figdrawline[#3,#4,#5,#6,#3]\PSc@mment{End mesh}\resetc@ntr@l\et@tpsmesh\fi\fi}}
\ctr@ld@f\def\Psmeshp@rt#1[#2,#3,#4,#5]{{\l@mbd@un=\@ne\l@mbd@de=#1\loop%
    \ifnum\l@mbd@un<#1\advance\l@mbd@de\m@ne\figptbary-1:[#2,#3;\l@mbd@de,\l@mbd@un]%
    \figptbary-2:[#5,#4;\l@mbd@de,\l@mbd@un]\figdrawline[-1,-2]\advance\l@mbd@un\@ne\repeat}}
\ctr@ld@f\def\Psmeshdi@g#1,#2[#3,#4,#5,#6]{\figptcopy-2:/#3/\figptcopy-3:/#6/%
    \l@mbd@un=\z@\l@mbd@de=#1\loop\ifnum\l@mbd@un<#1%
    \advance\l@mbd@un\@ne\advance\l@mbd@de\m@ne\figptcopy-1:/-2/\figptcopy-4:/-3/%
    \figptbary-2:[#3,#4;\l@mbd@de,\l@mbd@un]%
    \figptbary-3:[#6,#5;\l@mbd@de,\l@mbd@un]\Psmeshdi@gp@rt#2[-1,-2,-3,-4]\repeat}
\ctr@ld@f\def\Psmeshdi@gp@rt#1[#2,#3,#4,#5]{{\l@mbd@un=\z@\l@mbd@de=#1\loop%
    \ifnum\l@mbd@un<#1\figptbary-5:[#2,#5;\l@mbd@de,\l@mbd@un]%
    \advance\l@mbd@de\m@ne\advance\l@mbd@un\@ne%
    \figptbary-6:[#3,#4;\l@mbd@de,\l@mbd@un]\figdrawline[-5,-6]\repeat}}
\ctr@ln@m\figdrawnormal
\ctr@ld@f\def\Q@normalDD#1,#2[#3,#4]{{\ifCUR@PS\ifGR@cri%
    \PSc@mment{normal Length=#1, Lambda=#2 [Pt1,Pt2]=[#3,#4]}%
    \s@uvc@ntr@l\et@tpsnormal\resetc@ntr@l{2}\figptendnormal-6::#1,#2[#3,#4]%
    \figptcopyDD-5:/-1/\figdrawarrow[-5,-6]%
    \PSc@mment{End normal}\resetc@ntr@l\et@tpsnormal\fi\fi}}
\ctr@ld@f\def\figreset#1{\trtlis@rg{#1}{\Psreset@}}
\ctr@ld@f\def\Psreset@#1|{\def\t@xt@{#1}\ifx\t@xt@\empty\P@resetg@n% general settings
    \else\keln@mde#1|%
    \def\n@mref{al}\ifx\l@debut\n@mref%
        \figset altitude(blcolor=default,bldash=default,blwidth=default,%
        sqcolor=default,sqdash=default,sqwidth=default)\else% altitude
    \def\n@mref{ar}\ifx\l@debut\n@mref\figresetarrowhead\else% arrowhead
    \def\n@mref{cu}\ifx\l@debut\n@mref\figset curve(roundness=\D@FTroundness)\else% curve
    \def\n@mref{ge}\ifx\l@debut\n@mref\P@resetg@n\else% general settings
    \def\n@mref{fl}\ifx\l@debut\n@mref%
        \figset flowchart(arrowp=\D@FTfcarrowposition,arrowr=\D@FTfcarrowrefpt,%
	bgcolor=\D@FTfcbgcolor,line=\D@FTfcline,radius=\D@FTfcradius,%
	shape=\D@FTfcshape,thickcolor=default,thickness=\D@FTfcthickness,%
	xpadd=\D@FTfcxpadding,ypadd=\D@FTfcypadding)\else% flow chart
    \def\n@mref{me}\ifx\l@debut\n@mref\figset mesh(diag=\D@FTmeshdiag,%
        color=default,dash=default,width=default)\else% mesh
    \def\n@mref{tr}\ifx\l@debut\n@mref%
        \figset trimesh(color=default,dash=default,width=default)\else% trimesh
    \W@rnmeskwd{figreset}{#1}\fi\fi\fi\fi\fi\fi\fi\fi}
\ctr@ld@f\def\P@resetg@n{\figset (color=\D@FTcolor,dash=\D@FTdash,fill=\D@FTfill,%
    join=\D@FTjoin,width=\D@FTwidth)}
\ctr@ld@f\def\figset#1(#2){\def\t@xt@{#1}\ifx\t@xt@\empty\trtlis@rg{#2}{\Pssetg@n}% general settings
    \else\keln@mde#1|%
    \def\n@mref{al}\ifx\l@debut\n@mref\trtlis@rg{#2}{\Psset@lti}\else% altitude
    \def\n@mref{ar}\ifx\l@debut\n@mref\trtlis@rg{#2}{\Psset@rrowhe@d}\else% arrowhead
    \def\n@mref{cu}\ifx\l@debut\n@mref\trtlis@rg{#2}{\Pssetc@rve}\else% curve
    \def\n@mref{fl}\ifx\l@debut\n@mref\trtlis@rg{#2}{\Pssetfl@wchart}\else% flow chart
    \def\n@mref{ge}\ifx\l@debut\n@mref\trtlis@rg{#2}{\Pssetg@n}\else% general settings
    \def\n@mref{me}\ifx\l@debut\n@mref\trtlis@rg{#2}{\Pssetm@sh}\else% mesh
    \def\n@mref{pr}\ifx\l@debut\n@mref\ifCUR@PS\W@rnmesIgn{figset proj(...)}%
     \else\trtlis@rg{#2}{\Figsetpr@j}\fi\else% projection
    \def\n@mref{tr}\ifx\l@debut\n@mref\trtlis@rg{#2}{\Pssettrim@sh}\else% trimesh
    \def\n@mref{wr}\ifx\l@debut\n@mref\let\M@cro=\Figsetwr@te\trtlis@rgtok{#2,|}\else% write
    \W@rnmeskwd{figset}{#1}\fi\fi\fi\fi\fi\fi\fi\fi\fi\fi\ignorespaces}
\ctr@ld@f\def\figsetdefault#1(#2){\ifCUR@PS\W@rnmesIgn{figsetdefault}\else%
    \def\t@xt@{#1}\ifx\t@xt@\empty\trtlis@rg{#2}{\Pssd@g@n}\else\keln@mun#1|% general settings
    \def\n@mref{a}\ifx\l@debut\n@mref\trtlis@rg{#2}{\Pssd@@rrowhe@d}\else% arrowhead
    \def\n@mref{c}\ifx\l@debut\n@mref\trtlis@rg{#2}{\Pssd@c@rve}\else% curve
    \def\n@mref{g}\ifx\l@debut\n@mref\trtlis@rg{#2}{\Pssd@g@n}\else% general settings
    \def\n@mref{f}\ifx\l@debut\n@mref\trtlis@rg{#2}{\Pssd@fl@wchart}\else% flow chart
    \def\n@mref{m}\ifx\l@debut\n@mref\trtlis@rg{#2}{\Pssd@m@sh}\else% mesh
    \W@rnmeskwd{figsetdefault}{#1}\fi\fi\fi\fi\fi\fi\initpss@ttings\fi}
\ctr@ld@f\def\Pssd@g@n#1=#2|{\keln@mun#1|%
    \def\n@mref{c}\ifx\l@debut\n@mref\edef\D@FTcolor{#2}\else% color
    \def\n@mref{d}\ifx\l@debut\n@mref\edef\D@FTdash{#2}\else% line dash
    \def\n@mref{f}\ifx\l@debut\n@mref\edef\D@FTfill{#2}\else% fillmode
    \def\n@mref{j}\ifx\l@debut\n@mref\edef\D@FTjoin{#2}\else% line join
    \def\n@mref{u}\ifx\l@debut\n@mref\edef\D@FTupdate{#2}\Q@s@tupdate{#2}\else% update
    \def\n@mref{w}\ifx\l@debut\n@mref\edef\D@FTwidth{#2}\else% line width
    \W@rnmesAttr{figsetdefault}{#1}\fi\fi\fi\fi\fi\fi}
\ctr@ld@f\def\Pssd@@rrowhe@d#1=#2|{\keln@mun#1|%
    \def\n@mref{a}\ifx\l@debut\n@mref\edef\D@FTarrowheadangle{#2}\else% angle
    \def\n@mref{f}\ifx\l@debut\n@mref\edef\D@FTarrowheadfill{#2}\else% fillmode
    \def\n@mref{l}\ifx\l@debut\n@mref\y@tiunit{#2}\ifunitpr@sent%
     \edef\D@FTh@rdahlength{#2}\else\edef\D@FTh@rdahlength{#2pt}%
     \message{*** \BS@ figsetdefault (..., #1=#2, ...) : unit is missing, pt is assumed.}%
     \fi\else% length
    \def\n@mref{o}\ifx\l@debut\n@mref\edef\D@FTarrowheadout{#2}\else% out
    \def\n@mref{r}\ifx\l@debut\n@mref\edef\D@FTarrowheadratio{#2}\else% ratio
    \W@rnmesAttr{figsetdefault arrowhead}{#1}\fi\fi\fi\fi\fi}
\ctr@ld@f\def\Pssd@c@rve#1=#2|{\keln@mun#1|%
    \def\n@mref{r}\ifx\l@debut\n@mref\edef\D@FTroundness{#2}\else%
    \W@rnmesAttr{figsetdefault curve}{#1}\fi}
\ctr@ld@f\def\Pssd@fl@wchart#1=#2|{\keln@mtr#1|%
    \def\n@mref{arr}\ifx\l@debut\n@mref\expandafter\keln@mtr\l@suite|%
     \def\n@mref{owp}\ifx\l@debut\n@mref\edef\D@FTfcarrowposition{#2}\else% arrowposition
     \def\n@mref{owr}\ifx\l@debut\n@mref\edef\D@FTfcarrowrefpt{#2}\else% arrowrefpt
                     \W@rnmesAttr{figsetdefault flowchart}{#1}\fi\fi\else%
    \def\n@mref{bgc}\ifx\l@debut\n@mref\edef\D@FTfcbgcolor{#2}\else% background color
    \def\n@mref{lin}\ifx\l@debut\n@mref\edef\D@FTfcline{#2}\else% line
    \def\n@mref{pad}\ifx\l@debut\n@mref\edef\D@FTfcxpadding{#2}%
                    \edef\D@FTfcypadding{#2}\else% padding
    \def\n@mref{rad}\ifx\l@debut\n@mref\edef\D@FTfcradius{#2}\else% connection radius
    \def\n@mref{sha}\ifx\l@debut\n@mref\edef\D@FTfcshape{#2}\else% shape
    \def\n@mref{thi}\ifx\l@debut\n@mref\expandafter\keln@mtr\l@suite|%
     \def\n@mref{ckn}\ifx\l@debut\n@mref\edef\D@FTfcthickness{#2}\else% thickness
                     \W@rnmesAttr{figsetdefault flowchart}{#1}\fi\else%
    \def\n@mref{xpa}\ifx\l@debut\n@mref\edef\D@FTfcxpadding{#2}\else% xpadding
    \def\n@mref{ypa}\ifx\l@debut\n@mref\edef\D@FTfcypadding{#2}\else% ypadding
    \W@rnmesAttr{figsetdefault flowchart}{#1}\fi\fi\fi\fi\fi\fi\fi\fi\fi}
\ctr@ld@f\def\D@FTfcarrowposition{0.5}
\ctr@ld@f\def\D@FTfcarrowrefpt{start}
\ctr@ld@f\def\D@FTfcbgcolor{1}
\ctr@ld@f\def\D@FTfcline{polygon}
\ctr@ld@f\def\D@FTfcradius{0}
\ctr@ld@f\def\D@FTfcshape{rectangle}
\ctr@ld@f\def\D@FTfcthickness{0}
\ctr@ld@f\def\D@FTfcxpadding{0}
\ctr@ld@f\def\D@FTfcypadding{0}
\ctr@ld@f\def\Pssd@m@sh#1=#2|{\keln@mun#1|%
    \def\n@mref{d}\ifx\l@debut\n@mref\edef\D@FTmeshdiag{#2}\else%
    \W@rnmesAttr{figsetdefault mesh}{#1}\fi}
\ctr@ln@w{newif}\iffillm@de
\ctr@ld@f\def\Q@s@tfillmode#1{\expandafter\setfillm@de#1:}
\ctr@ld@f\def\setfillm@de#1#2:{\if#1n\fillm@defalse\else\fillm@detrue\fi}
\ctr@ld@f\def\D@FTfill{no}     % Valeur par defaut
\ctr@ln@w{newif}\ifGRupdatem@de
\ctr@ld@f\def\Q@s@tupdate#1{\ifCUR@PS\W@rnmesIgn{figset (update=...)}%
    \else\expandafter\setupd@te#1:\fi}
\ctr@ld@f\def\setupd@te#1#2:{\if#1n\GRupdatem@defalse\else\GRupdatem@detrue\fi}
\ctr@ld@f\def\D@FTupdate{no}     % Valeur par defaut
\ctr@ln@m\CUR@color \ctr@ln@m\CUR@colorc@md
\ctr@ld@f\def\s@uvcolor#1{\edef#1{\CUR@color}}
\ctr@ld@f\def\D@FTcolor{0}       % Valeur par defaut
\ctr@ld@f\def\Pssetc@lor#1{\ifGR@cri\result@tent=\@ne\expandafter\c@lnbV@l#1 :%
    \def\CUR@color{}\def\CUR@colorc@md{}%
    \ifcase\result@tent\or\Q@s@tgray{#1}\or\or\Q@s@trgb{#1}\or\Q@s@tcmyk{#1}\fi\fi}
\ctr@ln@m\CUR@colorc@mdStroke
\ctr@ld@f\def\Q@s@tcmyk#1{\ifGR@cri\def\CUR@color{#1}\def\CUR@colorc@md{\c@msetcmykcolor}%
    \def\CUR@colorc@mdStroke{\c@msetcmykcolorStroke}%
    \ifCUR@PS\PSc@mment{setcmyk Color=#1}\us@primarC@lor\fi\fi}
\ctr@ld@f\def\Q@s@trgb#1{\ifGR@cri\def\CUR@color{#1}\def\CUR@colorc@md{\c@msetrgbcolor}%
    \def\CUR@colorc@mdStroke{\c@msetrgbcolorStroke}%
    \ifCUR@PS\PSc@mment{setrgb Color=#1}\us@primarC@lor\fi\fi}
\ctr@ld@f\def\Q@s@tgray#1{\ifGR@cri\def\CUR@color{#1}\def\CUR@colorc@md{\c@msetgray}%
    \def\CUR@colorc@mdStroke{\c@msetgrayStroke}%
    \ifCUR@PS\PSc@mment{setgray Gray level=#1}\us@primarC@lor\fi\fi}
\ctr@ln@m\fillc@md
\ctr@ld@f\def\us@primarC@lor{\immediate\write\fwf@g{\d@fprimarC@lor}%
    \let\fillc@md=\prfillc@md}
\ctr@ld@f\def\prfillc@md{\d@fprimarC@lor\space\c@mfill}
\ctr@ld@f\def\c@lnbV@l#1 #2:{\def\t@xt@{#1}\relax\ifx\t@xt@\empty\c@lnbV@l#2:% Discard leading spaces
    \else\c@lnbV@l@#1 #2:\fi}
\ctr@ld@f\def\c@lnbV@l@#1 #2:{\def\t@xt@{#2}\ifx\t@xt@\empty%
    \def\t@xt@{#1}\ifx\t@xt@\empty\advance\result@tent\m@ne\fi% Discard trailing spaces
    \else\advance\result@tent\@ne\c@lnbV@l@#2:\fi}
\ctr@ld@f\def\Blackcmyk{0 0 0 1}
\ctr@ld@f\def\Whitecmyk{0 0 0 0}
\ctr@ld@f\def\Cyancmyk{1 0 0 0}
\ctr@ld@f\def\Magentacmyk{0 1 0 0}
\ctr@ld@f\def\Yellowcmyk{0 0 1 0}
\ctr@ld@f\def\Redcmyk{0 1 1 0}
\ctr@ld@f\def\Greencmyk{1 0 1 0}
\ctr@ld@f\def\Bluecmyk{1 1 0 0}
\ctr@ld@f\def\Graycmyk{0 0 0 0.50}
\ctr@ld@f\def\BrickRedcmyk{0 0.89 0.94 0.28} % PANTONE 1805
\ctr@ld@f\def\Browncmyk{0 0.81 1 0.60} % PANTONE 1615
\ctr@ld@f\def\ForestGreencmyk{0.91 0 0.88 0.12} % PANTONE 349
\ctr@ld@f\def\Goldenrodcmyk{ 0 0.10 0.84 0} % PANTONE 109
\ctr@ld@f\def\Marooncmyk{0 0.87 0.68 0.32} % PANTONE 201
\ctr@ld@f\def\Orangecmyk{0 0.61 0.87 0} % PANTONE ORANGE-021
\ctr@ld@f\def\Purplecmyk{0.45 0.86 0 0} % PANTONE PURPLE
\ctr@ld@f\def\RoyalBluecmyk{1. 0.50 0 0} % No PANTONE match
\ctr@ld@f\def\Violetcmyk{0.79 0.88 0 0} % PANTONE VIOLET
\ctr@ld@f\def\Blackrgb{0 0 0}
\ctr@ld@f\def\Whitergb{1 1 1}
\ctr@ld@f\def\Redrgb{1 0 0}
\ctr@ld@f\def\Greenrgb{0 1 0}
\ctr@ld@f\def\Bluergb{0 0 1}
\ctr@ld@f\def\Cyanrgb{0 1 1}
\ctr@ld@f\def\Magentargb{1 0 1}
\ctr@ld@f\def\Yellowrgb{1 1 0}
\ctr@ld@f\def\Grayrgb{0.5 0.5 0.5}
\ctr@ld@f\def\Chocolatergb{0.824 0.412 0.118}
\ctr@ld@f\def\DarkGoldenrodrgb{0.722 0.525 0.043}
\ctr@ld@f\def\DarkOrangergb{1 0.549 0}
\ctr@ld@f\def\Firebrickrgb{0.698 0.133 0.133}
\ctr@ld@f\def\ForestGreenrgb{0.133 0.545 0.133}
\ctr@ld@f\def\Goldrgb{1 0.843 0}
\ctr@ld@f\def\HotPinkrgb{1 0.412 0.706}
\ctr@ld@f\def\Maroonrgb{0.690 0.188 0.376}
\ctr@ld@f\def\Pinkrgb{1 0.753 0.796}
\ctr@ld@f\def\RoyalBluergb{0.255 0.412 0.882}
\ctr@ld@f\def\Pssetg@n#1=#2|{\keln@mun#1|%
    \def\n@mref{c}\ifx\l@debut\n@mref\update@ttr\D@FTcolor\Pssetc@lor{#2}\else% color
    \def\n@mref{d}\ifx\l@debut\n@mref\update@ttr\D@FTdash\Q@s@tdash{#2}\else% line dash
    \def\n@mref{f}\ifx\l@debut\n@mref\update@ttr\D@FTfill\Q@s@tfillmode{#2}\else% fillmode
    \def\n@mref{j}\ifx\l@debut\n@mref\update@ttr\D@FTjoin\Q@s@tjoin{#2}\else% line join
    \def\n@mref{u}\ifx\l@debut\n@mref\update@ttr\D@FTupdate\Q@s@tupdate{#2}\else% update
    \def\n@mref{w}\ifx\l@debut\n@mref\update@ttr\D@FTwidth\Q@s@twidth{#2}\else% line width
    \W@rnmesAttr{figset}{#1}\fi\fi\fi\fi\fi\fi}
\ctr@ln@m\CUR@dash
\ctr@ld@f\def\s@uvdash#1{\edef#1{\CUR@dash}}
\ctr@ld@f\def\D@FTdash{1}        % Valeur par defaut (numero sans espace)
\ctr@ld@f\def\Q@s@tdash#1{\ifGR@cri\edef\CUR@dash{#1}\ifCUR@PS\expandafter\Pssetd@sh#1 :\fi\fi}
\ctr@ld@f\def\Pssetd@shI#1{\PSc@mment{setdash Index=#1}\ifcase#1%
    \or\immediate\write\fwf@g{[] 0 \c@msetdash}%         Index=1
    \or\immediate\write\fwf@g{[6 2] 0 \c@msetdash}%      Index=2
    \or\immediate\write\fwf@g{[4 2] 0 \c@msetdash}%      Index=3
    \or\immediate\write\fwf@g{[2 2] 0 \c@msetdash}%      Index=4
    \or\immediate\write\fwf@g{[1 2] 0 \c@msetdash}%      Index=5
    \or\immediate\write\fwf@g{[2 4] 0 \c@msetdash}%      Index=6
    \or\immediate\write\fwf@g{[3 5] 0 \c@msetdash}%      Index=7
    \or\immediate\write\fwf@g{[3 3] 0 \c@msetdash}%      Index=8
    \or\immediate\write\fwf@g{[3 5 1 5] 0 \c@msetdash}%  Index=9
    \or\immediate\write\fwf@g{[6 4 2 4] 0 \c@msetdash}%  Index=10
    \fi}
\ctr@ld@f\def\Pssetd@sh#1 #2:{{\def\t@xt@{#1}\ifx\t@xt@\empty\Pssetd@sh#2:% Discard leading spaces
    \else\def\t@xt@{#2}\ifx\t@xt@\empty\Pssetd@shI{#1}\else\s@mme=\@ne\def\debutp@t{#1}%
    \an@lysd@sh#2:\ifodd\s@mme\edef\debutp@t{\debutp@t\space\finp@t}\def\finp@t{0}\fi%
    \PSc@mment{setdash Pattern=#1 #2}%
    \immediate\write\fwf@g{[\debutp@t] \finp@t\space\c@msetdash}\fi\fi}}
\ctr@ld@f\def\an@lysd@sh#1 #2:{\def\t@xt@{#2}\ifx\t@xt@\empty\def\finp@t{#1}\else%
    \edef\debutp@t{\debutp@t\space#1}\advance\s@mme\@ne\an@lysd@sh#2:\fi}
\ctr@ln@m\CUR@width
\ctr@ld@f\def\s@uvwidth#1{\edef#1{\CUR@width}}
\ctr@ld@f\def\D@FTwidth{0.4}     % Valeur par defaut
\ctr@ld@f\def\Q@s@twidth#1{\ifGR@cri\edef\CUR@width{#1}\ifCUR@PS%
    \PSc@mment{setwidth Width=#1}\immediate\write\fwf@g{#1 \c@msetlinewidth}\fi\fi}
\ctr@ln@m\CUR@join
\ctr@ld@f\def\s@uvjoin#1{\edef#1{\CUR@join}}
\ctr@ld@f\def\D@FTjoin{miter}   % Valeur par defaut
\ctr@ld@f\def\Q@s@tjoin#1{\ifGR@cri\edef\CUR@join{#1}\ifCUR@PS\expandafter\Pssetj@in#1:\fi\fi}
\ctr@ld@f\def\Pssetj@in#1#2:{\PSc@mment{setjoin join=#1}%
    \if#1r\def\t@xt@{1}\else\if#1b\def\t@xt@{2}\else\def\t@xt@{0}\fi\fi%
    \immediate\write\fwf@g{\t@xt@\space\c@msetlinejoin}}
\ctr@ld@f\def\Pss@tspecifSt#1{\trtlis@rg{#1}{\Pss@tspecifSt@}}
\ctr@ld@f\def\Pss@tspecifSt@#1=#2|{\keln@mun#1|%
    \def\n@mref{c}\ifx\l@debut\n@mref\def\n@mref{#2}\ifx\n@mref\D@FTref\else%
     \s@uvcolor{\typ@color}\Pssetc@lor{#2}\fi\else% color
    \def\n@mref{d}\ifx\l@debut\n@mref\def\n@mref{#2}\ifx\n@mref\D@FTref\else%
     \s@uvdash{\typ@dash}\Q@s@tdash{#2}\fi\else% line dash
    \def\n@mref{j}\ifx\l@debut\n@mref\def\n@mref{#2}\ifx\n@mref\D@FTref\else%
     \s@uvjoin{\typ@join}\Q@s@tjoin{#2}\fi\else% line join
    \def\n@mref{w}\ifx\l@debut\n@mref\def\n@mref{#2}\ifx\n@mref\D@FTref\else%
     \s@uvwidth{\typ@width}\Q@s@twidth{#2}\fi\else% line width
    \W@rnmeskwd{Pss@tspecifSt}{#1}\fi\fi\fi\fi}
\ctr@ld@f\def\Psrest@reSt#1{\trtlis@rg{#1}{\Psrest@reSt@}}
\ctr@ld@f\def\Psrest@reSt@#1=#2|{\keln@mun#1|%
    \def\n@mref{c}\ifx\l@debut\n@mref\def\n@mref{#2}\ifx\n@mref\D@FTref\else%
     \Pssetc@lor{\typ@color}\fi\else% color
    \def\n@mref{d}\ifx\l@debut\n@mref\def\n@mref{#2}\ifx\n@mref\D@FTref\else%
     \Q@s@tdash{\typ@dash}\fi\else% line dash
    \def\n@mref{j}\ifx\l@debut\n@mref\def\n@mref{#2}\ifx\n@mref\D@FTref\else%
     \Q@s@tjoin{\typ@join}\fi\else% line join
    \def\n@mref{w}\ifx\l@debut\n@mref\def\n@mref{#2}\ifx\n@mref\D@FTref\else%
     \Q@s@twidth{\typ@width}\fi\else% line width
    \W@rnmeskwd{Psrest@reSt}{#1}\fi\fi\fi\fi}
\ctr@ld@f\def\Pssettrim@sh#1=#2|{\keln@mde#1|%
    \def\n@mref{co}\ifx\l@debut\n@mref\update@ttr\D@FTref\P@settmeshcolor{#2}\else% color
    \def\n@mref{da}\ifx\l@debut\n@mref\update@ttr\D@FTref\P@settmeshdash{#2}\else% dash
    \def\n@mref{wi}\ifx\l@debut\n@mref\update@ttr\D@FTref\P@settmeshwidth{#2}\else% width
    \W@rnmesAttr{figset trimesh}{#1}\fi\fi\fi}
\ctr@ln@m\DDV@tmeshcolor
\ctr@ld@f\def\P@settmeshcolor#1{\edef\DDV@tmeshcolor{#1}}
\ctr@ln@m\DDV@tmeshdash
\ctr@ld@f\def\P@settmeshdash#1{\edef\DDV@tmeshdash{#1}}
\ctr@ln@m\DDV@tmeshwidth
\ctr@ld@f\def\P@settmeshwidth#1{\edef\DDV@tmeshwidth{#1}}
\ctr@ld@f\def\figdrawtrimesh#1[#2,#3,#4]{{\ifCUR@PS\ifGR@cri%
    \PSc@mment{trimesh Type=#1, Triangle=[#2,#3,#4]}%
    \s@uvc@ntr@l\et@tpstrimesh\ifnum#1>\@ne%
    \Pss@tspecifSt{color=\DDV@tmeshcolor,dash=\DDV@tmeshdash,width=\DDV@tmeshwidth}%
    \setc@ntr@l{2}%
    \Pstrimeshp@rt#1[#2,#3,#4]\Pstrimeshp@rt#1[#3,#4,#2]\Pstrimeshp@rt#1[#4,#2,#3]%
    \Psrest@reSt{color=\DDV@tmeshcolor,dash=\DDV@tmeshdash,width=\DDV@tmeshwidth}%
    \fi\figdrawline[#2,#3,#4,#2]%
    \PSc@mment{End trimesh}\resetc@ntr@l\et@tpstrimesh\fi\fi}}
\ctr@ld@f\def\Pstrimeshp@rt#1[#2,#3,#4]{{\l@mbd@un=\@ne\l@mbd@de=#1\loop\ifnum\l@mbd@de>\@ne%
    \advance\l@mbd@de\m@ne\figptbary-1:[#2,#3;\l@mbd@de,\l@mbd@un]%
    \figptbary-2:[#2,#4;\l@mbd@de,\l@mbd@un]\figdrawline[-1,-2]%
    \advance\l@mbd@un\@ne\repeat}}
\initpr@lim\initpss@ttings\initPDF@rDVI% Initialisation preliminaire
\ctr@ln@w{newbox}\figBoxA
\ctr@ln@w{newbox}\figBoxB
\ctr@ln@w{newbox}\figBoxC
\catcode`\@=12

\def\GrisClair{0.8}% gris assez clair pour que les textes sur les dessins soient lisibles (utilisé pour tous les dessins)
\def\RadBullet{0.05}% taille en cm du rayon des "bullets" marquant les points sur les figures

\topmargin -5.4mm
\oddsidemargin 0mm
\textwidth  168mm
\textheight 238mm

\numberwithin{equation}{section}

\newtheorem{theorem}{Theorem}[section]
\newtheorem{lemma}[theorem]{Lemma}
\newtheorem{proposition}[theorem]{Proposition}
\newtheorem{corollary}[theorem]{Corollary}

\theoremstyle{definition}

\newtheorem{notation}[theorem]{Notation}

\theoremstyle{remark}
\newtheorem{remark}[theorem]{Remark}

\newcommand{\bx}{\boldsymbol{x}}

\newcommand{\cC}{\mathcal{C}}
\newcommand{\cD}{\mathcal{D}}
\newcommand{\cH}{\mathcal{H}}
\newcommand{\cI}{\mathcal{I}}
\newcommand{\cJ}{\mathcal{J}}
\newcommand{\cK}{\mathcal{K}}
\newcommand{\cL}{\mathcal{L}}
\newcommand{\cM}{\mathcal{M}}
\newcommand{\cN}{\mathcal{N}}
\newcommand{\cO}{\mathcal{O}}
\newcommand{\cP}{\mathcal{P}}
\newcommand{\cQ}{\mathcal{Q}}
\newcommand{\cS}{\mathcal{S}}
\newcommand{\cT}{\mathcal{T}}
\newcommand{\cV}{\mathcal{V}}
\newcommand{\cW}{\mathcal{W}}

\newcommand{\gC}{\mathfrak{C}}
\newcommand{\gF}{\mathfrak{F}}
\newcommand{\gG}{\mathfrak{G}}
\newcommand{\gL}{\mathfrak{L}}

\newcommand{\rd}{\mathrm{d}}

\newcommand{\sD}{\mathsf{D}}
\newcommand{\sN}{\mathsf{N}}

\newcommand{\sC}{\mathscr{C}}
\newcommand{\sX}{\mathscr{X}}
\newcommand{\sY}{\mathscr{Y}}
\newcommand{\sP}{\mathscr{P}}

\newcommand{\sG}{\mathscr{G}}
\newcommand{\sL}{\mathscr{L}}
\newcommand{\sS}{\mathscr{S}}

\newcommand{\sGf}{\Gamma^\flat}
\newcommand{\sLf}{\Lambda^\flat}
\newcommand{\sSf}{\mathscr{S}^\flat}

\newcommand{\sGs}{\Gamma^\sharp}
\newcommand{\sLs}{\Lambda^\sharp}
\newcommand{\sSs}{\mathscr{S}^\sharp}

\newcommand{\uQ}{\,\underline{\!\cQ\!}\,}

\newcommand{\Dir}{\mathsf{Dir}}
\newcommand{\Mix}{\mathsf{Mix}}
\newcommand{\Bro}{\mathsf{Bro}}
\newcommand{\tens}{\mathsf{tens}}
\newcommand{\red}{\mathsf{1D}}
\newcommand{\DN}{\mathsf{DN}}

\newcommand{\dis}{\mathsf{dis}}
\newcommand{\ess}{\mathsf{ess}}
\newcommand{\Dom}{\mathsf{Dom}}

\newcommand{\R}{\mathbb{R}}

\renewcommand{\proofname}{{\bf Proof}}

\DeclareMathOperator{\dist}{dist}
\DeclareMathOperator{\supp}{supp}
\DeclareMathOperator{\dom}{dom}

\title{Dirichlet spectrum of the Fichera layer}

\author{Monique Dauge} 

\address{Univ. Rennes, CNRS, IRMAR - UMR 6625, F-35000 Rennes, France}

\email{monique.dauge@univ-rennes1.fr}
\urladdr{https://perso.univ-rennes1.fr/monique.dauge/}

\author{Yvon Lafranche} 

\address{Univ. Rennes, CNRS, IRMAR - UMR 6625, F-35000 Rennes, France}

\email{yvon.lafranche@univ-rennes1.fr
}
\urladdr{https://perso.univ-rennes1.fr/yvon.lafranche/}

\author{Thomas Ourmi\`eres-Bonafos}
\address{\rm Laboratoire de Math\'ematiques d'Orsay, Univ.~Paris-Sud, CNRS, Universit\'e Paris-Saclay, 91405 Orsay, France}

\email{thomas.ourmieres-bonafos@math.u-psud.fr}
\urladdr{http://www.math.u-psud.fr/~ourmieres-bonafos/}

%%%%%%%%%%%%%%%%%%%%%%%%%%%%%%%%%%%%%%%%%%%%%%%%%%%%%%%%%%%%%%%%%%%%%%%%%%%%%%%%%%%%%%%%%%%%%%%%%%%%%%%%%%%%%%%%
\begin{document}

\keywords{Laplace operator on infinite layer domains, Bound states, Essential spectrum, Quantum layers, Conical layers, Octant layer, Fichera corner.}

\subjclass[2010]{35J05, 35P15, 35Q40, 81Q10, 65M60}

\begin{abstract}
We investigate the spectrum of the three-dimensional Dirichlet Laplacian in a prototypal infinite polyhedral layer, that is formed by three perpendicular quarter-plane walls of constant width joining each other.  Alternatively,  this domain can  be viewed as an octant from which another ``parallel'' octant is removed. It contains six edges (three convex and three non-convex) and two corners (one convex and one non-convex).
It is a canonical example of non-smooth conical layer. We name it after Fichera because near its non-convex corner, it coincides with the famous Fichera cube that illustrates the interaction between edge and corner singularities. This domain could also be called an octant layer.

We show that the essential spectrum of the Laplacian on such a domain is a half-line and we characterize its minimum as the first eigenvalue of the two-dimensional Laplacian on a broken guide. By a Born-Oppenheimer type strategy, we also prove that its discrete spectrum is finite and that a lower bound is given by the ground state of a special Sturm-Liouville operator. By finite element computations taking singularities into account, we exhibit exactly one eigenvalue under the essential spectrum threshold leaving a relative gap of 3\%. We extend these results to a variant of the Fichera layer with rounded edges (for which we find a very small relative gap of 0.5\%), and to a three-dimensional cross where the three walls are full thickened planes.
\end{abstract}                                                            

\maketitle

%%%%%%%%%%%%%%%%%%%%%%%%%%%%%%%%%%%%%%%%%%%%%%%%%%%%%%%%%%%%%%%%%%%%%%%%%%%%%%%%%%%%%%%%%%%%%%%%%%%%%%%%%%%%%%%%
\section{Introduction and main results}
%%%%%%%%%%%%%%%%%%%%%%%%%%%%%%%%%%%%%%%%%%%%%%%%%%%%%%%%%%%%%%%%%%%%%%%%%%%%%%%%%%%%%%%%%%%%%%%%%%%%%%%%%%%%%%%%
\subsection{Motivations} 
%%%%%%%%%%%%%%%%%%%%%%%%%%%%%%%%%%%%%%%%%%%%%%%%%%%%%%%%%%%%%%%%%%%%%%%%%%%%%%%%%%%%%%%%%%%%%%%%%%%%%%%%%%%%%%%%
In the usual three-dimensional Euclidean space, the term {\it layer} commonly denotes a tubular neighborhood of a reference surface. Such geometries are physically relevant because for example, in mesoscopic physics, properties of thin-films (also known as quantum layers) can be deduced by a spectral study of the Laplacian in hard-wall layer domains.

From a mathematical point of view, such operators have been considered in \cite{DEK01,CEK04} and the main results can be roughly summed up as follows: under adequate geometric conditions on the reference surface the essential spectrum is a half-line of the form $[a,+\infty)$ ($a\in\mathbb{R}_+$) whereas the existence of bound states depends on curvature properties (such a behavior is reminiscent of the study of quantum waveguides, see for instance the pioneering works \cite{GJ92,DE95}).

In fact, as mentioned in \cite{DEK01,CEK04} and specifically studied in \cite{ET10}, reference surfaces being circular conical layers exhibit an interesting behavior: they have infinitely many bound states and thus, they accumulate to the threshold of the essential spectrum. Moreover, we know that the accumulation rate is logarithmic as shown in \cite{DOR15}.

Recently, in \cite{OBP16}, it has been proved that the infiniteness of bound states and the logarithmic accumulation to the threshold of the essential spectrum still hold for conical layers constructed around any smooth reference conical surface (by smooth reference conical surface, we mean that the surface is smooth except in its vertex).

Then, a natural question is to know whether this structure of the spectrum is preserved if this smoothness hypothesis is violated. This is the very question we tackle in this paper for a reference surface that is a specific polyhedral cone and thus has edges. We show that the structure of the spectrum drastically differs from the smooth case: the threshold of the essential spectrum is lower than for smooth conical surfaces and there is only a finite number of bound states.

Before going any further, let us mention similar works about various realizations of the Laplacian interplaying with conical geometries. Schr\"odinger operators with singular potential, modeled by $\delta$-interactions supported on smooth cones, are investigated in \cite{BEL14,LOB16} whereas the case of the Laplacian with Robin boundary conditions in smooth conical domains is dealt with in \cite{BPP17,P16}. Finally, for problems related with polyhedral geometries, let us mention \cite{BDP14} where the magnetic Laplacian in three-dimensional corner domains is studied as well as \cite{BP16} where the bottom of the essential spectrum of the Robin Laplacian is characterized for polyhedral cones.

%%%%%%%%%%%%%%%%%%%%%%%%%%%%%%%%%%%%%%%%%%%%%%%%%%%%%%%%%%%%%%%%%%%%%%%%%%%%%%%%%%%%%%%%%%%%%%%%%%%%%%%%%%%%%%%%
\subsection{Main results}\label{sec:mainres}
%%%%%%%%%%%%%%%%%%%%%%%%%%%%%%%%%%%%%%%%%%%%%%%%%%%%%%%%%%%%%%%%%%%%%%%%%%%%%%%%%%%%%%%%%%%%%%%%%%%%%%%%%%%%%%%%
As a prototype for infinite polyhedral layers including edges, 
we investigate the layer $\Lambda$ obtained by removing the first octant $(\R_+)^3$ from the translated octant $(\R_+)^3 - (1,1,1)$ (see Figure \ref{fig:fig0}, right), namely:
\begin{equation}
\label{eq:Fichera}
   \Lambda = \big\{ (\R_+)^3 - (1,1,1) \big\} \setminus (\overline\R_+)^3.
\end{equation}
(Here $\R_+$ denotes $(0,\infty)$). This domain can be called an {\em octant layer}, but we choose to name it after Fichera since its non-convex polyhedral corner sitting at the origin $(0,0,0)$ is a celebrated example of interaction between edge and corner singularities: this interaction is described in \cite[\S\,17]{DaBook1988} and related numerical issues are addressed in \cite{ApelSW1996,ApelMehrmannWatkins02,CoDa2003} for instance.
We are interested in the positive Dirichlet Laplacian $\cL_{\Lambda}$ posed on $\Lambda$. We will show that its spectral properties heavily depend on its two-dimensional analogue posed on the broken guide $\Gamma$ of width $1$ and angle $\frac\pi2$ (see Figure \ref{fig:fig0}, left).

\begin{figure}[h]
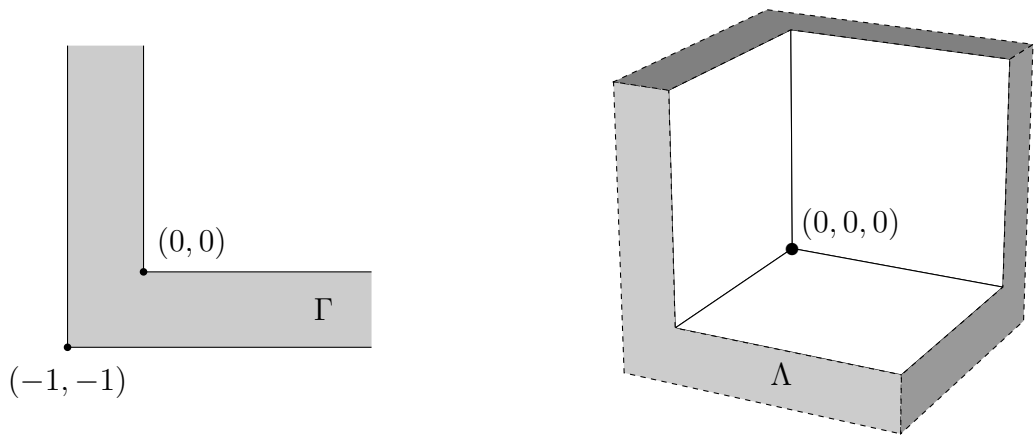

\begin{minipage}[l]{.49\linewidth}
\input fig1-2d.tex
\end{minipage}
\begin{minipage}[l]{.49\linewidth}
\input fig1-3d.tex
\end{minipage}
\caption{On the left: the two-dimensional {\em broken guide} $\Gamma$ with angle $\frac\pi2$. On the right: a representation of the three-dimensional {\em Fichera layer} $\Lambda$.}
\label{fig:fig0}
\end{figure}

\begin{theorem}[\cite{DaLaRa11,NazSha14}]
\label{th:GR}
With the broken guide $\Gamma=\big\{ (\R_+)^2 - (1,1) \big\} \setminus (\overline\R_+)^2$ and $\cL_\Gamma$ the positive Dirichlet Laplacian on $\Gamma$, there holds:
	\begin{enumerate}[i)]
		\item The essential spectrum of $\cL_\Gamma$ coincides with $[\pi^2,+\infty)$;
		\item The operator $\cL_\Gamma$ has exactly one eigenvalue below its essential spectrum.
	\end{enumerate}
\end{theorem}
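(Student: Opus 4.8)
The plan is to prove the two assertions separately, using variational techniques adapted to the geometry of the broken guide $\Gamma$. For part i), the first step is to identify the bottom of the essential spectrum. Away from the corner of the broken strip, $\Gamma$ is locally isometric (via a bending/straightening diffeomorphism) to the straight strip $\R\times(0,1)$, whose Dirichlet Laplacian has spectrum $[\pi^2,+\infty)$ with $\pi^2$ the first transverse eigenvalue. I would make this precise by a Dirichlet--Neumann bracketing argument: decompose $\Gamma$ into a bounded neighborhood of the corner and two half-strip ``arms''. Imposing extra Neumann conditions on the cuts shows $\inf\sigma_{\ess}(\cL_\Gamma)\ge\pi^2$ (the bounded piece contributes only discrete spectrum, each arm contributes $[\pi^2,+\infty)$); conversely, constructing a Weyl sequence supported far out in one arm, built from the product of the transverse Dirichlet ground state $\sqrt2\sin(\pi t)$ with a slowly varying longitudinal cutoff, shows $[\pi^2,+\infty)\subset\sigma(\cL_\Gamma)$. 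Since the essential spectrum is stable under the compact perturbation represented by the corner region, we get equality.

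For part ii), the strategy is twofold: produce at least one eigenvalue strictly below $\pi^2$, then show there is at most one. Existence follows from the standard waveguide heuristic that a bend creates a bound state: I would exhibit a trial function $u$ with $\|\nabla u\|_{L^2(\Gamma)}^2 < \pi^2\|u\|_{L^2(\Gamma)}^2$. The usual construction takes $u$ close to $\sqrt2\sin(\pi\,\dist(\cdot,\partial\Gamma))$ times a longitudinal cutoff, plus a carefully chosen localized correction near the corner; the gain in Rayleigh quotient comes from the fact that the ``diagonal'' region near the reentrant corner is effectively wider than the nominal width $1$. One then invokes the min-max principle together with part i). Uniqueness is the more delicate point: one shows that the quadratic form $Q_\Gamma[u]=\|\nabla u\|^2-\pi^2\|u\|^2$ has at most one direction on which it is negative. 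The cleanest route is to pass to the natural coordinates $(s,t)\in\R\times(0,1)$ adapted to the guide, write the transformed operator as $-\partial_t^2 + (\text{1D operator in }s)$ with an effective potential that is a single negative well, and decompose $u(s,t)=\varphi(s)\sqrt2\sin(\pi t)+u^\perp(s,t)$ where $u^\perp$ is orthogonal to $\sin(\pi t)$ in $t$ pointwise; the $u^\perp$ part has form value $\ge(4\pi^2-\pi^2)\|u^\perp\|^2>0$ (second transverse mode), so negativity of $Q_\Gamma$ forces it into the reduced one-dimensional operator $-\varphi''+W(s)\varphi$, which has a single bound state because $W$ is a one-dalley potential. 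This reduction must be handled with care since the change of variables is only bi-Lipschitz across the corner, so the effective potential picks up a singular (but sign-controlled) contribution there.

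The main obstacle I anticipate is the uniqueness statement rather than existence or the location of the essential spectrum. The pointwise fibered decomposition $u=\varphi\otimes\sin(\pi\cdot)+u^\perp$ does not commute cleanly with the gradient in the curved coordinates, so the cross terms between $\varphi$ and $u^\perp$ must be estimated, and one needs the effective potential $W$ to be genuinely monotone on each side of the corner (or at least to admit no more than one bound state) — a fact that ultimately relies on the specific right-angle geometry. An alternative, perhaps safer, route for uniqueness is to cite the analysis of \cite{DaLaRa11,NazSha14} directly, since the statement of Theorem~\ref{th:GR} is attributed to those references; in that case the ``proof'' here is simply a recollection of their results, and the work of the present paper lies in transferring this two-dimensional picture to the three-dimensional layer $\Lambda$. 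I would therefore present part i) with the bracketing-plus-Weyl-sequence argument in full and treat part ii) by reduction to the one-dimensional effective operator, flagging that the delicate monotonicity of $W$ is where the right angle $\tfrac\pi2$ enters.
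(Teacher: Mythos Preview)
Your treatment of part~i) and of existence in part~ii) is standard and correct; the paper does not reprove these points, simply citing \cite{DaLaRa11,NazSha14,ESS89}.

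For uniqueness, however, your curvilinear-coordinate route is both harder than necessary and genuinely problematic here. For a right-angle broken guide there is no smooth global $(s,t)$ chart: the signed curvature of the midline is a Dirac mass at the corner, so the ``effective potential $W$'' you would produce is not a rough but integrable well---it is a distribution, and the cross terms between $\varphi\otimes\sin(\pi t)$ and $u^\perp$ are not small perturbations but carry the entire geometric content. Asserting that $W$ is ``sign-controlled'' and supports a single bound state is precisely the hard part; this is why the original argument in \cite{NazSha14} is nontrivial, and your plan as written does not close the gap.

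The paper gives (Remark~\ref{rem:one}, via the proof of Lemma~\ref{lem:GR12}\,\textit{\ref{lem:GR12:ii})}) a much simpler argument that avoids curvilinear coordinates entirely. Split $\Gamma$ along the lines $x_1=0$ and $x_2=0$ into the corner square $\Gamma_0=(-1,0)^2$ and the two half-strips $\cS^1_0$, $\cS^2_0$, with Neumann conditions on the cuts. On $\Gamma_0$ the boundary conditions are Dirichlet on the sides $x_j=-1$ and Neumann on the sides $x_j=0$, so its first two eigenvalues are $(\tfrac14+\tfrac14)\pi^2=\tfrac{\pi^2}{2}$ and $(\tfrac14+\tfrac94)\pi^2=\tfrac{5\pi^2}{2}$; each half-strip contributes spectrum starting at $\pi^2$. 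Hence the second Rayleigh quotient of the broken form is exactly $\pi^2$, and by \eqref{eq:part1} one gets $\lambda_2(\Gamma)\ge\pi^2$. This one-line Neumann bracketing, in the spirit of \cite{Pa17}, replaces your fiber-decomposition programme and exploits the right angle only through the explicit mixed Dirichlet--Neumann spectrum of the unit square.
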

It is proved in \cite{DaLaRa11} that $\cL_\Gamma$ has a finite number of eigenvalues below its essential spectrum, and in \cite{NazSha14} that this number is $1$, as expected after semi-analytical calculations \cite{ESS89}, or finite element approximation \cite{DaLaRa11}. We are going to revisit this result later on, see Remark \ref{rem:one}.
Let $\lambda_1(\Gamma)$ be the first eigenvalue of $\cL_\Gamma$. An approximate numerical value given in \cite{DaLaRa11} is $\lambda_1(\Gamma)\simeq0.929\,\pi^2$. Our main result concerning the Fichera layer $\Lambda$ is
\begin{theorem} 
With the Fichera layer $\Lambda = \big\{ (\R_+)^3 - (1,1,1) \big\} \setminus (\overline\R_+)^3$ and $\cL_\Lambda$ the positive Dirichlet Laplacian on $\Lambda$, there holds:
\begin{enumerate}[i)]
\item\label{thm:main:i} The essential spectrum of $\cL_\Lambda$ coincides with $[\lambda_1(\Gamma),+\infty)$;
\item\label{thm:main:ii} $\cL_\Lambda$ has at most a finite number of eigenvalues below its essential spectrum.
\end{enumerate}
\label{thm:main}
\end{theorem}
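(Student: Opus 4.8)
The plan is to prove the two statements by rather different means: \eqref{thm:main:i} by the standard pairing of a Persson-type lower bound with explicit singular sequences, and \eqref{thm:main:ii} by a Neumann bracketing that peels off a bounded neighbourhood of the corner, followed by a Born--Oppenheimer reduction on the three arms of $\Lambda$ going to infinity.

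For \eqref{thm:main:i}, the key elementary fact is that the complement of a large ball in $\Lambda$ is covered by the three ``edge cylinders'' $\Omega_i:=\{x_i>R'\}\cap\Lambda$ (with $R'=R'(R)\to\infty$), and that each $\Omega_i$ is \emph{exactly} the product $(R',\infty)\times\Gamma$: when $x_i>0$ the constraint cutting $\Lambda$ out of $(-1,\infty)^3$ reduces, in the two remaining variables, to the one cutting $\Gamma$ out of $(-1,\infty)^2$. On each $\Omega_i$ one therefore has $-\Delta\ge\cL_\Gamma\otimes 1\ge\lambda_1(\Gamma)$ by separation of variables, while on the pairwise overlaps $\Omega_i\cap\Omega_j=(R',\infty)^2\times(-1,0)$ one coordinate is confined to a unit interval, so $-\Delta\ge\pi^2>\lambda_1(\Gamma)$. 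Gluing with a smooth partition of unity subordinate to $\{\Omega_i\}$ whose localization error tends to $0$ as $R\to\infty$, Persson's lemma gives $\inf\sigma_{\ess}(\cL_\Lambda)\ge\lambda_1(\Gamma)$. For the reverse inclusion, fix $\mu\ge\lambda_1(\Gamma)$, put $k=\sqrt{\mu-\lambda_1(\Gamma)}$, let $\phi_1\in H^1_0(\Gamma)$ be the ground state of $\cL_\Gamma$ and $\chi$ a fixed bump, and consider $u_n(x)=\phi_1(x_1,x_2)\,e^{\mathrm{i}kx_3}\,\chi(x_3/n)$, supported in $\{x_3>0\}\cap\Lambda\cong\Gamma\times(0,\infty)$; since $\cL_\Gamma\phi_1\in L^2(\Gamma)$ these lie in $\dom\cL_\Lambda$, tend weakly to $0$, and a short computation gives $\|(\cL_\Lambda-\mu)u_n\|=O(n^{-1/2})$ against $\|u_n\|\sim n^{1/2}$, so $\mu\in\sigma_{\ess}(\cL_\Lambda)$ by Weyl's criterion. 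Hence $\sigma_{\ess}(\cL_\Lambda)=[\lambda_1(\Gamma),+\infty)$.

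For \eqref{thm:main:ii}, split $\Lambda=\Lambda_0\sqcup\Lambda_\infty$ with $\Lambda_0$ a bounded neighbourhood of the corner and $\Lambda_\infty$ its complement, and Neumann bracket: $\cL_\Lambda\ge\cL^N_{\Lambda_0}\oplus\cL^N_{\Lambda_\infty}$, whence $\cN(\lambda_1(\Gamma),\cL_\Lambda)\le\cN(\lambda_1(\Gamma),\cL^N_{\Lambda_0})+\cN(\lambda_1(\Gamma),\cL^N_{\Lambda_\infty})$, where $\cN(\mu,\cdot)$ counts eigenvalues strictly below $\mu$; the first term is finite because $\Lambda_0$ is bounded. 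On $\Lambda_\infty$ I would use coordinates adapted to the conical layer: a radial variable $\rho$ (distance to the corner) and, at each fixed $\rho$, a two-dimensional transverse slice which is a thickened perimeter of a spherical triangle — three thin strips meeting at three corner regions, each corner region asymptotic, as $\rho\to\infty$, to the broken guide $\Gamma$. The transverse Dirichlet Laplacian $\cT_\rho$ on such a slice has a finite cluster of eigenvalues converging to $\lambda_1(\Gamma)$ from above as $\rho\to\infty$ (one per corner, with exponentially small tunnelling splitting), separated by a gap of order $\pi^2-\lambda_1(\Gamma)$ from the rest of its spectrum; moreover each such eigenvalue is $\ge\lambda_1(\Gamma)$, because near every corner the slice is a subdomain of $\Gamma$ (Dirichlet monotonicity) and elsewhere it is a unit strip. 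A Born--Oppenheimer reduction, projecting onto the corresponding almost-invariant transverse subspace, then bounds $\cL^N_{\Lambda_\infty}$ from below by an effective one-dimensional Sturm--Liouville operator (a finite system, on $\{\rho>R_0\}$, with some boundary condition at $\rho=R_0$) of the form $-\partial_\rho^2+\mathbf V(\rho)$ with $\mathbf V(\rho)\to\lambda_1(\Gamma)$, together with an operator bounded below by $\pi^2-\varepsilon$ on the transverse-excited part. The point is that, away from the corner, $\mathbf V(\rho)\ge\lambda_1(\Gamma)$: the transverse eigenvalues are $\ge\lambda_1(\Gamma)$ and the geometric (curvature/metric) corrections are non-negative or decay fast enough. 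Such a Sturm--Liouville operator is $-\partial_\rho^2+\lambda_1(\Gamma)$ perturbed by a bounded, compactly supported potential, hence has only finitely many eigenvalues below $\lambda_1(\Gamma)$; therefore $\cN(\lambda_1(\Gamma),\cL^N_{\Lambda_\infty})<\infty$ and \eqref{thm:main:ii} follows.

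The hard part is the Born--Oppenheimer step in the second statement. Turning the heuristic lower bound into a genuine form inequality requires controlling the non-adiabatic errors — the $\rho$-derivatives of the transverse eigenprojector and the cross-terms coupling the almost-invariant subspace to its orthogonal complement — and this is where the non-smoothness bites: the transverse eigenfunctions inherit the re-entrant-corner singularities of $\Gamma$ (and of the Fichera corner itself), so they are not in $H^2$, smooth adiabatic perturbation theory is unavailable, and one must run the whole argument with quadratic forms and weighted estimates, paying particular attention to the region near the corner and to the junctions where the straight edges meet the flat walls and the adapted coordinates degenerate. A subsidiary nuisance is the bookkeeping of the three arms, which overlap pairwise in the thin slabs $\Omega_i\cap\Omega_j$, forcing either genuinely conical coordinates or a partition of unity whose gradient is absorbed into the $-\partial_\rho^2$ term rather than simply discarded.
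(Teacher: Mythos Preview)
Your approach to part~\eqref{thm:main:i} is correct and in fact slightly cleaner than the paper's. You observe directly that $\{x_i>R'\}\cap\Lambda$ is isometric to $(R',\infty)\times\Gamma$, so a Persson/IMS argument with overlapping Cartesian cylinders gives $\inf\sigma_{\ess}\ge\lambda_1(\Gamma)$ immediately. The paper instead uses a \emph{disjoint} Neumann partition by the diagonal planes $x_j=x_k$, which forces the pieces $\Omega^j_R$ to have as cross-sections the \emph{finite} broken guides $\Gamma_{x_3}$ rather than $\Gamma$ itself; it then needs the auxiliary convergence $\lambda_1(\Gamma_R)\to\lambda_1(\Gamma)$ (Corollary~\ref{cor:conv}) to conclude. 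Your route avoids this detour. The Weyl sequences are the same in both.

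Part~\eqref{thm:main:ii}, however, has a genuine gap. Your key claim that the transverse eigenvalues on spherical slices satisfy $\lambda_1(\cT_\rho)\ge\lambda_1(\Gamma)$ is not justified and is almost certainly false. The argument you give --- ``near every corner the slice is a subdomain of $\Gamma$, elsewhere a unit strip'' --- does not assemble into a global lower bound: to get a lower bound on $\lambda_1(\cT_\rho)$ one would Neumann-bracket the slice into three pieces, and each piece is then (a curved analogue of) a \emph{finite} broken guide with Neumann ends, whose first eigenvalue is \emph{strictly below} $\lambda_1(\Gamma)$ (Lemma~\ref{lem:GR12}~\ref{lem:GR12:i})). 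So one should expect $\mathbf V(\rho)<\lambda_1(\Gamma)$, and your ``compactly supported perturbation'' picture collapses: the effective potential well extends to infinity and finiteness is no longer automatic.

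The paper's remedy is to abandon conical coordinates entirely and exploit the polyhedral structure. After the disjoint diagonal partition, the piece $\Omega^3_L$ is foliated by the \emph{flat} finite guides $\Gamma_{x_3}$, $x_3>L$, via Cartesian slicing. The transverse potential $\lambda(x_3):=\lambda_1(\Gamma_{x_3})$ is indeed below $\lambda_1(\Gamma)$, but the paper proves (Corollary~\ref{cor:conv}) that the gap closes \emph{exponentially}, $\lambda_1(\Gamma)-\lambda(x_3)\le C e^{-2\omega x_3}$, and controls the non-adiabatic commutator terms by the same exponential rate (Lemmas~\ref{lem:expdec} and~\ref{lem:wR}). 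The Born--Oppenheimer lower bound then lands on a one-dimensional Schr\"odinger operator with an exponentially decaying well, and a Bargmann estimate gives finiteness. The point is that working with the exact flat model $\Gamma_{x_3}$ makes all of this explicit; your spherical slices would require the same exponential-convergence and commutator analysis on curved, $\rho$-dependent domains, which is substantially harder and which your proposal does not address.
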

Theorem \ref{thm:main} exhibits a significant difference between non-smooth and smooth conical layers: First, the bottom of the essential spectrum in the non-smooth case is determined by the edge profile (the broken guide) and is lower than in the smooth case where it is given by the first eigenvalue $\pi^2$ of the one-dimensional Laplacian on the interval $\cI=(-1,0)$ across the layer. Second, in contrast with the present statement, for smooth conical layers the discrete spectrum is infinite as it was first observed for circular cones \cite[Thm.~3.1]{ET10}, \cite[Thm.~1.4]{DOR15}, and next generalized to smooth conical layers \cite[Thm.~2]{OBP16}.

Before going further, let us comment on relations between smooth layers and our Fichera layer. A smooth layer is defined like a shell in elasticity: from a reference unbounded smooth surface $\sSf$ without boundary (said \emph{midsurface}) and a positive thickness parameter $\varepsilon$, we may define the layer $\sLf[\varepsilon]$ as the set of points at distance strictly smaller than $\varepsilon/2$ to $\sSf$. This same definition is also adopted in \cite{OBP16} when $\sSf$ is a smooth conical surface. Trying this for our Fichera layer, we choose $\sSf$ as the union of three quarter planes
\[
   \sSf = \{\bx\in\mathbb{R}^3 :\ \  \min\{x_1,x_2,x_3\} = -\tfrac12\}
\]
and take $\varepsilon=1$. However the layer $\sLf:=\sLf[1]$ is distinct from $\Lambda$ outside any compact set: We can see that the section of $\Lambda$ by any plane $x_1=R$, $R>0$, is isometric to the broken guide $\Gamma$, whereas a similar section of $\sLf$ is isometric to the broken guide $\sGf$ with rounded exterior corner drawn in Figure \ref{fig:round}, left.

\begin{figure}[h]
\begin{minipage}[l]{.49\linewidth}
\input fig2-round.tex
\end{minipage}
\begin{minipage}[l]{.49\linewidth}
\input fig2-roundsharp.tex
\end{minipage}
\caption{Plane sections $\sGf$ and $\sGs$ of the variants $\sLf$ and $\sLs$ of the Fichera layer.}
\label{fig:round}
\end{figure}

An alternative in the same spirit would be to set $\sS^0=\{\bx\in\mathbb{R}^3 :\ \  \min\{x_1,x_2,x_3\} = 0\}$ and define the layer $\sLs$ as the set of points at signed distance less than $1$ from $\sS^0$ where the sign is given by the outward normal to the octant $(\R_+)^3$. We observe that the section of $\sLs$ by any plane $x_1=R$, $R>0$, is isometric to the broken guide $\sGs$ with rounded exterior corner drawn in Figure \ref{fig:round}, right. Note that $\sLs$ can also be viewed as a layer of thickness $\varepsilon=1$ associated with the midsurface $\sSs$ drawn in the same figure. 

In fact, the Dirichlet Laplacian on $\sGf$ or $\sGs$ has exactly one eigenvalue under the threshold of the essential spectrum \cite{GJ92,Pa17} and Theorem \ref{thm:main} generalizes to the layers $\sLf$ and $\sLs$ if we replace the guide $\Gamma$ by $\sGf$ and $\sGs$, respectively, see Section \ref{sec:ext} and Appendix \ref{sec:app2}.
 
Another interesting related geometry is given by the cross-like domains. Let $\sX$ and $\sY$ be the two- and three-dimensional ``crosses'', {\em cf.}\ Figure \ref{fig:X} (here $\cI$ is the bounded interval $(-1,0)$)
\begin{equation}\label{eqn:defXY}
   \sX = (\R\times\cI) \cup (\cI\times\R) \quad\mbox{and}\quad
   \sY = (\R^2\times\cI) \cup (\R\times\cI\times\R) \cup (\cI\times\R^2).
\end{equation}
\begin{figure}[h]
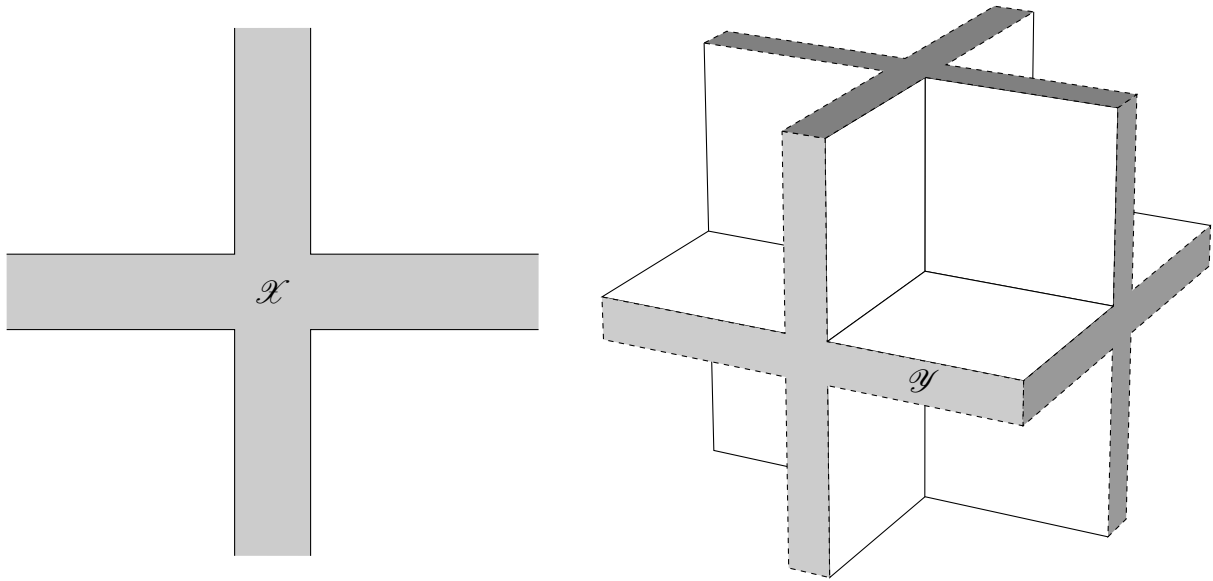

\begin{minipage}[l]{.49\linewidth}
\input fig3-cross.tex
\end{minipage}
\begin{minipage}[l]{.49\linewidth}
\input fig3-murs.tex
\end{minipage}
\caption{The 2- and 3-dimensional crosses.}
\label{fig:X}
\end{figure}%
Here, once more, the Dirichlet Laplacian on $\sX$ has exactly one eigenvalue under the essential spectrum \cite[Prop.\,13]{Pa17} and Theorem \ref{thm:main} generalizes to the three-dimensional cross $\sY$ if we replace the guide $\Gamma$ by the two-dimensional cross $\sX$, see Section \ref{sec:ext}.

%%%%%%%%%%%%%%%%%%%%%%%%%%%%%%%%%%%%%%%%%%%%%%%%%%%%%%%%%%%%%%%%%%%%%%%%%%%%%%%%%%%%%%%%%%%%%%%%%%%%%%%%%%%%%%%%
\subsection{Notations}
%%%%%%%%%%%%%%%%%%%%%%%%%%%%%%%%%%%%%%%%%%%%%%%%%%%%%%%%%%%%%%%%%%%%%%%%%%%%%%%%%%%%%%%%%%%%%%%%%%%%%%%%%%%%%%%%
Let $d\in\{1,2,3\}$ and let $\bx=(x_1,\cdots,x_d)$ denote the Cartesian coordinates of $\R^d$. For $L>0$, we will make use of $\Box_L$, the box domains of $\mathbb{R}^d$ defined as
\begin{equation}
   \Box_L := \{\bx\in\mathbb{R}^d :\ \ \max_{j=1}^d |x_j+1| < L\}.
\label{eqn:box}
\end{equation}

\subsubsection{Three-dimensional domains}
In $\R^3$, the Fichera layer \eqref{eq:Fichera} is the unbounded layer domain $\Lambda$ that can alternatively be written as
\begin{equation}
\label{eq:L}
	\Lambda = \{\bx\in\mathbb{R}^3 :\ \  -1<\min\{x_1,x_2,x_3\} < 0\}.
\end{equation}
Bounded versions of $\Lambda$ are obtained as the intersection of $\Lambda$ with the three-dimensional boxes $\Box_L$ centered at the external vertex $(-1,-1,-1)$ of $\Lambda$. Set for $R>-1$:
\begin{equation}
\label{eq:LR}
   \Lambda_R = \Lambda\cap\Box_{R+1}\,.
\end{equation}
The residual parts ``at infinity'' of $\Lambda$ are denoted by $\Omega_R$
\begin{equation}
\label{eq:OR}
   \Omega_R = \Lambda \cap (\R^3\setminus\overline\Box_{R+1}) = \Lambda\setminus\overline\Lambda_R \,.
\end{equation}
The layer $\Lambda$ is invariant by the symmetries with respect to the three diagonal planes $x_j=x_k$ ($j<k$). It is natural to split $\Lambda$ into the three isometric parts $\Lambda^1$, $\Lambda^2$ and $\Lambda^3$, with $\Lambda^3$ defined as
\begin{equation}
\label{eq:L3}
   \Lambda^3 = \{\bx\in\Lambda :\ \  x_1<x_3 \ \ \mbox{and}\ \ x_2<x_3 \}
\end{equation}
and the other two by permutation of indices. We also introduce the finite and residual parts
\begin{equation}
\label{eq:LjR}
   \Lambda^j_R = \Lambda^j\cap\Lambda_R \quad\mbox{and}\quad \Omega^j_R = \Lambda^j\cap\Omega_R\,.
\end{equation}

\subsubsection{Two-dimensional domains} The two-dimensional domain corresponding to the Fichera layer is the broken guide $\Gamma$ of opening $\frac\pi2$ and width $1$
\begin{equation}
\label{eq:G}
	\Gamma = \{\bx\in\mathbb{R}^2 :\ \  -1<\min\{x_1,x_2\} < 0\}
\end{equation}
and the finite broken guide $\Gamma_R$ is defined accordingly
\begin{equation}
\label{eq:GR}
   \Gamma_R = \Gamma\cap\Box_{R+1} \quad\mbox{for}\quad R>-1\,,
\end{equation}
where $\Box_{R+1}$ is the two-dimensional box as defined in \eqref{eqn:box}.

These domains are symmetric with respect to the diagonal line $x_1=x_2$, which makes natural the definition of the subdomains $\Gamma^1 = \{\bx\in\Gamma :\ x_2<x_1\}$  and  $\Gamma^2 = \{\bx\in\Gamma :\  x_1<x_2\}$
together with their bounded analogues
$\Gamma^j_R = \Gamma^j\cap\Box_{R+1}$ for $j=1,2$.
We particularize the parts of their boundaries at ``distance'' $R$, i.e.
$\Sigma^j_R = \partial \Gamma^j_R \cap \partial \Box_{R+1}$.
Note that
\begin{equation}\label{eqn:dfnsigmaj}
   \Sigma^1_R = \{R\}\times\cI \quad\mbox{and}\quad \Sigma^2_R = \cI\times\{R\}
   \quad\mbox{with}\quad \cI=(-1,0)\,.
\end{equation}
Finally
\begin{equation}
   \Sigma_R = \Sigma^1_R\cup\Sigma^2_R = \partial \Gamma_R \cap \partial \Box_{R+1}\,.
	\label{eqn:defsigmaR}
\end{equation}
We will also make use of the rectangles $\cT^j_R$ and half-strips $\cS^j_R$
\[
   \cT^1_R = (0,R)\times\cI,\ \ \ \cT^2_R = \cI\times(0,R) \quad\mbox{and}\quad
   \cS^1_R = (R,\infty)\times\cI,\ \ \ \cS^2_R = \cI\times(R,\infty).
\]
These domains are represented in Figure \ref{fig:subdom_guide} and \ref{fig:fin_gui}.
\begin{figure}[h]
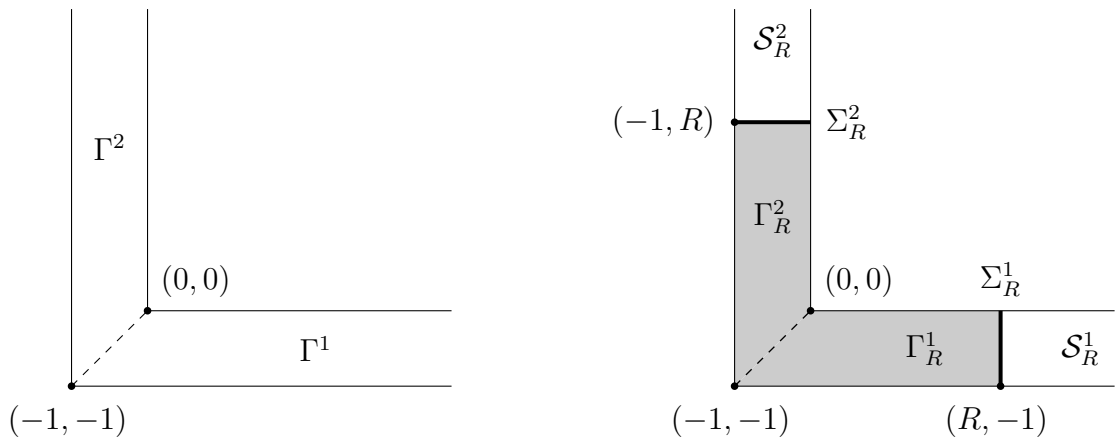

\begin{minipage}[l]{.49\linewidth}
\input fig4-1.tex
\end{minipage}
\begin{minipage}[l]{.49\linewidth}
\input fig4-2.tex
\end{minipage}
\caption{The guide $\Gamma$ and its associated subdomains.}
\label{fig:subdom_guide}
\end{figure}

In this paper, we will use in two occurrences the following simple uniform trace estimate:
\begin{lemma}
\label{lem:trace}
There exists a constant $C$ such that for all $R\ge1$ and all $u\in H^1(\Gamma_R)$ there holds
\begin{equation}
\label{eq:trace}
   \|u\|_{L^2(\Sigma_R)} \le C \|u\|_{H^1(\Gamma_R)}\,.
\end{equation}
\end{lemma}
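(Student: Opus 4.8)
The plan is to reduce the desired bound to a trace inequality on a \emph{fixed} unit square, so that the resulting constant is automatically independent of $R$; the key observation is that $\Sigma^1_R$ and $\Sigma^2_R$ are each one side of a unit square sitting at the far end of one of the two arms of $\Gamma_R$. Concretely, for $R\ge1$ set
\[
  Q^1_R = (R-1,R)\times\cI, \qquad Q^2_R = \cI\times(R-1,R).
\]
Since $R\ge1$, for $(x_1,x_2)\in Q^1_R$ one has $x_1\ge R-1\ge 0 > x_2 > -1$, hence $-1<\min\{x_1,x_2\}=x_2<0$ and $\max\{|x_1+1|,|x_2+1|\} = x_1+1 < R+1$; therefore $Q^1_R\subset\Gamma\cap\Box_{R+1}=\Gamma_R$, and likewise $Q^2_R\subset\Gamma_R$. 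Moreover $\Sigma^1_R=\{R\}\times\cI$ is one side of $\partial Q^1_R$ and $\Sigma^2_R=\cI\times\{R\}$ is one side of $\partial Q^2_R$.

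First I would record the scalar trace estimate on the reference square $Q=(0,1)\times\cI$: there is a constant $C_0$ such that $\|v\|_{L^2(\{1\}\times\cI)}\le C_0\|v\|_{H^1(Q)}$ for every $v\in H^1(Q)$. For $v\in C^\infty(\overline Q)$ this follows from writing, for $(t,x_2)\in Q$,
\[
  v(1,x_2)^2 = v(t,x_2)^2 + \int_t^1 2\, v(s,x_2)\,\partial_1 v(s,x_2)\,\rd s,
\]
then integrating in $t$ over $(0,1)$ and in $x_2$ over $\cI$ and applying the Cauchy--Schwarz and Young inequalities; the general case follows by density (equivalently, by the standard trace theorem on the bounded Lipschitz domain $Q$).

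Then, given $u\in H^1(\Gamma_R)$, I would apply this to $v:=u(\cdot+(R-1,0))\in H^1(Q)$, whose restriction to $\{1\}\times\cI$ corresponds to $u$ restricted to $\Sigma^1_R$; since translation is an $H^1$-isometry and $Q^1_R\subset\Gamma_R$, this gives $\|u\|_{L^2(\Sigma^1_R)}\le C_0\|u\|_{H^1(Q^1_R)}\le C_0\|u\|_{H^1(\Gamma_R)}$. The same argument on $Q^2_R$ yields $\|u\|_{L^2(\Sigma^2_R)}\le C_0\|u\|_{H^1(\Gamma_R)}$. Finally, as $\Sigma_R$ is the disjoint union $\Sigma^1_R\cup\Sigma^2_R$, one gets $\|u\|_{L^2(\Sigma_R)}^2 = \|u\|_{L^2(\Sigma^1_R)}^2+\|u\|_{L^2(\Sigma^2_R)}^2 \le 2C_0^2\,\|u\|_{H^1(\Gamma_R)}^2$, so \eqref{eq:trace} holds with $C=\sqrt2\,C_0$. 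There is no genuine difficulty in this lemma; the only point to watch is uniformity in $R$, and that is exactly what the reduction to translates of the fixed square $Q$ delivers, the hypothesis $R\ge1$ being used precisely to guarantee $Q^j_R\subset\Gamma_R$.
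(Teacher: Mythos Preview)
Your proof is correct and follows essentially the same approach as the paper: the paper's one-line argument bounds the $L^2$-norm of the trace on $\Sigma_R$ by the $H^1$-norm on $\Gamma_R\setminus\overline\Gamma_{R-1}$, which is precisely your $Q^1_R\cup Q^2_R$. You have simply spelled out the details (the reduction to a fixed unit square via translation) that the paper leaves implicit.
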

\begin{proof}
We bound the $L^2$-norm of the trace of $u$ on $\Sigma_R$ by the $H^1$-norm on $\Gamma_R\setminus\overline\Gamma_{R-1}$.
\end{proof}

\subsubsection{Operators}
\label{subsub:operators}
The Laplace operator $\Delta$ in $\R^d$ is the partial differential operator
\[
	\Delta = \sum_{j=1}^d \partial_j^2.
\]
On a generic domain $\cO\subset\R^d$, the positive Laplacian $-\Delta$ is associated with the quadratic form
\[
   \cQ(u) = \int_{\cO} |\nabla u(\bx)|^2\,\rd\bx,
\]
defined for $u$ belonging to the Sobolev space $H^1(\cO)$. The boundary conditions considered in this paper are either Dirichlet or Neumann on parts of the boundary of $\cO$. Let us assume that $\cO$ is a Lipschitz domain and that $\partial_{\sD}\cO$ and $\partial_{\sN}\cO$ are two Lipschitz subdomains of the boundary $\partial\cO$ such that $\overline{\partial_{\sD}\cO}\cup\overline{\partial_{\sN}\cO} = \partial\cO$. Then we can introduce the variational space $\Dom(\cQ)$ (form domain for $\cQ$) with Dirichlet conditions on $\partial_{\sD}\cO$:
\[
   \Dom(\cQ) = \big\{u\in H^1(\cO):\ \ u\big|_{\partial_{\sD}\cO}=0\big\}.
\]
The associated self-adjoint operator is $\cL:=-\Delta$ with domain
\[
   \Dom(\cL) = \big\{u\in\Dom(\cQ):\ \ -\Delta u\in L^2(\Omega),\ \ \partial_n u\big|_{\partial_{\sN}\cO}=0\big\}.
\]
The spectrum of $\cL$ is denoted by $\sigma(\cL)$, its discrete and essential parts by $\sigma_{\dis}(\cL)$ and $\sigma_{\ess}(\cL)$, respectively. Likewise, we might also write $\sigma(\cQ)$, $\sigma_{\ess}(\cQ)$ and $\sigma_{\dis}(\cQ)$, respectively.
We particularize the notation of $\cL$ (respectively $\cQ$) on the domain $\cO$ as $\cL^\Dir_\cO$ (respectively $\cQ^\Dir_\cO$) if $\partial_\sN\cO$ has measure zero and $\cL_\cO^\Mix$ (respectively $\cQ_\cO^\Mix$) otherwise.

The $\ell$-th Rayleigh quotient of $\cQ_\cO^\Dir$ (respectively $\cQ_\cO^\Mix$) on its form domain 
is denoted by $\lambda_\ell^\Dir(\cO)$ (respectively $\lambda_\ell^\Mix(\cO)$).
Except possibly in Section \ref{sec:numerics}, there is no risk of confusion, thus, for the sake of readability, we omit the mention of the superscripts $\Dir$ and $\Mix$ using $\cL_\cO$, $\cQ_\cO$ and $\lambda_\ell(\cO)$ instead.

In particular $\lambda_1(\cO)$ is the bottom of the spectrum of $\cL_\cO$:
\[
   \lambda_1(\cO) = \min_{u\in\Dom(\cQ_\cO),\ u\neq0} \frac{\cQ_\cO(u)}{\|u\|^2}\,.
\]

In Table \ref{tab:recap}, we list the main geometrical domains we are interested in as well as the associated Laplace operators, their Rayleigh quotients and the main related results.

\renewcommand{\arraystretch}{1.4}

\begin{table}
\begin{tabular}{|c||c|c|c|}
\hline
Domain $\cO$ & $\Gamma$, \ {\em cf.}\ \eqref{eq:G} 
             & $\Gamma_R$, \ {\em cf.}\ \eqref{eq:GR} 
             & $\Lambda$, \ {\em cf.}\ \eqref{eq:L}\\
\hline
\hline
Neumann part $\partial_\sN\cO$ & $\partial_\sN\Gamma = \emptyset$ 
                               & $\partial_\sN \Gamma_R = \Sigma_R$, \ \em{cf.}\ \eqref{eqn:defsigmaR} 
                               & $\partial_\sN \Lambda = \emptyset$\\
\hline
Laplace operator & $\cL_\Gamma$& $\cL_{\Gamma_R}$ & $\cL_\Lambda$\\
\hline
Rayleigh quotients & $\lambda_\ell(\Gamma)$ & $\lambda_\ell(\Gamma_R)$ & $\lambda_\ell(\Lambda)$ \\
\hline
Condensed notations & $\lambda_1(\Gamma) := \lambda_\infty$& $\lambda_1(\Gamma_R) := \lambda_R$ & \textendash\\
\textendash & \textendash & for $R=x_3$\,: \ $\lambda_1(\Gamma_{x_3}) := \lambda(x_3)$ & \textendash\\
\hline
Main results & Th.~\ref{th:GR} & Cor.~\ref{cor:conv} \ and \ Prop.~\ref{prop:Dau-Hel} & Th.~\ref{thm:main} \ and \  Prop.~\ref{prop:L*}\\
\hline
\end{tabular}
\vglue 1.5ex
\caption{Notations for the main Laplace operators studied in this paper.}
\label{tab:recap}
\end{table}

\subsubsection{Domain partition}
\label{sss:partition}
We will often use a comparison principle of eigenvalues based on a domain partition. Let us introduce a finite partition $(\cO_j)_{j\in\cJ}$ of the Lipschitz domain $\cO$ in the sense that each $\cO_j$ is Lipschitz, that they are pairwise disjoint, and 
\[
   \cup_{j\in\cJ} \overline\cO_j = \overline\cO.
\]
Then we define the broken quadratic form $\cQ_{\cO}^\Bro$
\[
   \cQ_{\cO}^\Bro(u) = \sum_{j\in\cJ} \int_{\cO_j} |\nabla u(\bx)|^2\,\rd\bx,
\]
defined for $u$ in the domain
\[
   \Dom(\cQ_{\cO}^\Bro) = \big\{u\in L^2(\cO):\ \ 
   u\big|_{\cO_j} \in \Dom(\cQ_{\cO_j}),\ \ j\in\cJ\big\}\,,
\]
where $\Dom(\cQ_{\cO_j}) = \{u\in H^1(\cO_j),\ u\big|_{\partial_{\sD}\cO\cap\partial\cO_j}\!=0\}$. Let $\lambda_\ell^\Bro(\mathcal{O})$ denote the $\ell$-th Rayleigh quotient of $\cQ_\cO^\Bro$.
Since we have the obvious embedding between form domains $\Dom(\cQ_{\cO})\subset\Dom(\cQ_{\cO}^\Bro)$, the following inequalities between Rayleigh quotients hold
\begin{subequations}
\label{eq:part}
\begin{equation}
\label{eq:part1}
   \lambda_\ell^\Bro(\cO) \le \lambda_\ell(\cO),\quad \forall \ell\ge1\,,
\end{equation}
while
\begin{equation}
\label{eq:part2}
   \lambda_\ell^\Bro(\cO) \ \ \mbox{is the $\ell$-th smallest term in the set} \ \ 
   \bigcup_{j\in\cJ} \bigcup_{k=1}^\ell \{\lambda_k(\cO_j)\}\, 
\end{equation}
\end{subequations}
with multiplicities taken into account.

%%%%%%%%%%%%%%%%%%%%%%%%%%%%%%%%%%%%%%%%%%%%%%%%%%%%%%%%%%%%%%%%%%%%%%%%%%%%%%%%%%%%%%%%%%%%%%%%%%%%%%%%%%%%%%%%
\subsection{Structure of the paper}
%%%%%%%%%%%%%%%%%%%%%%%%%%%%%%%%%%%%%%%%%%%%%%%%%%%%%%%%%%%%%%%%%%%%%%%%%%%%%%%%%%%%%%%%%%%%%%%%%%%%%%%%%%%%%%%%
In Section \ref{sec:brok_guides} we study the auxiliary question of the two-dimensional broken guides of finite length $\Gamma_R$. We collect results regarding the first eigenvalue of the Dirichlet Laplacian in such domains, in particular its exponential convergence to the first eigenvalue of the infinite broken guide as the length $R$ goes to infinity (see Corollary \ref{cor:conv}) and a Dauge-Helffer type formula about its derivative with respect to the length of the finite guide (see Proposition \ref{prop:Dau-Hel}).

Section \ref{sec:spectrum} is devoted to the proof of Theorem \ref{thm:main}. First, we investigate the structure of the essential spectrum using the form decomposition method as well as constructing adapted Weyl sequences for the operator. Second, we prove finiteness of the number of bound states by a Born-Oppenheimer strategy: we compare the number of eigenvalues below the threshold of the essential spectrum to that of a one-dimensional operator obtained after projection on the lowest eigenfunction of a transverse operator. Third, we conclude this section by giving a lower bound on the spectrum that turns out to be numerically close to the threshold of the essential spectrum: there is very little room left for bound states to exist (see Proposition \ref{prop:L*} and Figure \ref{fig:lammu}).

In Section \ref{sec:numerics}, we illustrate some of our results thanks to computations performed with the finite element library XLiFE++ \cite{Xlifepp} and we exhibit the existence of exactly one isolated eigenvalue for the Dirichlet problem on the Fichera layer. We address in Section \ref{sec:ext} the extensions mentioned above about the Fichera layer with exterior rounded edges $\sLs$ and the cross $\sY$. We draw finally some conclusions about other possible generalizations of our results in Section \ref{sec:conc}, developing the discussion on the distinct spectral behaviors in the family of smooth conical layers and in a family of generalized Fichera layers. The two appendices \ref{sec:app1} and \ref{sec:app2} close the paper.

%%%%%%%%%%%%%%%%%%%%%%%%%%%%%%%%%%%%%%%%%%%%%%%%%%%%%%%%%%%%%%%%%%%%%%%%%%%%%%%%%%%%%%%%%%%%%%%%%%%%%%%%%%%%%%%%
\section{Broken guides of finite length}\label{sec:brok_guides}
%%%%%%%%%%%%%%%%%%%%%%%%%%%%%%%%%%%%%%%%%%%%%%%%%%%%%%%%%%%%%%%%%%%%%%%%%%%%%%%%%%%%%%%%%%%%%%%%%%%%%%%%%%%%%%%%
In this section, we address the finite plane broken guides $\Gamma_R$ \eqref{eq:GR} for $R\ge0$: we investigate the first eigenpair of the Laplacian $\cL_{\Gamma_R}$ with Dirichlet conditions on $\partial\Gamma_R\cap\partial\Gamma$ and Neumann conditions on the remaining part $\Sigma_R$ of the boundary of $\partial\Gamma_R$. We start with rough estimates on the first two eigenvalues of $\cL_{\Gamma_R}$.

\begin{lemma} With the notations summarized in Table \textup{\ref{tab:recap}}, there holds:
\begin{enumerate}[i)]
	\item\label{lem:GR12:i} For all $R\ge0$, $\lambda_1({\Gamma_R})\le\lambda_1({\Gamma})$.
	\item\label{lem:GR12:ii} For all $R\ge0$, $\lambda_2({\Gamma_R})\ge\pi^2$.
\end{enumerate}
\label{lem:GR12}
\end{lemma}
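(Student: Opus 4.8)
The statement has two parts, and both should follow from the domain-partition / comparison principle recorded in \eqref{eq:part}, applied in two different directions.

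For part \ref{lem:GR12:i}, I would compare $\Gamma_R$ with the infinite guide $\Gamma$. The natural map is restriction: given a function in $\Dom(\cQ_\Gamma)$, i.e.\ $u\in H^1(\Gamma)$ vanishing on $\partial\Gamma$, its restriction to $\Gamma_R$ lies in $\Dom(\cQ_{\Gamma_R})$, since the only boundary portion we drop the Dirichlet condition on is $\Sigma_R$, where $\cL_{\Gamma_R}$ carries a Neumann condition anyway. This gives a norm-nonincreasing inclusion of form domains $\Dom(\cQ_\Gamma)\hookrightarrow\Dom(\cQ_{\Gamma_R})$ with $\cQ_{\Gamma_R}(u|_{\Gamma_R})\le\cQ_\Gamma(u)$ and $\|u|_{\Gamma_R}\|\le\|u\|$; taking $u$ the ground state of $\cL_\Gamma$ immediately yields $\lambda_1(\Gamma_R)\le\lambda_1(\Gamma)$ via the Rayleigh quotient characterization of $\lambda_1$. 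Strictly, one should note this works for every $R\ge0$ uniformly; monotonicity of $\lambda_1(\Gamma_R)$ in $R$ is not needed.

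For part \ref{lem:GR12:ii}, I would instead partition $\Gamma_R$ into the two isometric ``legs'' meeting at the corner, e.g.\ using $\Gamma^1_R$ and $\Gamma^2_R$ as defined in the excerpt, and invoke \eqref{eq:part1}--\eqref{eq:part2}: $\lambda_2(\Gamma_R)\ge\lambda_2^\Bro(\Gamma_R)$, where $\lambda_2^\Bro(\Gamma_R)$ is the second smallest among $\{\lambda_1(\Gamma^1_R),\lambda_2(\Gamma^1_R),\lambda_1(\Gamma^2_R),\lambda_2(\Gamma^2_R)\}$. Since the two pieces are isometric, it suffices to show $\lambda_1(\Gamma^1_R)\ge\pi^2$: then the multiset of broken eigenvalues has its two smallest entries both $\ge\pi^2$, so $\lambda_2^\Bro(\Gamma_R)\ge\pi^2$. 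Each $\Gamma^1_R$ is (a bounded truncation of) a half-strip of width $1$ in the $x_2$-direction with Dirichlet conditions on the two long sides $x_2=-1$ and $x_2=0$ and Neumann (or, on the diagonal cut, mixed) conditions on the remaining sides; separating variables, or simply using that the transverse Dirichlet problem on the interval $\cI=(-1,0)$ has first eigenvalue $\pi^2$, gives $\cQ_{\Gamma^1_R}(v)\ge\pi^2\|v\|^2$ for all admissible $v$, hence $\lambda_1(\Gamma^1_R)\ge\pi^2$. The only point requiring a little care is that the diagonal boundary segment of $\Gamma^1_R$ is not aligned with the coordinate axes, but since we only put a Neumann-type condition there the one-dimensional Dirichlet lower bound in the transverse variable still applies to the full quadratic form, so no essential obstacle arises.

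The main (minor) obstacle is bookkeeping the boundary conditions correctly when restricting or partitioning — making sure that dropping Dirichlet conditions only happens on portions that are Neumann for the target operator, so that the form-domain inclusions in \eqref{eq:part} genuinely hold. Once that is checked, both inequalities are immediate from the Rayleigh quotient and the partition principle.
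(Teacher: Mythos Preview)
Both parts of your proposal contain genuine gaps, and the one in part~\ref{lem:GR12:ii}) is fatal.

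For part~\ref{lem:GR12:i}), the two inequalities $\cQ_{\Gamma_R}(u|_{\Gamma_R})\le\cQ_\Gamma(u)$ and $\|u|_{\Gamma_R}\|\le\|u\|$ do \emph{not} by themselves bound the Rayleigh quotient of the restriction by $\lambda_1(\Gamma)$: both numerator and denominator shrink, and the quotient could in principle move either way. The paper avoids this by applying the partition principle \eqref{eq:part} to the decomposition $\Gamma=\Gamma_R\cup\cS^1_R\cup\cS^2_R$, which yields $\lambda_1(\Gamma)\ge\min\{\lambda_1(\Gamma_R),\,\lambda_1(\cS^1_R),\,\lambda_1(\cS^2_R)\}=\min\{\lambda_1(\Gamma_R),\pi^2\}$; since $\lambda_1(\Gamma)<\pi^2$ by Theorem~\ref{th:GR}, the minimum is forced to be $\lambda_1(\Gamma_R)$.

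For part~\ref{lem:GR12:ii}), your key claim $\lambda_1(\Gamma^1_R)\ge\pi^2$ is \emph{false}. By the diagonal symmetry of $\cL_{\Gamma_R}$ its positive ground state $v_R$ is even, hence satisfies a Neumann condition on the diagonal; its restriction to $\Gamma^1_R$ is therefore the ground state of the broken piece, and $\lambda_1(\Gamma^1_R)=\lambda_1(\Gamma_R)<\pi^2$. Concretely, your transverse argument breaks on the triangular portion $\{-1<x_2<x_1<0\}$: for fixed $x_1\in(-1,0)$ the vertical slice is $(-1,x_1)$ with Dirichlet at $x_2=-1$ but \emph{Neumann} at $x_2=x_1$ (the diagonal cut), so the one-dimensional first eigenvalue is $\pi^2/\bigl(4(x_1+1)^2\bigr)$, which drops below $\pi^2$ for every $x_1>-\tfrac12$. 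With your partition the second broken eigenvalue is only $\lambda_1(\Gamma_R)$, yielding nothing.

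The paper's partition for~\ref{lem:GR12:ii}) is instead $\Gamma_R=\Gamma_0\cup\cT^1_R\cup\cT^2_R$, with $\Gamma_0=(-1,0)^2$ carrying Dirichlet on $x_j=-1$ and Neumann on $x_j=0$, and the rectangles $\cT^j_R$ carrying Dirichlet on their long sides. Then $\lambda_1(\Gamma_0)=\pi^2/2$, $\lambda_2(\Gamma_0)=5\pi^2/2$, and $\lambda_1(\cT^j_R)=\pi^2$; the only sub-$\pi^2$ entry in the broken list is the simple value $\pi^2/2$, so the second smallest element is $\pi^2$.
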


\begin{proof}
We use the domain partition as stated in \S\ref{sss:partition}.

{\em\ref{lem:GR12:i})} For a chosen $R\ge0$, we split the broken guide $\Gamma$ into three pieces: The finite guide $\Gamma_R$, and the two half-strips $\cS^1_R=(R,\infty)\times\cI$ and $\cS^2_R=\cI\times(R,\infty)$. On the half-strips $\cS^1_R$ and $\cS^2_R$, Dirichlet conditions are applied on the unbounded sides. By \eqref{eq:part1}-\eqref{eq:part2} 
we find
\[
   \lambda_1(\Gamma) \ge 
   \min\big\{\lambda_1({\Gamma_R}),\ \lambda_1({\cS^1_R}),\ \lambda_1({\cS^2_R})\big\}\,.
\] 
Since $\lambda_1({\cS^1_R})=\lambda_1({\cS^2_R})=\pi^2$, we deduce point i) thanks to Theorem \ref{th:GR}.

{\em\ref{lem:GR12:ii})} Now we split the finite guide $\Gamma_R$ into the square $\Gamma_0$ and the two finite rectangles $\cT^1_R=(0,R)\times\cI$ and $\cT^2_R=\cI\times(0,R)$ (see Figure \ref{fig:fin_gui}). On the rectangles $\cT^1_R$ and $\cT^2_R$, Dirichlet conditions are applied on the sides $(0,R)\times\{-1,0\}$ and $\{-1,0\}\times(0,R)$, respectively. By \eqref{eq:part1}-\eqref{eq:part2} we find
\begin{equation}
\label{eq:2dset}
   \lambda_2({\Gamma_R}) \ge \ \ \mbox{2d smallest element of}\ \ 
   \big\{\lambda_1({\Gamma_0}),\ \lambda_2({\Gamma_0}),\ 
   \lambda_1({\cT^j_R}),\ \lambda_2({\cT^j_R}),\ j=1,2\big\}\,.
\end{equation}
\begin{figure}[h]
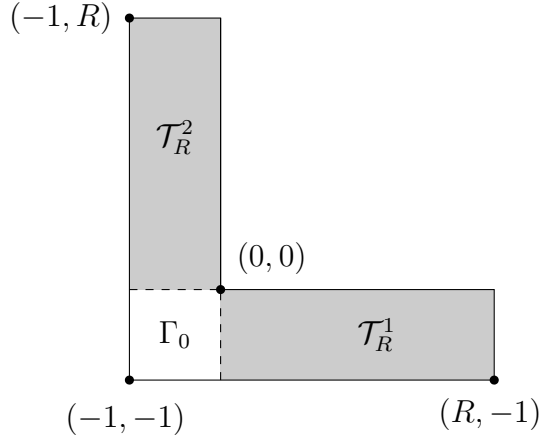

\input fig5.tex
\caption{The finite guide $\Gamma_R$, the square $\Gamma_0$ and the rectangles $\cT_R^j$, $j=1,2$.}
\label{fig:fin_gui}
\end{figure}

Recall that the boundary conditions on the square $\Gamma_0$ are Dirichlet on the sides $x_1=-1$, $x_2=-1$, and Neumann on the sides $x_1=0$, $x_2=0$. Therefore we have
\[
   \lambda_1({\Gamma_0}) = \Big(\frac14+\frac14\Big)\pi^2 = \frac{\pi^2}{2} \quad\mbox{and}\quad
   \lambda_2({\Gamma_0}) = \Big(\frac14+\frac94\Big)\pi^2 =\frac{5\pi^2}{2}.
\] 
Moreover, we get
\[
   \lambda_1({\cT^1_R})=\lambda_1({\cT^2_R})=\pi^2 \quad\mbox{and}\quad
   \lambda_2({\cT^1_R})=\lambda_2({\cT^2_R}) >\pi^2.
\]
Thus the second smallest element of the set in \eqref{eq:2dset} is $\pi^2$ and point {\em\ref{lem:GR12:ii})} of the lemma is proved.
\end{proof}

\begin{remark}
\label{rem:one}
A similar proof as that of point {\em\ref{lem:GR12:ii})} above yields that the second Rayleigh quotient of $\cL_\Gamma$ is greater than $\pi^2$. This is in the spirit of \cite{Pa17} and provides a more direct proof of the result \cite{NazSha14} that $\cL_\Gamma$ has at most one eigenvalue under its essential spectrum.
\end{remark}

%%%%%%%%%%%%%%%%%%%%%%%%%%%%%%%%%%%%%%%%%%%%%%%%%%%%%%%%%%%%%%%%%%%%%%%%%%%%%%%%%%%%%%%%%%%%%%%%%%%%%%%%%%%%%%%%
\subsection{Exponential decay of eigenvectors}
%%%%%%%%%%%%%%%%%%%%%%%%%%%%%%%%%%%%%%%%%%%%%%%%%%%%%%%%%%%%%%%%%%%%%%%%%%%%%%%%%%%%%%%%%%%%%%%%%%%%%%%%%%%%%%%%

In this section, we prove exponential decay estimates of the first eigenvector of $\cL_{\Gamma_R}$, uniformly as $R\to\infty$. For convenience,  
in the rest of Section \ref{sec:brok_guides} we use the following condensed notation for the first eigenvalue on $\Gamma_R$ and $\Gamma$, {\em cf.} Table \ref{tab:recap}, as well as the $L^2$-norm of their difference:
\begin{equation}
\label{not:sec2}
   \lambda_R := \lambda_1(\Gamma_R),\quad    \lambda_\infty := \lambda_1(\Gamma),
   \quad\mbox{and}\quad
      \omega = \sqrt{\pi^2-\lambda_\infty}\,.
\end{equation}

\begin{lemma}
\label{lem:expdec}
For $R\ge0$, let $v_R$ be the positive normalized eigenvector associated with the first eigenvalue $\lambda_R$ of $\cL_{\Gamma_R}$, i.e.
\begin{equation}
\label{eq:vR}
   \cL_{\Gamma_R} v_R = \lambda_R v_R,\quad v_R>0 \ \ \ \mbox{on}\ \ \ \Gamma_R,
   \quad\mbox{and}\quad
   \|v_R\|_{L^2(\Gamma_R)} = 1\,.
\end{equation}
Recall that $\Sigma_\rho=\partial\Gamma_\rho\cap\partial\Box_{\rho+1}$, {\em cf.} \eqref{eqn:defsigmaR}. Then, with $\omega$ given in \eqref{not:sec2}, for all integers $\ell$, $m\ge0$, there exists a constant $C_{\ell,m}$ such that
\[
   \forall R\ge1,\quad\forall\rho\in[1, R],\quad
   \|\partial^\ell_n\partial^m_\tau v_R\|_{L^2(\Sigma_\rho)} \le C_{\ell,m} \,e^{-\rho\omega}\,,
\]
where $\partial_n$ and $\partial_\tau$ are the normal and tangential derivatives on $\Sigma_\rho$\,, respectively.
\end{lemma}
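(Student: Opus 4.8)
The plan is to prove the estimate in two stages: first an $L^2$-decay estimate on the half-strips of $\Gamma_R$ by an Agmon-type argument, then a bootstrap to the boundary derivative norms by elliptic regularity on thin overlapping windows. Write $\Gamma_R = \Gamma_0 \cup \cT^1_R \cup \cT^2_R$ as in Lemma~\ref{lem:GR12}; by symmetry it suffices to treat the arm $\cT^1_R = (0,R)\times\cI$, on which $v_R$ satisfies the Helmholtz equation $-\Delta v_R = \lambda_R v_R$ with Dirichlet conditions on $(0,R)\times\{-1,0\}$ and Neumann conditions on $\Sigma^1_R = \{R\}\times\cI$. On this rectangle, decompose $v_R(x_1,x_2) = \sum_{k\ge1} a_k(x_1)\,\varphi_k(x_2)$ in the basis $\varphi_k$ of Dirichlet eigenfunctions of $-\partial_2^2$ on $\cI$, whose eigenvalues are $k^2\pi^2$. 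Then each $a_k$ solves $-a_k'' = (\lambda_R - k^2\pi^2)a_k$. Since $\lambda_R \le \lambda_\infty < \pi^2$ (Lemma~\ref{lem:GR12}\,\ref{lem:GR12:i}) and $\lambda_R \to \lambda_\infty$ (this is a hypothesis we may use via Corollary~\ref{cor:conv}, which is stated earlier), we have $\lambda_R - \pi^2 \le -\omega^2$ for $\lambda_\infty$ and, for $R$ large, $\lambda_R - \pi^2 < 0$ with $\sqrt{\pi^2 - \lambda_R} \ge \omega - o(1)$; more carefully, since $k\ge1$, $\sqrt{k^2\pi^2 - \lambda_R} \ge \sqrt{\pi^2 - \lambda_R}$, and each mode $a_k$ is a combination of $e^{\pm x_1\sqrt{k^2\pi^2-\lambda_R}}$. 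The Neumann condition at $x_1 = R$ forces $a_k'(R) = 0$, so $a_k(x_1) = a_k(R)\cosh\!\big((x_1-R)\sqrt{k^2\pi^2-\lambda_R}\big)$; evaluated on $[\rho-\tfrac12,\rho+\tfrac12]$ this is bounded by $\|a_k\|_{L^2(\cT^1_R)}\,e^{-(R-\rho-1)\sqrt{\pi^2-\lambda_R}}$ up to a universal constant, hence summing in $k$ and using $\|v_R\|_{L^2} = 1$ gives
\[
   \|v_R\|_{L^2\big((\rho-\frac12,\rho+\frac12)\times\cI\big)} \le C\,e^{-(R-\rho)\sqrt{\pi^2-\lambda_R}}.
\]

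This, however, is decay measured from the \emph{far end} $x_1=R$, not from the origin — which is what the statement wants. The resolution is that the far-end contribution is in fact negligible: I would show that $\|a_1\|$ itself decays from the \emph{origin} side too, because near $x_1=0$ the function $v_R$ connects to the square $\Gamma_0$ where the first eigenfunction has mass of order one, and the key point is a comparison argument. Concretely, test the eigenvalue equation with a cutoff $\chi_\rho$ supported in $x_1 > \rho$: an integration by parts (Agmon estimate) gives, for any $\alpha < \sqrt{\pi^2 - \lambda_R}$,
\[
   \int_{\cT^1_R} e^{2\alpha x_1}\big(|\nabla v_R|^2 + |v_R|^2\big)\,\rd\bx
   \le C_\alpha \int_{\Gamma_0} |v_R|^2\,\rd\bx \le C_\alpha,
\]
the boundary term at $x_1=R$ dropping by the Neumann condition and the $\cosh$-profile being absorbed since $\alpha$ is strictly subcritical. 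This yields $\|v_R\|_{H^1((\rho-1,\rho+1)\times\cI)} \le C e^{-\alpha\rho}$, and optimizing $\alpha \uparrow \sqrt{\pi^2-\lambda_R}$ together with $\lambda_R \le \lambda_\infty$ recovers the exponent $\omega$ (possibly one must state it as $e^{-\rho\omega'}$ for any $\omega' < \omega$ and then close the gap using the sharper ODE profile for the single critical mode $k=1$, or simply absorb constants since $\sqrt{\pi^2-\lambda_R}\ge\omega$).

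It remains to pass from the interior $H^1$ bound to the boundary derivative norms $\|\partial_n^\ell \partial_\tau^m v_R\|_{L^2(\Sigma_\rho)}$. On the window $W_\rho = (\rho-1,\rho+1)\times\cI$, the function $v_R$ solves a homogeneous elliptic equation with analytic (constant) coefficients, Dirichlet data on the flat top and bottom, and is smooth up to $\Sigma_\rho = \{\rho\}\times\cI$ for $\rho < R$ (interior of the arm, no corner); standard interior-and-boundary elliptic estimates give $\|v_R\|_{H^{s+2}(W'_\rho)} \le C_s\big(\|v_R\|_{H^1(W_\rho)} + \lambda_R\|v_R\|_{L^2(W_\rho)}\big) \le C_s e^{-\rho\omega}$ on a slightly smaller window $W'_\rho$, with constants uniform in $R$ and $\rho$ because the geometry of $W_\rho$ is translation-invariant. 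A trace theorem on $\Sigma_\rho \subset \overline{W'_\rho}$ then controls $\|\partial_n^\ell \partial_\tau^m v_R\|_{L^2(\Sigma_\rho)}$ by $\|v_R\|_{H^{\ell+m+1}(W'_\rho)}$, giving the claim with $C_{\ell,m}$ independent of $R$. The main obstacle is the uniformity in $R$ of the Agmon constant — one must check that the cutoff manipulations and the absorption of the $x_1=R$ boundary term do not introduce $R$-dependence, which is why it is cleanest to run the Agmon estimate directly on the infinite half-strip picture after extending/reflecting, or to exploit the explicit $\cosh$ structure of the critical mode; the higher-$k$ modes are harmless since their decay rate $\sqrt{k^2\pi^2-\lambda_R}$ is bounded below uniformly. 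A secondary point is to make sure the Dirichlet-mode expansion is legitimate up to the Neumann end, which follows from $v_R \in H^2$ near that edge (the rectangle has only convex corners there, and mixed Dirichlet–Neumann at a right angle is $H^2$-regular).
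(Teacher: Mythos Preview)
Your opening move — separation of variables on the arm $\cT^1_R$ in the Dirichlet sine basis — is exactly the paper's approach, and had you carried it through you would have been done. The error is in the sentence ``evaluated on $[\rho-\tfrac12,\rho+\tfrac12]$ this is bounded by $\|a_k\|_{L^2(\cT^1_R)}\,e^{-(R-\rho-1)\sqrt{\pi^2-\lambda_R}}$'': the $\cosh$ profile $a_k(x_1)=a_k(R)\cosh\big((R-x_1)\omega_{R,k}\big)$ is \emph{largest} near $x_1=0$, not near $x_1=R$, so the decay is from the origin, not from the far end. The clean bookkeeping (which is what the paper does) is to parametrize by the trace $g_{R,k}=a_k(0)$ at $x_1=0$, so that $a_k(x_1)=g_{R,k}\,\dfrac{\cosh\big((R-x_1)\omega_{R,k}\big)}{\cosh(R\,\omega_{R,k})}$ and hence
\[
   |a_k(\rho)| \le 2\,|g_{R,k}|\,e^{-\rho\,\omega_{R,k}} \le 2\,|g_{R,k}|\,e^{-\rho\,\omega},
\]
using $\omega_{R,k}=\sqrt{k^2\pi^2-\lambda_R}\ge\sqrt{\pi^2-\lambda_\infty}=\omega$ from Lemma~\ref{lem:GR12}\,\ref{lem:GR12:i}) alone. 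The uniform control on $\sum_k g_{R,k}^2=\|g_R\|_{L^2(\Sigma^1_0)}^2$ comes from the trace bound $\|g_R\|_{H^{1/2}}\le C\|v_R\|_{H^1(\Gamma_0)}\le C\sqrt{1+\pi^2}$. The paper then differentiates the series termwise to get all $\partial_n^\ell\partial_\tau^m$ at once; your elliptic-bootstrap alternative for this last step is perfectly valid and arguably more robust.

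The Agmon detour you take after the miscalculation does \emph{not} close as written. With weight $e^{2\alpha x_1}$ the standard identity produces a denominator $(\pi^2-\lambda_R)-\alpha^2$; choosing $\alpha=\omega$ makes this equal to $\lambda_\infty-\lambda_R$, which tends to $0$ as $R\to\infty$, so the constant $C_\alpha$ is not uniform in $R$. Choosing instead any fixed $\alpha<\omega$ gives a uniform constant but only the rate $e^{-\alpha\rho}$, which is strictly weaker than the claimed $e^{-\omega\rho}$. Your parenthetical ``simply absorb constants since $\sqrt{\pi^2-\lambda_R}\ge\omega$'' conflates the exponent with the prefactor. (Also, invoking Corollary~\ref{cor:conv} here would be circular: that corollary is proved \emph{from} the present lemma.) The fix is not to patch the Agmon argument but to return to your Fourier expansion and redo the one-line $\cosh$ ratio estimate.
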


\begin{proof}
We prove exponential decay for $v_R$ by a representation formula on the rectangle $\cT^1_R=(0,R)\times\cI$, the rectangle $\cT^2_R=\cI\times(0,R)$ being handled in a similar way. 
We note that $v_R$ is solution of the mixed problem in $\cT^1_R=(0,R)\times\cI$
\begin{equation}
	\left\{
		\begin{array}{rcll}
			-\Delta v_R &=& \lambda_R v_R & \text{in } (0,R)\times\cI,\\
			v_R(x_1,-1) = v_R(x_1,0) &=& 0 & \forall x_1\in (0,R),\\
			\partial_1 v_R(R,x_2) &=& 0 & \forall x_2\in\cI,\\
			v_R(0,x_2) &=& g_R &\forall x_2\in\cI,
		\end{array}
	\right.
\label{eqn:syst-eigfun}
\end{equation}
where $g_R$ is the trace of $v_R$ on the segment $\Sigma^1_0=\{0\}\times\cI$. Since $v_R$ belongs in particular to $H^1(\Gamma_0)$, its trace  belongs to $H^{1/2}(\Sigma^1_0)$  with the estimates
\[
   \|g_R\|_{H^{1/2}(\Sigma^1_0)} \le C_0 \|v_R\|_{H^1(\Gamma_0)}
\]
where $C_0$ does not depend on $R$. Now, there holds
\[
   \|v_R\|_{H^1(\Gamma_0)} \le \|v_R\|_{H^1(\Gamma_R)} = \sqrt{\lambda_R+1} \|v_R\|_{L^2(\Gamma_R)}
   = \sqrt{\lambda_R+1} \le \sqrt{\pi^2+1}\,.
\]
Thus we have obtained in particular
\begin{equation}
\label{eq:expdec1}
   \|g_R\|_{L^{2}(\Sigma^1_0)} \le C_0 \sqrt{\pi^2+1},\quad \forall R\ge0.
\end{equation}
We expand $v_R$ along the eigenvectors of the operator $-\partial_y^2$, self-adjoint on $(H^2\cap H_0^1)(\cI)$. Its normalized eigenvectors are
\[
	s_k(x_2) = \sqrt{2}\,\sin(k\pi x_2)
\]
and we write
\[
	v_R(x_1,x_2) = \sum_{k\geq 1} u_k(x_1)s_k(x_2), \ 
	\text{ where } \ u_k(x_1) = \int_{\cI} v_R(x_1,x_2)\,s_k(x_2) \,\rd x_2.
\]
Hence, \eqref{eqn:syst-eigfun} yields that for all $k\geq 1$ we have
\begin{equation}
	\left\{
		\begin{array}{rcll}
			-u_k'' + (k^2\pi^2 -\lambda_R)u_k &=& 0  \quad\text{ in } (0,R),\\
			u_k'(R) &=& 0, \\
			u_k(0) &=& g_{R,k} \\
		\end{array}
	\right. \quad\mbox{where}\quad
	g_{R,k} = \int_{\cI} g_R(x_2)\,s_k(x_2)\,\rd x_2\,.
\label{eqn:syst-eigfunfibr}
\end{equation}
The solution of \eqref{eqn:syst-eigfunfibr} is given by
\[
	u_k(x_1) = \frac{g_{R,k}}
	{\cosh\big(R\sqrt{k^2\pi^2 -\lambda_R}\,\big)}\ \cosh\big((R-x_1)\sqrt{k^2\pi^2 -\lambda_R}\,\big)
\]
which yields
\begin{equation}\label{eqn:rep-formula}
	v_R(x_1,x_2) = \sum_{k\geq 1} \frac{g_{R,k}}
	{\cosh\big(R\omega_{R,k}\big)} \ \cosh\big((R-x_1)\omega_{R,k}\big) \ s_k(x_2),
\end{equation}
where we have set $\omega_{R,k} = \sqrt{k^2\pi^2 -\lambda_R}$.
Thanks to the uniform convergence of this series and its derivatives on any subdomain of the form $[a,R]\times\cI$ with a positive $a$, we deduce the formulas for $\ell,m\in\mathbb{N}$
\[
\begin{cases}
	\partial^{\ell}_1\partial^{m}_2 v_R (x_1,x_2) = 
	\sum_{k\geq 1} (-1)^{m/2}\ \frac{\omega^{\ell}_{R,k} (k\pi)^{m}}
	{\cosh(R\omega_{R,k})} \ \, g_{R,k} \, \cosh\big((R-x_1)\omega_{R,k}\big) \   s_k(x_2) & m\ \ \mbox{even}\\[0.5ex]
	\partial^{\ell}_1\partial^{m}_2 v_R (x_1,x_2) = 
	\sum_{k\geq 1} (-1)^{(m-1)/2}\ \frac{\omega^{\ell}_{R,k} (k\pi)^{m}}
	{\cosh(R\omega_{R,k})}\ \, g_{R,k} \, \cosh\big((R-x_1)\omega_{R,k}\big) \   c_k(x_2) & m\ \ \mbox{odd}\\
\end{cases}
\]
if $\ell$ is even and
\[
\begin{cases}
	\partial^{\ell}_1\partial^{m}_2 v_R (x_1,x_2) = 
	\sum_{k\geq 1} (-1)^{m/2+1}\ \frac{\omega^{\ell}_{R,k} (k\pi)^{m}}
	{\cosh(R\omega_{R,k})} \ \, g_{R,k} \, \sinh\big((R-x_1)\omega_{R,k}\big) \   s_k(x_2) & m\ \ \mbox{even}\\[0.5ex]
	\partial^{\ell}_1\partial^{m}_2 v_R (x_1,x_2) = 
	\sum_{k\geq 1} (-1)^{(m+1)/2}\ \frac{\omega^{\ell}_{R,k} (k\pi)^{m}}
	{\cosh(R\omega_{R,k})}\ \, g_{R,k} \, \sinh\big((R-x_1)\omega_{R,k}\big) \   c_k(x_2) & m\ \ \mbox{odd}\\
\end{cases}
\]
if $\ell$ is odd, where we set $c_k=\sqrt{2}\,\cos(k\pi x_2)$ the cosine basis.
Thus we can calculate the $L^2$-norm of their trace on $\Sigma^1_\rho$, {\it i.e.} at $x_1=\rho$, for any $\rho\in [1,R]$:
\[
	\|\partial^{\ell}_1\partial^{m}_2 v_R\|_{L^2(\Sigma^1_\rho)}^2  =
	 	\begin{cases}
	\sum_{k\geq 1} \Big(\omega^{\ell}_{R,k} (k\pi)^{m}\, \frac{\cosh((R-\rho)\omega_{R,k}) }
	{\cosh(R\omega_{R,k})} \ g_{R,k} \Big)^2 & \ell\ \ \mbox{even}\\[0.5ex]
	\sum_{k\geq 1} \Big(\omega^{\ell}_{R,k} (k\pi)^{m}\, \frac{\sinh((R-\rho)\omega_{R,k}) }
	{\cosh(R\omega_{R,k})} \ g_{R,k} \Big)^2 & \ell\ \ \mbox{odd}.\\
										\end{cases}
\]
Since $\lambda_R\le \lambda_1(\Gamma)$ by point i) of Lemma \ref{lem:GR12}, we notice that $\omega_{R,k}$ is larger than $\sqrt{k^2\pi^2-\lambda_1(\Gamma)}$, itself larger than  $k\omega$. Thus we deduce 
\[
   \frac{\sinh\big((R-\rho)\omega_{R,k}\big) }
	{\cosh\big(R\omega_{R,k}\big)} \le \frac{\cosh\big((R-\rho)\omega_{R,k}\big) }
	{\cosh\big(R\omega_{R,k}\big)} \le 2 e^{-\rho\omega_{R,k}}  \le 2 e^{-\rho k\omega},
\] 
hence
\[
	\|\partial^{\ell}_1\partial^{m}_2 v_R\|_{L^2(\Sigma^1_\rho)}^2  \le 
	4\sum_{k\geq 1} \big(\omega^{\ell}_{R,k} (k\pi)^{m}\big)^2\,g^2_{R,k} \, e^{-2\rho k\omega}\,.
\]
Using the upper bound $\omega_{R,k}\le k\pi$, we find
\[
\begin{aligned}
	\|\partial^{\ell}_1\partial^{m}_2 v_R\|_{L^2(\Sigma^1_\rho)}^2  &\le 
	4\sum_{k\geq 1} (k\pi)^{2(\ell+m)}\,g^2_{R,k} \, e^{-2\rho k\omega}\\
	&\le 
	4 e^{-2\rho\omega} \Big(\sum_{k\geq 1} g^2_{R,k} \Big)
	\max_{k\geq 1} \Big\{ (k\pi)^{2(\ell+m)} \, e^{-2\rho(k-1)\omega} \Big\} \\
	&\le 
	4 e^{-2\rho\omega} \|g_R\|_{L^{2}(\Sigma^1_0)}^2 
	\max_{k\geq 1} \big\{ (k\pi)^{2(\ell+m)} \, e^{-2(k-1)\omega} \big\}	\quad\mbox{for}\quad \rho\ge1\,.
\end{aligned}
\]
Combining with \eqref{eq:expdec1}, we find for any $\ell,m\ge0$ the estimates $\|\partial^{\ell}_1\partial^{m}_2 v_R\|_{L^2(\Sigma^1_\rho)} \le C_{l,m} \, e^{-\rho\omega}$ with constants $C_{l,m}$ independent of $R$. The lemma is thus proved.
\end{proof}

\begin{corollary}
\label{cor:conv}
With the positive number $\omega=\sqrt{\pi^2-\lambda_\infty}$, {\em cf.\ \eqref{not:sec2}}, there exists a constant $C_1$ such that
\[
   \forall R\ge0,\quad 0\le\lambda_\infty - \lambda_R \le C_1\,e^{-2R\omega}.
\]
\end{corollary}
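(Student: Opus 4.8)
The plan is to prove the two-sided bound $0\le\lambda_\infty-\lambda_R\le C_1 e^{-2R\omega}$ by combining the variational characterization of $\lambda_\infty=\lambda_1(\Gamma)$ with a suitable trial function built from the finite-guide eigenvector $v_R$, using the exponential trace estimates from Lemma \ref{lem:expdec}. The lower bound $\lambda_\infty\ge\lambda_R$ is immediate from point \ref{lem:GR12:i}) of Lemma \ref{lem:GR12}, so all the work is in the upper bound $\lambda_\infty-\lambda_R\le C_1 e^{-2R\omega}$.

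\textbf{Construction of a trial function.} First I would extend $v_R$ from $\Gamma_R$ to the whole broken guide $\Gamma$ to produce a competitor $w_R\in\Dom(\cQ_\Gamma)$. On $\Gamma_R$ we set $w_R=v_R$. On each half-strip $\cS^j_R=(R,\infty)\times\cI$ (resp.\ $\cI\times(R,\infty)$) we continue $v_R$ by its values on $\Sigma^j_R$ multiplied by an exponentially decaying profile in the unbounded direction: explicitly, on $\cS^1_R$ take $w_R(x_1,x_2)=v_R(R,x_2)\,e^{-\pi(x_1-R)}$, and similarly on $\cS^2_R$. This $w_R$ is continuous across $\Sigma_R$, vanishes on $\partial_{\sD}\Gamma$ (because $v_R(R,\cdot)$ vanishes at $x_2=-1,0$), and lies in $H^1(\Gamma)$. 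The point of the $e^{-\pi t}$ profile is that $-\partial_t^2(e^{-\pi t})+(-\partial_{x_2}^2)(e^{-\pi t}\sin(\cdot))$ roughly balances at $\pi^2$, which is the bottom of the essential spectrum, so the extra Dirichlet energy on the half-strips is controlled by $\pi^2\|w_R\|^2_{L^2(\cS^j_R)}$ plus a boundary term.

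\textbf{Energy estimate.} Then I would compute $\cQ_\Gamma(w_R)$ and $\|w_R\|^2_{L^2(\Gamma)}$. We have $\cQ_\Gamma(w_R)=\cQ_{\Gamma_R}(v_R)+\sum_{j=1,2}\int_{\cS^j_R}|\nabla w_R|^2 = \lambda_R + \sum_j\int_{\cS^j_R}|\nabla w_R|^2$, and $\|w_R\|^2_{L^2(\Gamma)}=1+\sum_j\|w_R\|^2_{L^2(\cS^j_R)}$. On $\cS^1_R$, writing $g(x_2)=v_R(R,x_2)$, one gets $\int_{\cS^1_R}|\nabla w_R|^2 = \int_\cI\big(\tfrac{\pi}{2}|g|^2 + \tfrac{1}{2\pi}|g'|^2\big)\rd x_2$ after integrating the $e^{-2\pi(x_1-R)}$ factor, and $\|w_R\|^2_{L^2(\cS^1_R)}=\tfrac{1}{2\pi}\|g\|^2_{L^2(\cI)}$. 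By Lemma \ref{lem:expdec} with $\rho=R$ and $(\ell,m)\in\{(0,0),(0,1)\}$, both $\|g\|^2_{L^2(\Sigma^1_R)}$ and $\|g'\|^2_{L^2(\Sigma^1_R)}$ are bounded by $C e^{-2R\omega}$ (and likewise on $\Sigma^2_R$); hence there is a constant $C$ with $\sum_j\int_{\cS^j_R}|\nabla w_R|^2\le C e^{-2R\omega}$ and $\sum_j\|w_R\|^2_{L^2(\cS^j_R)}\le C e^{-2R\omega}$. Therefore
\[
   \lambda_\infty \le \frac{\cQ_\Gamma(w_R)}{\|w_R\|^2_{L^2(\Gamma)}}
   \le \frac{\lambda_R + C e^{-2R\omega}}{1 + \tfrac1{2\pi}\|g\|^2_{L^2(\Sigma_R)} }
   \le \lambda_R + C e^{-2R\omega},
\]
using $\lambda_R\le\pi^2$ and dropping the positive denominator correction, which gives the claim with a suitable $C_1$; for $R\in[0,1)$ the bound is trivial after adjusting the constant since $\lambda_\infty-\lambda_R\le\lambda_\infty\le\pi^2$.

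\textbf{Main obstacle.} The only genuinely delicate point is making sure the trial function $w_R$ is admissible and that the half-strip energy is bounded by the \emph{square} of the decay rate, i.e.\ $e^{-2R\omega}$ rather than $e^{-R\omega}$; this is exactly why one needs Lemma \ref{lem:expdec} stated for the $L^2$-norms of $v_R$ and its tangential derivative on $\Sigma_R$ (not merely an $L^\infty$ pointwise bound), and why the gluing must be done continuously across $\Sigma_R$ so that no spurious interface term of lower order appears. Everything else is a routine Rayleigh-quotient computation combined with $\lambda_R\le\pi^2$ and the already-established monotonicity $\lambda_R\le\lambda_\infty$.
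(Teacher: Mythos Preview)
Your proof is correct and follows the same overall strategy as the paper's: build a trial function for $\cL_\Gamma$ from the finite-guide eigenvector $v_R$, then invoke Lemma~\ref{lem:expdec} to control the discrepancy by $Ce^{-2R\omega}$. The construction, however, is different. The paper multiplies $v_R$ by a smooth cutoff $\chi(x_j-R)$ supported in $\Gamma_R$ and extends by zero to $\Gamma$; this forces one to estimate $v_R$ and $\nabla v_R$ on the strip $\Gamma_R\setminus\Gamma_{R-1}$, which amounts to integrating the trace bound of Lemma~\ref{lem:expdec} over $\rho\in[R-1,R]$. You instead extend $v_R$ continuously beyond $\Sigma_R$ by the explicit profile $v_R(R,x_2)\,e^{-\pi(x_1-R)}$ and compute the half-strip energy in closed form, so you only need Lemma~\ref{lem:expdec} at the single value $\rho=R$ (for $m=0$ and $m=1$). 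Your route is marginally more economical and makes the dependence on the trace norms transparent; the paper's cutoff route is more robust in that it produces a function in $\Dom(\cL_\Gamma)$ rather than merely $\Dom(\cQ_\Gamma)$, though only the form domain is needed for the min-max argument. Both lead to the same bound with the same rate.
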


\begin{proof}
We know already that $\lambda_\infty \ge \lambda_R$. To prove the right estimate, we use the eigenvectors $v_R$ as quasi-modes for $\cL_\Gamma$ after cutting them off on the rectangular regions $(R-1,R)\times\cI$ and $\cI\times(R-1,R)$: assume $R\ge2$ for simplicity and introduce a smooth cut-off $\chi$ such that
\[
   \chi(t)=1 \ \ \mbox{for} \ \ t\le-1 \quad\mbox{and}\quad \chi(t)=0 \ \ \mbox{for} \ \ t\ge0.
\] 
We define $\widetilde v_R(\bx)$  by $\chi(x_1-R)v_R(\bx)$ if $\bx\in\Gamma^1_R$, by $\chi(x_2-R)v_R(\bx)$ if $\bx\in\Gamma^2_R$, and by $0$ if $\bx\in\Gamma\setminus\Gamma_R$. Then $\widetilde v_R$ belongs to the domain of $\cL_\Gamma$ and we can evaluate its Rayleigh quotient. Lemma \ref{lem:expdec} implies the following estimates with a constant $C$ independent of $R$:
\begin{equation}\label{eqn:expestimates1}
   \big| \|\widetilde v_R\|^2_{L^2(\Gamma)} - 1 \big| \le C\,e^{-2R\omega} \quad\mbox{and}\quad
   \big| \|\nabla\widetilde v_R\|^2_{L^2(\Gamma)} -  \|\nabla v_R\|^2_{L^2(\Gamma_R)}\big| \le C\,e^{-2R\omega}.
\end{equation}
Hence, the min-max principle and \eqref{eqn:expestimates1} give
\[
	\lambda_\infty(1-Ce^{-2R\omega})\leq\lambda_\infty \|\widetilde v_R\|_{L^2(\Gamma)}^2 \leq \|\nabla\widetilde v_R\|_{L^2(\Gamma)}^2 \leq \|\nabla v_R\|_{L^2(\Gamma_R)}^2 + Ce^{-2R\omega} = \lambda_R + Ce^{-2R\omega}.
\]
Consequently, we get
\[
	\lambda_\infty - \lambda_R \leq C_1e^{-2R\omega},
\]
with $C_1 = C(\lambda_\infty+1)$ and the corollary is proved.
\end{proof}

%%%%%%%%%%%%%%%%%%%%%%%%%%%%%%%%%%%%%%%%%%%%%%%%%%%%%%%%%%%%%%%%%%%%%%%%%%%%%%%%%%%%%%%%%%%%%%%%%%%%%%%%%%%%%%%%
\subsection{Variation of eigenpairs}
%%%%%%%%%%%%%%%%%%%%%%%%%%%%%%%%%%%%%%%%%%%%%%%%%%%%%%%%%%%%%%%%%%%%%%%%%%%%%%%%%%%%%%%%%%%%%%%%%%%%%%%%%%%%%%%%
\label{subsec:deriv}
Before proving differential formulas for the first eigenpair of the operator $\cL_{\Gamma_R}$, let us give an argument stating the regularity of this first eigenpair with respect to $R$.

\begin{lemma}
\label{lem:kato}
Recall the abbreviated notation $\lambda_R$ for the first eigenvalue of $\cL_{\Gamma_R}$ and let $v_R$ be the associated normalized and positive eigenvector, {\em cf.}\ \eqref{eq:vR}.
Then the function $R\mapsto\lambda_R$ is analytic on $(0,\infty)$ and the derivative $w_R:=\partial_Rv_R$ makes sense in $\Dom(\cQ_{\Gamma_R})$.
\end{lemma}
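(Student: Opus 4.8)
\textbf{Proof plan for Lemma \ref{lem:kato}.}

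The plan is to fix a reference length $R_0>0$ and transport the variable-length family $\cL_{\Gamma_R}$ for $R$ near $R_0$ onto the fixed domain $\Gamma_{R_0}$ by an explicit bi-Lipschitz diffeomorphism, then apply Kato's analytic perturbation theory for the resulting holomorphic family of operators (of type (B) at the level of quadratic forms). First I would construct, for $R$ in a neighborhood of $R_0$, a diffeomorphism $\Phi_R:\Gamma_{R_0}\to\Gamma_R$ that is the identity on the square $\Gamma_0$ and on a neighborhood of it, and that stretches the two rectangular arms $\cT^j_{R_0}$ linearly in the longitudinal variable onto $\cT^j_R$ (so on $\cT^1_{R_0}$ one sets $x_1\mapsto \frac{R}{R_0}x_1$ and leaves $x_2$ untouched, symmetrically on $\cT^2_{R_0}$; the two prescriptions agree on the overlap because they both act as the identity there). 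The map $\Phi_R$ depends affinely — hence analytically — on the parameter $R$, it preserves the Dirichlet and Neumann parts of the boundary, and its Jacobian is bounded above and below uniformly for $R$ near $R_0$.

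Next I would pull back the quadratic form: for $u\in\Dom(\cQ_{\Gamma_{R_0}})$ set $\widehat\cQ_R(u)=\cQ_{\Gamma_R}(u\circ\Phi_R^{-1})$, which after the change of variables becomes $\int_{\Gamma_{R_0}} A_R(\bx)\nabla u\cdot\nabla u\,\rd\bx$ with a symmetric matrix field $A_R$ built from $D\Phi_R$ and its Jacobian; similarly the $L^2$ inner product pulls back to $\int_{\Gamma_{R_0}} \rho_R\, u\bar v\,\rd\bx$ with $\rho_R=|\det D\Phi_R|$. Because $\Phi_R$ is piecewise affine in $R$, both $A_R$ and $\rho_R$ are real-analytic (indeed rational) in $R$ with values in $L^\infty(\Gamma_{R_0})$, uniformly elliptic and uniformly positive near $R_0$. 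Hence $R\mapsto\widehat\cQ_R$ is a self-adjoint holomorphic family of type (B) on the fixed form domain $\Dom(\cQ_{\Gamma_{R_0}})$, and the associated operators $\widehat\cL_R$ (with the fixed weight $\rho_R$) form a holomorphic family of type (B) in Kato's sense \cite[Ch.~VII]{Kato}. Its spectrum is, up to the unitary map $u\mapsto u\circ\Phi_R$, unitarily equivalent to $\sigma(\cL_{\Gamma_R})$, so the eigenvalues coincide.

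Then I would invoke the simplicity of the first eigenvalue $\lambda_{R_0}$: by Lemma \ref{lem:GR12}\emph{ ii)} combined with Corollary \ref{cor:conv} (or directly, since the ground state of a Dirichlet/mixed Laplacian on a connected domain is simple with a positive eigenfunction), $\lambda_{R_0}$ is a simple isolated eigenvalue, separated from the rest of $\sigma(\cL_{\Gamma_{R_0}})$ by a gap. The Kato–Rellich theorem for holomorphic families of type (B) then gives a neighborhood of $R_0$ on which there is exactly one eigenvalue near $\lambda_{R_0}$, that it and a corresponding eigenvector $\widehat v_R\in\Dom(\cQ_{\Gamma_{R_0}})$ depend analytically on $R$, and (normalizing in the $\rho_R$-weighted $L^2$ norm and fixing the sign) that this eigenvector is the transported ground state. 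Pulling back through $\Phi_R$, the function $R\mapsto\lambda_R=\lambda_1(\Gamma_R)$ is analytic on $(0,\infty)$ since $R_0$ was arbitrary, and $v_R=\widehat v_R\circ\Phi_R^{-1}$ depends analytically on $R$ with values that, composed back, lie in $\Dom(\cQ_{\Gamma_{R_0}})$; in particular $w_R:=\partial_R v_R$ is well defined as an element of $\Dom(\cQ_{\Gamma_R})$ (one must take the derivative on the fixed domain $\Gamma_{R_0}$ and push it forward, or equivalently differentiate $\widehat v_R$ and pull back, which does not leave the form domain because $\Phi_R$ is only bi-Lipschitz but maps $H^1$ to $H^1$ and respects the Dirichlet trace).

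\textbf{Main obstacle.} The delicate point is not the abstract perturbation theory but the bookkeeping around the change of variables: one must check that the family of diffeomorphisms $\Phi_R$ can be chosen piecewise affine \emph{and} smooth enough across the junction between the central square and the arms so that $A_R,\rho_R$ stay in $L^\infty$ (a Lipschitz $\Phi_R$ suffices, and the identity-on-a-neighborhood-of-$\Gamma_0$ construction gives exactly that), and that the pulled-back form domain is genuinely $R$-independent — which holds precisely because $\Phi_R$ preserves the partition of $\partial\Gamma_R$ into Dirichlet and Neumann parts. The only subtlety worth a line in the final write-up is that $w_R$, obtained by differentiating the transported eigenvector, has to be pushed back to $\Gamma_R$; since $\Phi_R$ and $\Phi_R^{-1}$ are bi-Lipschitz and the Dirichlet condition is preserved, the pushforward stays in $\Dom(\cQ_{\Gamma_R})$, which is all that is claimed.
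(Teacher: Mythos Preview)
Your proposal is correct and follows essentially the same approach as the paper: both pull back to a fixed reference guide via a piecewise-linear longitudinal stretch of the arms (the paper uses $\Gamma_1$, you use a generic $\Gamma_{R_0}$), obtain a quadratic form whose coefficients depend rationally on $R$ on an $R$-independent form domain, and apply Kato's analytic perturbation theory together with the simplicity of the ground state. One small wording slip: your map $x_1\mapsto \tfrac{R}{R_0}x_1$ on $\cT^1_{R_0}$ is the identity only on the interface $\{x_1=0\}$, not on a full neighborhood of $\Gamma_0$; but this is exactly what the paper does (and is all that is needed, since a Lipschitz match suffices for $A_R,\rho_R\in L^\infty$), so the argument goes through unchanged.
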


\begin{proof}For any $R>0$, let us introduce the following change of variables that transforms $\Gamma_R$ into $\Gamma_1$:
\begin{equation}
\label{eq:change}
	(x,y) = \left\{\begin{array}{lcl}
			(x_1,x_2)&\text{if}& (x_1,x_2)\in\Gamma_0,\\
			(R^{-1} x_1,x_2)&\text{if}& (x_1,x_2)\in\cT_R^1,\\
			(x_1,R^{-1}x_2)&\text{if}& (x_1,x_2)\in\cT_R^2.
		\end{array}\right.
\end{equation}
For $u\in \Dom(\cQ_{\Gamma_R})$, setting $\widehat{u}(x,y) = u(x_1,x_2)$ we get $\cQ_{\Gamma_R}(u) = \widehat{\cQ}_{R}(\widehat{u})$, where $\widehat{\cQ}_{R}$ is the parameter dependent $H^1$ semi-norm on $\Gamma_1$ defined as
\[
	\begin{gathered}
	\widehat{\cQ}_{R}(\widehat{u}) = \|\nabla\widehat{u}\|_{L^2(\Gamma_0)}^2 + R^{-1} \Big(\|\partial_x\widehat{u}\|_{L^2(\cT_1^1)}^2 + \|\partial_y\widehat{u}\|_{L^2(\cT_1^2)}^2\Big) + R\Big(\|\partial_x\widehat{u}\|_{L^2(\cT_1^2)}^2 + \|\partial_y\widehat{u}\|_{L^2(\cT_1^1)}^2\Big),\\
	\Dom (\widehat{\cQ}_{R}) = \{ \widehat{u} \in H^1(\Gamma_1) : \widehat{u}|_{\partial\Gamma_1\setminus\Sigma_1} = 0\}.
	\end{gathered}
\]
Remark that the $L^2$-norm becomes
\[
\|u\|_{L^2(\Gamma_R)}^2 = \widehat{\cN}_R(\widehat{u})^2 := \|\widehat{u}\|_{L^2(\Gamma_0)}^2 + R\Big(\|\widehat{u}\|_{L^2(\cT_1^1)}^2 + \|\widehat{u}\|_{L^2(\cT_1^2)}^2\Big),
\]
where the norm $\widehat{\cN}_R$ is equivalent to the usual norm $\|\cdot\|_{L^2(\Gamma_1)}$.

Now, as $\Dom (\widehat{\cQ}_{R})$ does not depend on $R$ and that for all $\widehat{u}\in\Dom (\widehat{\cQ}_{R})$ the application
\[
R\in(0,+\infty)\mapsto \frac{\widehat{\cQ}_{R}(\widehat{u})}{\widehat{\cN}_R(\widehat{u})^2}
\]
is analytic, using that $\widehat{\cQ}_{R}$ is bounded from below (non-negative), the strategy of Kato \cite[Chapt. 7 \S 4.2]{Kat95} can be adapted to obtain that the self-adjoint operators associated with the family $(\widehat{\cQ}_{R})_{R>0}$ {\it via} the first representation theorem is an analytic family of operators for $R\in(0,\infty)$. In particular, as for any $R>0$ the first eigenvalue $\lambda_R$ is simple, we obtain the lemma.
\end{proof}

Now we can prove a formula for the derivative with respect to $R$ of the first eigenvalue of $\cL_{\Gamma_R}$ in the spirit of \cite[VII, sect 6(5)]{Kat95} and \cite[Theorem (1.4)]{DH93-1}.

\begin{proposition}\label{prop:Dau-Hel}
With notations as in Lemma \textup{\ref{lem:kato}}, the derivative $\partial_R\lambda_R$ satisfies the formula (here $\tau$ is a tangential variable in the Neumann part $\Sigma_R$ of $\partial\Gamma_R$)
\begin{equation}
\label{eq:deriv}
   \partial_R\lambda_R = \int_{\Sigma_R} \big(|\partial_\tau v_R|^2 -\lambda_R |v_R|^2\big)\,\rd\tau,
   \quad R>0\,.
\end{equation}
\end{proposition}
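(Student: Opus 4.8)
The strategy is a Hadamard-type domain-variation argument, executed in the fixed-geometry picture introduced in the proof of Lemma~\ref{lem:kato}. After the change of variables \eqref{eq:change}, the eigenpair $(\lambda_R, v_R)$ corresponds to an eigenpair $(\lambda_R,\widehat v_R)$ of the $R$-dependent, non-negative form $\widehat\cQ_R$ on the fixed form domain $\Dom(\widehat\cQ_R)$, with $\widehat\cN_R(\widehat v_R)=1$. By Lemma~\ref{lem:kato} both $R\mapsto\lambda_R$ and $R\mapsto\widehat v_R\in\Dom(\widehat\cQ_R)$ are differentiable; write $\widehat w_R=\partial_R\widehat v_R$. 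Differentiating the normalization identity $\widehat\cN_R(\widehat v_R)^2=1$ and the eigenvalue identity gives, after using that $\widehat v_R$ is the eigenfunction (so the contribution of $\widehat w_R$ drops out by self-adjointness), the abstract Feynman--Hellmann formula
\[
   \partial_R\lambda_R = (\partial_R\widehat\cQ_R)(\widehat v_R) - \lambda_R\,(\partial_R\widehat\cN_R^2)(\widehat v_R),
\]
where $\partial_R\widehat\cQ_R$ and $\partial_R\widehat\cN_R^2$ are the (explicit, since $\widehat\cQ_R$ and $\widehat\cN_R^2$ are affine or rational in $R$ and $R^{-1}$) derivatives of the form and squared norm at fixed $\widehat u$.

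\textbf{Translating back and identifying the boundary integral.} The next step is to compute these two terms explicitly and transport them back to $\Gamma_R$. From the formula for $\widehat\cQ_R$ one reads off
\[
   (\partial_R\widehat\cQ_R)(\widehat v_R) = -R^{-2}\Big(\|\partial_x\widehat v_R\|_{L^2(\cT_1^1)}^2+\|\partial_y\widehat v_R\|_{L^2(\cT_1^2)}^2\Big)
   + \|\partial_x\widehat v_R\|_{L^2(\cT_1^2)}^2+\|\partial_y\widehat v_R\|_{L^2(\cT_1^1)}^2,
\]
and $(\partial_R\widehat\cN_R^2)(\widehat v_R)=\|\widehat v_R\|_{L^2(\cT_1^1)}^2+\|\widehat v_R\|_{L^2(\cT_1^2)}^2$. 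Undoing the dilation on each rectangle $\cT^j_R$ (which scales the stretched variable by $R$ and contributes the obvious Jacobian), these volume integrals over $\cT^j_R$ can be recombined. The key algebraic point is that the combination appearing on the right-hand side is exactly the derivative, in $R$, of $\int_{\cT^j_R}(|\nabla v_R|^2 - \lambda_R|v_R|^2)$ \emph{at frozen} $v_R,\lambda_R$ in the sense of the moving upper limit $R$ of the rectangle — so one is led to rewrite everything as a flux through the sliding faces $\Sigma^j_R$. Concretely, on $\cT^1_R=(0,R)\times\cI$ one uses the identity
\[
   \partial_R\!\int_0^R\!\!\int_\cI \big(|\nabla v_R|^2-\lambda_R|v_R|^2\big)\,\rd x_2\,\rd x_1
   = \int_\cI\big(|\nabla v_R|^2-\lambda_R|v_R|^2\big)\big|_{x_1=R}\,\rd x_2 + (\text{terms from }\partial_R v_R,\partial_R\lambda_R),
\]
and the extra terms are handled by testing the eigenvalue equation $-\Delta v_R=\lambda_R v_R$ against $\partial_R v_R=w_R$ (Green's formula, using the Dirichlet condition on $\partial\Gamma\cap\partial\Gamma_R$ and the Neumann condition $\partial_n v_R=0$ on $\Sigma_R$) to show they cancel against the corresponding pieces coming from $\Gamma_0$ and from the $\|\cdot\|_{L^2}$ normalization. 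On $\Sigma^1_R=\{R\}\times\cI$ the Neumann condition $\partial_1 v_R=0$ means $|\nabla v_R|^2 = |\partial_2 v_R|^2 = |\partial_\tau v_R|^2$, which produces exactly the integrand $|\partial_\tau v_R|^2 - \lambda_R|v_R|^2$ of \eqref{eq:deriv}; adding the symmetric contribution from $\cT^2_R$ on $\Sigma^2_R$ gives the integral over all of $\Sigma_R$.

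\textbf{Main obstacle.} The analytic content (differentiability of $\lambda_R$ and of $v_R$ as an element of the form domain, hence the Feynman--Hellmann step) is already granted by Lemma~\ref{lem:kato}, so the real work is bookkeeping: carefully tracking how the stretched-variable derivatives $\partial_x\widehat v_R,\partial_y\widehat v_R$ and the $R$-dilation Jacobians assemble, and verifying that all the ``interior'' contributions (the $w_R$-terms from Green's formula, the $\partial_R\lambda_R\,\|v_R\|^2$ term, and the pieces supported on the fixed square $\Gamma_0$) cancel exactly, leaving only the flux through the two moving faces. The delicate point where one must be slightly careful is the regularity of $v_R$ up to $\Sigma_R$ needed to justify Green's formula and to make sense of the traces $\partial_\tau v_R|_{\Sigma_R}$ pointwise in the integral — but this is exactly the elliptic regularity encoded in the representation formula \eqref{eqn:rep-formula} from the proof of Lemma~\ref{lem:expdec}, which shows $v_R$ is smooth in a neighborhood of $\Sigma_R$ inside each rectangle $\cT^j_R$. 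With that in hand the identity \eqref{eq:deriv} follows by collecting terms. An alternative, essentially equivalent route is to avoid the global change of variables and instead differentiate directly using a localized Hadamard deformation of $\Gamma_R$ near $\Sigma_R$ (a vector field pushing the faces $\Sigma^j_R$ outward), which leads to the shape-derivative formula whose boundary term is immediately the right-hand side of \eqref{eq:deriv}; I would present whichever is shorter, but the fixed-domain version dovetails most cleanly with Lemma~\ref{lem:kato}.
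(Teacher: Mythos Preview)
Your approach is workable but genuinely different from the paper's, and there is one fuzzy step you should tighten.

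\textbf{Comparison.} The paper does not pass through the change of variables of Lemma~\ref{lem:kato} at all. It works directly on (the symmetric half of) $\Gamma_R$: for $h>0$ it writes Green's identity for the pair $(v_R,v_{R+h})$ on $\Gamma^1_R$, uses the eigen-equations, and exploits $\partial_1 v_{R+h}|_{\Sigma^1_{R+h}}=0$ to turn the single boundary term $\int_{\Sigma^1_R} v_R\,\partial_1 v_{R+h}$ into a difference quotient. Dividing by $h$ and letting $h\to0$ gives $\partial_R\lambda_R\cdot\tfrac12=\int_{\Sigma^1_R} v_R\,\partial_1^2 v_R$, and substituting $\partial_1^2 v_R=-\partial_2^2 v_R-\lambda_R v_R$ followed by one tangential integration by parts yields \eqref{eq:deriv}. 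This is short and uses no fixed-domain machinery. Your Feynman--Hellmann route via $\widehat\cQ_R$ buys you a clean abstract justification (you lean on Lemma~\ref{lem:kato} exactly where it was set up), at the price of a heavier back-transport computation.

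\textbf{The gap.} After your (correct) Feynman--Hellmann step and undoing the dilation, what you actually obtain on $\cT^1_R$ is the \emph{volume} quantity
\[
   R^{-1}\!\int_{\cT^1_R}\big(|\partial_2 v_R|^2-|\partial_1 v_R|^2-\lambda_R|v_R|^2\big)\,\rd\bx,
\]
not the boundary integral. Your paragraph then switches narrative and starts differentiating $\int_{\cT^1_R}(|\nabla v_R|^2-\lambda_R|v_R|^2)$ in $R$ with ``terms from $\partial_R v_R$''---but in the fixed-domain picture those $w_R$-terms already cancelled, so you are mixing two arguments. The honest bridge from the volume expression above to $\int_{\Sigma^1_R}(|\partial_\tau v_R|^2-\lambda_R|v_R|^2)$ is a Rellich-type identity on the rectangle: multiply $-\Delta v_R=\lambda_R v_R$ by $x_1\partial_1 v_R$, integrate over $\cT^1_R$, and use $\partial_1 v_R|_{x_1=R}=0$, $v_R|_{x_2\in\{-1,0\}}=0$. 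This yields exactly
\[
   R^{-1}\!\int_{\cT^1_R}\big(|\partial_2 v_R|^2-|\partial_1 v_R|^2-\lambda_R|v_R|^2\big)
   =\int_{\Sigma^1_R}\big(|\partial_2 v_R|^2-\lambda_R|v_R|^2\big)\,\rd x_2,
\]
and similarly on $\cT^2_R$. State and use this identity explicitly and your proof closes; as written, the ``frozen $v_R$'' sentence is not literally correct and the cancellation story is not self-contained.
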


\begin{proof} 
Since $\lambda_R$ is a simple eigenvalue, and since $\cL_{\Gamma_R}$ commutes with the symmetry with respect to the diagonal $\cD=\{\bx:\ x_1=x_2\}$, the eigenvector $v_R$ is also symmetric with respect to $\cD$. Indeed, it satisfies Neumann conditions on $\cD\cap\Gamma_0$, see \cite[Prop.2.2]{DaLaRa11}. Thus, we can reduce our analysis to the lower half $\Gamma^1_R$ of $\Gamma_R$. There holds $\|v_R\|_{L^2(\Gamma^1_R)}^2=\frac{1}{2}$ and we are going to prove
\begin{equation}
\label{eq:deriv2}
   \partial_R\lambda_R = 2\int_{\Sigma^1_R} \big(|\partial_2 v_R|^2 -\lambda_R |v_R|^2\big)\,\rd x_2,
   \quad R>0\,.
\end{equation}
We follow the steps of the proof of \cite[Theorem (1.4)]{DH93-1}.
Integrating by parts and using that $\partial_nv_R$ is zero on $\Sigma^1_R$ and $\cD\cap\Gamma_0$, we find for any chosen $h>0$
\[
   \int_{\Gamma^1_R} (-\Delta -\lambda_R)v_R\,v_{R+h}\,\rd x_1\rd x_2 =
   \int_{\Gamma^1_R} v_R\,(-\Delta -\lambda_{R})v_{R+h}\,\rd x_1\rd x_2 + 
   \int_{\Sigma^1_R} v_R\,\partial_1 v_{R+h}\,\rd x_2\,.
\]
Hence, using the eigen-equations for $v_R$ and $v_{R+h}$: 
\[
   (\lambda_{R+h} -\lambda_R)\int_{\Gamma^1_R} v_R\,v_{R+h}\,\rd x_1\rd x_2
    +\int_{\Sigma^1_R} v_R\,\partial_1v_{R+h}\,\rd x_2 = 0.  
\]
Taking advantage of the condition $\partial_1v_{R+h}\big|_{\Sigma^1_{R+h}}=0$ we can write
\[
   \int_{\Sigma^1_R} v_R\,\partial_1v_{R+h}\,\rd x_2 = 
   \int_{\cI} v_R(R,x_2)\,\Big(\partial_1v_{R+h}(R,x_2) - \partial_1v_{R+h}(R+h,x_2)\Big)\,\rd x_2.
\]
Putting together the last two identities, 
dividing  by $h$ and letting $h$ tend to $0$, we obtain the relation
\begin{equation*}
   \partial_R\lambda_R\int_{\Gamma^1_R} |v_R|^2 \,\rd x_1\rd x_2 =
   \int_{\Sigma^1_R} v_R\,(\partial^2_1v_R)\,\rd x_2\,.
\end{equation*}
Using the relation $(-\partial^2_1-\partial^2_2-\lambda_R)v_R=0$, 
we deduce formula \eqref{eq:deriv2}, hence formula \eqref{eq:deriv}.
\end{proof}

\begin{remark}
1) Formula  \eqref{eq:deriv} takes also the form
\begin{equation}
\label{eq:compat}
   \partial_R\lambda_R =
   \int_{\Sigma_R} v_R\,(\partial^2_nv_R)\,\rd \tau,
   \quad R>0\,.
\end{equation}
2) Since for any $R>0$, $v_R$ belongs to $H^1_0(\Sigma_R)$, thus satisfies $\|\partial_\tau v_R\|_{L^2(\Gamma_R)}^2\ge \pi^2 \|v_R\|_{L^2(\Gamma_R)}^2$, formula \eqref{eq:deriv} implies the inequality
\begin{equation}
\label{eq:deriv3}
   \partial_R\lambda_R \ge \int_{\Sigma_R} (\pi^2 -\lambda_R) |v_R|^2\,\rd\tau,
   \quad R>0\,.
\end{equation}
As $v_R$ is not identically $0$ on the Neumann boundary $\Sigma_R$, the above inequality implies:
\begin{equation}
\label{eq:increas}
   \mbox{the function \ $R\mapsto\lambda_R$ \ is increasing on \ $(0,\infty)$.}
\end{equation}
3) The function $R\mapsto\lambda_R$ is analytic on $(0,\infty)$, but has no extension as an analytic (nor even $\sC^1$) function on the closed interval $[0,\infty)$.
\label{rmk:pt_vp}
\end{remark}

Let $w_R$ be the derivative $\partial_Rv_R$. On $\Gamma_R$, the eigen-equations for $v_R$, $v_{R+h}$ yield
\[
   (-\Delta -\lambda_{R+h})(v_{R+h} - v_R) = 
   (\lambda_{R+h} -\lambda_R)v_R\,.
\]
The function $v_{R+h}-v_R$ satisfies the zero Dirichlet conditions on $\partial\Gamma_R\setminus\Sigma_R$. On $\Sigma_R$, we can write like in the proof above
\[
   \partial_n(v_{R+h} - v_R)\big|_{\Sigma_R} = 
   \partial_n v_{R+h}\big|_{\Sigma_R} - \partial_n v_{R+h}\big|_{\Sigma_{R+h}}
\]
Dividing by $h$ and letting $h\to0$, we deduce that $w_R$ is solution of the mixed problem
\begin{equation}
\label{eq:pbwR}
\left\{\begin{array}{rcll}
   (-\Delta -\lambda_R)w_R &=& (\partial_R\lambda_R)\,v_R \quad& \mbox{in}\ \ \Gamma_R,\\
   w_R &=& 0 & \mbox{on}\ \ \partial\Gamma_R\setminus\Sigma_R,\\
   \partial_n w_R &=& -\partial^2_nv_R & \mbox{on}\ \ \Sigma_R.
\end{array}\right.
\end{equation}
We note that formula \eqref{eq:compat} is the compatibility relation for the existence of a solution to the mixed problem \eqref{eq:pbwR}. Moreover, the normalization $\int_{\Gamma_R}|v_R|^2\rd x_1\rd x_2 = 1$ implies the relation
\begin{equation}
	\label{eqn:psnul}
	\int_{\Gamma_R}w_Rv_R \, \rd x_1\rd x_2  =  - \frac{1}{2}\int_{\Sigma_R}|v_R|^2 \,\rd\tau \,.
\end{equation}

\begin{lemma}
\label{lem:wR}
With the notations of Lemma \textup{\ref{lem:kato}} and $\omega=\sqrt{\pi^2-\lambda_1(\Gamma)}$, the derivative $w_R=\partial_Rv_R$ satisfies the estimates
\begin{equation}
\label{eq:wR}
   \|w_R\|_{H^1(\Gamma_R)} \le C\,e^{-R\omega },\quad R\ge1.
\end{equation}
\end{lemma}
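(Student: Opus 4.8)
The plan is to prove the estimate \eqref{eq:wR} directly from the boundary value problem \eqref{eq:pbwR} for $w_R$, combining an energy estimate with the exponential decay already established for $v_R$ and its traces. First, I would test the equation \eqref{eq:pbwR} against $w_R$ itself: integrating by parts gives
\[
   \cQ_{\Gamma_R}(w_R) - \lambda_R\|w_R\|^2_{L^2(\Gamma_R)}
   = (\partial_R\lambda_R)\int_{\Gamma_R} w_R v_R\,\rd\bx
     - \int_{\Sigma_R} (\partial^2_n v_R)\,w_R\,\rd\tau .
\]
The left-hand side would be negative if $\lambda_R$ coincided with the bottom of the spectrum, so the key point is a coercivity (spectral gap) argument: decompose $w_R = \alpha_R v_R + w_R^\perp$ with $w_R^\perp$ orthogonal to $v_R$ in $L^2(\Gamma_R)$. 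On the orthogonal complement the quadratic form $\cQ_{\Gamma_R} - \lambda_R\|\cdot\|^2$ is bounded below by $(\lambda_2(\Gamma_R) - \lambda_R)\|w_R^\perp\|^2_{L^2(\Gamma_R)}$, and by Lemma \ref{lem:GR12} together with Corollary \ref{cor:conv} we have $\lambda_2(\Gamma_R) - \lambda_R \ge \pi^2 - \lambda_\infty = \omega^2 > 0$ uniformly in $R\ge1$. The component $\alpha_R$ along $v_R$ is controlled by \eqref{eqn:psnul}, which gives $|\alpha_R| = \tfrac12\|v_R\|^2_{L^2(\Sigma_R)} \le C e^{-2R\omega}$ by Lemma \ref{lem:expdec} (case $\ell=m=0$, $\rho=R$).

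Next I would bound the right-hand side. The term $(\partial_R\lambda_R)\int w_R v_R$ is harmless: by \eqref{eqn:psnul} again $\int w_R v_R = -\tfrac12\|v_R\|^2_{L^2(\Sigma_R)}$ is exponentially small, and $\partial_R\lambda_R$ is bounded (e.g.\ by \eqref{eq:deriv} and Lemma \ref{lem:expdec}, or just by the monotonicity \eqref{eq:increas} and boundedness of $\lambda_R$ combined with the trace estimate). For the boundary term $\int_{\Sigma_R}(\partial^2_n v_R)\,w_R\,\rd\tau$, I would use the trace estimate of Lemma \ref{lem:trace} on $w_R$ — giving $\|w_R\|_{L^2(\Sigma_R)} \le C\|w_R\|_{H^1(\Gamma_R)}$ — and Lemma \ref{lem:expdec} with $\ell=2$, $m=0$, $\rho=R$ to get $\|\partial^2_n v_R\|_{L^2(\Sigma_R)} \le C e^{-R\omega}$. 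Thus this term is bounded by $C e^{-R\omega}\|w_R\|_{H^1(\Gamma_R)}$. Note $\|\partial^2_n v_R\|_{L^2(\Sigma_R)}$ is the crucial quantity here; it decays like $e^{-R\omega}$, not like $e^{-2R\omega}$, which is why the final estimate in \eqref{eq:wR} is $e^{-R\omega}$ rather than $e^{-2R\omega}$.

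Putting these pieces together yields an inequality of the shape
\[
   \omega^2 \|w_R^\perp\|^2_{L^2(\Gamma_R)} \le \cQ_{\Gamma_R}(w_R^\perp)
   \le \cQ_{\Gamma_R}(w_R) - \lambda_R\|w_R\|^2_{L^2(\Gamma_R)} + \lambda_R|\alpha_R|^2\|v_R\|^2
   \le C e^{-R\omega}\|w_R\|_{H^1(\Gamma_R)} + C e^{-4R\omega},
\]
after absorbing the $v_R$-component. Since $\|w_R\|^2_{H^1(\Gamma_R)} = \cQ_{\Gamma_R}(w_R) + \|w_R\|^2_{L^2(\Gamma_R)} \le (1+\lambda_R)\|w_R\|^2_{L^2} + \cQ_{\Gamma_R}(w_R) - \lambda_R\|w_R\|^2 \le C\|w_R^\perp\|^2_{L^2} + C|\alpha_R|^2 + C e^{-R\omega}\|w_R\|_{H^1(\Gamma_R)}$, and $\|w_R^\perp\|^2_{L^2}$ is controlled by the displayed inequality, a Young-inequality absorption of the cross term $C e^{-R\omega}\|w_R\|_{H^1}$ into $\tfrac12\|w_R\|^2_{H^1}$ closes the estimate and gives $\|w_R\|_{H^1(\Gamma_R)} \le C e^{-R\omega}$.

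The main obstacle is making the coercivity step clean and uniform in $R$: one must be sure that the spectral gap $\lambda_2(\Gamma_R)-\lambda_R$ stays bounded away from zero (which is exactly what Lemma \ref{lem:GR12}(ii) and Corollary \ref{cor:conv} provide, giving the gap $\ge\omega^2$), and one must correctly identify that the worst-decaying ingredient is the Neumann datum $\partial^2_n v_R$ on $\Sigma_R$, forcing the $e^{-R\omega}$ rather than $e^{-2R\omega}$ rate. The rest — integration by parts in \eqref{eq:pbwR}, the trace estimate, and the absorption argument — is routine.
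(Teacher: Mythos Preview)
Your approach is essentially the same as the paper's: both decompose $w_R=\alpha_R v_R+w_R^\perp$, use the uniform spectral gap $\lambda_2(\Gamma_R)-\lambda_R\ge\omega^2$ from Lemma~\ref{lem:GR12}, and combine Lemma~\ref{lem:expdec} with the trace estimate Lemma~\ref{lem:trace} to control the Neumann datum $\partial_n^2 v_R$ on $\Sigma_R$; the paper packages the coercivity step as $\|w_R^\perp\|_{H^1}\le K\|f_R\|_{V'}$ while you test \eqref{eq:pbwR} directly against $w_R$, but the content is identical. One small slip: your displayed chain $\omega^2\|w_R^\perp\|^2\le\cQ_{\Gamma_R}(w_R^\perp)\le\cQ_{\Gamma_R}(w_R)-\lambda_R\|w_R\|^2+\lambda_R|\alpha_R|^2$ is not quite right as written (the middle term actually equals $\cQ_{\Gamma_R}(w_R^\perp)-\lambda_R\|w_R^\perp\|^2$), but the intended estimate $\cQ_{\Gamma_R}(w_R^\perp)-\lambda_R\|w_R^\perp\|^2\ge\tfrac{\omega^2}{1+\pi^2}\|w_R^\perp\|_{H^1}^2$ follows by the same $\varepsilon$-splitting the paper uses, and your absorption argument then closes correctly.
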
	

\begin{proof}
The solution $w_R$ of \eqref{eq:pbwR}-\eqref{eqn:psnul} is unique and can be written as
\[
	w_R = \mu_R v_R + w_R^\perp, \quad\text{with}\quad \int_{\Gamma_R}w_R^\perp v_R \, \rd x_1\rd x_2 = 0
	\ \ \mbox{and}\ \  \mu_R = - \frac12\int_{\Sigma_R}|v_R|^2 \,\rd\tau.
\]
Recall that the variational space associated with Problem \eqref{eq:pbwR} is the form domain $V=\Dom(\cQ_{\Gamma_R})$ and denote by $V'$ its dual space. Let $f_R$ denote the right hand side of \eqref{eq:pbwR}. Let us prove that 
\begin{equation}\label{eqn:lbvarfor1}
	\|w_R^\perp\|_{H^1(\Gamma_R)} \leq K \|f_R\|_{V'}\quad\mbox{for}\quad K = \frac{1+\pi^2}{\omega^2}\,.
\end{equation}
Indeed, if $\langle\cdot,\cdot\rangle_{V',V}$ denotes the duality pairing, for any $\varepsilon \in (0,1)$ we have
\begin{align*}
	\|w_R^\perp\|_{H^1(\Gamma_R)}\|f_R\|_{V'} \geq \langle f_R, w_R^\perp\rangle_{V',V} &= \|\nabla w_R^\perp\|_{L^2(\Gamma_R)}^2 - \lambda_1(\Gamma_R)\|w_R^\perp\|_{L^2(\Gamma_R)}^2\\
	&\geq \varepsilon \|\nabla w_R^\perp\|_{L^2(\Gamma_R)}^2 + \big((1-\varepsilon)\lambda_{2}(\Gamma_R) - \lambda_1(\Gamma_R)\big)\|w_R^\perp\|_{L^2(\Gamma_R)}^2\,.
\end{align*}
Using Lemma \ref{lem:GR12}, we get $\lambda_1(\Gamma_R) < \lambda_1(\Gamma)$ and $\lambda_2(\Gamma_R) - \lambda_1(\Gamma_R)\ge \pi^2-\lambda_1(\Gamma)=\omega^2$, hence
\[
	\varepsilon\|\nabla w_R^\perp\|_{L^2(\Gamma_R)}^2 + (\omega^2 - \varepsilon\pi^2)\|w_R^\perp\|_{L^2(\Gamma_R)}^2 \leq \|w_R^\perp\|_{H^1(\Gamma_R)} \|f_R\|_{V'}\,.
\]
Choosing $\varepsilon$ so that $\varepsilon=\omega^2 - \varepsilon\pi^2$, we obtain \eqref{eqn:lbvarfor1}.
From \eqref{eqn:lbvarfor1} we deduce (still using the condensed notation $\lambda_R$ for $\lambda_1(\Gamma_R)$)
\begin{align}
	\|w_R\|_{H^1(\Gamma_R)} &\le |\mu_R|\,\|v_R\|_{H^1(\Gamma_R)} + \|w_R^\perp\|_{H^1(\Gamma_R)} \nonumber\\
	&\leq \|v_R\|_{L^2(\Sigma_R)}^2 \,\|v_R\|_{H^1(\Gamma_R)} + K\,\|f_R\|_{V'}\nonumber\\
	&\leq \sqrt{1+\lambda_R}\,\|v_R\|_{L^2(\Sigma_R)}^2 \, + K\,\|f_R\|_{V'}\nonumber\\
	&\leq \sqrt{1+\pi^2}\,\|v_R\|_{L^2(\Sigma_R)}^2 + K\,\|f_R\|_{V'}\,.
\label{eqn:maj_wR}
\end{align}
As the duality pairing between $f_R\in V'$ and any $g\in V$ satisfies $\langle f_R,g\rangle_{V',V} = \int_{\Gamma_R}(\partial_R\lambda_R) v_R\,g\, \rd\bx - 
\int_{\Sigma_R} \partial^2_nv_R \,g\,\rd\tau$, we get
\begin{equation}
	|\langle f_R,g\rangle_{V',V}| \leq |\partial_R\lambda_R|\|g\|_{L^2(\Gamma_R)} + \|\partial_n^2v_R\|_{L^2(\Sigma_R)}\|g\|_{L^2(\Sigma_R)}.
\label{eqn:dual_pair}
\end{equation}
Using Lemma \ref{lem:trace} that provides a uniform estimate of $\|g\|_{L^2(\Sigma_R)}$ by $\|g\|_{H^1(\Gamma_R)}$ (here the assumption $R\ge1$ comes into play), we find that 
\eqref{eqn:dual_pair} becomes
\[
	|\langle f_R,g\rangle_{V',V}| \leq \Big(|\partial_R\lambda_R| + C\|\partial_n^2v_R\|_{L^2(\Sigma_R)}\Big)\|g\|_{H^1(\Gamma_R)}
\]
and we obtain
\begin{equation}
	\|f_R\|_{V'} \leq |\partial_R\lambda_R| + C\|\partial_n^2v_R\|_{L^2(\Sigma_R)}.
	 \label{eqn:maj_normdual}
\end{equation}
Combining estimates \eqref{eqn:maj_wR} and \eqref{eqn:maj_normdual} with formula \eqref{eq:compat} yields
\[
	\|w_R\|_{H^1(\Gamma_R)}  
	\leq \sqrt{1+\pi^2}\,\|v_R\|_{L^2(\Sigma_R)}^2  + K\,
	\Big( \|v_R\|_{L^2(\Sigma_R)}\|\partial^2_nv_R\|_{L^2(\Sigma_R)} + C\|\partial^2_nv_R\|_{L^2(\Sigma_R)} \Big)\,.
\]
The application of Lemma \ref{lem:expdec} ends the proof.
\end{proof}

%%%%%%%%%%%%%%%%%%%%%%%%%%%%%%%%%%%%%%%%%%%%%%%%%%%%%%%%%%%%%%%%%%%%%%%%%%%%%%%%%%%%%%%%%%%%%%%%%%%%%%%%%%%%%%%%
\section{Fichera layer: Finiteness of discrete spectrum}\label{sec:spectrum}
%%%%%%%%%%%%%%%%%%%%%%%%%%%%%%%%%%%%%%%%%%%%%%%%%%%%%%%%%%%%%%%%%%%%%%%%%%%%%%%%%%%%%%%%%%%%%%%%%%%%%%%%%%%%%%%%
This section is devoted to the proof of our main theoretical result, that is Theorem \ref{thm:main} that describes the essential spectrum of the Dirichlet Laplacian $\cL_\Lambda$ on the Fichera layer and states the finiteness of its discrete spectrum. We also exhibit a lower bound for the whole spectrum of $\cL_\Lambda$.

%%%%%%%%%%%%%%%%%%%%%%%%%%%%%%%%%%%%%%%%%%%%%%%%%%%%%%%%%%%%%%%%%%%%%%%%%%%%%%%%%%%%%%%%%%%%%%%%%%%%%%%%%%%%%%%%
\subsection{Essential spectrum}
%%%%%%%%%%%%%%%%%%%%%%%%%%%%%%%%%%%%%%%%%%%%%%%%%%%%%%%%%%%%%%%%%%%%%%%%%%%%%%%%%%%%%%%%%%%%%%%%%%%%%%%%%%%%%%%%
In this subsection we prove point i) of Theorem \ref{thm:main}, i.e.,
\begin{equation}
\label{eq:ess}
   \sigma_{\ess}(\cL_\Lambda) = [\lambda_1(\Gamma),+\infty).
\end{equation}
The proof of \eqref{eq:ess} is made in two steps: first we establish the inclusion $[\lambda_1(\Gamma),+\infty)\subset \sigma_{\ess}(\cL_\Lambda)$, and second we show the inequality $\min\sigma_{\ess}(\cL_\Lambda)\ge \lambda_1(\Gamma)$. For this, we make use of the following result \cite[Th.10.2.4]{BS87}: \begin{equation}
\label{eq:BS}
   \mbox{$\cQ_1 \leq \cQ_2$ in the sense of quadratic forms}\ \Longrightarrow \ 
   \min\sigma_{\ess}(\cQ_1)\le\min\sigma_{\ess}(\cQ_2).
\end{equation}

\begin{proof}[Proof of $[\lambda_1(\Gamma),+\infty)\subset \sigma_{\ess}(\cL_\Lambda)$]
To prove this, it suffices to consider suitable Weyl sequences for the operator $\cL_{\Lambda}$.
Let $v_\infty$ denote an eigenvector of $\cL_\Gamma$ associated with its first eigenvalue. Choose $\kappa\ge0$ and $\chi\in\mathscr{D}(\R)$ satisfying $\chi\equiv1$ on $[1,2]$ and $\supp(\chi) \subset [\frac12,\frac52]$. The sequence $(\psi_n)$ given by
\[
   \psi_n(\bx) = 
   \begin{cases}
   v_\infty(x_1,x_2)\,e^{i\kappa x_3}\, \frac{1}{\sqrt{n}}\,\chi\big(\frac{x_3}{n}\big),
   \quad &\bx=(x_1,x_2,x_3)\in\Gamma\times\R_+\,, \\
   0 \quad &\bx\in\Lambda\setminus(\Gamma\times\R_+)\,.
   \end{cases}
\]
is a suitable Weyl sequence for the value $\lambda_1(\Gamma)+\kappa^2$ and the operator $\cL_\Lambda$.
\end{proof}

\begin{proof}[Proof of $\min\sigma_{\ess}(\cL_\Lambda)\ge \lambda_1(\Gamma)$]
Choose $R>0$. Then, using subdomains introduced in \eqref{eq:LR}--\eqref{eq:LjR}, we see that
\[
   \Lambda_R \cup \Omega^1_R \cup \Omega^2_R \cup \Omega^3_R
\]
is a domain partition of $\Lambda$ in the sense of Section \ref{sss:partition} and its associated quadratic form $\cQ_\Lambda^\Bro$ satisfies $\cQ_\Lambda^\Bro\le\cQ_\Lambda$. So, by \eqref{eq:BS}
\[
   \min\sigma_{\ess}(\cQ_\Lambda^\Bro) \le \min\sigma_{\ess}(\cQ_\Lambda).
\]
As $\Lambda_R$ is bounded and $\Omega^1_R$, $\Omega^2_R$ and $\Omega^3_R$ are isometric, we find
\[
   \min\sigma_{\ess}(\cQ_\Lambda^\Bro) = \min\sigma_{\ess}(\cQ_{\Omega^3_R}) \ge \lambda_1(\cQ_{\Omega^3_R}),
\]
where we recall that the quadratic $\cQ_{\Omega^3_R}$ and its domain are defined according to the conventions of \S \ref{sss:partition}. Note that by definition, the domain $\Omega^3_R$ satisfies
\begin{equation}
\label{eq:O3R}
	\Omega^3_R = \{(x_1,x_2,x_3) \in \Lambda :\ \ x_3>R\text{ and } (x_1,x_2) \in \Gamma_{x_3}\},
\end{equation}
where $\Gamma_{x_3}$ is the finite waveguide of length $x_3$ defined in \eqref{eq:GR} taking $R=x_3$. Hence, for $u\in\Dom (\cQ_{\Omega^3_R})$ we have
\begin{align*}
	\cQ_{\Omega^3_R}(u) 	&= \int_{R}^{\infty}
	\bigg\{\int_{\Gamma_{x_3}}\big(|\partial_1 u|^2 + |\partial_2 u|^2 + |\partial_3 u|^2 \big)
	\,\rd x_1\rd x_2\bigg\}\rd x_3\\
				&\geq \int_{\Omega^3_R} \lambda_1(\Gamma_{x_3})\, |u|^2 \,\rd x_1\rd x_2 \rd x_3
				\geq \lambda_1(\Gamma_R)\|u\|_{L^2(\Omega^3_R)}^2,
\end{align*}
where we used the monotonicity property \eqref{eq:increas} of the first eigenvalue $\lambda_1(\Gamma_{x_3})=\lambda_{x_3}$ with respect to $x_3$. We have finally obtained for any $R>0$
\[
	\min\sigma_{\ess}(\cL_\Lambda) \geq \min \sigma(\cQ_{\Omega^3_R}) \geq \lambda_1(\Gamma_R).
\]
Combined with the convergence result $\lambda_1(\Gamma_R)\to\lambda_1(\Gamma)$ as $R\to\infty$ (Corollary \ref{cor:conv}), this yields the desired inequality.
\end{proof}

%%%%%%%%%%%%%%%%%%%%%%%%%%%%%%%%%%%%%%%%%%%%%%%%%%%%%%%%%%%%%%%%%%%%%%%%%%%%%%%%%%%%%%%%%%%%%%%%%%%%%%%%%%%%%%%%
\subsection{Finiteness of the number of bound states} \label{subsec:finite}
%%%%%%%%%%%%%%%%%%%%%%%%%%%%%%%%%%%%%%%%%%%%%%%%%%%%%%%%%%%%%%%%%%%%%%%%%%%%%%%%%%%%%%%%%%%%%%%%%%%%%%%%%%%%%%%%
The purpose of this part is to prove
\begin{equation}
\label{eq:finit1}
   \#\Big(\sigma(\cL_{\Lambda})\cap\big[0,\lambda_1(\Gamma)\big)\Big) < + \infty.
\end{equation}

\begin{proof}[Proof step 1: Reduction to the residual domain $\Omega^3_L$.]
Like in the previous proof, we use for $L\ge0$ the domain partition $\Lambda_L \cup \Omega^1_L \cup \Omega^2_L \cup \Omega^3_L$ of $\Lambda$. As a consequence of \eqref{eq:part1}-\eqref{eq:part2}, if the number of eigenvalues under $\lambda_1(\Gamma)$ is finite for each of the operators acting on $\Lambda_L$, $\Omega^1_L$, $\Omega^2_L$, and $\Omega^3_L$, the same holds for $\cL_{\Lambda}$. For each chosen $L$, this finiteness holds for the bounded domain $\Lambda_L$. Moreover the spectra of the three operators $\Omega^j_L$ are identical by symmetry. Thus \eqref{eq:finit1} will be proved if there holds
\begin{equation}
\label{eq:finit2}
   \#\Big(\sigma(\cQ_{\Omega^3_L})\cap\big [0,\lambda_1(\Gamma)\big)\Big) < + \infty.
\end{equation}
for some $L\ge0$.
\end{proof}

\begin{proof}[Proof step 2. A Born-Oppenheimer type lower bound]
Recall that $\Omega^3_L$ is the set of $\bx \in \Lambda$ such that $x_3>L$ and $(x_1,x_2) \in \Gamma_{x_3}$,  {\em cf.}\ \eqref{eq:O3R}. In order to prove \eqref{eq:finit2}, we establish a lower bound for the associated quadratic form $\cQ_{\Omega^3_L}$ by projection on the first eigenvector $v_{x_3}$ of $\cL_{\Gamma_{x_3}}$ for each $x_3>L$. This is in the spirit of the so-called Born-Oppenheimer approximation \cite{CDS81,KMSW92,Mar89}. Recall that for $R\ge0$, $v_R$ denotes the positive normalized eigenfunction associated with the first eigenvalue $\lambda_R$ of $\cL_{\Gamma_R}$. Now, $R$ is set as the third coordinate $x_3$ of $\bx\in\Omega^3_L$. For an improved readability we denote, {\em cf.}\ Table \ref{tab:recap},
\begin{equation}
\label{eq:not}
   \lambda(x_3) := \lambda_1(\Gamma_{x_3}) \equiv \lambda_{x_3},\quad\lambda_\infty := \lambda_1(\Gamma)
    \quad\mbox{and}\quad \omega=\sqrt{\pi^2-\lambda_\infty}.
\end{equation}

\begin{lemma}
For any $L\ge0$ and any $u\in L^2(\Omega^3_L)$ we introduce the orthogonal projections
\[
	\big(\Pi_{x_3} u\big)(x_1,x_2,x_3) = f(x_3)\,v_{x_3}(x_1,x_2),\quad \Pi_{x_3}^\perp u = u - \Pi_{x_3} u\,,
	\quad  \mbox{for}\ \ x_3\ge L,
\]
where for the sake of simplicity we set
\[
	f(x_3) = \int_{\Gamma_{x_3}} u(x_1,x_2,x_3)\,v_{x_3}(x_1,x_2)\,\rd x_1 \rd x_2.
\]
Then, for all $\varepsilon\in(0,1)$, there exists $L_0\ge1$, such that for all $L\ge L_0$ and $u\in\Dom (\cQ_{\Omega^3_{L}})$, we have, with notation \eqref{eq:not},
\[
	  \cQ_{\Omega^3_L}(u) \geq  
	  (1-\varepsilon) \|f'\|_{L^2(L,\infty)}^2 +
	  \int_{L}^\infty (1-e^{-2\omega (x_3-L_0)}) \,\lambda(x_3)\,|f(x_3)|^2\, \rd x_3
	  + \lambda_\infty\,\|\Pi_{x_3}^\perp u\|_{L^2(\Omega^3_L)}^2\,.
\]
\label{prop:lbfq1D}
\end{lemma}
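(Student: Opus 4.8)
The plan is to write $\cQ_{\Omega^3_L}(u)$ as the $x_3$-integral of the fibered quadratic form on $\Gamma_{x_3}$, namely $\int_L^\infty \big( \int_{\Gamma_{x_3}} |\partial_1 u|^2 + |\partial_2 u|^2 + |\partial_3 u|^2 \,\rd x_1\rd x_2\big)\,\rd x_3$, and to split each fiber integral according to the orthogonal decomposition $u = \Pi_{x_3}u + \Pi_{x_3}^\perp u$. For the transverse part, that is the $\partial_1,\partial_2$ derivatives, the spectral gap $\lambda_2(\Gamma_{x_3}) - \lambda_1(\Gamma_{x_3})\ge\omega^2$ from Lemma \ref{lem:GR12} gives $\int_{\Gamma_{x_3}}(|\partial_1 u|^2+|\partial_2 u|^2) \ge \lambda(x_3)|f(x_3)|^2 + \lambda_2(\Gamma_{x_3})\|\Pi_{x_3}^\perp u(\cdot,x_3)\|^2$, and since $\lambda_2(\Gamma_{x_3})\ge\pi^2\ge\lambda_\infty$ this already produces the last term $\lambda_\infty\|\Pi_{x_3}^\perp u\|_{L^2(\Omega^3_L)}^2$ after integrating in $x_3$. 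The delicate term is the longitudinal one, $\int_L^\infty\int_{\Gamma_{x_3}}|\partial_3 u|^2$, because the fiber $\Gamma_{x_3}$ itself depends on $x_3$, so $\partial_3$ of $\Pi_{x_3}u$ is not simply $f'(x_3)v_{x_3}$.

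First I would make the dependence of the domain on $x_3$ disappear by the change of variables \eqref{eq:change} that maps $\Gamma_{x_3}$ onto the fixed reference guide $\Gamma_1$, exactly as in Lemma \ref{lem:kato}; alternatively, and more transparently, one differentiates directly: on $\Omega^3_L$ we have $\Pi_{x_3}u(\bx) = f(x_3)v_{x_3}(x_1,x_2)$ with $v_{x_3}$ smooth in $x_3$ by Lemmas \ref{lem:kato}--\ref{lem:wR}, so $\partial_3(\Pi_{x_3}u) = f'(x_3)v_{x_3} + f(x_3)\,w_{x_3}$ where $w_{x_3}=\partial_R v_R|_{R=x_3}$. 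The key point is that, by the normalization $\int_{\Gamma_{x_3}}v_{x_3}^2=1$ and relation \eqref{eqn:psnul}, $w_{x_3}$ is \emph{almost} orthogonal to $v_{x_3}$: $\int_{\Gamma_{x_3}}w_{x_3}v_{x_3} = -\frac12\int_{\Sigma_{x_3}}|v_{x_3}|^2\,\rd\tau$, which Lemma \ref{lem:expdec} controls by $C e^{-2x_3\omega}$. Thus one writes $\int_{\Gamma_{x_3}}|\partial_3 \Pi_{x_3}u|^2 \ge (1-\varepsilon)|f'(x_3)|^2 - C_\varepsilon\,|f(x_3)|^2\,\|w_{x_3}\|_{H^1(\Gamma_{x_3})}^2$ after a Young inequality $2ab \le \varepsilon a^2 + \varepsilon^{-1}b^2$ absorbing the cross term; together with the cross terms coming from $\langle\partial_3\Pi_{x_3}u,\partial_3\Pi_{x_3}^\perp u\rangle$ (again estimated by the smallness of $w_{x_3}$ and by Young against the positive transverse term we have in reserve), and using $\|w_{x_3}\|_{H^1(\Gamma_{x_3})}\le Ce^{-x_3\omega}$ from Lemma \ref{lem:wR}, every error is bounded by $C e^{-2x_3\omega}$ times $|f(x_3)|^2$ or the transverse $L^2$-mass. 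Choosing $L_0$ large enough that $C e^{-2(x_3-L_0)\omega}\le$ the reserve, these errors are reabsorbed, producing the factor $(1-e^{-2\omega(x_3-L_0)})$ in front of $\lambda(x_3)|f(x_3)|^2$.

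Assembling the three fiber contributions and integrating over $x_3\in(L,\infty)$ gives exactly the claimed lower bound, with the slack $\varepsilon$ in front of $\|f'\|^2$ inherited from the Young step and $L_0$ chosen depending only on $\varepsilon$, $\omega$ and the universal constants of Lemmas \ref{lem:expdec}, \ref{lem:wR}. The main obstacle I anticipate is the bookkeeping of the longitudinal cross terms — both $\langle f'v_{x_3},fw_{x_3}\rangle$ and $\langle \partial_3\Pi_{x_3}u,\partial_3\Pi_{x_3}^\perp u\rangle$ — and checking that all of them can be simultaneously absorbed: the $f'$–$fw$ cross term eats a small piece of $\|f'\|^2$, while the $\Pi$–$\Pi^\perp$ cross term must be absorbed into the (large, of size $\pi^2\gg\lambda_\infty$) transverse term $\lambda_2(\Gamma_{x_3})\|\Pi_{x_3}^\perp u\|^2$, leaving just enough to retain $\lambda_\infty\|\Pi_{x_3}^\perp u\|^2$ in the final inequality. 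One should also be slightly careful that $u\in\Dom(\cQ_{\Omega^3_L})$ only guarantees $H^1$ regularity, so the differentiation $\partial_3(\Pi_{x_3}u)$ and the integrations by parts in the fiber variables are justified by density of smooth functions and by the $H^1(\Gamma_R)$-bounds on $v_R$ and $w_R$ that are uniform in $R\ge1$.
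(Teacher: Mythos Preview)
Your overall strategy is sound and would ultimately succeed, but you take a different---and noticeably heavier---route than the paper at the crucial longitudinal step. You choose to differentiate the projection, writing $\partial_3(\Pi_{x_3}u)=f'\,v_{x_3}+f\,w_{x_3}$, and then must contend with the cross terms $\langle\partial_3\Pi_{x_3}u,\partial_3\Pi_{x_3}^\perp u\rangle$, which you rightly flag as the main bookkeeping obstacle. The paper instead \emph{projects the derivative}: it uses the exact orthogonal decomposition
\[
   \|\partial_3 u\|_{L^2(\Omega^3_L)}^2 \;=\; \|\Pi_{x_3}(\partial_3 u)\|_{L^2(\Omega^3_L)}^2 + \|\Pi_{x_3}^\perp(\partial_3 u)\|_{L^2(\Omega^3_L)}^2,
\]
drops the second term, and observes that $\Pi_{x_3}(\partial_3 u)=(f'(x_3)-F(x_3))\,v_{x_3}$ for a scalar commutator $F(x_3)$ collecting the moving-boundary traces of $u$ on $\Sigma_{x_3}$ together with $\langle u,w_{x_3}\rangle_{L^2(\Gamma_{x_3})}$. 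A single Young inequality on the pair $(f',F)$ then gives $(1-\varepsilon)\|f'\|^2-\varepsilon^{-1}\|F\|^2$, and the key estimate is $|F(x_3)|^2\le C\,e^{-2\omega x_3}\,\cQ_{\Gamma_{x_3}}(u)$ (via Lemmas~\ref{lem:expdec}, \ref{lem:wR} and the trace Lemma~\ref{lem:trace}). This error is absorbed directly into the \emph{full} transverse form $\int_L^\infty\cQ_{\Gamma_{x_3}}(u)\,\rd x_3$ \emph{before} the spectral splitting $\cQ_{\Gamma_{x_3}}(u)\ge\lambda(x_3)|f|^2+\pi^2\|\Pi_{x_3}^\perp u\|^2$ is applied---which is what produces the factor $(1-e^{-2\omega(x_3-L_0)})$ in one stroke and makes the choice of $L_0$ transparent.

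Your route can be completed, but be aware that the cross term $\langle f'v_{x_3}+f w_{x_3},\,\partial_3\Pi_{x_3}^\perp u\rangle$ is not automatically small: the piece $f'\langle v_{x_3},\partial_3\Pi_{x_3}^\perp u\rangle$ requires differentiating the orthogonality relation $\langle v_{x_3},\Pi_{x_3}^\perp u\rangle=0$, which reintroduces both $\langle w_{x_3},\Pi_{x_3}^\perp u\rangle$ and a moving-boundary trace $\int_{\Sigma_{x_3}}v_{x_3}\,\Pi_{x_3}^\perp u$; the latter needs $H^1(\Gamma_{x_3})$ control of $\Pi_{x_3}^\perp u$, i.e.\ a piece of $\cQ_{\Gamma_{x_3}}(\Pi_{x_3}^\perp u)$, not merely the $L^2$-mass you mention. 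Similarly the piece $f\langle w_{x_3},\partial_3\Pi_{x_3}^\perp u\rangle$ must be Young'd against $\|\partial_3\Pi_{x_3}^\perp u\|^2$, which you have in reserve but only once. So the absorption works, but there are more terms and more places where the exponentially small factor has to be tracked than your sketch indicates. The paper's device of swapping the order---project first, then no cross terms---is exactly what sidesteps all of this.
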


\begin{proof}
Thanks to the orthogonality in $L^2(\Gamma_{x_3})$ of $\Pi_{x_3}v$ and $\Pi_{x_3}^\perp v$ for any $v\in L^2(\Gamma_{x_3})$ we get
\[
	  \|\partial_3 u\|_{L^2(\Omega^3_L)}^2 = 
	  \|\Pi_{x_3} (\partial_3 u)\|_{L^2(\Omega^3_L)}^2 + \|\Pi_{x_3}^\perp (\partial_3 u)\|_{L^2(\Omega^3_L)}^2 .
\]
Thus, for all $u\in\Dom (\cQ_{\Omega^3_L})$ we have
\begin{align}
	  \cQ_{\Omega^3_L}(u) &\nonumber=  
	  \|\partial_3 u\|_{L^2(\Omega^3_L)}^2 + \int_{L}^\infty \cQ_{\Gamma_{x_3}}(u)\, \rd x_3\\
	  &\geq \|\Pi_{x_3} (\partial_3 u)\|_{L^2(\Omega^3_L)}^2 
	   + \int_{L}^\infty \cQ_{\Gamma_{x_3}}(u)\, \rd x_3.
	\label{eqn:lbq2}
\end{align}
Now, since $u\in\Dom (\cQ_{\Omega^3_L})$, remark that $f \in H^1(L,+\infty)$ and that we have
\begin{equation}
\label{eq:fF}
	\Pi_{x_3}(\partial_3 u)(x_1,x_2,x_3) = f'(x_3)v_{x_3}(x_1,x_2) - F(x_3)\,v_{x_3}(x_1,x_2),
\end{equation}
where the commutator term $F$ is given by (using the notation $w_{x_3} = \partial_{3}v_{x_3}$, {\em cf.} Lemma \ref{lem:wR})
\begin{align*}
	  F(x_3) &:= \int_{\Sigma^2_{x_3}} u(x_1,x_3,x_3)\, v_{x_3}(x_1,x_3)\,\rd x_1 
	  + \int_{\Sigma^1_{x_3}} u(x_3,x_2,x_3)\,v_{x_3}(x_3,x_2)\,\rd x_2 \\
	  &\quad+ \int_{\Gamma_{x_3}} u(x_1,x_2,x_3)\, w_{x_3}(x_1,x_2)\,\rd x_1\rd x_2\,,\quad x_3\ge L.
\end{align*}
On one hand, using Lemmas \ref{lem:expdec} and \ref{lem:wR}, we deduce the exponentially decreasing upper bound for $F$
\begin{equation*}
   |F(x_3)| \le C\,e^{-\omega x_3}\big(\|u\|_{L^2(\Sigma_{x_3})} + \|u\|_{L^2(\Gamma_{x_3})}\big) \,,\quad\forall x_3\ge L.
\end{equation*}
Since $L\ge1$, we can use Lemma \ref{lem:trace} to bound the trace term $\|u\|_{L^2(\Sigma_{x_3})}$ by a multiple of $\|u\|_{H^1(\Gamma_{x_3})}$ uniformly in $x_3>L$. Hence, with other constants $C'$ and $C''$ independent of $x_3$
\begin{equation}
\label{eq:Fdecr}
   |F(x_3)| \le C'\,e^{-\omega x_3} \,\|u\|_{H^1(\Gamma_{x_3})} 
   \le C'' \,e^{-\omega x_3} \,\sqrt{\cQ_{\Gamma_{x_3}}(u)}
    \,,\quad\forall x_3\ge L,
\end{equation}
where we used the min-max principle and Point 2) in Remark \ref{rmk:pt_vp} to obtain
\[
	\lambda(\Gamma_1)\|u\|_{L^2(\Gamma_{x_3})}^2\le \lambda(\Gamma_{x_3})\|u\|_{L^2(\Gamma_{x_3})}^2\le \cQ_{\Gamma_{x_3}}(u).
\]
On the other hand, coming back to \eqref{eq:fF} we have for any $\varepsilon\in(0,1)$
\[
\begin{split}
   \|\Pi_{x_3}(\partial_3 u)\|_{L^2(\Omega^3_L)}^2 &\ge 
   (1-\varepsilon) \|f'(x_3)v_{x_3}(x_1,x_2)\|_{L^2(\Omega^3_L)}^2 + 
   (1-\varepsilon^{-1}) \|F(x_3)\,v_{x_3}(x_1,x_2)\|_{L^2(\Omega^3_L)}^2 \\
   &= 
   (1-\varepsilon) \|f'\|_{L^2(L,\infty)}^2 
   +(1-\varepsilon^{-1}) \|F\|_{L^2(L,\infty)}^2\\
   &\ge 
   (1-\varepsilon) \|f'\|_{L^2(L,\infty)}^2 
   -\varepsilon^{-1} \|F\|_{L^2(L,\infty)}^2\,.
\end{split}
\]
Now \eqref{eq:Fdecr} yields, with a new constant $C$ independent of $x_3$
\[
   \|F\|_{L^2(L,\infty)}^2 \le C \int_{L}^\infty e^{-2\omega x_3} \cQ_{\Gamma_{x_3}}(u) \,\rd x_3\,.
\]
Hence we get 
\[
   \|\Pi_{x_3}(\partial_3 u)\|_{L^2(\Omega^3_L)}^2 \ge
   (1-\varepsilon) \|f'\|_{L^2(L,\infty)}^2 -
   C\,\varepsilon^{-1}\int_{L}^\infty e^{-2\omega x_3} \cQ_{\Gamma_{x_3}}(u) \,\rd x_3\,.
\]
Combining this with \eqref{eqn:lbq2} we obtain
\[
	  \cQ_{\Omega^3_L}(u) \geq  
	 (1-\varepsilon) \|f'\|_{L^2(L,\infty)}^2 + \int_{L}^\infty \cQ_{\Gamma_{x_3}}(u)\, \rd x_3  -
   C\,\varepsilon^{-1}\int_{L}^\infty e^{-2\omega x_3} \cQ_{\Gamma_{x_3}}(u) \,\rd x_3\,.
\]
Let us fix $\varepsilon\in(0,1)$ and choose $L_1\geq 1$ such that $C\,\varepsilon^{-1} e^{-2\omega L_1}\le1$. Then we write for any $L\ge L_1$
\begin{equation}
\label{eq:QO}
	  \cQ_{\Omega^3_L}(u) \geq  
	  (1-\varepsilon) \|f'\|_{L^2(L,\infty)}^2 +
	  \int_{L}^\infty (1-e^{-2\omega (x_3-L_1)}) \cQ_{\Gamma_{x_3}}(u)\, \rd x_3  \,.
\end{equation}
But for $u\in\Dom (\cQ_{\Omega^3_L})$ we have for $x_3\ge L$:
\[
\begin{split}
	  \cQ_{\Gamma_{x_3}}(u) &= \cQ_{\Gamma_{x_3}}(\Pi_{x_3} u) + \cQ_{\Gamma_{x_3}}(\Pi_{x_3}^\perp u) \\
	  &\geq \lambda_1(\Gamma_{x_3})\|\Pi_{x_3} u\|_{L^2(\Gamma_{x_3})}^2 
	  + \lambda_2(\Gamma_{x_3})\|\Pi_{x_3}^\perp u\|_{L^2(\Gamma_{x_3})}^2\,.
\end{split}
\]
Noting that $\|\Pi_{x_3} u\|_{L^2(\Gamma_{x_3})}^2 = |f(x_3)|^2$ we get (with notation \eqref{eq:not})
\begin{equation}
	\cQ_{\Gamma_{x_3}}(u) \geq \lambda(x_3) \,|f(x_3)|^2 + \pi^2\|\Pi_{x_3}^\perp u\|_{L^2(\Gamma_{x_3})}^2,
	\label{eqn:minq_gamma}
\end{equation}
where we used point {\em\ref{lem:GR12:ii})} of Lemma \ref{lem:GR12}. Combining \eqref{eq:QO} and \eqref{eqn:minq_gamma} yields
\[
	  \cQ_{\Omega^3_L}(u) \geq  
	  (1-\varepsilon) \|f'\|_{L^2(L,\infty)}^2 + \!
	  \int_{L}^\infty \! (1-e^{-2\omega (x_3-L_1)}) \Big( \lambda(x_3)\,|f(x_3)|^2
	  + \pi^2\|\Pi_{x_3}^\perp u\|_{L^2(\Gamma_{x_3})}^2\Big)\, \rd x_3  
\]
Take $L_0>L_1$ such that $(1-e^{-2\omega (L_0-L_1)})\pi^2\ge \lambda_\infty$. For $L\ge L_0$ the previous estimate implies
\[
	  \cQ_{\Omega^3_L}(u) \geq  
	  (1-\varepsilon) \|f'\|_{L^2(L,\infty)}^2 + \!
	  \int_{L}^\infty \! (1-e^{-2\omega (x_3-L_1)}) \lambda(x_3)\,|f(x_3)|^2\, \rd x_3
	  + \lambda_\infty\,\|\Pi_{x_3}^\perp u\|_{L^2(\Omega^3_L)}^2,
\]
But since $(1-e^{-2\omega (x_3-L_1)})\ge(1-e^{-2\omega (x_3-L_0)})$, the lemma is proved.
\end{proof}

Taking advantage of the exponential convergence of $\lambda(x_3)$ toward $\lambda_\infty$, we deduce:

\begin{corollary}
\label{cor:W}
There exists $L_0\ge1$, such that for all $L\ge L_0$ and $u\in\Dom (\cQ_{\Omega^3_L})$, we have, 
\begin{equation}
\label{eq:W}
	  \cQ_{\Omega^3_L}(u) \geq  
   \int_{L}^\infty \Big(\tfrac12|f'(x_3)|^2 + \big(\lambda_\infty- V_0(x_3)\big) |f(x_3)|^2 \Big)\,\rd x_3 
   + \lambda_\infty \,\|\Pi_{x_3}^\perp u\|_{L^2(\Omega^3_{L})}^2
\end{equation}
with $V_0(x_3) = e^{-2\omega (x_3-L_0)}$ and with $\lambda_\infty$ defined in \eqref{eq:not}.
\end{corollary}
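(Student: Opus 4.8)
The plan is to specialize the free parameter $\varepsilon$ appearing in Lemma~\ref{prop:lbfq1D} to the value $\varepsilon=\tfrac12$, and then to replace the $x_3$-dependent coefficient $(1-e^{-2\omega(x_3-L_0)})\lambda(x_3)$ multiplying $|f(x_3)|^2$ by the cleaner affine-in-$e^{-2\omega x_3}$ lower bound $\lambda_\infty-V_0(x_3)$, at the modest price of enlarging the threshold $L_0$.

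First I would apply Lemma~\ref{prop:lbfq1D} with $\varepsilon=\tfrac12$; this furnishes a threshold, call it $L_1\ge1$, such that for every $L\ge L_1$ and $u\in\Dom(\cQ_{\Omega^3_L})$,
\[
\cQ_{\Omega^3_L}(u)\ \ge\ \tfrac12\,\|f'\|_{L^2(L,\infty)}^2 + \int_L^\infty \big(1-e^{-2\omega(x_3-L_1)}\big)\,\lambda(x_3)\,|f(x_3)|^2\,\rd x_3 + \lambda_\infty\,\|\Pi_{x_3}^\perp u\|_{L^2(\Omega^3_L)}^2 .
\]
The $f'$-term and the $\Pi^\perp$-term already have the desired shape, since $\tfrac12\|f'\|_{L^2(L,\infty)}^2=\int_L^\infty\tfrac12|f'(x_3)|^2\,\rd x_3$; only the middle integrand needs to be reshaped.

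Next I would bound that integrand pointwise on $[L_1,\infty)$. Writing $\lambda(x_3)=\lambda_\infty-(\lambda_\infty-\lambda(x_3))$, using $0\le\lambda_\infty-\lambda(x_3)\le C_1\,e^{-2\omega x_3}$ (Corollary~\ref{cor:conv}) together with $\lambda(x_3)\le\lambda_\infty$ (point~i) of Lemma~\ref{lem:GR12}), and using $0\le 1-e^{-2\omega(x_3-L_1)}\le1$ on $[L_1,\infty)$, one obtains
\[
\big(1-e^{-2\omega(x_3-L_1)}\big)\,\lambda(x_3)\ \ge\ \lambda_\infty-\big(\lambda_\infty e^{2\omega L_1}+C_1\big)\,e^{-2\omega x_3}.
\]
It then suffices to choose $L_0\ge\max\{1,L_1\}$ large enough that $\lambda_\infty e^{2\omega L_1}+C_1\le e^{2\omega L_0}$; then for every $x_3\ge L\ge L_0$ we have $\big(\lambda_\infty e^{2\omega L_1}+C_1\big)e^{-2\omega x_3}\le e^{-2\omega(x_3-L_0)}=V_0(x_3)$, so that $\big(1-e^{-2\omega(x_3-L_1)}\big)\lambda(x_3)\ge\lambda_\infty-V_0(x_3)$. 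Substituting this into the displayed inequality and restricting attention to $L\ge L_0$ yields \eqref{eq:W}.

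The one point deserving a little care — and hence the \emph{main obstacle}, although it is a mild one — is the bookkeeping of the two roles played by the constant $L_0$: the $L_0$ appearing inside $V_0$ must be taken strictly larger than the threshold produced by Lemma~\ref{prop:lbfq1D}, precisely so that the multiplicative constant $\lambda_\infty e^{2\omega L_1}+C_1$ coming from the exponential convergence of $\lambda(x_3)$ towards $\lambda_\infty$ can be absorbed into the factor $e^{2\omega L_0}$. All the remaining manipulations are direct substitutions.
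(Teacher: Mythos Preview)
Your proof is correct and follows essentially the same route as the paper: apply Lemma~\ref{prop:lbfq1D} with $\varepsilon=\tfrac12$ to obtain a preliminary threshold (which the paper calls $L_0^\flat$ and you call $L_1$), then use Corollary~\ref{cor:conv} together with $\lambda(x_3)\le\lambda_\infty$ to replace $(1-e^{-2\omega(x_3-L_1)})\lambda(x_3)$ by $\lambda_\infty-(\lambda_\infty e^{2\omega L_1}+C_1)e^{-2\omega x_3}$, and finally absorb the constant by enlarging to $L_0$. Your careful remark about the two roles of $L_0$ is exactly the point the paper handles by distinguishing $L_0^\flat$ from the final $L_0$.
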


\begin{proof} Thanks to Lemma \ref{prop:lbfq1D} with $\varepsilon = \frac12$ we get the existence of $L^\flat_0\ge 1$ such that for all $L\ge L^\flat_0$:
\begin{equation}
	\cQ_{\Omega^3_L}(u) \geq  
	  \tfrac12 \|f'\|_{L^2(L,\infty)}^2 + \!
	  \int_{L}^\infty \! (1-e^{-2\omega (x_3-L^\flat_0)}) \lambda(x_3)\,|f(x_3)|^2\, \rd x_3
	  + \lambda_\infty\,\|\Pi_{x_3}^\perp u\|_{L^2(\Omega^3_L)}^2.
	\label{eqn:lbfq_cor}
\end{equation}
But:
\[
\begin{split}
   (1-e^{-2\omega (x_3-L^\flat_0)}) \lambda(x_3) 
   &= \lambda_\infty + (\lambda(x_3)-\lambda_\infty) - e^{-2\omega (x_3-L^\flat_0)} \lambda(x_3) \\
   &\ge \lambda_\infty - C_1 e^{-2\omega x_3} - e^{-2\omega (x_3-L^\flat_0)} \lambda(x_3) \\
   &\ge \lambda_\infty - C_1 e^{-2\omega x_3} - e^{-2\omega (x_3-L^\flat_0)} \lambda_\infty \\
\end{split}
\]
where we have used Corollary \ref{cor:conv} and the monotonicity of $\lambda$. Hence, to obtain the corollary, it suffices to choose $L_0\ge L^\flat_0$ such that $C_1 + e^{2\omega L^\flat_0} \lambda_\infty \le  e^{2\omega L_0}$.
\end{proof}

This concludes step 2 of the proof of \eqref{eq:finit1}.
\end{proof}

\begin{proof}[Proof step 3. Reduction to a one-dimensional Schr\"odinger operator]
Taking advantage of Corollary \ref{cor:W}, we extend the quadratic form in the right hand side of \eqref{eq:W} to a larger  functional space, defining the quadratic form $\cQ^{\tens}$ with tensor product domain:
\[
\begin{split}
   \cQ^{\tens}(f,v) &= \int_{L}^\infty \Big(\tfrac12|f'(x_3)|^2 
   +  \big(\lambda_\infty- V_0(x_3)\big) |f(x_3)|^2 \Big)\,\rd x_3 
   + \lambda_\infty \,\|v\|_{L^2(\Omega^3_{L})}^2\\
   \Dom (\cQ^{\tens}) &= H^1(L,+\infty)\times L^2(\Omega^3_{L})\,.
\end{split}
\]
Let $u\in \Dom(\cQ_{\Omega^3_L})$, thanks to \eqref{eq:W} we have
\[
	  \cQ_{\Omega^3_L}(u) \geq \cQ^{\tens}(f,\Pi_{x_3}u),\quad 
	  \|u\|_{L^2(\Omega^3_L)}^2 = \|f\|_{L^2(L,+\infty)}^2 + \|\Pi_{x_3}^\perp u\|_{L^2(\Omega^3_L)}^2,
\]
with $f(x_3) = \langle u,v_{x_3}\rangle_{L^2(\Gamma_{x_3})}$. This inequality and the natural embedding of domains
\[
   	\begin{array}{ccc}
		\Dom(\cQ_{\Omega^3_L}) & \longrightarrow & \Dom (\cQ^{\tens})\\
		u			&\longmapsto 	& \big(\langle u,v_{x_3}\rangle_{L^2(\Gamma_{x_3})}, \Pi_{x_3}^\perp u\big)
		\end{array}
\]
imply by the min-max principle that the number of eigenvalues of $\cQ_{\Omega^3_L}$ below $\lambda_\infty$ is not greater than the number of eigenvalues of $\cQ^{\tens}$ below $\lambda_\infty$. Moreover, by construction, any eigenstate of $\cQ^{\tens}$ below $\lambda_\infty$ is of the form $(f,0)$, with $f$ an eigenstate associated with a negative eigenvalue of the one-dimensional Schr\"odinger quadratic form
\[
	\cQ^{\red}(f) = \int_{L}^{+\infty}\tfrac12|f'(t)|^2 - V_0(t)|f(t)|^2 \,\rd t,\quad 
	\Dom (\cQ^{\red}) = H^1(L,+\infty).
\]
It remains to prove that the $\cQ^{\red}(f)$ has at most a finite number of negative eigenvalues.
\end{proof}

\begin{proof}[Proof step 4. Conclusion]
As the potential $V_0$ of $\cQ^{\red}$ satisfies
\[
	\int_{L}^{+\infty} t \,V_0(t) \,\rd t < +\infty,
\]
a Bargmann estimate (see \cite{Bar52} or \cite[Thm.~XIII.9 a)]{RS78}) gives the finiteness of negative eigenvalues of $\cQ^{\red}$. More precisely, in our case
\[
   \#\Big(\sigma(\cQ^{\red})\cap(-\infty,0)\Big) \le 1 + 2\int_{L}^{+\infty} t \,V_0(t) \,\rd t\,,
\]
where the shift of $1$ with respect to \cite[Thm.~XIII.9 a)]{RS78} comes from the fact that the operator with Neumann boundary condition in $x_3 = L$ is a perturbation of rank one of the same differential operator but with Dirichlet boundary condition in $x_3 = L$.
This ends the proof of \eqref{eq:finit1}, i.e.\ of point {\em\ref{thm:main:ii})} of Theorem \ref{thm:main}.
\end{proof}

%%%%%%%%%%%%%%%%%%%%%%%%%%%%%%%%%%%%%%%%%%%%%%%%%%%%%%%%%%%%%%%%%%%%%%%%%%%%%%%%%%%%%%%%%%%%%%%%%%%%%%%%%%%%%%%%
\subsection{Bounds for the discrete spectrum} 
%%%%%%%%%%%%%%%%%%%%%%%%%%%%%%%%%%%%%%%%%%%%%%%%%%%%%%%%%%%%%%%%%%%%%%%%%%%%%%%%%%%%%%%%%%%%%%%%%%%%%%%%%%%%%%%%

As a consequence of \eqref{eq:ess}, an upper bound for the discrete spectrum $\sigma_{\dis}(\cL_\Lambda)$ is $\lambda_1(\Gamma)$. We exhibit now a lower bound. First, we prove symmetry properties for possible eigenvectors.

\begin{lemma}
\label{lem:sym}
Denote by $\cP^1$ the diagonal plane $x_2=x_3$, and $\cP^2$, $\cP^3$ by permutation of indices.
Let $u$ be an eigenvector of $\cL_\Lambda$ associated with a discrete eigenvalue. Then $u$ satisfies Neumann conditions $\partial_n u=0$ on the three diagonal planes $\cP^\ell\cap\Lambda$, $\ell=1,2,3$.
\end{lemma}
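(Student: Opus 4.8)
The plan is to use the invariance of $\Lambda$ under the three reflections $\cS_\ell$ across the diagonal planes $\cP^\ell$ (these transpositions of coordinates generate a copy of $\mathfrak{S}_3$) and to show that, for an eigenvalue $\mu<\lambda_1(\Gamma)$, no eigenfunction can have a nonzero component that is \emph{odd} under any $\cS_\ell$. Fix $\ell$, say $\ell=3$, so $\cP^3=\{x_1=x_2\}$ and $\cS_3$ swaps $x_1$ and $x_2$. Since $\cS_3$ commutes with $\cL_\Lambda$, if $u$ is an eigenfunction for $\mu$ then so is its odd part $u_-=\tfrac12(u-u\circ\cS_3)$; by the description of the essential spectrum in Theorem~\ref{thm:main} a discrete eigenvalue satisfies $\mu<\lambda_1(\Gamma)$, so it is enough to prove that the odd sector contains no such eigenvalue, i.e.\ that $u_-=0$. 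Running the argument for $\ell=1,2$ as well shows that $u$ is invariant under each $\cS_\ell$, which is exactly the asserted homogeneous Neumann condition $\partial_n u=0$ on $\cP^\ell\cap\Lambda$ (cf.\ \cite[Prop.~2.2]{DaLaRa11} for the analogous two-dimensional statement on the guide $\Gamma$).

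To deal with the odd sector I restrict $u_-$ to the half-domain $\Xi=\Lambda\cap\{x_1<x_2\}$. The function $u_-$ belongs to $H^1_0(\Lambda)$, hence vanishes on $\partial\Lambda$, and being $\cS_3$-odd it also vanishes on the cut $\cP^3\cap\Lambda$; since $\partial\Xi$ is the union of these two pieces, the restriction $u_-|_\Xi$ lies in $H^1_0(\Xi)$. By the reflection symmetry, $\cQ_\Xi(u_-)/\|u_-\|_{L^2(\Xi)}^2=\cQ_\Lambda(u_-)/\|u_-\|_{L^2(\Lambda)}^2=\mu$, so if $u_-\neq0$ the min-max principle forces $\lambda_1^{\Dir}(\Xi)\le\mu<\lambda_1(\Gamma)$. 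Thus everything reduces to the lower bound $\lambda_1^{\Dir}(\Xi)\ge\lambda_1(\Gamma)$.

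For that bound I observe that on $\Xi$ one has $x_1<x_2$, hence $\min\{x_1,x_2,x_3\}=\min\{x_1,x_3\}$, so that $\Xi\subset\{-1<\min\{x_1,x_3\}<0\}\times\R_{x_2}$, which after relabelling coordinates is precisely $\Gamma\times\R$. The Dirichlet Laplacian on $\Gamma\times\R$ has spectrum $[\lambda_1(\Gamma),+\infty)+[0,+\infty)=[\lambda_1(\Gamma),+\infty)$, so its bottom is $\lambda_1(\Gamma)$, and Dirichlet domain monotonicity (extension by zero embeds $H^1_0(\Xi)$ into $H^1_0(\Gamma\times\R)$) gives $\lambda_1^{\Dir}(\Xi)\ge\lambda_1(\Gamma)$, the sought contradiction. (Equivalently one may slice $\Xi$ by the planes $x_2=\mathrm{const}$: each slice is a subdomain of $\Gamma$ on which $u_-$ satisfies full Dirichlet conditions, so the two-dimensional Rayleigh inequality holds with constant $\lambda_1^{\Dir}(\Gamma)=\lambda_1(\Gamma)$, and integrating in $x_2$ while discarding $\int|\partial_2 u_-|^2\ge0$ yields the same estimate.) The step that needs the most care is the boundary bookkeeping for $\Xi$: checking that the trace of $u_-$ vanishes both on the part of $\partial\Xi$ inherited from $\partial\Lambda$ and on the cut $\cP^3\cap\Lambda$, and that these two parts exhaust $\partial\Xi$, so that $u_-|_\Xi\in H^1_0(\Xi)$ genuinely; the rest is a routine use of min-max and domain monotonicity.
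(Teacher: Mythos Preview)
Your proof is correct and follows essentially the same route as the paper: decompose $u$ into its even and odd parts under a diagonal reflection, observe that the odd part satisfies full Dirichlet conditions on the half-domain $\Xi=\Lambda\cap\{x_1<x_2\}$, and kill it via the geometric identity $\Xi\subset\Gamma\times\R$ (the paper actually notes the sharper equality $\Xi=\{x_1<x_2\}\cap(\Gamma\times\R)$, but your inclusion is all that Dirichlet monotonicity needs). The only cosmetic difference is the choice of index $\ell$.
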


\begin{proof}
Let us write the eigenvector $u$ as the sum of its even and odd parts $u_+$ and $u_-$ with respect to the plane $\cP^1$.
The operator $\Delta$ and the domain $\Lambda$ being invariant by the symmetry with respect to the plane $\cP^1$, the parts $u_+$ and $u_-$ satisfy the same eigenproblem as $u$. The odd part $u_-$ is zero on $\cP^1\cap\Lambda$, hence satisfies Dirichlet boundary conditions on the domain $\Pi^1\cap\Lambda$, where $\Pi^1$ denotes the half-space $x_2<x_3$. Let us notice that
\[
   \Pi^1\cap\Lambda = \Pi^1\cap(\Gamma\times\R).
\]
Indeed, $\Pi^1\cap\Lambda = \{x\in\R^3 : -1<\min\{x_1,x_2,x_3\} < 0 \;\mbox{ and }\;x_2<x_3\}$. But  if $x_2<x_3$, then $\min\{x_1,x_2,x_3\}=\min\{x_1,x_2\}$. Whence the above equality. Therefore we have 
$$\lambda^{\Dir}_1(\Pi^1\cap\Lambda) =\lambda^{\Dir}_1(\Pi^1\cap(\Gamma\times\R)),$$ 
where for a Lipschitz domain $\mathcal{O}$, as introduced in \S \ref{subsub:operators}, $\lambda_i^\Dir(\mathcal{O})$ denotes the $i$-th Rayleigh quotient of the Dirichlet Laplacian posed on $\mathcal{O}$. By Dirichlet bracketing $\lambda^{\Dir}_1(\Pi^1\cap(\Gamma\times\R)) \ge \lambda^{\Dir}_1(\Gamma\times\R)$. This last quantity coincides with $\lambda_1(\Gamma)$, the infimum of the essential spectrum of $\cL_\Lambda$. Thus we deduce that $u_-$ is zero and are left with $u=u_+$. It proves that $u$ is even with respect to the plane $\cP^1$, thus satisfies $\partial_nu=0$ on $\cP^1$, and similarly for $\cP^2$ and $\cP^3$. 
\end{proof}

\begin{corollary}
\label{cor:L3}
With the definition \eqref{eq:L3} of $\Lambda^3$ (see also Figure \textup{\ref{fig-Lambda3}}), we denote by $\cL_{\Lambda^3}$ the realization of $-\Delta$ in $\Lambda^3$ with Dirichlet boundary conditions on $\partial\Lambda^3\cap\partial\Lambda$ and Neumann boundary conditions on the remaining part of the boundary $\partial\Lambda^3\cap(\cP^1\cup\cP^2)$. Then 
\[
   \sigma_{\dis}(\cL_\Lambda) = \sigma_{\dis}(\cL_{\Lambda^3})\quad\mbox{with multiplicities}
\]
and the associated eigenvectors of $\cL_{\Lambda^3}$ are the restrictions to $\Lambda^3$ of the eigenvectors of $\cL_\Lambda$.
\end{corollary}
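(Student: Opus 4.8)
The plan is to combine Lemma~\ref{lem:sym} with the classical spectral decomposition of a self-adjoint operator along an orthogonal symmetry group. Since $\Lambda$ is invariant under the group $G$ generated by the three reflections through the diagonal planes $\cP^1$, $\cP^2$, $\cP^3$ (this group permutes the coordinates $x_1,x_2,x_3$, so $G\cong\mathfrak S_3$), the operator $\cL_\Lambda$ commutes with the unitary action of $G$ on $L^2(\Lambda)$. Consequently $L^2(\Lambda)$ splits as an orthogonal direct sum of $G$-isotypic components, each reducing $\cL_\Lambda$, and $\sigma_{\dis}(\cL_\Lambda)$ is the union (with multiplicities) of the discrete spectra of the restrictions.

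First I would observe that Lemma~\ref{lem:sym} says precisely that every eigenvector associated with a discrete eigenvalue lies in the \emph{trivial} (fully symmetric) isotypic component: it is even with respect to each $\cP^\ell$, hence invariant under all of $G$. Therefore $\sigma_{\dis}(\cL_\Lambda)$ coincides, with multiplicities, with the discrete spectrum of the restriction $\cL_\Lambda^{\mathrm{sym}}$ of $\cL_\Lambda$ to the subspace of $G$-invariant functions, and the eigenvectors are the same. Next I would identify $\cL_\Lambda^{\mathrm{sym}}$ with $\cL_{\Lambda^3}$. A $G$-invariant function on $\Lambda$ is determined by its restriction to the fundamental domain $\Lambda^3=\{\bx\in\Lambda:\ x_1<x_3,\ x_2<x_3\}$, and the restriction map is (up to the constant factor $\sqrt{|G|}=\sqrt6$, or $\sqrt3$ since only the stabilizer-cosets matter — in any case an isometry after normalization) a unitary from the $G$-invariant subspace of $L^2(\Lambda)$ onto $L^2(\Lambda^3)$. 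Under this identification the form $\cQ_\Lambda$ restricted to invariant functions becomes $\cQ_{\Lambda^3}$, because the energy integral over $\Lambda$ of an invariant function is $|G|$ (or the appropriate orbit count) times the integral over $\Lambda^3$; the Dirichlet condition on $\partial\Lambda\cap\partial\Lambda^3$ is inherited, while on the interfaces $\partial\Lambda^3\cap(\cP^1\cup\cP^2)$ — which are the diagonal planes bounding the fundamental domain — the invariance forces exactly the natural (Neumann) condition, matching the definition of $\cL_{\Lambda^3}$. A symmetric argument handles the face $x_1=x_2$ if one prefers to cut $\Lambda$ into six pieces; using the three planes $\cP^\ell$ directly and the three pieces $\Lambda^1,\Lambda^2,\Lambda^3$ is cleanest. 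The equality $\sigma_{\dis}(\cL_\Lambda)=\sigma_{\dis}(\cL_{\Lambda^3})$ with multiplicities, together with the stated correspondence of eigenvectors, then follows.

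I expect the main subtlety to be bookkeeping rather than substance: one must be careful that the relevant symmetry group is $\mathfrak S_3$ acting on $\Lambda$ (the reflections through $\cP^1,\cP^2,\cP^3$ generate all coordinate permutations, not just an order-two action), that Lemma~\ref{lem:sym} indeed pins the eigenvectors down to the fully symmetric sector (it gives evenness across \emph{all three} planes, hence invariance under the full group), and that the boundary conditions on the cut planes are correctly read off — an invariant $H^1$ function on $\Lambda$ restricts to an $H^1$ function on $\Lambda^3$ with \emph{no} trace constraint on $\cP^1\cap\Lambda,\cP^2\cap\Lambda$, and conversely every $u\in\Dom(\cQ_{\Lambda^3})$ extends by symmetry to $\Dom(\cQ_\Lambda)$, which is exactly what makes the restriction map a form-isomorphism onto the invariant sector. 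Once the unitary equivalence $\cL_\Lambda^{\mathrm{sym}}\simeq\cL_{\Lambda^3}$ is set up, the corollary is immediate from the min-max principle or directly from the definition of unitary equivalence.
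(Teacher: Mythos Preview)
Your approach has a genuine gap in the identification $\cL_\Lambda^{\mathrm{sym}}\simeq\cL_{\Lambda^3}$. The domain $\Lambda^3=\{\bx\in\Lambda:\ x_1<x_3,\ x_2<x_3\}$ is \emph{not} a fundamental domain for the full group $G\cong\mathfrak S_3$: generic $G$-orbits have six points, and each $\Lambda^j$ contains two of them. Concretely, the reflection through $\cP^3$ (swapping $x_1\leftrightarrow x_2$) maps $\Lambda^3$ to itself, so $\cP^3$ is an \emph{internal} symmetry plane of $\Lambda^3$, not a boundary face. Consequently the restriction map from $G$-invariant functions on $\Lambda$ to $L^2(\Lambda^3)$ is an isometry onto the \emph{$\cP^3$-even} subspace only, and your claim that ``every $u\in\Dom(\cQ_{\Lambda^3})$ extends by symmetry to $\Dom(\cQ_\Lambda)$'' fails: reflecting $u$ through $\cP^1$ and $\cP^2$ produces functions on $\Lambda^1$ and $\Lambda^2$ that agree along their common interface $\cP^3\cap\overline{\Lambda^1}\cap\overline{\Lambda^2}$ \emph{only if} $u$ is $\cP^3$-symmetric. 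So what you actually obtain is $\sigma_{\dis}(\cL_\Lambda)=\sigma_{\dis}\big(\cL_{\Lambda^3}\big|_{\text{$\cP^3$-even}}\big)$, which gives one inclusion but not the corollary.

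The missing step is precisely what the paper supplies: one must show that any eigenvector of $\cL_{\Lambda^3}$ associated with an eigenvalue below $\lambda_1(\Gamma)$ is automatically $\cP^3$-symmetric. The paper does this by repeating the Lemma~\ref{lem:sym} argument inside $\Lambda^3$: the $\cP^3$-odd part $u_-$ satisfies Dirichlet conditions on $\partial(\Lambda^3\cap\Pi^3)$, and since $\Lambda^3\cap\Pi^3=(\cI\times\R^2)\cap\Pi^3$, the first Dirichlet eigenvalue there is at least $\pi^2>\lambda_1(\Gamma)$, forcing $u_-=0$. Only then can $u$ be extended by reflection to an eigenvector on $\Lambda$. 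Your remark that ``a symmetric argument handles the face $x_1=x_2$'' gestures in this direction but misdiagnoses the issue as a choice between three and six pieces; it is not a bookkeeping alternative but a substantive spectral estimate that must be carried out.
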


% Trace figuratif du tiers LambdaTrois du coin de Fichera 3D "etendu"
% (3 murs 2 a 2 perpendiculaires dans l'octant x>0, y>0, z>0)

% ---
% Trace en couleur Couleur d'un polygone de sommets Pt1, Pt2, ..., PtN
% et trace de son contour.
% Appel : \drawpatch Couleur[Pt1, Pt2, ..., PtN]
\def\drawpatch#1[#2]{%
\figset (color=#1,fill=yes)\figdrawline[#2]
\figset (color=default,fill=no)\figdrawline[#2]}
% ---

% Appel \LambdaTroisInt{longitude}{latitude}{distance}
\def\LambdaTroisInt#1#2#3#4{%
% 1. Definition of characteristic points
\figinit{6mm,realistic}
\def\ep{1}% epaisseur des murs
% Points externes
\figpt 1:(3,0,0)
\figpt 2:(0,4,0)
\figpt 8:(0,0,5.4)

\figpt 3:(0,0,0)
\figpt 5:(0,#4,#4)
\figpt 6:(0,0,#4)
\figpt 7:(#4,0,#4)
% Points internes
\figpt 13:(\ep,\ep,\ep)
\figpt 15:(\ep,#4,#4)
\figpt 16:(\ep,\ep,#4)
\figpt 17:(#4,\ep,#4)
%
% 2. Creation of the graphical file
\figset projection(longitude=#1, latitude=#2, dist=#3)
\figdrawbegin{}
\figset (join=round)
% Section par le plan y=z
\drawpatch 0.6[3,5,15,13,3]
% Section par le plan x=z
\drawpatch 0.7[3,7,17,13,3]
% Plan interne (x,z)
\drawpatch 1[16,13,17]
% Plan interne (y,z)
\drawpatch 1[16,13,15]
% Section selon Oz
\figset (dash=4)
\drawpatch \GrisClair[6,5,15,16,17,7,6]
%\figdrawaxes 13(1.2)
\figdrawend
%
% 3. Writing text on the figure
\figvisu{\figBoxA}{}{}
\centerline{\box\figBoxA}
}

% Appel \LambdaTroisExt{longitude}{latitude}{distance}
\def\LambdaTroisExt#1#2#3#4{%
% 1. Definition of characteristic points
\figinit{6mm,realistic}
\def\ep{1}% epaisseur des murs
% Points externes
\figpt 3:(0,0,0)
\figpt 5:(0,#4,#4)
\figpt 6:(0,0,#4)
\figpt 7:(#4,0,#4)
\figpt 8:(0,0,6.4)
% Points internes
\figpt 13:(\ep,\ep,\ep)
\figpt 15:(\ep,#4,#4)
\figpt 16:(\ep,\ep,#4)
\figpt 17:(#4,\ep,#4)
%
% 2. Creation of the graphical file
\figset projection(longitude=#1, latitude=#2, dist=#3)
\figdrawbegin{}
\figset (join=round)
% Plan externe (x,z)
\drawpatch 1[6,3,7]
% Plan externe (y,z)
\drawpatch 1[6,3,5]
% Section selon Oz
\figset (dash=4)
\drawpatch \GrisClair[6,5,15,16,17,7,6]
%\figdrawaxes 13(1.2)
\figdrawend
%
% 3. Writing text on the figure
\figvisu{\figBoxA}{}{}
\centerline{\box\figBoxA}
}

\begin{figure}
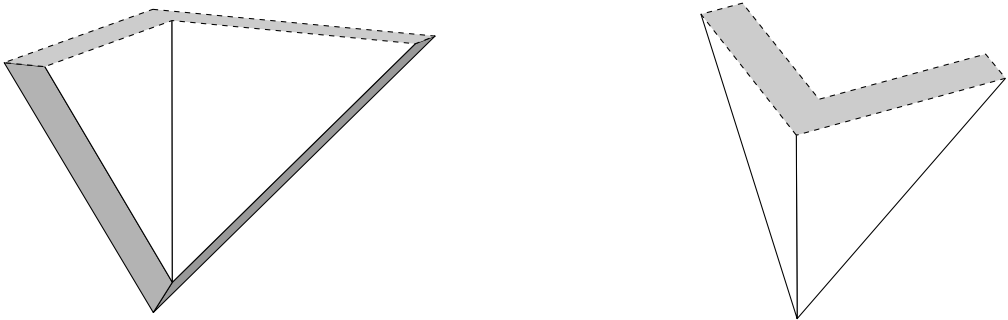

\begin{minipage}[l]{.49\linewidth}
\LambdaTroisInt{27}{12}{200}{7}
\end{minipage}
\begin{minipage}[l]{.49\linewidth}
\LambdaTroisExt{-115}{36}{200}{5}
\end{minipage}
\caption{Two views of $\Lambda^3$ (cut by a plane perpendicular to the $x_3$ axis).}
\label{fig-Lambda3}
\end{figure}

\begin{proof}
Lemma \ref{lem:sym} implies that if $u$ is an eigenvector associated with an eigenvalue below $\lambda_1(\Gamma)$, then its restriction to $\Lambda^3$ is an eigenvector of $\cL_{\Lambda^3}$ associated with the same eigenvalue. Conversely, if $u$ is an eigenvector of $\cL_{\Lambda^3}$ associated with an eigenvalue below $\lambda_1(\Gamma)$, we prove that it is symmetric with respect to the plane $\cP^3$ by an argument similar as above: the odd part $u_-$ of $u$ is zero on the boundary of $\Lambda^3\cap\Pi^3$ (with $\Pi^3$ the half-space $x_1<x_2$). But $\Lambda^3\cap\Pi^3=(\cI\times\R^2)\cap\Pi^3$, from which we deduce that $u_-$ is zero. Hence, $u$ can be extended to $\Lambda$ by symmetry through $\cP^1$ and $\cP^2$, defining an eigenvector of $\cL_\Lambda$.
\end{proof}

\begin{notation}
\label{not:VL}
According to Table \ref{tab:recap}, we use the condensed notation $\lambda(x_3)$ for $\lambda_1(\Gamma_{x_3})$
($x_3>-1$).
For any $L>-1$, let $\cV_L$ be the Sturm-Liouville operator with potential $\lambda:x_3\mapsto\lambda(x_3)$ on $(L,\infty)$:
\begin{equation}
\label{eq:VL}
   \cV_L: q\mapsto -q''+\lambda q,\quad\Dom (\cV_L) = \big\{q \in H^2(L,+\infty) : \ q'(L) = 0\big\}.
\end{equation}  
Let $\mu(L)$ be its lowest eigenvalue and $q_L$ be its normalized eigenfunction satisfying $q_L(L)>0$.
\end{notation}

Before stating the main result of this paragraph, we need the following two lemmas.
\begin{lemma} There holds:
\begin{enumerate}[i)]
	\item\label{lem:lambda1:i} $\lambda(x_3) = \frac{\pi^2}{2}(1+x_3)^{-2}$ for $x_3\in(-1,0]$, in particular $\lambda$ is analytic and decreasing on $(-1,0]$;
	\item\label{lem:lambda1:ii} $\lambda$ is analytic and increasing on $(0,+\infty)$;
	\item\label{lem:lambda1:iii} $\lambda$ has a right limit in $x_3=0$ satisfying $\lambda(0_+)\ge\lambda(0)=\frac{\pi^2}{2}$.
\end{enumerate}
\label{lem:lambda1}
\end{lemma}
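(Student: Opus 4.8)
The plan is to treat the three assertions separately, with the first one being an explicit computation and the other two following from the material already established in Section \ref{sec:brok_guides}. For point \emph{\ref{lem:lambda1:i})}, I would observe that for $x_3\in(-1,0]$ the finite broken guide $\Gamma_{x_3}$ degenerates: since $R:=x_3\le 0$, the rectangles $\cT^1_R$ and $\cT^2_R$ are empty and $\Gamma_{x_3}$ reduces to (an isometric copy of) the square $(-1,x_3)\times(-1,x_3)$ of side $1+x_3$, carrying Dirichlet conditions on the two sides $\{x_j=-1\}$ and Neumann conditions on the two sides $\{x_j=x_3\}$. The first eigenvalue of the Dirichlet--Neumann Laplacian on a square of side $a$ with this mixed pattern is $2\bigl(\tfrac{\pi}{2a}\bigr)^2=\tfrac{\pi^2}{2a^2}$, the corresponding eigenfunction being a product of quarter-period sines; taking $a=1+x_3$ yields $\lambda(x_3)=\tfrac{\pi^2}{2}(1+x_3)^{-2}$, which is manifestly analytic and strictly decreasing on $(-1,0]$.

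For point \emph{\ref{lem:lambda1:ii})}, analyticity of $x_3\mapsto\lambda(x_3)=\lambda_1(\Gamma_{x_3})$ on $(0,\infty)$ is exactly Lemma \ref{lem:kato}, and the strict monotonicity is \eqref{eq:increas} in Remark \ref{rmk:pt_vp} (which rests on the Dauge--Helffer formula \eqref{eq:deriv} of Proposition \ref{prop:Dau-Hel}); so this point is already available and requires only a citation. For point \emph{\ref{lem:lambda1:iii})}, the value $\lambda(0)=\tfrac{\pi^2}{2}$ is given by the formula just proved in \emph{\ref{lem:lambda1:i})} (it is also $\lambda_1(\Gamma_0)$, computed explicitly in the proof of Lemma \ref{lem:GR12}). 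The existence of the right limit $\lambda(0_+)$ follows from the monotonicity in \emph{\ref{lem:lambda1:ii})}: $\lambda$ is increasing on $(0,\infty)$, hence bounded below, so $\lambda(0_+):=\inf_{x_3>0}\lambda(x_3)$ exists. For the inequality $\lambda(0_+)\ge\lambda(0)$, I would use a domain-partition/monotonicity argument: for any $R>0$, splitting $\Gamma_R$ into $\Gamma_0$ and the two rectangles $\cT^j_R$ exactly as in the proof of Lemma \ref{lem:GR12}\emph{\ref{lem:GR12:ii})}, and keeping only the first eigenvalue, gives $\lambda_1(\Gamma_R)\ge\min\{\lambda_1(\Gamma_0),\lambda_1(\cT^1_R),\lambda_1(\cT^2_R)\}=\min\{\tfrac{\pi^2}{2},\pi^2,\pi^2\}=\tfrac{\pi^2}{2}=\lambda(0)$; letting $R\to0_+$ yields $\lambda(0_+)\ge\lambda(0)$.

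The main obstacle — if one insists on a self-contained argument — is pinning down the right limit $\lambda(0_+)$ cleanly, i.e. making sure one does not confuse $\lambda(0)=\tfrac{\pi^2}2$ (the eigenvalue on the degenerate square guide, attained at $x_3=0$) with the limit from the right; the subtlety, already flagged in Remark \ref{rmk:pt_vp}(3), is that $\lambda$ has no $\sC^1$ extension across $0$, so one genuinely has $\lambda(0_+)>\lambda(0)$ in general and only the inequality is claimed. Everything else is either an explicit separation-of-variables computation on a rectangle or a direct appeal to results already proved (Lemmas \ref{lem:GR12}, \ref{lem:kato} and Remark \ref{rmk:pt_vp}).
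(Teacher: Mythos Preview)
Your proof is correct and follows essentially the same approach as the paper: explicit separation of variables on the square for \emph{\ref{lem:lambda1:i})}, citation of Lemma~\ref{lem:kato} and \eqref{eq:increas} for \emph{\ref{lem:lambda1:ii})}, and the domain partition $\Gamma_R=\Gamma_0\cup\cT^1_R\cup\cT^2_R$ (as in Lemma~\ref{lem:GR12}) for \emph{\ref{lem:lambda1:iii})}. One small correction to your commentary: you write that ``one genuinely has $\lambda(0_+)>\lambda(0)$ in general,'' but in fact $\lambda$ is continuous at $0$ (see Remark~\ref{rem:lammu}), so $\lambda(0_+)=\lambda(0)=\tfrac{\pi^2}{2}$; the lemma only states the inequality because the continuity is a more delicate fact (the convergence from the right behaves like $1/|\log x_3|$) and is not needed for what follows.
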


\begin{proof}
For Point {\em\ref{lem:lambda1:i})} we remark that if $x_3\le0$, $\lambda(x_3)$ is the first eigenvalue of a Laplacian on a square of size $(1+x_3)$ with Dirichlet boundary condition on $\partial\Gamma\cap\Box_{1+x_3}$ and Neumann on the remaining part of the boundary. This yields Point i) immediately and we notice that 
$\lim_{x_3\rightarrow 0^-}\lambda(x_3) = \frac{\pi^2}{2}$.

Point {\em\ref{lem:lambda1:ii})} is a direct consequence of Lemma \ref{lem:kato} and \eqref{eq:increas}. 

Concerning Point {\em\ref{lem:lambda1:iii})}, for any positive $x_3$, we apply the argument \eqref{eq:part1}-\eqref{eq:part2} to the partition of $\Gamma_{x_3}$ into $\Gamma_0$, $\cT^1_{x_3}$ and $\cT^2_{x_3}$ and find
\[
   \lambda(x_3) \ge 
   \min\big\{\lambda(0),\ \lambda_1(\cT^1_{x_3}),\ \lambda_1(\cT^2_{x_3})\big\}\,.
\] 
Since $\lambda_1(\cT^j_{x_3})=\pi^2$ for $j=1,2$, which is larger than $\lambda(0)$, we find that $\lambda(x_3)\ge\lambda(0)$ for all positive $x_3$. As the function $\lambda$ is increasing on $(0,\infty)$, it has a right limit $\lambda(0_+)\ge\lambda(0)$ in $x_3=0$.
\end{proof}

\begin{lemma} 
The first eigenvalue $\mu(L)$ of the Sturm-Liouville operator $\cV_L$ defines a $\sC^0$ function $\mu$ on $(-1,+\infty)$ and the following holds:
\begin{enumerate}[i)]
	\item\label{lem:mu1:i} For all $L\neq0$, $\mu$ has a derivative that satisfies $\mu'(L) = \big(\mu(L) - \lambda(L)\big) \, q_L(L)^2$,
	\item\label{lem:mu1:ii} For all $L\geq 0$, $\mu(L) > \lambda(L)$,
	\item\label{lem:mu1:iii} For all $L\in(-1,+\infty)$, $\mu(L) \leq \lambda_\infty$ (where $\lambda_\infty$ is defined in \eqref{eq:not}).
\end{enumerate}
\label{lem:mu1}
\end{lemma}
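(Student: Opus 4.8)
The plan is to prove the three points in order, relying on standard Sturm-Liouville theory together with the properties of $\lambda$ established in Lemma~\ref{lem:lambda1} and the exponential convergence $\lambda(x_3)\to\lambda_\infty$ from Corollary~\ref{cor:conv}. First, continuity of $\mu$ on $(-1,\infty)$: since $\lambda$ is bounded (it lies between $\lambda(\Gamma_1)$ and $\pi^2$ away from $x_3=-1$, and explicitly $\tfrac{\pi^2}{2}(1+x_3)^{-2}$ near $x_3=-1$) and converges exponentially to $\lambda_\infty<\pi^2$ at infinity, the operator $\cV_L$ has $[\lambda_\infty,\infty)$ as essential spectrum and a genuine ground state $\mu(L)<\lambda_\infty$ for each $L$; the map $L\mapsto\mu(L)$ is continuous because the bottom of the spectrum depends continuously on the left endpoint of the half-line (one may transplant $(L,\infty)$ to a fixed half-line by translation, turning the dependence on $L$ into a translation of the potential $\lambda(\cdot+L)$, which is continuous in $L$ for the relevant topology; alternatively invoke Kato as in Lemma~\ref{lem:kato}).

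For Point~\ref{lem:mu1:i}, I would follow exactly the Dauge--Helffer computation already used in Proposition~\ref{prop:Dau-Hel}. Writing $\mu_h=\mu(L+h)$ and $q_h=q_{L+h}$, integrate the identity $\int_L^\infty\big((-\partial^2+\lambda)q_L\big)q_h - \int_L^\infty q_L\big((-\partial^2+\lambda)q_h\big)$ by parts on $(L,\infty)$; the boundary terms at $+\infty$ vanish by exponential decay of the eigenfunctions, and using $q_h'(L+h)=0$ one gets $(\mu_h-\mu(L))\int_L^\infty q_Lq_h = q_L(L)\,q_h'(L)$, then rewrites $q_h'(L)=q_h'(L)-q_h'(L+h)=\int_{L+h}^L q_h''=-\int_{L+h}^L(\lambda-\mu_h)q_h$. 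Dividing by $h$ and letting $h\to0$ (legitimacy of differentiating the eigenpair in $L$ away from $x_3=0$ follows as in Lemma~\ref{lem:kato}, since $\lambda$ is analytic on $(-1,0)$ and on $(0,\infty)$ separately) yields $\mu'(L)=\big(\mu(L)-\lambda(L)\big)q_L(L)^2$. The restriction $L\ne0$ is exactly the point where $\lambda$ fails to be $\sC^1$, matching Point~\ref{lem:lambda1:iii}.

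For Point~\ref{lem:mu1:ii}, let $L\ge0$ and suppose for contradiction $\mu(L)\le\lambda(L)$. Since $\lambda$ is increasing on $(0,\infty)$ and $\lambda(L)\le\lambda(x_3)$ for all $x_3\ge L$ (using Lemma~\ref{lem:lambda1}), we would have $-q_L''=(\mu(L)-\lambda)q_L\le 0$ on $(L,\infty)$, i.e.\ $q_L$ is concave; being positive, normalizable and satisfying $q_L'(L)=0$, a concave positive function on $(L,\infty)$ with zero derivative at the left endpoint is forced to be nonincreasing and in fact would have to be constant to stay positive and concave, contradicting $q_L\in L^2$. (Equivalently: testing the quadratic form against $q_L$ and using $\|q_L'\|^2=\int(\mu(L)-\lambda)|q_L|^2 + \text{boundary}$, or a direct ODE comparison, gives the strict inequality. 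One must be a little careful to get strictness rather than $\ge$; the variational characterization $\mu(L)=\inf\int(|q'|^2+\lambda|q|^2)/\int|q|^2$ together with the fact that $\lambda$ is not constant on $(L,\infty)$ makes $\mu(L)>\inf_{x_3\ge L}\lambda(x_3)=\lambda(L)$ once one checks the infimum of $\lambda$ is not attained in the $L^2$-essential sense.) Finally, Point~\ref{lem:mu1:iii} is the easiest: use a Weyl/trial-function argument. Pick the constant-ish trial function $\chi_n(x_3)=n^{-1/2}\chi(x_3/n)$ supported near infinity, where $\chi$ is a fixed cutoff; since $\lambda(x_3)\to\lambda_\infty$, the Rayleigh quotient $\big(\tfrac1n\|\chi'\|^2 + \int\lambda|\chi_n|^2\big)/\|\chi_n\|^2$ tends to $\lambda_\infty$, so $\mu(L)\le\lambda_\infty$ for every $L$.

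The main obstacle I expect is getting the \emph{strict} inequality in Point~\ref{lem:mu1:ii} cleanly: one must rule out $\mu(L)=\lambda(L)$, which requires exploiting that $\lambda$ is genuinely increasing (not constant) on $(L,\infty)$ — a soft variational argument gives $\mu(L)\ge\lambda(L)$ trivially, but strictness needs either the ODE concavity contradiction sketched above or a unique-continuation/Harnack-type remark that the ground state cannot be supported where $\lambda$ equals its infimum. A secondary nuisance is justifying the differentiability of $L\mapsto(\mu(L),q_L)$ needed for Point~\ref{lem:mu1:i}; this is handled, as in Lemma~\ref{lem:kato}, by a change of variables sending $(L,\infty)$ to a fixed half-line and invoking analytic perturbation theory of Kato on the resulting $L$-analytic family of Sturm-Liouville forms, which is clean precisely because we stay on one side of $x_3=0$.
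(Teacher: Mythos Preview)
Your proposal is correct, and your treatments of continuity, Point~\ref{lem:mu1:i}, and Point~\ref{lem:mu1:iii} match the paper's (the paper simply cites \cite{DH93-1} for Point~\ref{lem:mu1:i} rather than redoing the boundary computation, and for Point~\ref{lem:mu1:iii} it invokes that $\lambda_\infty$ lies in the essential spectrum of $\cV_L$, which is your Weyl-sequence argument in disguise).

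For Point~\ref{lem:mu1:ii}, however, you work much harder than needed. The paper's argument is a single line: since $\lambda$ is nondecreasing on $[0,\infty)$ and $q_L$ is normalized,
\[
   \mu(L) = \int_L^{\infty}\big(|q_L'|^2 + \lambda(x_3)\,|q_L|^2\big)\,\rd x_3
   \;\ge\; \int_L^{\infty}|q_L'|^2\,\rd x_3 + \lambda(L) \;>\; \lambda(L),
\]
the strict inequality holding because $q_L'\not\equiv0$ (otherwise $q_L$ would be constant, which is incompatible with the eigen-equation and with $q_L\in L^2$). Your concavity/ODE-comparison route is valid, but the ``main obstacle'' you anticipated---getting strictness rather than $\ge$---simply does not arise here: the strictly positive kinetic term $\int|q_L'|^2$ is the entire remainder. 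You actually brushed against this in your parenthetical remark about ``testing the quadratic form against $q_L$'', but then wandered off into worrying whether the infimum of $\lambda$ is ``attained in the $L^2$-essential sense'', which is unnecessary.
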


\begin{proof}The continuity of $\mu(L)$ is obtained \textit{via} its characterization by the min-max principle and the continuity of its associated Rayleigh quotient.
Point {\em\ref{lem:mu1:i})} is straightforward application of the main result in \cite{DH93-1}.
About Point {\em\ref{lem:mu1:ii})}, using that $\lambda$ is increasing on $[0,+\infty)$ we know that for all $L\geq0$
\[
	\mu(L) = \int_L^{+\infty}\big(|q_L'|^2 + \lambda(x_3)|q_L|^2\big) dx_3 \geq \int_L^{+\infty}|q_L'|^2dx_3 + \lambda(L)>\lambda(L),
\]
where the strict inequality holds because $q_L'$ can not be identically zero otherwise $q_L$ would be constant, which is incompatible with the eigen-equation verified by $q_L$.

Finally Point {\em\ref{lem:mu1:iii})} is a consequence of the fact that the potential $\lambda$ of the Sturm-Liouville operator $\cV_L$ is smaller than $\lambda_\infty$ on the unbounded interval $(0,\infty)$. Therefore $\lambda_\infty$ belongs to the essential spectrum of $\cV_L$ and is an upper bound for $\mu(L)$.
\end{proof}

We are ready to prove the following proposition.

\begin{proposition}
\label{prop:L*}
With Notation \textup{\ref{not:VL}}, the first Rayleigh quotient of the Fichera layer $\lambda_1(\Lambda)$ admits the lower bound
\begin{equation}
\label{eq:lowb}
   \lambda_1(\Lambda) \ge \inf_{L\,>\,-1} \mu(L)\,.
\end{equation}
and there exists a unique element $L^*\in(-1,\infty)$ such that
\[
   \inf_{L\,>\,-1} \mu(L) = \mu(L^*).
\]
Moreover $L^*$ belongs to $(-1,0)$ and satisfies $\mu(L^*)=\lambda(L^*)$.
\end{proposition}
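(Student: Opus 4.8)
The statement splits into a one–dimensional part about the function $\mu$ and a spectral lower bound; we treat them in this order.

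\emph{Shape of $\mu$.} Put $g(L)=\mu(L)-\lambda(L)$. On $(0,+\infty)$ Lemma~\ref{lem:mu1}(ii) gives $g>0$, hence $\mu'=g\,q_L(L)^2>0$ by Lemma~\ref{lem:mu1}(i) (recall $q_L(L)>0$), while Corollary~\ref{cor:conv} together with $\mu(L)\le\lambda_\infty$ (Lemma~\ref{lem:mu1}(iii)) forces $\mu(L)\to\lambda_\infty$ as $L\to+\infty$. On $(-1,0)$ the function $g$ is continuous, $g(0^-)=\mu(0)-\tfrac{\pi^2}{2}>0$ (Lemmas~\ref{lem:mu1}(ii) and~\ref{lem:lambda1}(i)), and $g(L)\to-\infty$ as $L\to-1^+$ since $\lambda(L)=\tfrac{\pi^2}{2}(1+L)^{-2}\to+\infty$ while $\mu(L)\le\lambda_\infty$; so $g$ has a zero in $(-1,0)$. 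At any such zero $L_0$ one has $\mu'(L_0)=0$, and differentiating the formula of Lemma~\ref{lem:mu1}(i),
\[
   \mu''(L_0)=\big(\mu'(L_0)-\lambda'(L_0)\big)\,q_{L_0}(L_0)^2=-\lambda'(L_0)\,q_{L_0}(L_0)^2>0
\]
because $\lambda'<0$ on $(-1,0)$ (Lemma~\ref{lem:lambda1}(i)). Hence every critical point of $\mu$ in $(-1,0)$ is a strict local minimum; since two of them would have to be separated by a further critical point of maximum type, of which there are none, there is exactly one, denoted $L^*$. Consequently $g<0$ on $(-1,L^*)$ and $g>0$ on $(L^*,0)$, so $\mu$ is strictly decreasing on $(-1,L^*)$ and, using the monotonicity on $(0,+\infty)$ and the continuity at $0$, strictly increasing on $(L^*,+\infty)$. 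Thus $L^*$ is the unique minimizer of $\mu$ over $(-1,+\infty)$, it lies in $(-1,0)$, and $\mu(L^*)=\lambda(L^*)$.

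\emph{Reduction and the truncated wedge.} For \eqref{eq:lowb}, by Corollary~\ref{cor:L3}, the eigenvalue comparison \eqref{eq:part1}--\eqref{eq:part2} applied to $\Lambda=\Lambda^1\cup\Lambda^2\cup\Lambda^3$ and the symmetry $\lambda_1(\Lambda^1)=\lambda_1(\Lambda^2)=\lambda_1(\Lambda^3)$, it suffices to prove $\lambda_1(\cQ_{\Lambda^3})\ge\mu(L^*)$. Cut $\Lambda^3$ at the height $x_3=L^*$ into the bounded piece $A=\Lambda^3\cap\{x_3<L^*\}$ and $\Omega^3_{L^*}$; the interface carries a natural (Neumann) condition, so $\cQ_{\Lambda^3}(u)=\cQ_A(u)+\cQ_{\Omega^3_{L^*}}(u)$ for every $u$ in the form domain. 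Since $L^*<0$, the fibre of $A$ over $x_3\in(-1,L^*)$ is the square $(-1,x_3)^2$, on which the transverse ground energy is $\lambda(x_3)$; as $\lambda$ is decreasing on $(-1,0]$ (Lemma~\ref{lem:lambda1}(i)),
\[
   \cQ_A(u)\ \ge\ \int_{-1}^{L^*}\cQ_{\Gamma_{x_3}}(u)\,\rd x_3\ \ge\ \lambda(L^*)\,\|u\|_{L^2(A)}^2\ =\ \mu(L^*)\,\|u\|_{L^2(A)}^2 ,
\]
using $\mu(L^*)=\lambda(L^*)$. So the whole matter reduces to the inequality $\cQ_{\Omega^3_{L^*}}(u)\ge\mu(L^*)\,\|u\|_{L^2(\Omega^3_{L^*})}^2$.

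\emph{Born--Oppenheimer on $\Omega^3_{L^*}$.} Write $u(\cdot,\cdot,x_3)=f(x_3)\,v_{x_3}+u^\perp(\cdot,\cdot,x_3)$ with $f(x_3)=\langle u,v_{x_3}\rangle_{L^2(\Gamma_{x_3})}$ and $u^\perp\perp v_{x_3}$, so that $\|u\|^2=\|f\|_{L^2(L^*,\infty)}^2+\|u^\perp\|^2$ and $f\in H^1(L^*,+\infty)$. As in the proof of Lemma~\ref{prop:lbfq1D} the transverse cross term vanishes and
\[
   \cQ_{\Omega^3_{L^*}}(u)\ \ge\ \int_{L^*}^\infty\!\big(|f'-F|^2+\lambda(x_3)|f|^2\big)\,\rd x_3
   \ +\ \int_{L^*}^\infty\!\cQ_{\Gamma_{x_3}}(u^\perp)\,\rd x_3 ,
\]
where $F$ collects the traces of $u$ on the moving side $\Sigma_{x_3}$ of $\partial\Gamma_{x_3}$ and the term $\langle u,w_{x_3}\rangle$, $w_{x_3}=\partial_3v_{x_3}$. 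One writes $F=\beta(x_3)\,f+G$, isolating the part proportional to $f$ (with $\beta>0$, which after an integration by parts only raises the effective one-dimensional potential above $\lambda$) from $G$, which depends on $u^\perp$ alone and obeys $|G(x_3)|^2\le C\,\theta(x_3)\big(\cQ_{\Gamma_{x_3}}(u^\perp)+\|u^\perp(\cdot,\cdot,x_3)\|^2\big)$ for a weight $\theta$ bounded on $[L^*,\infty)$ and $\le Ce^{-2\omega x_3}$ for $x_3\ge1$ (using Lemma~\ref{lem:trace} on $u^\perp$, Lemmas~\ref{lem:expdec} and~\ref{lem:wR} for $x_3\ge1$, and the explicit form of $v_{x_3}$ on $[L^*,0]$ together with continuity on the compact range $[0,1]$). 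The $G$-terms are then absorbed into the transverse surplus, since $\cQ_{\Gamma_{x_3}}(u^\perp)\ge\lambda_2(\Gamma_{x_3})\,\|u^\perp(\cdot,\cdot,x_3)\|^2\ge\pi^2\,\|u^\perp(\cdot,\cdot,x_3)\|^2$ (Lemma~\ref{lem:GR12}(ii), and for $x_3\in(-1,0)$ directly from $\lambda_2(\Gamma_{x_3})=\tfrac{5\pi^2}{2}(1+x_3)^{-2}\ge\tfrac{5\pi^2}{2}$), which leaves a strict margin because $\pi^2>\lambda_\infty\ge\mu(L^*)$. What then remains is $\int_{L^*}^\infty(|f'|^2+\lambda|f|^2)\,\rd x_3\ge\mu(L^*)\|f\|_{L^2(L^*,\infty)}^2$, i.e.\ precisely the Rayleigh inequality for the ground state $\mu(L^*)$ of the Sturm--Liouville operator $\cV_{L^*}$ of Notation~\ref{not:VL} (Neumann at $L^*$, no other constraint on $f\in H^1(L^*,+\infty)$), plus a leftover $\mu(L^*)\|u^\perp\|^2$; summing yields $\cQ_{\Omega^3_{L^*}}(u)\ge\mu(L^*)\|u\|^2$, which is what was left to prove.

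\emph{Main obstacle.} The only nontrivial step is the last one: the commutator $F$ must be controlled sharply enough to recover the \emph{full} transverse potential $\lambda(x_3)$ and the full one-dimensional kinetic term, and the delicate region is near $x_3=0$, where $v_{x_3}$ is not smooth in $x_3$ and $w_{x_3}=\partial_3v_{x_3}$ does not decay, so that — unlike in the finiteness proof — the exponential bounds of Lemmas~\ref{lem:expdec}--\ref{lem:wR} are not available there; one must use instead that the transverse gap $\lambda_2(\Gamma_{x_3})-\lambda_\infty$ is large near $x_3=0$ and that the $f$-proportional part of $F$ works in our favour. Everything else — the domain partitions, the slice estimate on $A$, and the one-dimensional analysis of $\mu$ — is routine.
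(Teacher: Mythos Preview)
Your analysis of the function $\mu$ and the location of $L^*$ is fine; the paper argues slightly differently (once $\mu>\lambda$ at some point of $(-1,0)$, the formula $\mu'=(\mu-\lambda)q_L(L)^2$ together with the fact that $\lambda$ is decreasing there forces $\mu>\lambda$ to persist, so there is a single crossing), but your second-derivative argument is equivalent.

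For the lower bound \eqref{eq:lowb}, however, you have missed a much simpler route, and the one you propose is not completed. The paper's proof is a three-line Fubini computation on $\Lambda^3$: after the transverse estimate $\int_{\Gamma_{x_3}}|\nabla_{12}u|^2\ge\lambda(x_3)\int_{\Gamma_{x_3}}|u|^2$, one simply switches the order of integration, noticing that $\Lambda^3=\{(x_1,x_2)\in\Gamma,\ x_3>\max\{x_1,x_2\}\}$. For fixed $(x_1,x_2)\in\Gamma$ the one-dimensional integral $\int_{\max\{x_1,x_2\}}^\infty(|\partial_3u|^2+\lambda(x_3)|u|^2)\,\rd x_3$ is, by definition of $\mu$, bounded below by $\mu(\max\{x_1,x_2\})\int|u|^2\,\rd x_3$, and one concludes by taking the infimum over $L=\max\{x_1,x_2\}$. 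No projection on $v_{x_3}$, no commutator, no cut at $L^*$.

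Your Born--Oppenheimer approach on $\Omega^3_{L^*}$ would, if it worked, have to deliver the \emph{sharp} inequality $\cQ_{\Omega^3_{L^*}}(u)\ge\mu(L^*)\|u\|^2$ with no loss: you need the full kinetic term $|f'|^2$ and the full potential $\lambda(x_3)$ simultaneously. The mechanism you sketch---splitting $F=\beta f+G$, claiming the $\beta f$ part ``only raises the potential'', and absorbing $G$ into the transverse surplus $\pi^2-\mu(L^*)$---is not carried out, and the assertion about the $\beta f$ part is not obvious (an integration by parts of $2\beta ff'$ produces a boundary term at $x_3=L^*$ and a $\beta'$ term whose sign you do not control). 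You yourself identify the delicate region near $x_3=0$, where the estimates of Lemmas~\ref{lem:expdec} and~\ref{lem:wR} are unavailable, as the main obstacle, and you do not resolve it. Note also that the finiteness proof in \S\ref{subsec:finite}, which uses the same projection, only achieves a lower bound of the form $(1-\varepsilon)|f'|^2+(\lambda-V_0)|f|^2$ for $L$ large; turning this into a lossless bound valid from $L^*<0$ is a genuinely different problem. In short: the gap is real, and the Fubini argument bypasses it entirely.
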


\begin{proof}
By Corollary \ref{cor:L3}, it suffices to consider the operator $\cL_{\Lambda^3}$.
Let $u\in \Dom(\cL_{\Lambda^3})$ such that $\|u\|_{L^2(\Lambda^3)}=1$. We bound its energy from below:
\[
\begin{split}
   \int_{\Lambda^3} |\nabla u|^2\,\rd x_1\rd x_2\rd x_3    & =
   \int_{-1}^\infty \Big(\int_{\Gamma_{x_3}} \big(|\partial_1 u|^2 + |\partial_2 u|^2 + |\partial_3 u|^2\big)
   \,\rd x_1\rd x_2\Big) \,\rd x_3 
   \\   & \ge
   \int_{-1}^\infty \Big(\int_{\Gamma_{x_3}} \big(\lambda(x_3) |u|^2+ |\partial_3 u|^2\big)\,\rd x_1\rd x_2\Big) \,\rd x_3    
   \\   & \ge
   \int_\Gamma \Big(\int_{\max\{x_1,x_2\}}^\infty \big(\lambda(x_3) |u|^2+ |\partial_3 u|^2\big)\,\rd x_3\Big) 
   \,\rd x_1\rd x_2    
   \\   & \ge
   \int_\Gamma \mu(\max\{x_1,x_2\}) \Big(\int_{\max\{x_1,x_2\}}^\infty  |u|^2\,\rd x_3\Big) 
   \,\rd x_1\rd x_2    
   \\   & \ge
   \inf_{L\,>\,-1} \mu(L) \int_{\Lambda^3}  |u|^2\,\rd x_1\rd x_2\rd x_3\,,
\end{split}
\]
which proves \eqref{eq:lowb}.
Thanks to  Lemma \ref{lem:lambda1} {\em\ref{lem:lambda1:i})} we know that $\lim_{x_3\,\rightarrow\, -1^+}\lambda(x_3) = +\infty$ and thanks to Lemma \ref{lem:mu1} {\em\ref{lem:mu1:iii})} we know that $\mu(L) \leq \lambda_\infty$ for all $L\in(-1,+\infty)$. In particular, in a neighborhood of $-1$ we have $\mu<\lambda$. Combining Lemma \ref{lem:mu1} {\em\ref{lem:mu1:ii})} and  Lemma \ref{lem:lambda1} {\em\ref{lem:lambda1:iii})}, we know that $\mu > \lambda$ in a neighborhood of $0$. Since $\mu - \lambda$ is continuous on $(-1,0)$, the functions $\mu$ and $\lambda$ cross at a lowest point $L^*\in (-1,0)$. They cannot cross again due to  Lemma \ref{lem:lambda1} {\em\ref{lem:lambda1:i})} and  Lemma \ref{lem:mu1} {\em\ref{lem:mu1:i})}: $\lambda$ is decreasing while $\mu$ is increasing on $(L^*,0)$.
\end{proof}

\begin{remark}
\label{rem:lammu}
In fact the function $\lambda$ is continuous in $0$, i.e. $\lambda(0_+)=\lambda(0)$. But the convergence to $\lambda(0)$ on the right is very slow, behaving as $1/|\log x_3|$, as in the case of a small Dirichlet hole \cite{Besson1985,MazNazPla2000}. This is illustrated by the plots in Figure \ref{fig:lammu} where $\lambda$ and $\mu$ are represented as functions of $x_3$ (computation details are provided in the next section). We can see that the function $\mu$ is piecewise $\sC^1$ ($\mu'$ has a discontinuity in $0$) and reaches its minimum at its intersection point with $\lambda$.
\end{remark}

\begin{figure}[ht]
\includegraphics[scale=0.52]{./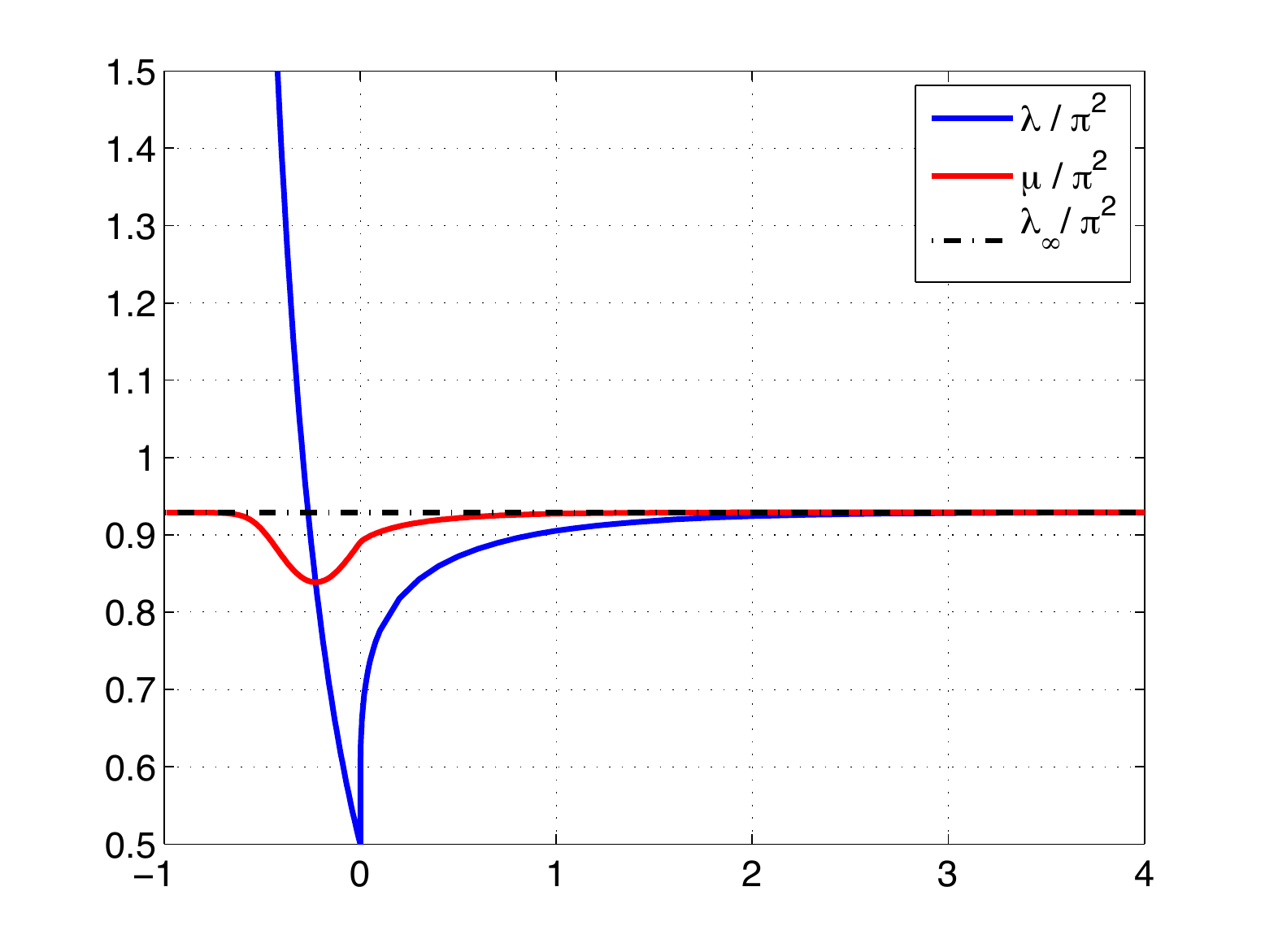}
\includegraphics[scale=0.52]{./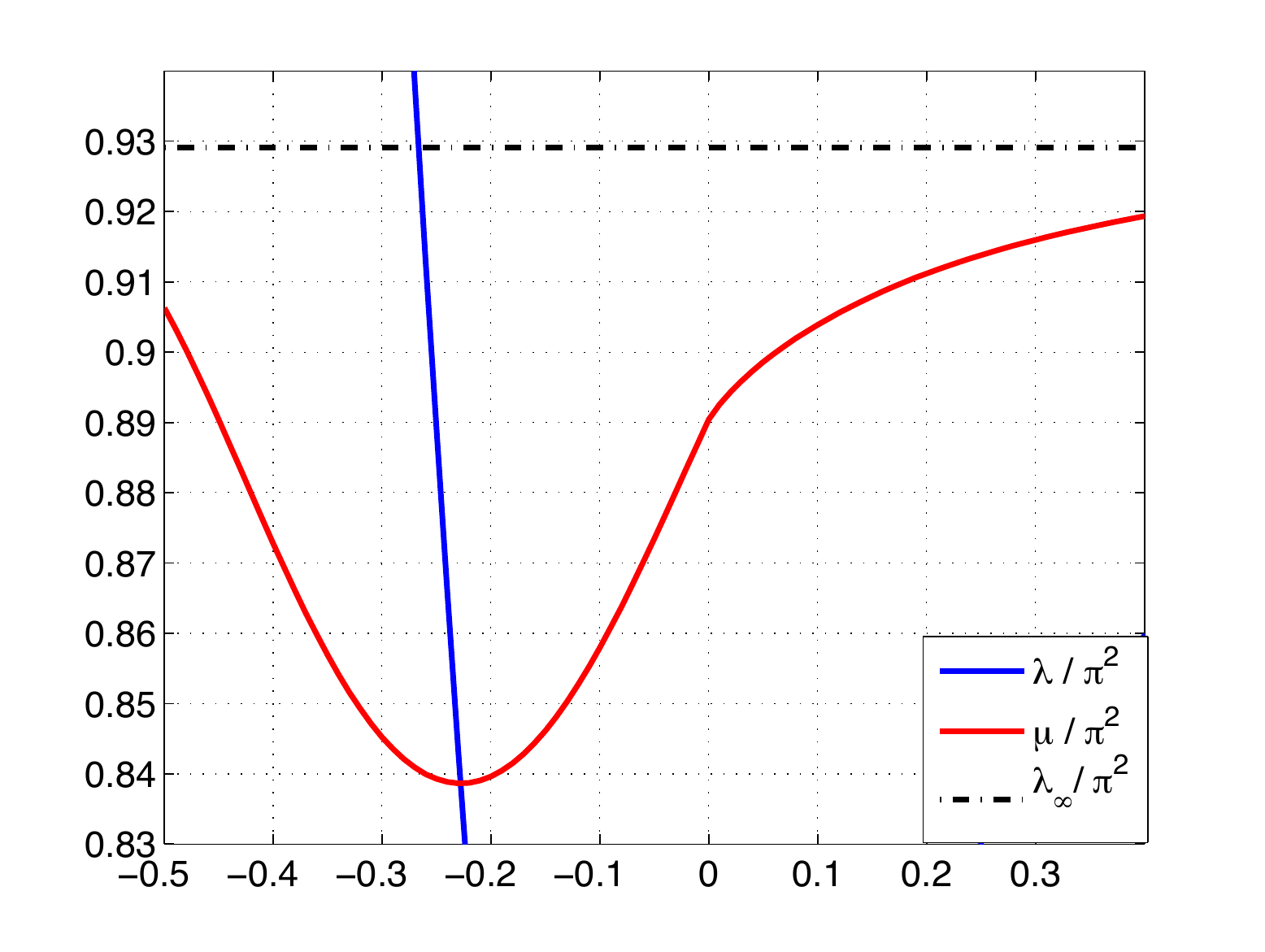}
   \caption{Normalized functions $x_3\mapsto\lambda(x_3)/\pi^2$ and $x_3\mapsto\mu(x_3)/\pi^2$ (with zoom).}
\label{fig:lammu}
\end{figure}

%%%%%%%%%%%%%%%%%%%%%%%%%%%%%%%%%%%%%%%%%%%%%%%%%%%%%%%%%%%%%%%%%%%%%%%%%%%%%%%%%%%%%%%%%%%%%%%%%%%%%%%%%%%%%%%%
\section{Fichera layer: Numerical evidence of discrete spectrum}\label{sec:numerics}
%%%%%%%%%%%%%%%%%%%%%%%%%%%%%%%%%%%%%%%%%%%%%%%%%%%%%%%%%%%%%%%%%%%%%%%%%%%%%%%%%%%%%%%%%%%%%%%%%%%%%%%%%%%%%%%%
Here we address the issue of existence of bound states for the Dirichlet Laplacian $\cL_\Lambda$ on the Fichera layer. In absence of a theoretical proof for the existence of discrete eigenvalues, we investigate the spectrum of $\cL_\Lambda$ by means of Galerkin projections using finite element approximations. 

It is a classical result that, in presence of exact geometric description and exact quadrature, the computed eigenvalues are {\em larger} than those of the original problem, see for instance the short survey \cite[Section 1]{DaLaRa11}. 

%%%%%%%%%%%%%%%%%%%%%%%%%%%%%%%%%%%%%%%%%%%%%%%%%%%%%%%%%%%%%%%%%%%%%%%%%%%%%%%%%%%%%%%%%%%%%%%%%%%%%%%%%%%%%%%%
\subsection{Finite two-dimensional guides}\label{ss:GR}
%%%%%%%%%%%%%%%%%%%%%%%%%%%%%%%%%%%%%%%%%%%%%%%%%%%%%%%%%%%%%%%%%%%%%%%%%%%%%%%%%%%%%%%%%%%%%%%%%%%%%%%%%%%%%%%%

Beforehand, we compute an approximation of the functions $\lambda$ and $\mu$, \emph{cf.}\ Figure \ref{fig:lammu}. Since $\lambda$ is explicit when $x_3\equiv R\le0$, we only need to compute it when $R$ is positive. In order to deal with a single domain and a single finite element mesh, we perform the change of variables \eqref{eq:change} that transforms $\Gamma_R$ into $\Gamma_1$. We use a quadrilateral strongly refined mesh with 4 layers and a refinement ratio of $0.1$ around the nonconvex corner $(0,0)$. For the results in Figure \ref{fig:lammu}, the interpolation degree $p$ is chosen equal to 16. Values of $R$ are sampled between 0.001 and 10. Varying the interpolation degree $p$ shows that we may expect 5 or 6 correct digits when $p=16$.

\begin{figure}[ht]
\includegraphics[scale=0.33]{./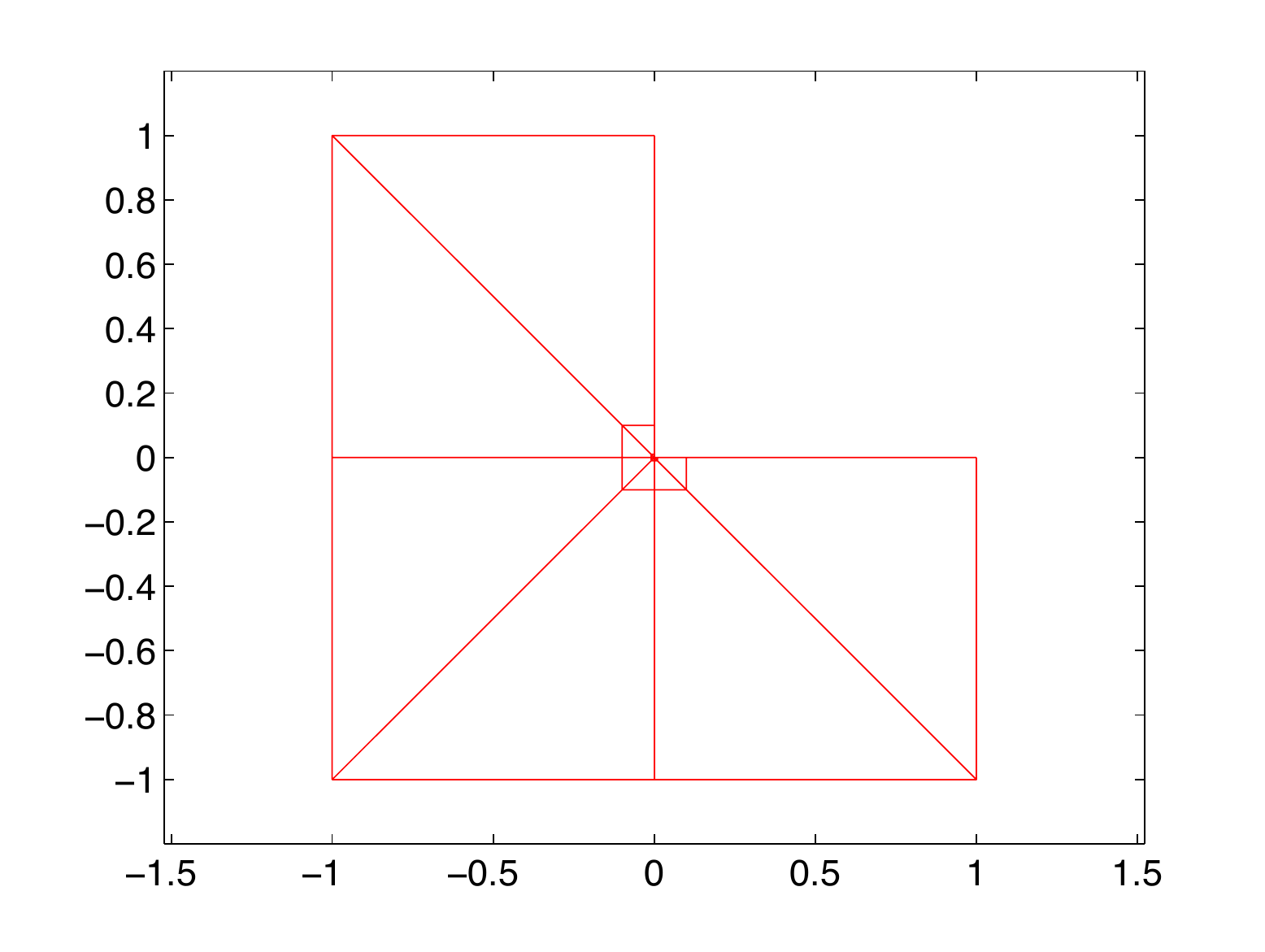}
\includegraphics[scale=0.33]{./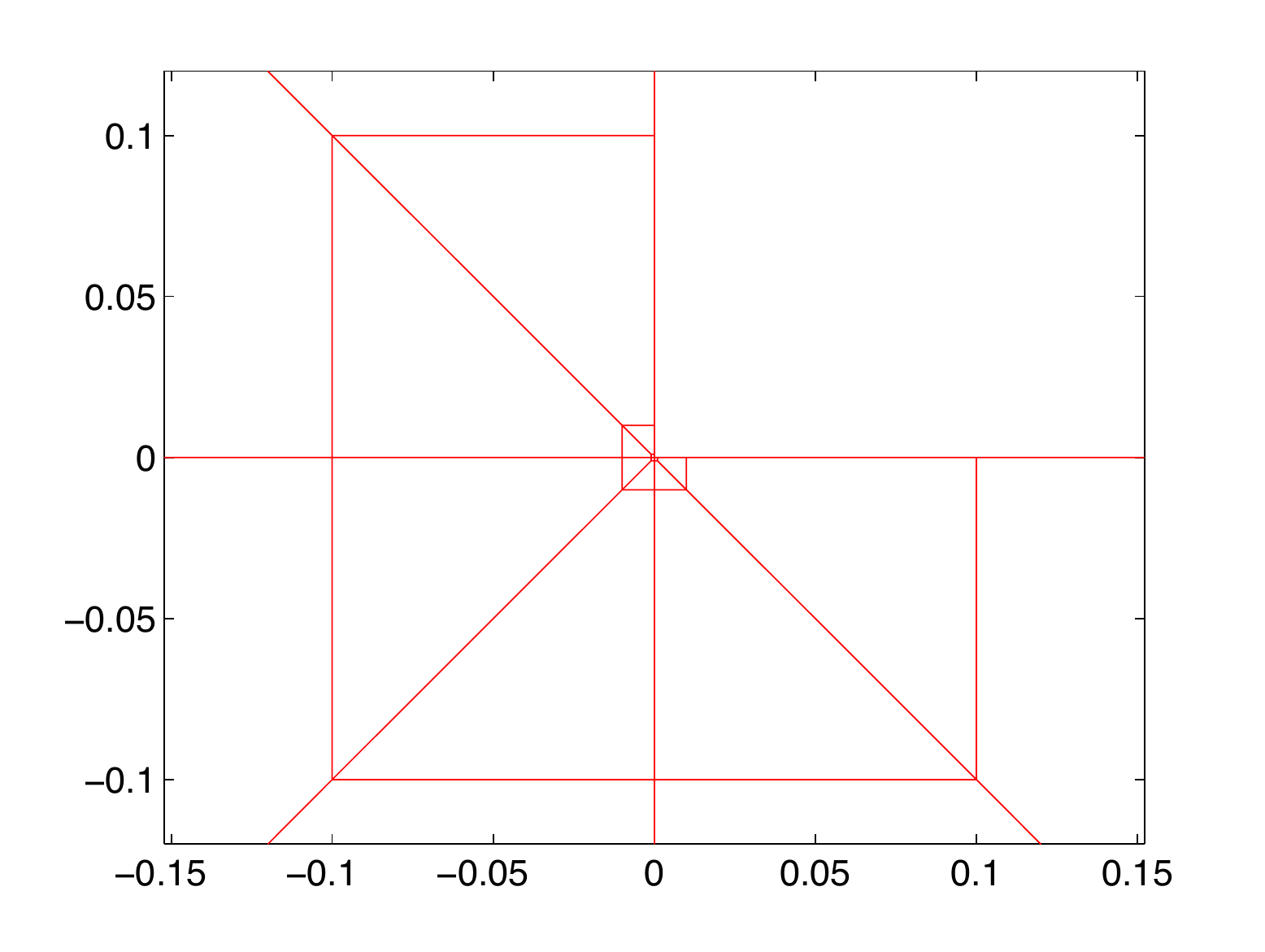}
\includegraphics[scale=0.33]{./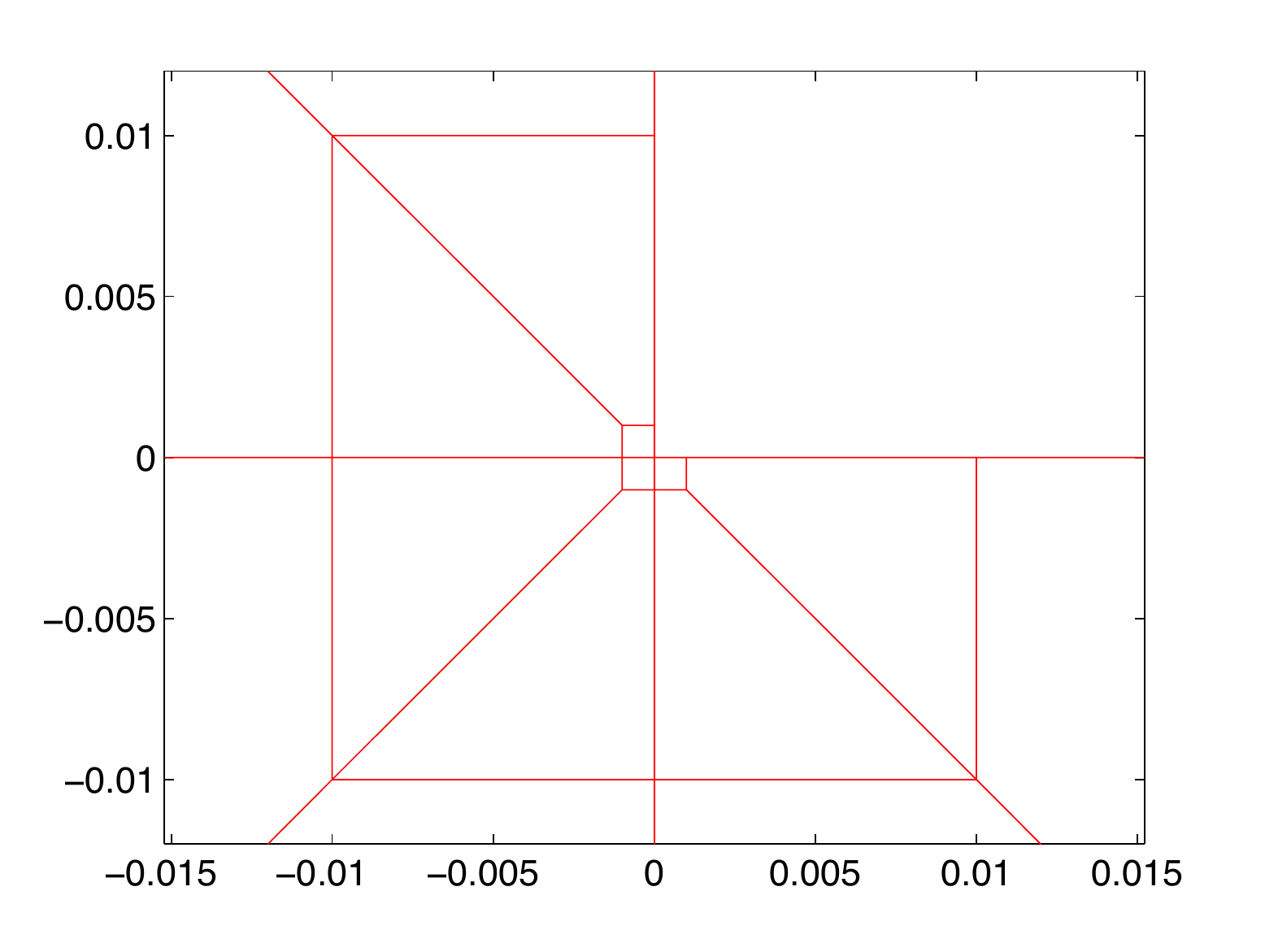}
   \caption{Mesh of $\Gamma_1$ with 4 layers and ratio $0.1$, and zoom by factors 10 and 100.}
\label{fig:mesh}
\end{figure}

These computations allow to find numerical upper and lower bounds for the first Rayleigh quotient $\lambda_1(\Lambda)$ of the Fichera layer.

\subsubsection{Upper bound}\label{subsub:ub} To evaluate $\lambda_\infty$, we compute the first eigenvalue of the Laplacian on $\Gamma_R$ either with Dirichlet or Neumann conditions on $\Sigma_R$, (and still Dirichlet conditions on $\partial\Gamma_R\setminus\Sigma_R$). Denote by $\lambda^\Dir_1(\Gamma_R)$ and $\lambda^\Mix_1(\Gamma_R)\equiv\lambda_R$, respectively, these eigenvalues.

We perform the computations for a sample of values of $R$. The $\log$ of their difference is plotted in Figure \ref{fig:conv}, together with its slope with respect to $R$. The slope converges to a number $-\alpha$, which means the exponential convergence
\begin{equation}
\label{eq:num}
   \lambda^\Dir_1(\Gamma_R)-\lambda^\Mix_1(\Gamma_R) \sim e^{-\alpha R}\quad\mbox{with}\quad \alpha\simeq1.672782.
\end{equation}
As an approximation of $\lambda_\infty$, we take the mean value of $\lambda^\Dir_1(\Gamma_R)$ and $\lambda^\Mix_1(\Gamma_R)$ for $R=10$
\[
   \tfrac12\big(\lambda^\Dir_1(\Gamma_{10})+\lambda^\Mix_1(\Gamma_{10})\big) \simeq 0.9291205\,\pi^2:=\widetilde\lambda_\infty.
\]
We notice that, with this numerical value, the theoretical exponential convergence $\simeq e^{-2R\omega}$ stated by Corollary \ref{cor:conv} becomes
\[
   e^{-2R\widetilde\omega}\quad\mbox{with}\quad 2\widetilde\omega = 2\pi\sqrt{1-0.9291205} \simeq 1.672785,
\]
which is very close to the observed $\alpha$ in \eqref{eq:num}.

\begin{figure}[ht]
\includegraphics[scale=0.52]{./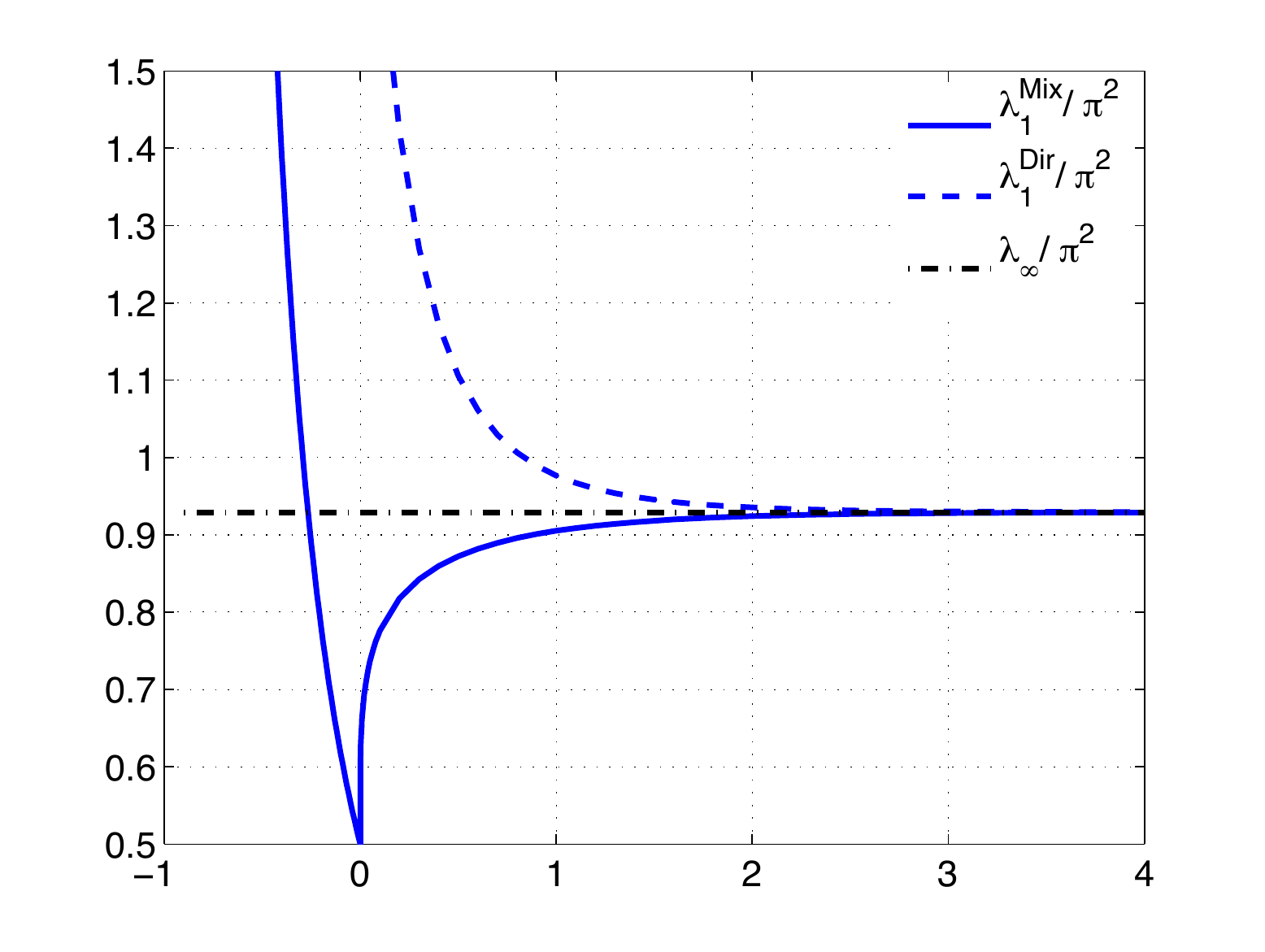}
\includegraphics[scale=0.52]{./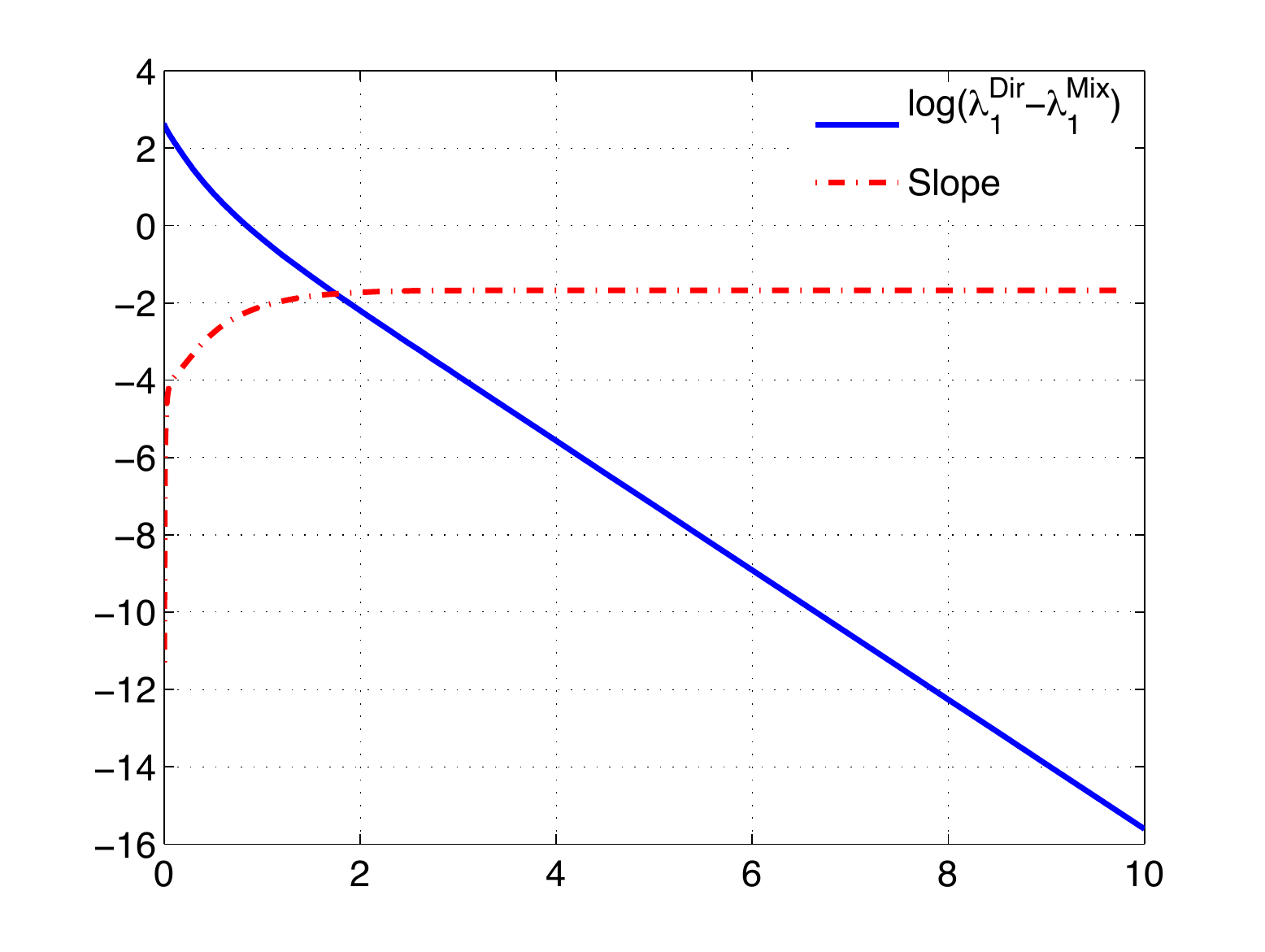}
   \caption{Computed eigenvalues $\lambda^\Dir_1(\Gamma_R)$ and $\lambda^\Mix_1(\Gamma_R)$ as functions of $R$ (left), $\log$ of difference $\lambda^\Dir_1(\Gamma_R)-\lambda^\Mix_1(\Gamma_R)$ and slope (right).}
\label{fig:conv}
\end{figure}

We also notice that the value $\widetilde\lambda_\infty/\pi^2=0.9291205$ is smaller, thus more precise, than the value $0.92934$ computed in \cite{DaLaRa11}: here the strongly refined mesh and the polynomials of degree 16 capture the corner singularity of the eigenvectors more effectively than the uniform triangular mesh with polynomials of degree 6 used in \cite{DaLaRa11}.

\subsubsection{Lower bound} Computing $\lambda^\Mix_1(\Gamma_R)\equiv\lambda(R)$ for a sufficiently dense sample of values of $R$ allows to evaluate in turn the first eigenvalue $\mu$ of the Sturm Liouville operator \eqref{eq:VL} $\cV_L:q\mapsto -q''+\lambda q$ for a sample of values of $L$, as shown in Figure \ref{fig:lammu}: here the computation is performed with polynomials of degree $10$ on the interval $[L,40]$ with Neumann conditions at the two ends. This yields an evaluation of the quantities $L^*$ and $\mu(L^*)$ defined in Proposition \ref{prop:L*}:
\[
   L^* \simeq -0.228\quad\mbox{and}\quad \mu(L^*) \simeq 0.838653\,\pi^2.
\]
Thus the first Rayleigh quotient $\lambda_1(\Lambda)$ of the Fichera layer satisfies
\begin{equation}
\label{eq:encad}
   0.838653\,\pi^2 \le \lambda_1(\Lambda) \le 0.9291205\,\pi^2.
\end{equation}

%%%%%%%%%%%%%%%%%%%%%%%%%%%%%%%%%%%%%%%%%%%%%%%%%%%%%%%%%%%%%%%%%%%%%%%%%%%%%%%%%%%%%%%%%%%%%%%%%%%%%%%%%%%%%%%%
\subsection{Finite Fichera layers}\label{subsec:FFiclay}
%%%%%%%%%%%%%%%%%%%%%%%%%%%%%%%%%%%%%%%%%%%%%%%%%%%%%%%%%%%%%%%%%%%%%%%%%%%%%%%%%%%%%%%%%%%%%%%%%%%%%%%%%%%%%%%%
We compute the first eigenvalues of the bilinear form $\cQ=\nabla\cdot\nabla$ on finite layers $\Lambda_R$ with Dirichlet or Neumann boundary conditions on the part $\Sigma_R$ of the boundary of $\Lambda_R$ (and Dirichlet on $\partial\Lambda_R\cap\partial\Lambda$ in both cases). Let us denote them by $\lambda^\Dir_\ell(\Lambda_R)$ and $\lambda^\Mix_\ell(\Lambda_R)$, respectively. We have the inequalities
\begin{equation}
\label{eq:ineq3d}
   \lambda^\Mix_\ell(\Lambda_R) \le \lambda^\Dir_\ell(\Lambda_R)\quad\mbox{and}\quad
   \lambda^\Dir_\ell(\Lambda) \le \lambda^\Dir_\ell(\Lambda_R),\quad
   \forall j\ge1,\quad\forall R>-1.
\end{equation}

\begin{figure}[ht]
\includegraphics[scale=0.52]{./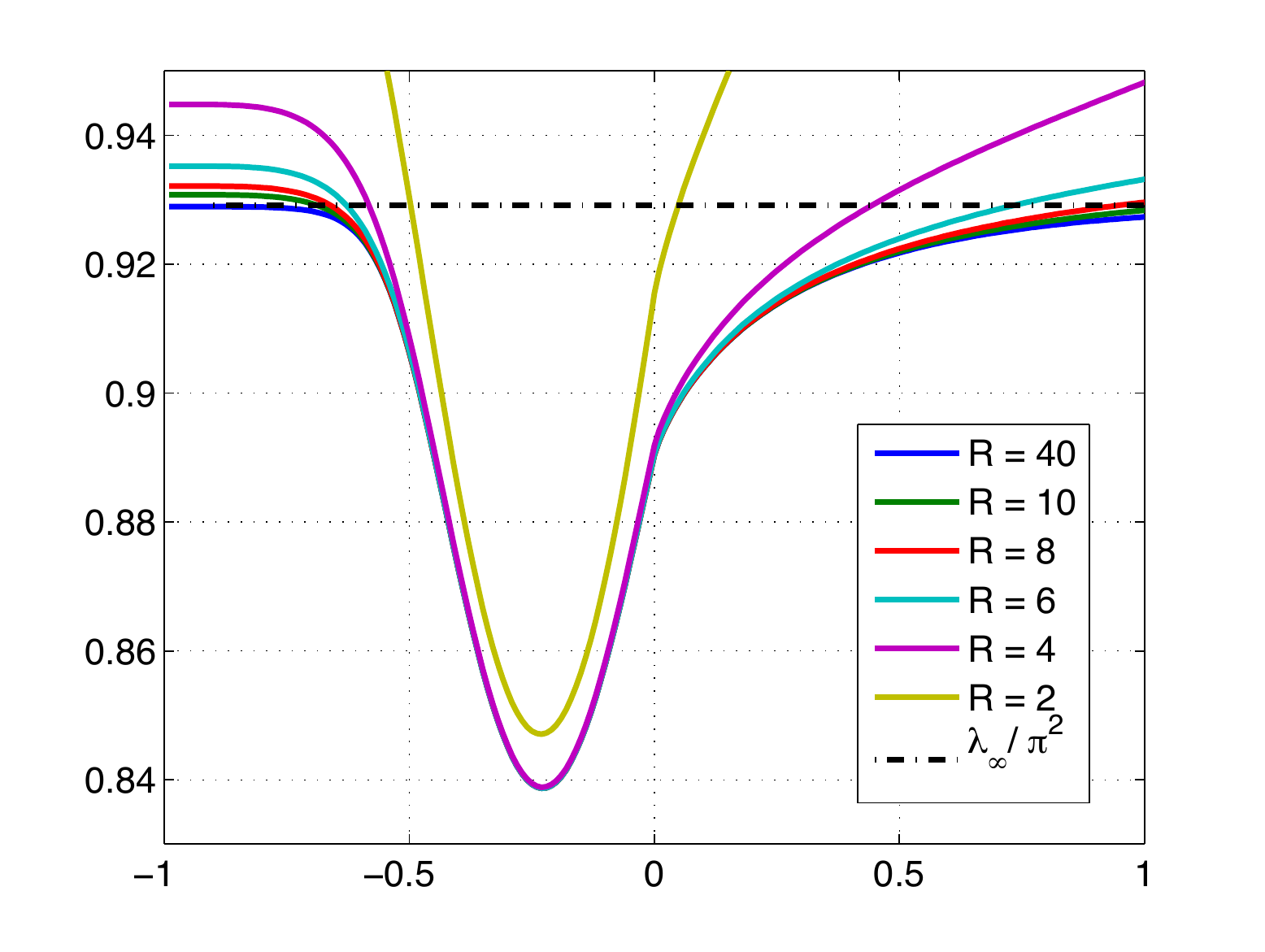}
   \caption{First eigenvalues $\mu(L,R)$ (divided by $\pi^2$) of Sturm-Liouville operators $\cV_{L,R}$ as functions of $L$ for $R=2,4,6,8,10$ and $40$.}
\label{fig:mu}
\end{figure}
Beforehand, we compute the first eigenvalue $\mu(L,R)$ of the Sturm-Liouville operators $\cV_{L,R}:\: q\mapsto -q''+\lambda q$ on the interval $(L,R)$ with Neumann condition at $L$ and Dirichlet condition at $R$. By a reasoning similar to Lemma \ref{lem:mu1} we find the lower bound for $\lambda^\Dir_1(\Lambda_R)$
\begin{equation}
\label{eq:muLR}
      \lambda^\Dir_1(\Lambda_R) \ge \min_{L\,\in\,(-1,R)} \mu(L,R)\,.
\end{equation}
From Figure \ref{fig:mu}, we see that 
the value of $R$ has a very slight influence on $\min_{L}\mu(L,R)$ as soon as $R\ge4$. Moreover, the values for $R=40$ are very close to those presented in Figure \ref{fig:lammu} for which the Neumann condition at $R=40$ was imposed.

We compute the first three eigenvalues $\lambda^\Dir_\ell(\Lambda_R)$, $j=1,2,3$, for $R$ ranging from $2$ to $10$, and $\lambda^\Mix_\ell(\Lambda_R)$ for the same values of $R$ for comparison sake, see Figure \ref{fig:3D}. We use three structured tensor hexahedral grids $\gG_1$, $\gG_2$, and $\gG_3$ on the finite Fichera layers $\Lambda_R$ (see details in Appendix \ref{sec:app1}) combined with the interpolation degrees $p=4$, $2$ and $1$, respectively. In all these configurations, the number of degrees of freedom is 20293 when $R=2$ and 36829 for larger values of $R$.
\begin{figure}[ht]
\includegraphics[scale=0.52]{./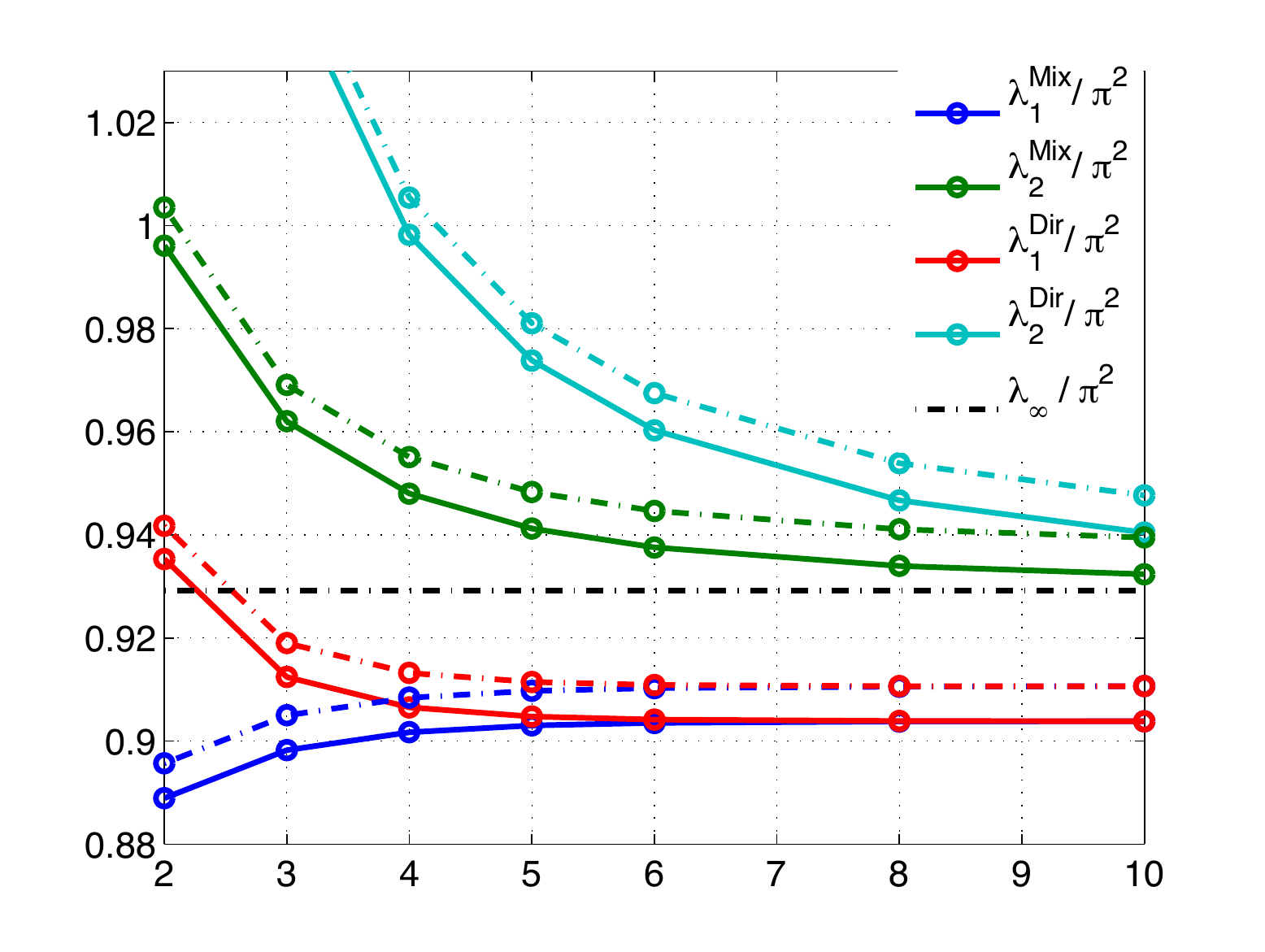}
\includegraphics[scale=0.52]{./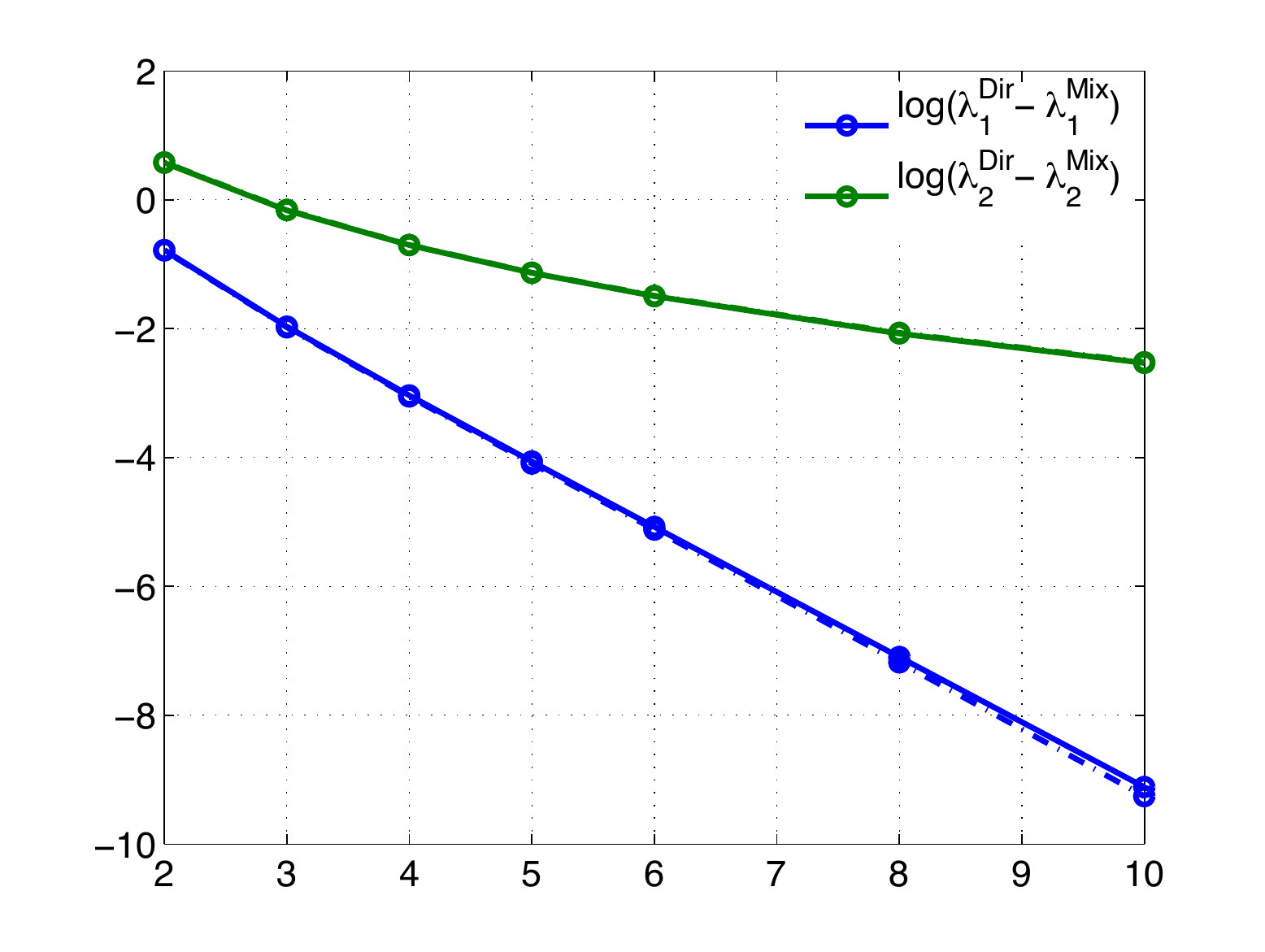}
   \caption{Computed eigenvalues $\lambda^\Dir_\ell(\Lambda_R)$ and $\lambda^\Mix_\ell(\Lambda_R)$ as functions of $R$ (left), $\log$ of difference $\lambda^\Dir_\ell(\Lambda_R)-\lambda^\Mix_\ell(\Lambda_R)$ (right). Solid lines: $p=4$ on $\gG_1$, 
dash and dots: $p=1$ on $\gG_3$. The results with $p=2$ on $\gG_2$ are very close to $p=4$ on $\gG_1$ and are not plotted.}
\label{fig:3D}
\end{figure}

We do not plot the third eigenvalue because it is identical (within 13 digits) to the second one. We notice the numerical evidence of exactly one discrete eigenvalue under the essential spectrum of $\cL_\Lambda$:
\begin{enumerate}[{\em i)}]
\item As soon as $R\ge3$, both $\lambda^\Dir_1(\Lambda_R)$ and $\lambda^\Mix_1(\Lambda_R)$ are smaller than $\lambda_\infty$
\item The difference $\lambda^\Dir_1(\Lambda_R)-\lambda^\Mix_1(\Lambda_R)$ converges exponentially to $0$ with respect to $R$.
\item $\lambda^\Dir_2(\Lambda_R)$ and $\lambda^\Mix_2(\Lambda_R)$ tend to $\lambda_\infty$ by superior values.
\item The difference $\lambda^\Dir_2(\Lambda_R)-\lambda^\Mix_2(\Lambda_R)$ converges slowly to $0$.
\end{enumerate}
The observed convergence rate $\beta$ such that $\lambda^\Dir_1(\Lambda_R)-\lambda^\Mix_1(\Lambda_R)\simeq e^{-\beta R}$ is slightly larger than $1$. Extrapolating from computations on the same grid with degrees $p$ from 1 to 8 yields the estimate
\[
   0.9031\pi^2 < \lambda_1(\Lambda) < 0.9033\pi^2.
\]
A theoretical Agmon-type exponential decay estimate is $e^{-2\gamma R}$ with $\gamma=\sqrt{\lambda_\infty-\lambda_1(\Lambda)}$. With the numerical values $0.9291\pi^2$ for $\lambda_\infty$ and $0.9033\pi^2$ for $\lambda_1(\Lambda)$, we find $\gamma\simeq0.5046$, which is coherent with $2\gamma\simeq\beta\simeq1$. 

The relative gap between the bound state and the bottom of the essential spectrum is
\begin{equation}
\label{eq:gapLD}
   g(\Lambda) := \frac{\lambda_1(\Gamma)-\lambda_1(\Lambda)}{\lambda_1(\Lambda)} \simeq  
   \frac{0.9291-0.9032}{0.9032} \simeq 0.029\,.
\end{equation}

We represent in Figure \ref{fig:eigv} slices of the eigenvectors associated with the first and second eigenvalues $\lambda^\Mix_1(\Lambda_R)$ and $\lambda^\Mix_2(\Lambda_R)$ for $R=4$. We slice the domain $\Lambda_R$ by the plane $\cP$ of equation $x_1=x_2$. The first eigenvector is concentrated near the origin and is close to the restriction of the first eigenvector on the infinite Fichera layer $\Lambda$. We can see that the decay is slower along the edge $x_1=x_2=0$ (vertical leg in the figure) than in the horizontal leg that goes away from the edges. The second eigenvector is a manifestation of the essential spectrum and concentrates along the edge.

\begin{figure}[ht]
\includegraphics[scale=0.64]{./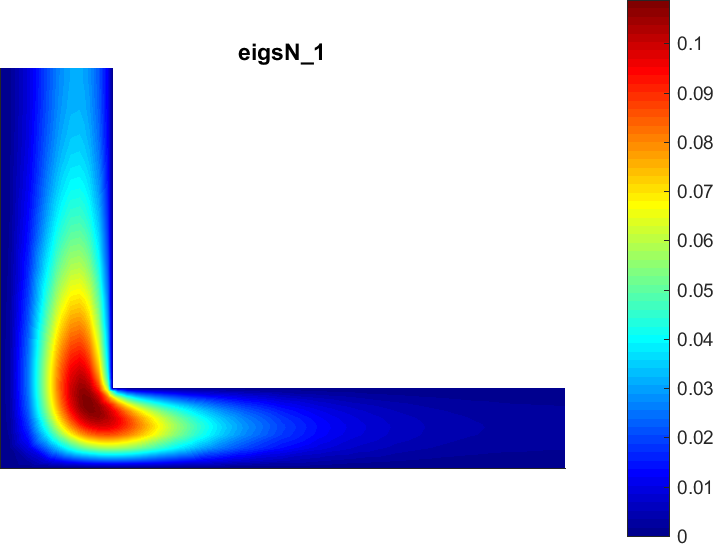}\qquad
\includegraphics[scale=0.64]{./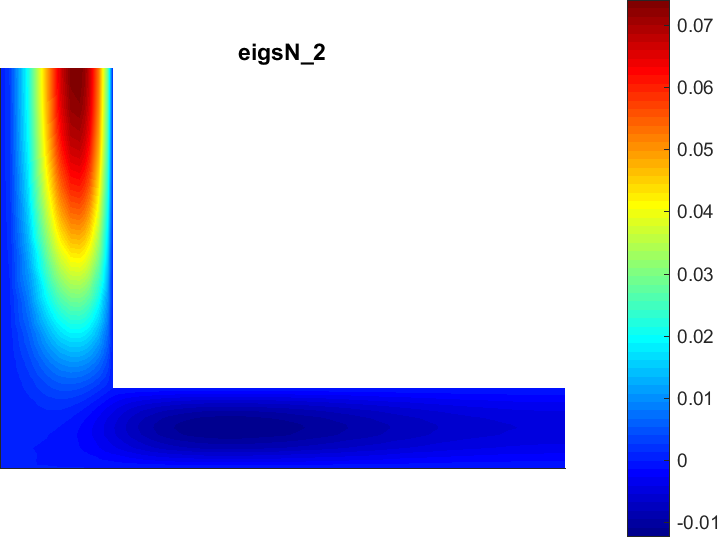}
   \caption{Slices of eigenvectors associated with $\lambda^\Mix_1(\Lambda_4)$ (left) and $\lambda^\Mix_2(\Lambda_4)$ (right) through the plane $x_1=x_2$. The vertical leg has length $4$ and width $\sqrt{2}$ whereas the horizontal leg has length $4\sqrt{2}$ and width $1$.}
\label{fig:eigv}
\end{figure}

%%%%%%%%%%%%%%%%%%%%%%%%%%%%%%%%%%%%%%%%%%%%%%%%%%%%%%%%%%%%%%%%%%%%%%%%%%%%%%%%%%%%%%%%%%%%%%%%%%%%%%%%%%%%%%%%
\section{Two extensions}\label{sec:ext}
%%%%%%%%%%%%%%%%%%%%%%%%%%%%%%%%%%%%%%%%%%%%%%%%%%%%%%%%%%%%%%%%%%%%%%%%%%%%%%%%%%%%%%%%%%%%%%%%%%%%%%%%%%%%%%%%

In this section we discuss two extensions of Theorem \ref{thm:main}. The first one is about the layer with rounded edges $\sLs$ and the second one concerns the three-dimensional cross $\sY$, both introduced in \S \ref{sec:mainres}. A similar strategy could be applied to investigate the layer $\sLf$ (also introduced in \S \ref{sec:mainres}).

The theoretical investigation of $\sLs$ and $\sY$ follows the same lines as for the Fichera layer $\Lambda$. Let us bring out the main points of the rationale of our proof:

\begin{enumerate}[(a)]
	\item There are two-dimensional waveguides canonically associated with $\sLs$ and $\sY$. The Dirichlet spectrum in these waveguides has the same structure as in Theorem \ref{th:GR}: the essential spectrum coincides with $[\pi^2,+\infty)$ and there is a (unique) bound state, as in Theorem \ref{th:GR}.
	\item\label{it:b} The eigenvalues of the truncated guides converge exponentially to the one on the entire waveguide, as in Corollary \ref{cor:conv}.
	\item If they exist, the eigenfunctions in $\sLs$ or $\sY$ have symmetry properties, which allows to reduce the problem to truncated layers around each edge as in Corollary \ref{cor:L3}. 
	\item The Born-Oppenheimer strategy of \S \ref{subsec:finite} reduces the study to a Sturm-Liouville operator with an adequate potential which has a finite number of bound states.
\end{enumerate}
To keep the discussion concise and avoid redundancies, we choose to skip proof details for $\sLs$ and $\sY$ and focus on numerical results.

%%%%%%%%%%%%%%%%%%%%%%%%%%%%%%%%%%%%%%%%%%%%%%%%%%%%%%%%%%%%%%%%%%%%%%%%%%%%%%%%%%%%%%%%%%%%%%%%%%%%%%%%%%%%%%%%
\subsection{Fichera layer with exterior rounded edges}
%%%%%%%%%%%%%%%%%%%%%%%%%%%%%%%%%%%%%%%%%%%%%%%%%%%%%%%%%%%%%%%%%%%%%%%%%%%%%%%%%%%%%%%%%%%%%%%%%%%%%%%%%%%%%%%%

Let us start by defining the geometrical objects we are interested in. Recall that the surface $\sS^0$ is defined as $\{\bx \in\R^3: \min\{x_1,x_2,x_3\} = 0\}$. Note that it coincides with the boundary $\partial\R_+^3$ of the first octant $\R^3_+$. The layer $\sLs$ is
$$
	\sLs = \{\bx \in \R^3\setminus\overline{\R^3_+}: 
	\mathsf{dist}(\bx,\partial\R_+^3) < 1\}.
$$
Its two-dimensional analogue is the guide $\sGs$ (see Figure \ref{fig:round}, right) that is defined as
$$
	\sGs = \{\bx \in \R^2\setminus\overline{\R_+^2}: \mathsf{dist}(\bx,\partial\R_+^2) < 1\}.
$$

\begin{theorem}\label{thm:guideround} With the broken guide $\sGs$ and $\cL_{\sGs}$ the positive Dirichlet Laplacian on $\sGs$, there holds:
\begin{enumerate}[i)]
	\item The essential spectrum of $\cL_{\sGs}$ coincides with $[\pi^2,+\infty)$;
	\item The operator $\cL_{\sGs}$ has exactly one eigenvalue under its essential spectrum denoted by $\lambda_1(\sGs)$.
\end{enumerate}
\end{theorem}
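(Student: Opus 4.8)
The plan is to mimic, for $\sGs$, the two-step template already used for $\Gamma$. First note that $\sGs$ is a Lipschitz domain which outside a large box $\Box_R$ is the disjoint union of the two straight half-strips $(-1,0)\times(R,\infty)$ and $(R,\infty)\times(-1,0)$ of width $1$ (with Dirichlet conditions on the lateral sides), which has a reentrant corner of opening $\tfrac{3\pi}2$ at the origin, and which in the third quadrant coincides with the quarter disk $D=\{x_1^2+x_2^2<1,\ x_1<0,\ x_2<0\}$, whose bounding arc meets the lines $x_1=-1$ and $x_2=-1$ tangentially, so that the exterior boundary of $\sGs$ is of class $\sC^{1,1}$.

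For point i), I would obtain the inclusion $[\pi^2,+\infty)\subset\sigma_{\ess}(\cL_{\sGs})$ exactly as in the proof of Theorem \ref{thm:main} i), via a singular Weyl sequence: for $\kappa\ge0$ take $\psi_n(x_1,x_2)=n^{-1/2}\,\chi(x_2/n)\,e^{i\kappa x_2}\,\sqrt2\,\sin\big(\pi(x_1+1)\big)$ supported in the half-strip $(-1,0)\times(0,\infty)$, with $\chi\in\mathscr D(\R)$ equal to $1$ on $[1,2]$ and supported in $[\tfrac12,\tfrac52]$; then $(\cL_{\sGs}-\pi^2-\kappa^2)\psi_n\to0$, $\|\psi_n\|\not\to0$ and $\psi_n\rightharpoonup0$. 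For the converse bound $\min\sigma_{\ess}(\cL_{\sGs})\ge\pi^2$, I would use the domain partition of $\sGs$ into the bounded piece $\sGs\cap\Box_R$ and the two half-strip tails: the associated broken form satisfies $\cQ_{\sGs}^{\Bro}\le\cQ_{\sGs}$ and has essential spectrum $[\pi^2,+\infty)$, so \eqref{eq:BS} gives the claim (Persson's theorem is an alternative). Together these give $\sigma_{\ess}(\cL_{\sGs})=[\pi^2,+\infty)$.

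For point ii), the bound \emph{at most one} eigenvalue below $\pi^2$ would follow from a domain partition in the spirit of Remark \ref{rem:one} and Lemma \ref{lem:GR12} ii): partition $\sGs$ into the quarter disk $D$ (Dirichlet on the arc, Neumann on its two straight sides) and the two half-strips $(-1,0)\times(0,\infty)$, $(0,\infty)\times(-1,0)$ (Dirichlet on their lateral sides). Doubling $D$ across its two Neumann sides identifies it with the unit disk with Dirichlet conditions; the relevant eigenfunctions are the ones $J_{2m}(j_{2m,k}\,r)\cos(2m\theta)$ even with respect to both coordinate axes, the two smallest eigenvalues being $\lambda_1(D)=j_{0,1}^2<\pi^2$ and $\lambda_2(D)=j_{2,1}^2>\pi^2$ (with $j_{m,k}$ the zeros of the Bessel function $J_m$; here $j_{0,1}\simeq2.40<\pi<j_{2,1}\simeq5.14$). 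Each half-strip has all its Rayleigh quotients equal to $\pi^2$. Hence, by \eqref{eq:part2}, the second broken Rayleigh quotient of $\sGs$ equals $\pi^2$, and \eqref{eq:part1} then yields $\lambda_2(\sGs)\ge\pi^2$. For the existence of one eigenvalue below $\pi^2$ I would invoke the bent-guide mechanism: $\sGs$ is straight outside a compact set but genuinely bent, so a Goldstone--Jaffe type trial function — the transverse ground state $\sqrt2\cos(\pi u)$ in Fermi coordinates around the central arc, times a slowly decaying longitudinal cut-off — has Rayleigh quotient strictly below $\pi^2$ (\cite{GJ92,DE95}); this construction is admissible here because that transverse profile vanishes at both walls, in particular at the reentrant corner where the Fermi coordinates degenerate, the relevant metric weights staying integrable. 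One may instead quote \cite{Pa17} for the whole of point ii). Combining the two bounds of point ii) gives exactly one eigenvalue below $\pi^2$.

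The essential spectrum and the ``at most one'' count are routine once the partition into the quarter disk and the two straight tails is fixed — the quarter disk $D$ plays here the role of the square $\Gamma_0$ in Lemma \ref{lem:GR12} ii). The only genuinely substantive point is the existence of the bound state: it rests on the bending phenomenon of \cite{GJ92,DE95} and on verifying that their curvilinear trial function survives the coordinate degeneracy at the reentrant corner — this is where I would lean on the literature rather than on the tools of Sections \ref{sec:brok_guides}--\ref{sec:spectrum}.
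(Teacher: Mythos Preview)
Your argument for the essential spectrum is fine and matches the paper's (the paper is even terser: it just observes that $\sGs$ and $\Gamma$ coincide outside $\Box_1$, so their essential spectra agree).

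For ``at most one'' your route differs from the paper's but is correct. You partition into the quarter disk $D$ and two half-strips and compute $\lambda_1(D)=j_{0,1}^2<\pi^2<\lambda_2(D)=j_{2,1}^2$ via the doubling trick; this is self-contained and elegant. The paper instead exploits the inclusion $\sGs\subset\Gamma$: since the two domains coincide outside a compact set, Dirichlet monotonicity gives that the number of bound states of $\cL_{\sGs}$ is at most that of $\cL_\Gamma$, which is $1$ by Theorem~\ref{th:GR}. Their argument is shorter but imports Theorem~\ref{th:GR}; yours stands alone.

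The gap is in the existence step. You correctly flag that the Fermi-coordinate trial function of \cite{GJ92,DE95} is problematic at the reentrant corner, but you do not actually carry out a construction; the paper explicitly remarks that \cite{DE95} does not apply directly here. The paper's proof avoids curvilinear coordinates altogether. On each half-strip it puts $e^{-\mu x_j}\sqrt2\sin(\pi x_k)$, and on the quarter disk $\sGs_0$ it takes the unique $H^1$ solution $\psi_0$ of the Helmholtz problem $(\Delta+\pi^2)\psi_0=0$ with matching Dirichlet data (well-posed since $\pi^2<j_{1,1}^2\le\lambda_1^\Dir(\sGs_0)$, by comparison with a half-disk). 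A direct computation gives
\[
\frac{\|\nabla\psi\|^2}{\|\psi\|^2}=\pi^2+\frac{\mu\big(\mu+J(\psi_0)\big)}{1+\mu\|\psi_0\|_{L^2(\sGs_0)}^2},
\qquad J(\phi):=\|\nabla\phi\|_{L^2(\sGs_0)}^2-\pi^2\|\phi\|_{L^2(\sGs_0)}^2,
\]
so one needs $J(\psi_0)<0$. The key observation is that the radial competitor $\widehat\psi(\bx)=-\sqrt2\sin(\pi|\bx|)$ has the same boundary trace, satisfies $J(\widehat\psi)=\pi^3\int_0^1 r\cos(2\pi r)\,\rd r=0$, but is \emph{not} a Helmholtz solution; since $\psi_0$ is the unique strict minimizer of $J$ among functions with that trace, $J(\psi_0)<J(\widehat\psi)=0$. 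Then any $\mu\in(0,|J(\psi_0)|)$ produces a Rayleigh quotient below $\pi^2$. Your fallback to \cite{Pa17} is legitimate --- the paper itself cites it in the introduction --- but the appendix exists precisely to supply this concrete trial function in place of the hand-wave.
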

Remark that Theorem \ref{thm:guideround} is not a direct consequence of \cite{DE95} because the hypothesis of \emph{loc. cit.} on the curvature is not satisfied. For completeness, we present a proof in Appendix \ref{sec:app2}.

The analogue of Theorem \ref{thm:main}, stated for the layer $\sLs$, reads as follows:
\begin{theorem}\label{thm:mainround} Let $\cL_{\sLs}$ be the positive Dirichlet Laplacian on $\sLs$. There holds:
	\begin{enumerate}[i)]
		\item The essential spectrum of $\cL_{\sLs}$ coincides with $[\lambda_1(\sGs),+\infty)$;
		\item $\cL_{\sLs}$ has at most a finite number of eigenvalues under its essential spectrum.
	\end{enumerate}
\end{theorem}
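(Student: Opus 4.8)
The plan is to transcribe, almost word for word, the three-part proof of Theorem \ref{thm:main}, the only genuinely new work being to re-run Section \ref{sec:brok_guides} for the rounded guide $\sGs$. Write $(\sGs)_R=\sGs\cap\Box_{R+1}$ for the truncated guide, with Neumann conditions on the far cross-sections and Dirichlet conditions on the rest of its boundary, and $\lambda_1((\sGs)_R)$ for its first eigenvalue. The key point is that $\sGs$ is, like $\Gamma$, a fixed bounded ``core'' --- here a quarter-disk of radius $1$ carrying Dirichlet conditions on its circular arc --- completed by the two \emph{straight} half-strips $(0,\infty)\times\cI$ and $\cI\times(0,\infty)$ of width $1$. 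Consequently every statement of Section \ref{sec:brok_guides} survives: a domain-partition argument gives $\lambda_1((\sGs)_R)\le\lambda_1(\sGs)$ and $\lambda_2((\sGs)_R)\ge\pi^2$, where in the counterpart of Lemma \ref{lem:GR12} one replaces the explicit eigenvalues of the square $\Gamma_0$ by those of the quarter-disk (Bessel zeros $j_{2m,k}^2$, obeying $\lambda_1=j_{0,1}^2<\pi^2<j_{2,1}^2=\lambda_2$); the representation formula on a leg $(0,R)\times\cI$ expanded along $\sqrt2\,\sin(k\pi x_2)$ yields the uniform exponential decay of the normalized eigenvector $v_R$ and of all its boundary derivatives at rate $\omega=\sqrt{\pi^2-\lambda_1(\sGs)}$, as in Lemma \ref{lem:expdec}, whence $0\le\lambda_1(\sGs)-\lambda_1((\sGs)_R)\le Ce^{-2R\omega}$ as in Corollary \ref{cor:conv}; and the change of variables \eqref{eq:change}, taken to scale only the two legs and to be the identity on the core, gives analyticity of $R\mapsto\lambda_1((\sGs)_R)$ (Lemma \ref{lem:kato}), the Dauge-Helffer formula \eqref{eq:deriv} and the monotonicity \eqref{eq:increas}, and the decay bound $\|\partial_R v_R\|_{H^1}\le Ce^{-R\omega}$ of Lemma \ref{lem:wR}.

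For point (i) I would argue as in \S\ref{sec:spectrum}. The inclusion $[\lambda_1(\sGs),+\infty)\subset\sigma_{\ess}(\cL_{\sLs})$ follows from the Weyl sequences $\psi_n(\bx)=v_\infty(x_1,x_2)\,e^{i\kappa x_3}\,n^{-1/2}\chi(x_3/n)$, $\kappa\ge0$, $v_\infty$ being the ground state of $\cL_{\sGs}$ provided by Theorem \ref{thm:guideround}, using the inclusion $\sGs\times(0,\infty)\subset\sLs$ (so that $\psi_n$ vanishes on $\partial\sLs$ throughout its support, exactly as for $\Gamma$ and $\Lambda$). For the reverse inequality, fix $R>0$, partition $\sLs$ into a bounded piece and three isometric residual pieces around the three edges, and observe that the residual piece around the $x_3$-edge is fibered over $\{x_3>R\}$ with sections isometric to $(\sGs)_{x_3}$; the monotonicity above gives $\lambda_1((\sGs)_{x_3})\ge\lambda_1((\sGs)_R)$, hence $\min\sigma_{\ess}(\cL_{\sLs})\ge\lambda_1((\sGs)_R)$, and letting $R\to\infty$ with the exponential convergence concludes.

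For point (ii) I would first establish the symmetry reduction of Lemma \ref{lem:sym} and Corollary \ref{cor:L3}: an eigenfunction of $\cL_{\sLs}$ associated with an eigenvalue below $\lambda_1(\sGs)$ is even with respect to the three diagonal planes $x_j=x_k$, since its odd part with respect to $\{x_2=x_3\}$ would satisfy Dirichlet conditions on $\{x_2<x_3\}\cap\sLs=\{x_2<x_3\}\cap(\sGs\times\R)$ and Dirichlet bracketing would force it to have energy $\ge\lambda_1(\sGs\times\R)=\lambda_1(\sGs)$. This reduces the bound-state count to the sixth-piece $\sLs\cap\{x_1<x_3,\,x_2<x_3\}$, fibered over $x_3$ with sections $(\sGs)_{x_3}$. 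On that piece I would run the Born-Oppenheimer argument of \S\ref{subsec:finite}: project onto the transverse ground state $v_{x_3}$ of $\cL_{(\sGs)_{x_3}}$, control the commutator term by the uniform decay estimates for $v_{x_3}$ and $\partial_3 v_{x_3}$ together with the uniform trace inequality of Lemma \ref{lem:trace}, and obtain, for $L$ large enough and $u$ supported in the residual part, a lower bound $\cQ(u)\ge\int_L^\infty\big(\tfrac12|f'|^2+(\lambda_\infty-V_0)|f|^2\big)\,\rd x_3+\lambda_\infty\|\Pi^\perp u\|^2$ with $\lambda_\infty=\lambda_1(\sGs)$ and $V_0(x_3)=e^{-2\omega(x_3-L_0)}$, as in Lemma \ref{prop:lbfq1D} and Corollary \ref{cor:W}. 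Since $\int_L^\infty t\,V_0(t)\,\rd t<\infty$, a Bargmann estimate bounds the number of negative eigenvalues of the resulting one-dimensional Schr\"odinger operator, hence the number of eigenvalues of $\cL_{\sLs}$ below $\lambda_1(\sGs)$; the analogue of Proposition \ref{prop:L*} then also gives a quantitative lower bound for $\lambda_1(\sLs)$, with $\lambda$ replaced by the first eigenvalue of the curved sections $(\sGs)_{x_3}$ (no longer explicit, but still analytic, monotone, and blowing up as $x_3\to-1^+$).

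The obstacle is not conceptual but lies in the geometric verifications that legitimise this transcription, all of them short computations with the distance function to $\partial\R^3_+$: that the two legs of $\sGs$ are genuine straight half-strips of width $1$ with Dirichlet conditions on both long sides --- so that the transverse operator, and with it the threshold $\pi^2$ and the rate $\omega=\sqrt{\pi^2-\lambda_1(\sGs)}$, are unchanged; that in the region where $x_3$ is the largest coordinate $\sLs$ is exactly the fibration $\{(x_1,x_2)\in(\sGs)_{x_3}\}$; and that $\{x_2<x_3\}\cap\sLs=\{x_2<x_3\}\cap(\sGs\times\R)$. The one point demanding genuine, though still routine, care --- slightly more than in the Fichera case --- is that the core of $\sGs$ is curved: the change of variables used for analyticity in $R$ and for the Dauge-Helffer formula must be arranged to act only on the straight legs and to be the identity on the core, exactly as in \eqref{eq:change}; with that precaution Kato's analytic perturbation theory applies verbatim.
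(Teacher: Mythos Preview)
Your proposal follows exactly the strategy the paper sketches in points (a)--(d) of Section~\ref{sec:ext}, and the substantive verifications you supply (the Bessel eigenvalues $j_{0,1}^2<\pi^2<j_{2,1}^2$ for the quarter-disk core, the fibration of the residual piece for $x_3>L$, the unchanged transverse threshold $\pi^2$ on the straight legs) are all correct. The paper deliberately omits these details, so your write-up is more complete than the paper's own ``proof''.

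Two small geometric slips are worth flagging, though neither damages the argument. First, the identity $\{x_2<x_3\}\cap\sLs=\{x_2<x_3\}\cap(\sGs\times\R)$ you invoke for the symmetry reduction is false when $x_3<0$: one has only the inclusion $\subset$ (because $\mathrm{dist}(\bx,\partial\R_+^3)^2=(x_1)_-^2+(x_2)_-^2+(x_3)_-^2$ picks up an extra $x_3^2$ term). But inclusion is precisely what Dirichlet bracketing needs, so the odd part still has energy $\ge\lambda_1^{\Dir}(\sGs\times\R)=\lambda_1(\sGs)$. Second, and for the same reason, the sections of the sixth-piece $\sLs\cap\{x_1<x_3,\,x_2<x_3\}$ at levels $x_3<0$ are \emph{not} $(\sGs)_{x_3}$ but strictly smaller domains (sectors of disks of radius $\sqrt{1-x_3^2}$). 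This is irrelevant for the finiteness proof, which only uses the fibration for $x_3>L>0$, but your remark on the analogue of Proposition~\ref{prop:L*} would need the first eigenvalues of these actual sections rather than of $(\sGs)_{x_3}$.
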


We present now in \S \ref{subsub:2dround} computations supporting point (\ref{it:b}) above (exponential convergence in two-dimensional finite guides). Next in \S \ref{sss:3DRoundFiclay}, we give numerical evidence about the existence of exactly one bound state for $\sLs$.

\subsubsection{Two-dimensional rounded guides}\label{subsub:2dround}
For $R\geq0$, we define the finite broken guide $\sGs_R$ like we did in \eqref{eq:GR}
$$
	\sGs_R = \sGs \cap \Box_{R+1}.
$$
Note that the part of its boundary at ``distance" $R$, i.e.\  $\partial\sGs \cap \partial \Box_{R+1}$, coincides with $\Sigma_R$ in \eqref{eqn:defsigmaR}.

Like in Section \ref{ss:GR}, $\lambda_1^\Dir(\sGs_R)$ and $\lambda_1^\Mix(\sGs_R)$ denote the first eigenvalues of the Laplace operator in $\sGs_R$ with Dirichlet and mixed ($\partial_{\sN} \sGs_R = \Sigma_R$) boundary conditions, respectively.
The counterpart of Corollary \ref{cor:conv} for the guide $\sGs$ writes as follows.

\begin{corollary}\label{cor:conv2} With the positive number $\omega^\sharp = \sqrt{\pi^2 - \lambda_1(\sGs)}$, there exists a constant $C^\sharp$ such that
$$
	\forall R\geq 1,\quad 0\leq \lambda_1(\sGs) - \lambda_1^\Mix(\sGs_R) \leq C^\sharp e^{-2R\omega^\sharp}.
$$
\end{corollary}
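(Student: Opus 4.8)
The plan is to mimic the proof of Corollary \ref{cor:conv} step by step, the only genuinely new ingredient being the transposition of the uniform exponential decay of Lemma \ref{lem:expdec} to the finite rounded guide $\sGs_R$.

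First I would record the a priori bound $\lambda_1^\Mix(\sGs_R)\le\lambda_1(\sGs)<\pi^2$ for all $R\ge0$: this is the exact analogue of point i) of Lemma \ref{lem:GR12}, obtained by partitioning $\sGs$ into the finite guide $\sGs_R$ and the two straight half-strips $(R,\infty)\times\cI$ and $\cI\times(R,\infty)$ carrying Dirichlet conditions on their unbounded sides (for which the bottom of the spectrum is $\pi^2$), then invoking \eqref{eq:part1}--\eqref{eq:part2} and Theorem \ref{thm:guideround}. Next, since the rounding of $\sGs$ is confined to a fixed bounded neighborhood of the corner, there exists $R_0>0$ such that for every $R\ge R_0$ the domain $\sGs_R$ splits into a fixed Lipschitz core $\sGs_{R_0}$ and two straight rectangular legs isometric to $(R_0,R)\times\cI$, glued along the segment $\{R_0\}\times\cI$ (and its symmetric counterpart). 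I would then let $v^\sharp_R$ be the positive normalized first eigenvector of $\cL_{\sGs_R}$ and observe that on each leg it solves a mixed boundary value problem of the form \eqref{eqn:syst-eigfun}, with Dirichlet data on $\{R_0\}\times\cI$ equal to the trace of $v^\sharp_R$ there; this trace is bounded in $L^2$ uniformly in $R$ because $\|v^\sharp_R\|_{H^1(\sGs_R)}=\sqrt{\lambda_1^\Mix(\sGs_R)+1}\le\sqrt{\pi^2+1}$ and the core has fixed geometry. Expanding $v^\sharp_R$ in the leg along the transverse basis $s_k(x_2)=\sqrt{2}\sin(k\pi x_2)$ and solving the resulting family of ODEs exactly as in the proof of Lemma \ref{lem:expdec}, the bound $\lambda_1^\Mix(\sGs_R)<\pi^2$ gives $\omega_{R,k}:=\sqrt{k^2\pi^2-\lambda_1^\Mix(\sGs_R)}\ge k\omega^\sharp$, whence the uniform estimate
\[
   \|\partial_n^\ell\partial_\tau^m v^\sharp_R\|_{L^2(\Sigma_\rho)}\le C_{\ell,m}\,e^{-\rho\omega^\sharp},\qquad R\ge R_0,\ \ \rho\in[R_0,R].
\]

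With this decay estimate at hand, the corollary follows as in the proof of Corollary \ref{cor:conv}: for $R$ large I would cut off $v^\sharp_R$ smoothly to zero over the last unit of each leg and extend it by $0$ outside $\sGs_R$, producing an admissible test function $\widetilde v^\sharp_R$ for $\cL_{\sGs}$; the decay estimate gives $\bigl|\,\|\widetilde v^\sharp_R\|_{L^2(\sGs)}^2-1\,\bigr|\le Ce^{-2R\omega^\sharp}$ and $\bigl|\,\|\nabla\widetilde v^\sharp_R\|_{L^2(\sGs)}^2-\|\nabla v^\sharp_R\|_{L^2(\sGs_R)}^2\,\bigr|\le Ce^{-2R\omega^\sharp}$, and the min-max principle then yields $\lambda_1(\sGs)\le\lambda_1^\Mix(\sGs_R)+C^\sharp e^{-2R\omega^\sharp}$, which combined with $\lambda_1^\Mix(\sGs_R)\le\lambda_1(\sGs)$ is the claim. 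The only point requiring a little care is the decay estimate of the first step, i.e.\ checking that the separation-of-variables argument of Lemma \ref{lem:expdec} transfers verbatim; this is the case precisely because the legs of $\sGs_R$ are genuine straight strips of width $1$, so the transverse operator $-\partial_{x_2}^2$ on $\cI$ and the spectral gap $\pi^2-\lambda_1(\sGs)$ are exactly as in the non-rounded case, the sole change being that the fixed square $\Gamma_0$ is replaced by the fixed rounded core $\sGs_{R_0}$. The same remarks apply, with obvious modifications, to the guide $\sGf$.
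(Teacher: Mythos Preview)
Your proposal is correct and follows exactly the approach the paper intends: the paper does not spell out a proof of Corollary~\ref{cor:conv2} but presents it as the counterpart of Corollary~\ref{cor:conv}, and your sketch supplies precisely the adaptation of Lemmas~\ref{lem:GR12}\,i) and~\ref{lem:expdec} to the rounded guide, exploiting that the legs of $\sGs_R$ are the same straight strips as those of $\Gamma_R$ so that the separation-of-variables argument carries over verbatim with $\omega$ replaced by $\omega^\sharp$. One cosmetic remark: since $\sGs$ coincides with $\Gamma$ outside $\Box_1$ (the rounded core is exactly the quarter-disk $\sGs_0$), you may simply take $R_0=0$ and trace on $\{0\}\times\cI$ as in the original Lemma~\ref{lem:expdec}.
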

Similarly as in \S \ref{subsub:ub}, to evaluate ${\lambda}_1(\sGs)$ we compute $\lambda_1^\Dir(\sGs_R)$ and $\lambda_1^\Mix(\sGs_R)$ for a sample of values of $R$ (see Figure \ref{fig:convcb}, left). 
We observe the numerical exponential convergence
\begin{equation}\label{eqn:slope2}
	\lambda_1^\Dir(\sGs_R) - 	\lambda_1^\Mix(\sGs_R) \sim e^{-\alpha R}\quad\text{with}\quad \alpha = 0.7293.
\end{equation}
As an approximation of $\lambda_1(\sGs)$, we take the mean value of $\lambda^\Dir_1(\sGs_R)$ and $\lambda^\Mix_1(\sGs_R)$ for $R=12$:
\begin{equation}
\label{eq:linftysharp}
	\lambda_1(\sGs) \simeq 0.9865\pi^2.
\end{equation}
With this numerical value, the convergence rate expected in Corollary \ref{cor:conv2} becomes
$e^{-2R\widetilde{\omega}^\sharp}$ with $2\widetilde{\omega}^\sharp = 0.7294$.
This is consistent with the observed slope given in \eqref{eqn:slope2}.

\begin{figure}[ht]
\includegraphics[scale=0.52]{./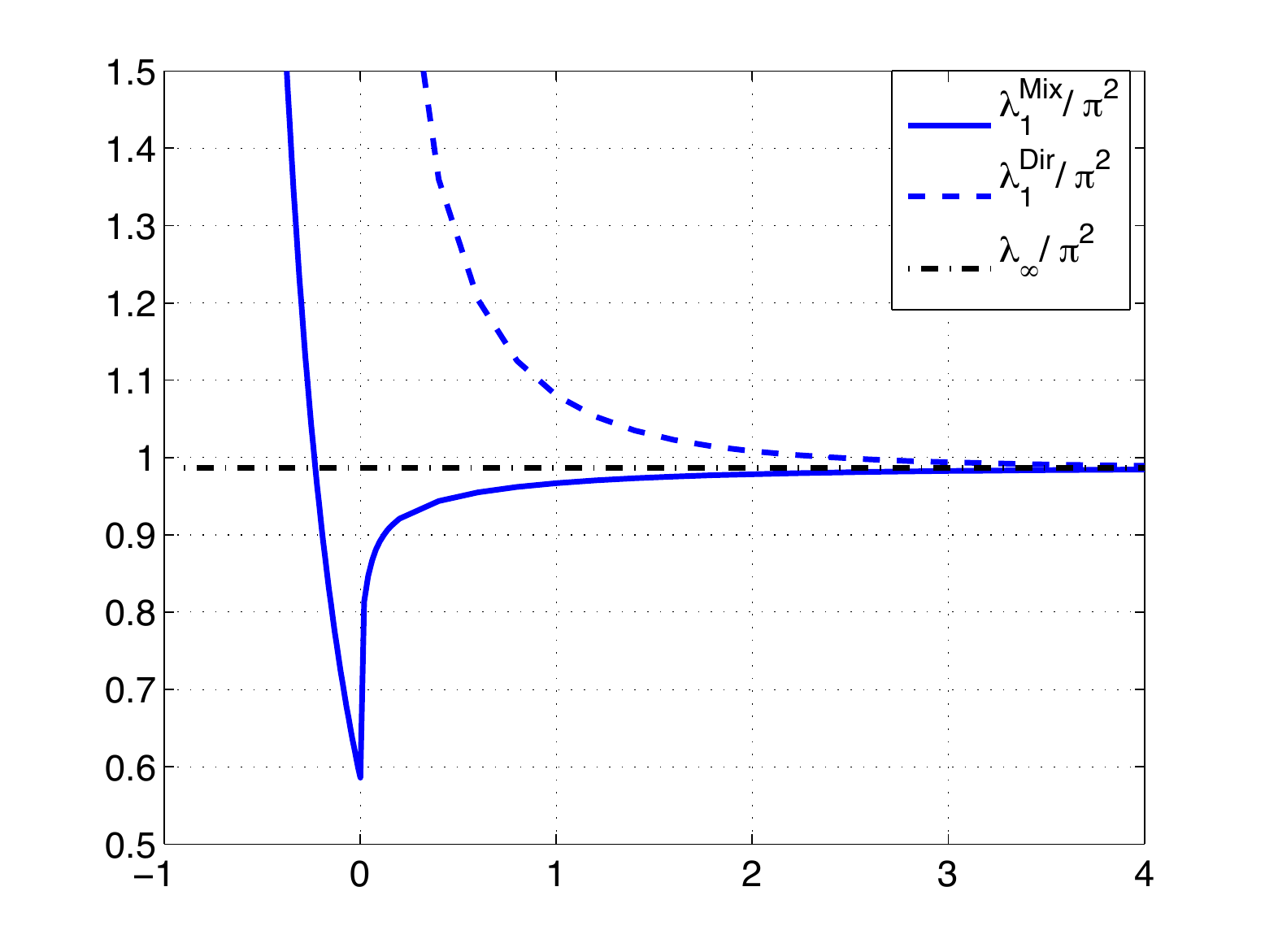}
\includegraphics[scale=0.52]{./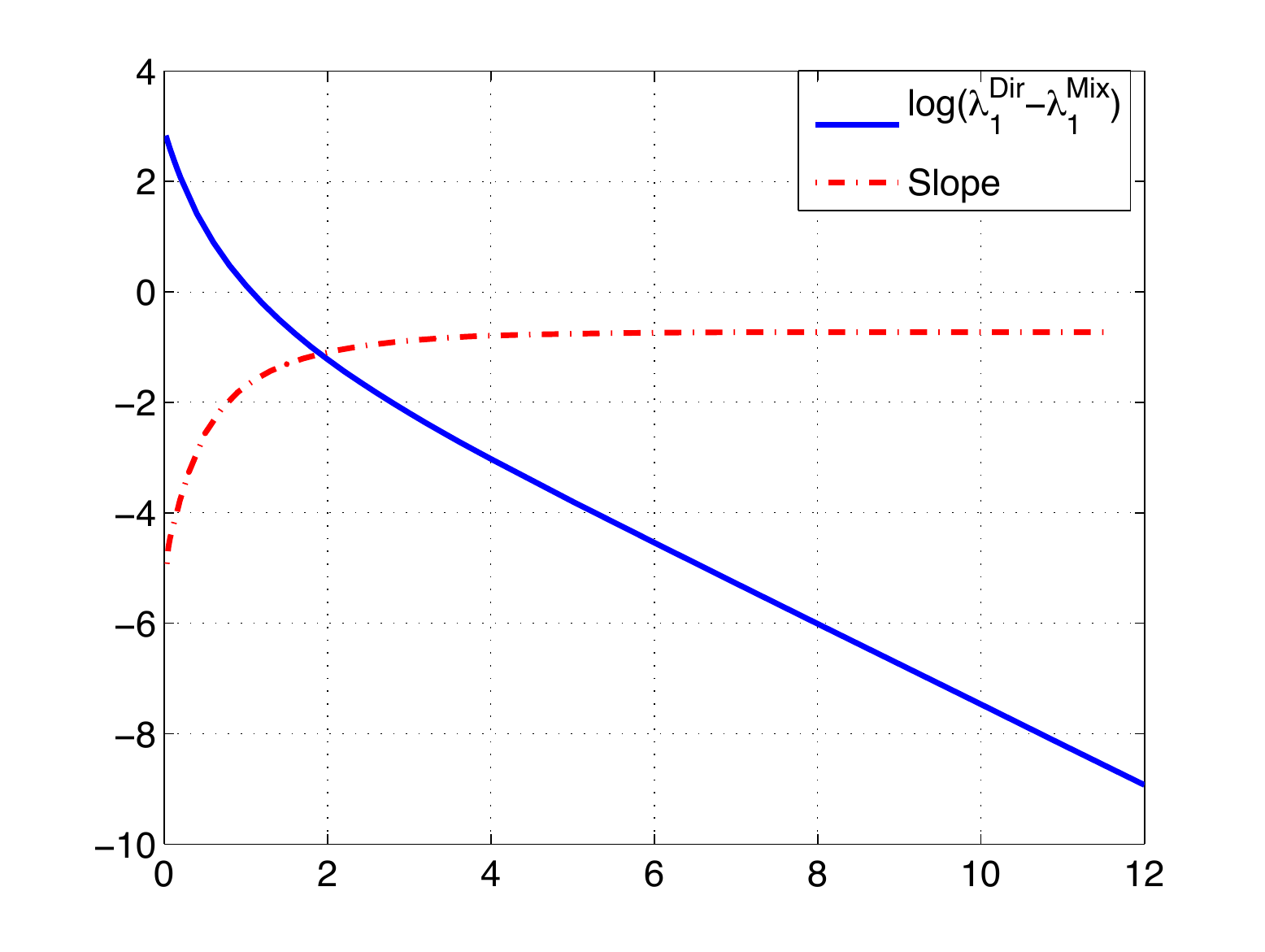}
   \caption{Computed eigenvalues $\lambda^\Dir_1(\sGs_R)$ and $\lambda^\Mix_1(\sGs_R)$ as functions of $R$ (left), $\log$ of difference $\lambda^\Dir_1(\sGs_R)-\lambda^\Mix_1(\sGs_R)$ and slope (right).}
\label{fig:convcb}
\end{figure}

\subsubsection{Three-dimensional rounded layers}\label{sss:3DRoundFiclay}
For $R\geq0$, we define the finite layers like in \eqref{eq:LR}
$$
	\sLs_R := \sLs \cap \Box_{R+1},
$$
To find approximate Rayleigh quotients of $\cL_{\sLs}$, we compute the first two eigenvalues $\lambda_\ell^\Dir(\sLs_R)$ and $\lambda_\ell^\Mix(\sLs_R)$ ($\ell=1,2$), of the Laplacian on $\sLs_R$, with Dirichlet boundary conditions on $\partial\sLs\cap\partial\sLs_R$ and Dirichlet or Neumann conditions in what remains of the boundary. 
In Figure \ref{fig:3Dcb}, we plot the results for $R$ ranging from $4$ to $16$.
We used a tetrahedral mesh, see Figure \ref{fig:mesh3DFL}, and an interpolation of degree $6$.
Like for the Fichera layer (see \S \ref{subsec:FFiclay}) the existence of a unique bound state is clearly exhibited. We find the value
$$
	\lambda_1(\sLs)\simeq 0.9817\pi^2.
$$
Using \eqref{eq:linftysharp}, we know that the threshold of the essential spectrum for the layer $\sLs$ is given by $\lambda_1(\sGs) \simeq 0.9865\pi^2$. Thus the relative gap between the bound state and the bottom of the essential spectrum is
\begin{equation}
\label{eq:gapLC}
   g(\sLs) := \frac{\lambda_1(\sGs)-\lambda_1(\sLs)}{\lambda_1(\sLs)} \simeq  
   \frac{0.9865-0.9817}{0.9817} \simeq 0.0049\,.
\end{equation}
Compared to \eqref{eq:gapLD}, there is less room for a bound state to exist than in the Fichera layer $\Lambda$.
\begin{figure}[ht]
\includegraphics[scale=0.52]{./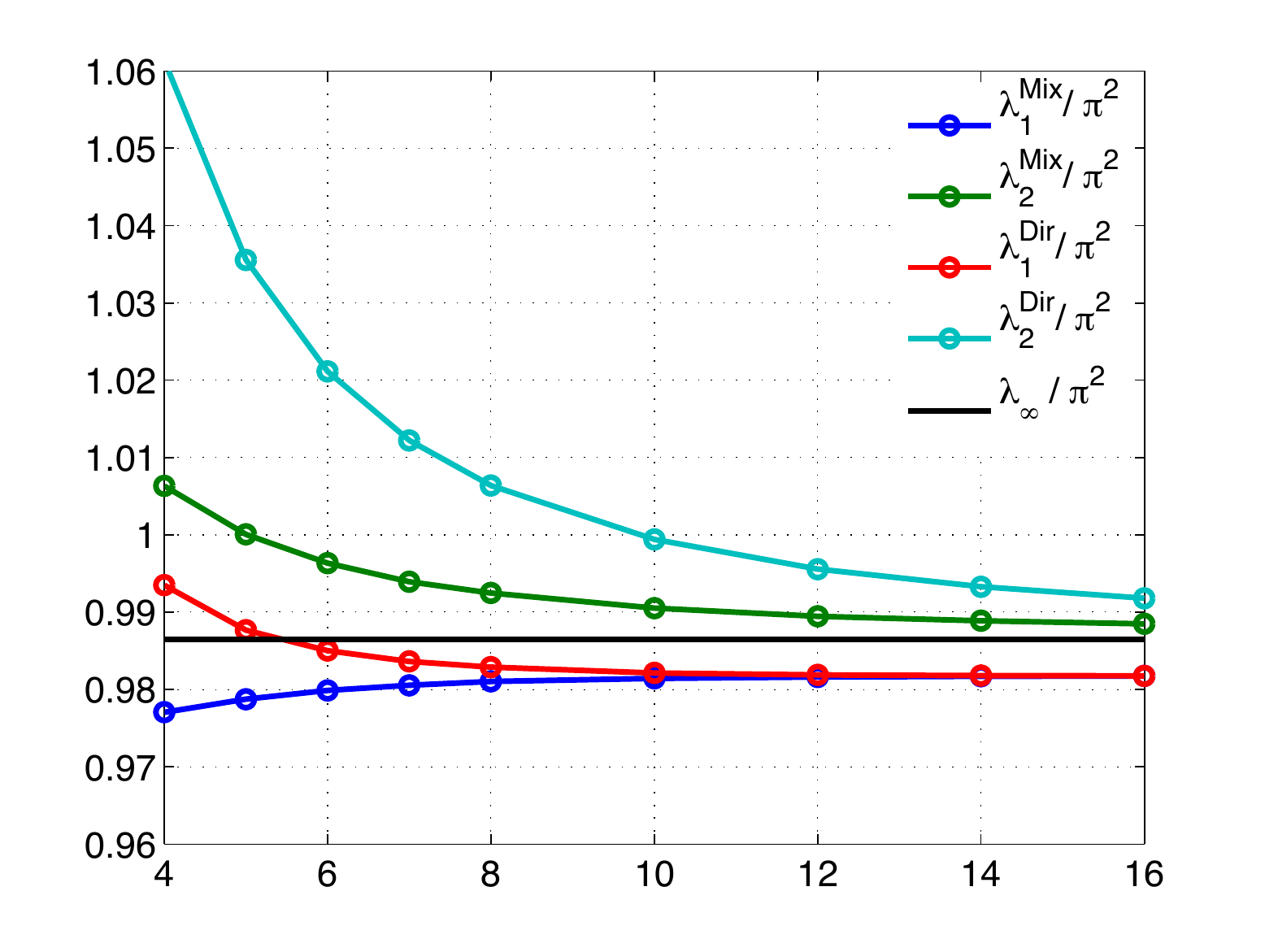}
\includegraphics[scale=0.52]{./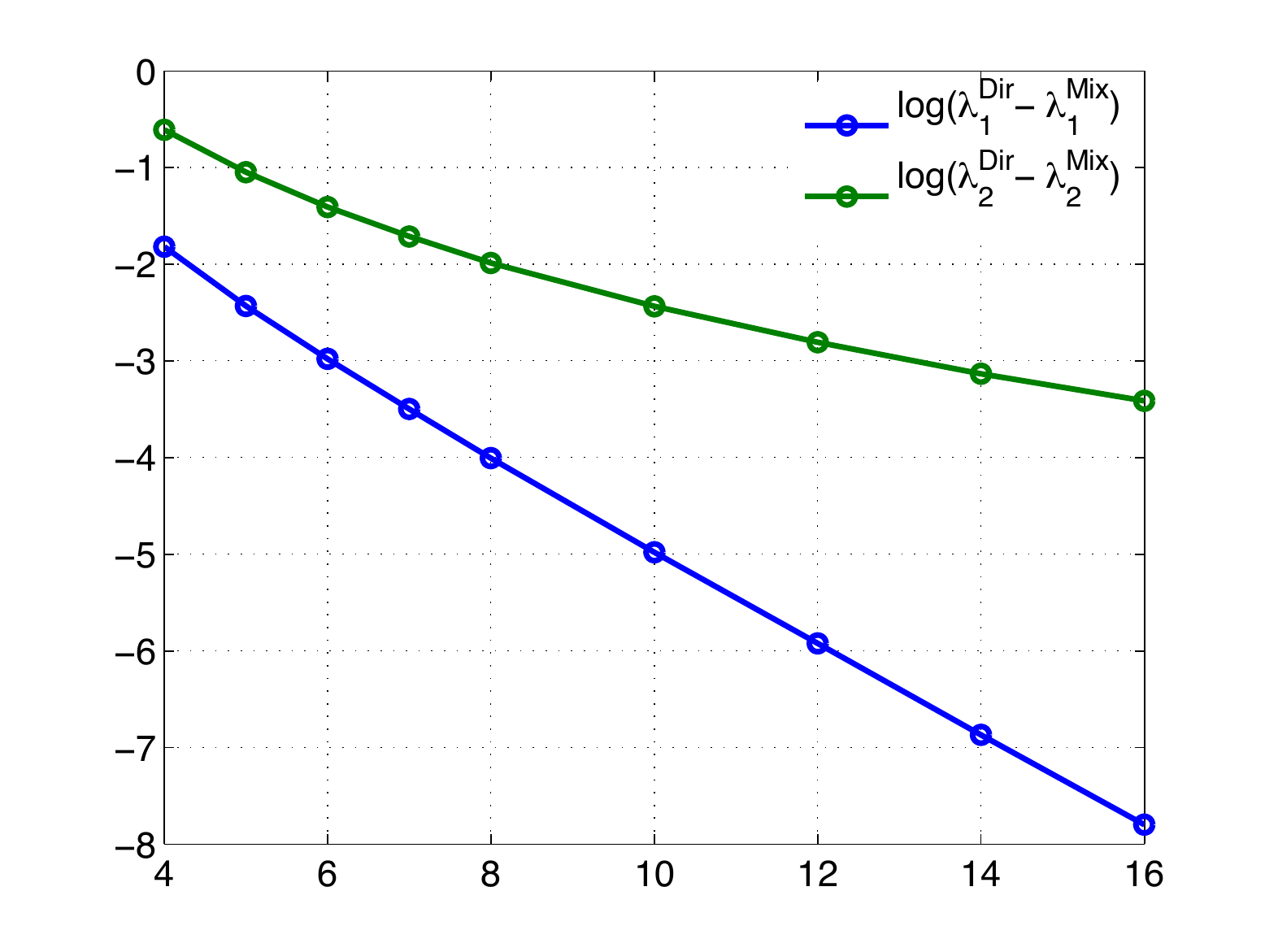}
   \caption{Computed eigenvalues $\lambda^\Dir_\ell(\sLs_R)$ and $\lambda^\Mix_\ell(\sLs_R)$ as functions of $R$ (left), $\log$ of difference $\lambda^\Dir_\ell(\sLs_R)-\lambda^\Mix_\ell(\sLs_R)$ (right), for $\ell=1,2$.}
\label{fig:3Dcb}
\end{figure}

%%%%%%%%%%%%%%%%%%%%%%%%%%%%%%%%%%%%%%%%%%%%%%%%%%%%%%%%%%%%%%%%%%%%%%%%%%%%%%%%%%%%%%%%%%%%%%%%%%%%%%%%%%%%%%%%
\subsection{Three-dimensional cross}\label{subsec:3DX}
%%%%%%%%%%%%%%%%%%%%%%%%%%%%%%%%%%%%%%%%%%%%%%%%%%%%%%%%%%%%%%%%%%%%%%%%%%%%%%%%%%%%%%%%%%%%%%%%%%%%%%%%%%%%%%%%
Recall that the cross waveguide $\sX$ and the three-dimensional analogue $\sY$ are defined in \eqref{eqn:defXY}. The following result is 
known, see \cite[\S 3.4]{Pa17}.
\begin{theorem}[\cite{Pa17}] Let $\cL_\sX$ be the Dirichlet Laplacian in $\sX$. There holds:
\begin{enumerate}[i)]
	\item The essential spectrum of $\cL_\sX$ coincides with $[\pi^2,+\infty)$;
	\item $\cL_\sX$ has exactly one eigenvalue under its essential spectrum, denoted by $\lambda_1(\sX)$.
\end{enumerate}
\end{theorem}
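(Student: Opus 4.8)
My plan is to transfer the two-dimensional results for the broken guide $\Gamma$ (Theorem~\ref{th:GR}) to the cross $\sX$ of \eqref{eqn:defXY} by elementary domain comparisons, exactly in the spirit of the arguments used above for $\cL_\Lambda$. Throughout let $\psi(t)=\sqrt2\,\sin(\pi(t+1))$ be the $L^2(\cI)$-normalised first Dirichlet eigenfunction of $-\partial_t^2$ on $\cI=(-1,0)$, so that $-\psi''=\pi^2\psi$ and $\int_\cI|\psi'|^2=\pi^2$.

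For point i) I would argue exactly as in the proof of $\sigma_\ess(\cL_\Lambda)=[\lambda_1(\Gamma),+\infty)$. Outside a large ball the cross $\sX$ is a disjoint union of four half-strips of width $1$; partitioning $\sX$ into a bounded core and these four ends and using $\cQ^\Bro_\sX\le\cQ_\sX$ together with \eqref{eq:BS} gives $\min\sigma_\ess(\cL_\sX)\ge\pi^2$, since the core is bounded and each half-strip has spectrum $[\pi^2,+\infty)$. For the reverse inclusion, the functions $\psi_n(x_1,x_2)=\frac1{\sqrt n}\,\chi(x_1/n)\,e^{i\kappa x_1}\,\psi(x_2)$ with $\chi\in\mathscr{D}(\R)$, $\chi\equiv1$ on $[1,2]$, $\supp\chi\subset[\tfrac12,\tfrac52]$, are supported in one arm of $\sX$ and form a Weyl sequence for $\pi^2+\kappa^2$; letting $\kappa$ range over $\R_+$ yields $[\pi^2,+\infty)\subset\sigma_\ess(\cL_\sX)$. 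Hence i).

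For the ``at most one eigenvalue'' half of ii) I would use the partition principle \eqref{eq:part1}--\eqref{eq:part2}, in the spirit of Remark~\ref{rem:one}: decompose $\sX$ into the central square $Q=\cI\times\cI$ and the four half-strips $\cI\times(0,\infty)$, $\cI\times(-\infty,-1)$, $(0,\infty)\times\cI$, $(-\infty,-1)\times\cI$ (this is a partition in the sense of \S\ref{sss:partition}). On $Q$ the induced conditions are Dirichlet on the two sides lying in $\partial\sX$ and Neumann on the other two, so $\lambda_1(Q)=\tfrac{\pi^2}{2}$ and $\lambda_2(Q)=\tfrac{5\pi^2}{2}$; each half-strip carries Dirichlet conditions on its two long sides and Neumann on the short one, so its spectrum is exactly $[\pi^2,+\infty)$ and $\lambda_1=\lambda_2=\pi^2$ there. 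By \eqref{eq:part2}, $\lambda_2^\Bro(\sX)$ is the second smallest element of $\{\tfrac{\pi^2}{2},\,\tfrac{5\pi^2}{2}\}$ together with eight copies of $\pi^2$, namely $\pi^2$; hence $\lambda_2(\sX)\ge\lambda_2^\Bro(\sX)=\pi^2=\min\sigma_\ess(\cL_\sX)$, so $\cL_\sX$ has at most one eigenvalue below its essential spectrum.

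For the ``at least one eigenvalue'' half, the shortest route is Dirichlet bracketing against the broken guide $\Gamma$ of \eqref{eq:G}: from the definitions one checks directly that $\Gamma\subseteq\sX$ (if $\min\{x_1,x_2\}\in(-1,0)$ then the minimising coordinate lies in $\cI$, so $\bx$ belongs to one of the two strips), whence the extension-by-zero embedding $H^1_0(\Gamma)\hookrightarrow H^1_0(\sX)$ gives $\lambda_1(\sX)\le\lambda_1(\Gamma)$. By Theorem~\ref{th:GR}, $\lambda_1(\Gamma)<\pi^2=\min\sigma_\ess(\cL_\sX)$, so $\cL_\sX$ has at least one eigenvalue strictly below $\pi^2$; combined with the previous paragraph this proves ii). (Alternatively one could produce a Goldstone--Jaffe type trial function built from $\psi$ and a slowly decaying longitudinal profile as in \cite{GJ92}, but the bracketing argument is cleaner here.) The only steps requiring care are the Lipschitz regularity of the subdomains in the two partitions (immediate, $\sX$ being polygonal) and the elementary half-strip and square spectral computations; the one genuinely non-trivial input is Theorem~\ref{th:GR} — in fact only its existence statement, due to \cite{GJ92} — which is quoted. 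So I expect no serious obstacle: the content of the statement is inherited from the two-dimensional broken guide, via a subdomain comparison for the lower bound on $\lambda_1(\sX)$ and via an elementary partition for the lower bound on $\lambda_2(\sX)$.
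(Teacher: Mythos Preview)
The paper does not give its own proof of this statement: it simply cites \cite[\S3.4]{Pa17}. Your argument must therefore be judged on its own merits.

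Part i) and the existence half of ii) are correct. The inclusion $\Gamma\subset\sX$ (if $\min\{x_1,x_2\}\in\cI$ then the minimising coordinate lies in $\cI$, so $\bx\in\cI\times\R$ or $\R\times\cI$) together with Dirichlet bracketing and Theorem~\ref{th:GR} gives $\lambda_1(\sX)\le\lambda_1(\Gamma)<\pi^2$ cleanly.

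There is, however, a factual slip in the ``at most one'' half. In the partition of $\sX$ into $Q=\cI\times\cI$ and four half-strips, \emph{none} of the four sides of $Q$ lies on $\partial\sX$: each side $\{-1\}\times\cI$, $\{0\}\times\cI$, $\cI\times\{-1\}$, $\cI\times\{0\}$ is interior to one of the two strips forming $\sX$. (You have copied the situation of $\Gamma_0$ inside the broken guide $\Gamma$, where two sides are indeed Dirichlet.) So the broken problem on $Q$ is the full \emph{Neumann} Laplacian, with $\lambda_1(Q)=0$ and $\lambda_2(Q)=\pi^2$, not $\tfrac{\pi^2}{2}$ and $\tfrac{5\pi^2}{2}$. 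Fortunately the conclusion survives: the ordered list of broken Rayleigh quotients begins with $0$ and then $\pi^2$ (from $Q$ and from each half-strip), so still $\lambda_2(\sX)\ge\lambda_2^\Bro(\sX)=\pi^2$. With this correction your proof is complete.
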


The analogue of Theorem \ref{thm:main} reads as follow.
\begin{theorem} Let $\cL_\sY$ be the Dirichlet Laplacian in $\sY$. There holds:
	\begin{enumerate}[i)]
		\item The essential spectrum of $\cL_\sY$ coincides with $[\lambda_1(\sX),+\infty)$;
		\item $\cL_\sY$ has at most a finite number of eigenvalues under its essential spectrum.
	\end{enumerate}
\end{theorem}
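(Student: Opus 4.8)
The proof will transpose that of Theorem~\ref{thm:main} step by step, following the rationale (a)--(d) above, with the finite broken guide $\Gamma_R$ of \S\ref{sec:brok_guides} systematically replaced by the \emph{truncated cross} $\sX_R:=\sX\cap\Box_{R+1}$ (two-dimensional box), carrying Dirichlet conditions on $\partial\sX\cap\partial\sX_R$ and Neumann conditions on the truncation part $\partial\sX_R\setminus\partial\sX$. The first task is to re-establish for $\sX_R$ the auxiliary results of \S\ref{sec:brok_guides}: a sub-partition of $\sX_R$ into its central unit square and its four rectangular arms (each of width $1$) gives $\lambda_2(\sX_R)\ge\pi^2$ for every $R$, as in Lemma~\ref{lem:GR12}; expanding the positive normalized ground state $v_R$ of $\cL_{\sX_R}$ in the Dirichlet modes $\sqrt2\,\sin(k\pi\,\cdot)$ transverse to each arm yields, exactly as in Lemmas~\ref{lem:expdec} and~\ref{lem:wR}, the exponential decay of $v_R$, of its derivatives, and of $\partial_R v_R$ along the arms at the rate $\omega:=\sqrt{\pi^2-\lambda_1(\sX)}$, uniformly in $R$; using $v_R$ as a quasimode for $\cL_\sX$ gives $0\le\lambda_1(\sX)-\lambda_1(\sX_R)\le C\,e^{-2R\omega}$ as in Corollary~\ref{cor:conv}; and the Dauge--Helffer formula of Proposition~\ref{prop:Dau-Hel} together with \eqref{eq:deriv3}--\eqref{eq:increas} transposes verbatim, so that $R\mapsto\lambda_1(\sX_R)$ is increasing.

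The essential spectrum is then identified as in \S\ref{sec:spectrum}. For $[\lambda_1(\sX),+\infty)\subset\sigma_{\ess}(\cL_\sY)$ one uses the Weyl sequence $\psi_n(\bx)=\phi(x_2,x_3)\,e^{i\kappa x_1}\,\tfrac1{\sqrt n}\,\chi\!\big(\tfrac{x_1}{n}\big)$ supported in the half-slab $\{x_1>1\}\cap\sY=(1,+\infty)\times\sX$, where $\phi$ is a ground state of $\cL_\sX$ and $\chi$ is the cutoff from the proof of Theorem~\ref{thm:main}; this produces an approximate eigenvalue $\lambda_1(\sX)+\kappa^2$ for every $\kappa\ge0$. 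For the reverse inequality, fix $L\ge1$ and partition $\sY$ into the bounded box $\{\max_j|x_j+1|\le L+1\}\cap\sY$ and its six residual arms, which are pairwise isometric under the coordinate permutations and the reflections $x_j\mapsto-1-x_j$; since $\cQ_\sY^\Bro\le\cQ_\sY$, the comparison principle \eqref{eq:BS} together with the same lower bound $\cQ(u)\ge\lambda_1(\sX_L)\|u\|^2$ for the arms' forms as in the proof of point~i) of Theorem~\ref{thm:main} — using the monotonicity of $R\mapsto\lambda_1(\sX_R)$ — gives $\min\sigma_{\ess}(\cL_\sY)\ge\lambda_1(\sX_L)$ for each $L$; letting $L\to\infty$ and using the exponential convergence $\lambda_1(\sX_L)\to\lambda_1(\sX)$ yields $\min\sigma_{\ess}(\cL_\sY)\ge\lambda_1(\sX)$.

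For the finiteness of $\sigma_{\dis}(\cL_\sY)$, observe first that, as in the rationale (c) and Corollary~\ref{cor:L3}, an eigenvector $u$ below $\lambda_1(\sX)$ may be assumed even with respect to the diagonal planes $\{x_j=x_k\}$: its odd part is supported in a set contained in one of the unit-width slabs $\{x_\ell\in\cI\}$, hence has Dirichlet ground state $\ge\pi^2>\lambda_1(\sX)$ and vanishes; by the full symmetry of $\sY$ its six arms are moreover isometric. Fixing $L\ge1$ large, partition $\sY$ into a bounded box and the six residual pieces, the model being the ``opening arm'' $\Omega_L=\{(x_1,x_2,x_3)\in\sY:\ x_1>L,\ (x_2,x_3)\in\Box_{x_1+1}\}$, whose section at height $x_1>L$ is the truncated cross $\sX_{x_1}$. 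On $\Omega_L$ a Born--Oppenheimer projection onto the transverse ground state $v_{x_1}$, with the commutator term controlled by the decay estimates exactly as in Lemma~\ref{prop:lbfq1D} and Corollary~\ref{cor:W}, bounds the energy below by $\tfrac12\|f'\|_{L^2(L,\infty)}^2+\int_L^\infty\big(\lambda_1(\sX)-V_0(x_1)\big)\,|f(x_1)|^2\,\rd x_1+\lambda_1(\sX)\,\|\Pi_{x_1}^\perp u\|_{L^2(\Omega_L)}^2$ with $V_0(x_1)=e^{-2\omega(x_1-L_0)}$; the tensor-product comparison with $\cQ^{\tens}$ and $\cQ^{\red}$ as in \S\ref{subsec:finite} then reduces the count of eigenvalues below $\lambda_1(\sX)$ to the number of negative eigenvalues of $f\mapsto\int_L^{+\infty}\big(\tfrac12|f'|^2-V_0|f|^2\big)\,\rd t$, which is finite by Bargmann's estimate \cite{Bar52} since $\int_L^{+\infty}t\,V_0(t)\,\rd t<+\infty$. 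As the bounded box contributes only finitely many eigenvalues and the six residual pieces are isometric, point~ii) follows.

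The only place requiring genuinely new work is the transposition of \S\ref{sec:brok_guides} to the truncated cross $\sX_R$: its shape (a ``plus'' with four arms, of which $\Gamma_R$ is essentially the two-armed ``L'') forces the Fourier-series arguments behind Lemmas~\ref{lem:GR12}, \ref{lem:expdec}, \ref{lem:wR} and Corollary~\ref{cor:conv} to be rerun, and one must check that $\cL_{\sX_R}$ still has a simple ground state below $\pi^2$, with the uniform-in-$R$ spectral gap and exponential localization that the Born--Oppenheimer step relies on. The essential-spectrum computation, the symmetry reduction, and the Born--Oppenheimer / Bargmann argument are otherwise routine transcriptions of \S\ref{sec:spectrum}.
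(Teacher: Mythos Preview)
Your direct transposition of \S\ref{sec:spectrum} with $\sX_R$ in place of $\Gamma_R$ is sound and would go through, but it is not the route the paper takes. The paper exploits an additional symmetry of the cross that the Fichera layer lacks: $\sY$ is invariant under the reflections $x_j\mapsto-1-x_j$ across the three mid-planes $\sP^j=\{x_j=-\tfrac12\}$. Lemma~\ref{lem:redu-DNcross} shows (by an adaptation of Lemma~\ref{lem:sym}) that any discrete eigenfunction of $\cL_\sY$ is even with respect to each $\sP^j$, and Corollary~\ref{cor:X} concludes that $\sigma_{\dis}(\cL_\sY)=\sigma_{\dis}(\cL_{\widehat\Lambda})$ and $\lambda_1(\sX)=\lambda_1(\widehat\Gamma)$, where $\widehat\Lambda=\tfrac12\Lambda$ and $\widehat\Gamma=\tfrac12\Gamma$ carry the mixed conditions inherited from the mid-planes. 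Since these are just rescalings of the Fichera problem already solved by Theorem~\ref{thm:main}, the whole of \S\ref{sec:brok_guides} need not be redone. Your approach buys generality (it would apply to a cross with unequal arm widths, where the mid-plane symmetry is lost); the paper's buys economy.

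One specific claim in your argument is wrong as written: the odd part of an eigenvector with respect to a diagonal plane $\{x_j=x_k\}$ is \emph{not} supported in a single unit-width slab. For instance, $\{x_2<x_3\}\cap\sY$ contains both points with $x_3\in\cI$ and $x_1,x_2$ arbitrarily large, and points with $x_1\in\cI$ and $x_3$ arbitrarily large; it lies in no slab $\{x_\ell\in\cI\}$, and the identity $\Pi^1\cap\Lambda=\Pi^1\cap(\Gamma\times\R)$ that drives Lemma~\ref{lem:sym} has no analogue here. Fortunately this remark is inessential to your proof: the partition of $\sY$ into a bounded box and six isometric residual arms (separated by the planes $x_j+1=\pm(x_k+1)$), followed by the Born--Oppenheimer bound on each arm, requires only the geometric symmetries of the domain $\sY$, not any symmetry of eigenfunctions. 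Simply drop the diagonal-plane claim and your argument stands.
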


The following lemma can be proved adapting Lemma \ref{lem:sym}. It implies that, up to a scaling factor,  calculations for $\sX$ and $\sY$ reduce to calculations on $\Gamma$ and $\Lambda$, respectively, see Corollary \ref{cor:X}.

\begin{lemma}\label{lem:redu-DNcross} For $d=2,3$ and $j\in\{1,\dots,d\}$, we define $\sP^j:=\{\bx\in\R^d : x_j = -1/2\}$. Then:
\begin{enumerate}[i)]
\item For $d=2$, an eigenfunction associated with $\lambda_1(\sX)$ satisfies $\partial_nu = 0$ on $\sP^j$ ($j=1,2$),
\item For $d=3$, an eigenfunction associated with a bound state of $\cL_\sY$ satisfies $\partial_nu = 0$ on $\sP^j$ ($j=1,2,3$).
\end{enumerate}
\end{lemma}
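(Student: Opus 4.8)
The plan is to adapt the symmetry argument of Lemma \ref{lem:sym}, treating $d=2$ and $d=3$ in parallel. Write $\cO_d$ as a placeholder for $\sX$ (if $d=2$) and for $\sY$ (if $d=3$), and recall that $\min\sigma_{\ess}(\cL_\sX)=\pi^2$ and $\min\sigma_{\ess}(\cL_\sY)=\lambda_1(\sX)$ (part i) of the corresponding theorems, proved as in the Fichera case by Weyl sequences and form decomposition, without recourse to the present lemma). Since $\cO_d$ is invariant under permutations of the coordinates, it is enough to establish the Neumann condition on $\sP^1=\{x_1=-1/2\}$. First I would note that the reflection across $\sP^1$, namely $(x_1,\ldots,x_d)\mapsto(-1-x_1,x_2,\ldots,x_d)$, maps $\cO_d$ onto itself: in each slab defining $\sX$ or $\sY$ the first coordinate ranges either over all of $\R$ or over $\cI=(-1,0)$, and this reflection preserves both $\R$ and $\cI$. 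Consequently, if $u$ is an eigenfunction of $\cL_{\cO_d}$ for an eigenvalue $\mu<\min\sigma_{\ess}(\cL_{\cO_d})$, its even and odd parts $u_\pm$ under this reflection are each either $0$ or an eigenfunction for $\mu$, and the claim reduces to showing $u_-\equiv0$.

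The odd part $u_-$ vanishes on $\sP^1\cap\cO_d$, hence its restriction to the half-domain $A:=\cO_d\cap\{x_1<-1/2\}$ lies in the form domain of the Dirichlet Laplacian on $A$ (zero trace on $\partial A$: on $\partial\cO_d$ from the original Dirichlet condition, on $\sP^1$ from oddness). Since the reflection is measure preserving and $|\nabla u_-|$ is even under it, the Rayleigh quotient of $u_-|_A$ on $A$ equals $\mu$; so if $u_-\not\equiv0$ then $\mu\ge\lambda_1^\Dir(A)$, and it remains to bound $\lambda_1^\Dir(A)$ from below by $\min\sigma_{\ess}(\cL_{\cO_d})$, which will contradict $\mu<\min\sigma_{\ess}(\cL_{\cO_d})$.

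For that lower bound I would invoke the partition principle \eqref{eq:part1}--\eqref{eq:part2}, cutting $A$ along the hyperplane $\{x_1=-1\}$ with Neumann on the new interface. A direct inspection of the definitions of $\sX$ and $\sY$ gives
\[
   A\cap\{x_1<-1\}=(-\infty,-1)\times\cC_{d-1},\qquad
   A\cap\{-1<x_1<-1/2\}=(-1,-1/2)\times\R^{d-1},
\]
where $\cC_1=\cI$ and $\cC_2=\sX$ in the variables $(x_2,x_3)$. On the first piece --- a half-cylinder over $\cC_{d-1}$, Dirichlet on $(-\infty,-1)\times\partial\cC_{d-1}$ and Neumann at $x_1=-1$ --- one has $\cQ(v)\ge\lambda_1^\Dir(\cC_{d-1})\|v\|^2$, that is $\pi^2\|v\|^2$ when $d=2$ and $\lambda_1(\sX)\|v\|^2$ when $d=3$; on the second piece --- a slab of width $\tfrac12$ in $x_1$ with Dirichlet on both faces --- one has $\cQ(v)\ge4\pi^2\|v\|^2$. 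Hence $\lambda_1^\Bro(A)=\min\{\lambda_1^\Dir(\cC_{d-1}),4\pi^2\}=\lambda_1^\Dir(\cC_{d-1})$, and \eqref{eq:part1} yields $\lambda_1^\Dir(A)\ge\lambda_1^\Dir(\cC_{d-1})=\min\sigma_{\ess}(\cL_{\cO_d})$, as wanted. Therefore $u_-\equiv0$, $u$ is even under the reflection across $\sP^1$, so $\partial_nu=0$ there, and repeating for $\sP^2$ (and $\sP^3$ when $d=3$) finishes the proof.

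The bookkeeping --- reflection invariance, the identification of the two pieces of $A$, the one-dimensional eigenvalue computations --- is routine. The step needing a real idea, and the main obstacle, is the lower bound on $\lambda_1^\Dir(A)$: the half-domain $A$ is unbounded both in $x_1$ (through the thin cylindrical tail $(-\infty,-1)\times\cC_{d-1}$) and transversally (through the fat slab near $x_1=-1$), so it sits in no finite-width slab and cannot be compared by Dirichlet bracketing to a product with positive bottom eigenvalue. The decomposition along $\{x_1=-1\}$ is precisely what converts the tail into a half-cylinder over the \emph{lower-dimensional} cross, so that the $d=3$ estimate closes on the already-known value $\lambda_1(\sX)$ rather than on something circular.
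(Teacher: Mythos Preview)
Your argument is precisely the adaptation of Lemma~\ref{lem:sym} that the paper intends (the paper gives no further details), and the overall strategy is correct. There is one bookkeeping slip: on the slab piece $(-1,-\tfrac12)\times\R^{d-1}$, the face $\{x_1=-1\}$ does \emph{not} carry pure Dirichlet conditions. Its portion $\{-1\}\times\cC_{d-1}$ is exactly the interface with the cylindrical piece and therefore receives Neumann from the partition principle (as you correctly state for the other side of the cut); only the complement $\{-1\}\times(\R^{d-1}\setminus\overline{\cC_{d-1}})\subset\partial\cO_d$ carries Dirichlet. This does not affect the conclusion: relaxing to full Neumann on the whole face $\{x_1=-1\}$ still yields $\cQ(v)\ge\pi^2\|v\|^2$ on the slab (the first mixed Dirichlet--Neumann eigenvalue on an interval of length $\tfrac12$ is $\pi^2$, not $4\pi^2$), and since $\pi^2\ge\min\sigma_{\ess}(\cL_{\cO_d})$ in both dimensions, the lower bound $\lambda_1^\Dir(A)\ge\min\sigma_{\ess}(\cL_{\cO_d})$ survives intact.
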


Let us introduce the scaled versions of the broken guide $\Gamma$ and the Fichera layer $\Lambda$ as
$$
	\widehat{\Gamma} := \tfrac12\, \Gamma \quad\mbox{and}\quad \widehat{\Lambda} := \tfrac12\, \Lambda.
$$
We consider Dirichlet boundary conditions on $\partial\Gamma\cap\partial\widehat{\Gamma}$ and $\partial\Lambda\cap\partial\widehat{\Lambda}$, and Neumann on the remaining part of the boundary. The Rayleigh quotients of the corresponding positive two-dimensional and three-dimensional Laplacians $\cL_{\widehat{\Gamma}}$ and $\cL_{\widehat{\Lambda}}$  are denoted by $\lambda_\ell(\widehat{\Gamma})$ and $\lambda_\ell(\widehat{\Lambda})$, respectively.

A consequence of Lemma \ref{lem:redu-DNcross} is the following corollary, reminiscent of Corollary \ref{cor:L3}, that can be proved using symmetries of the eigenfunctions.
\begin{corollary} 
\label{cor:X}
The following holds:
	\begin{enumerate}[i)]
		\item We have $\lambda_1(\sX) = \lambda_1(\widehat{\Gamma})$.
		\item We have $\sigma_\dis(\cL_\sY) = \sigma_\dis(\cL_{\widehat{\Lambda}})$.
	\end{enumerate}
\end{corollary}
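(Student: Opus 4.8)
The plan is to mimic the proof of Corollary~\ref{cor:L3}, using the reflection symmetries of $\sX$ and $\sY$ across the planes $\sP^j=\{x_j=-\tfrac12\}$ that are provided by Lemma~\ref{lem:redu-DNcross}. The two steps are a geometric identification of the fundamental domains with $\widehat{\Gamma}$ and $\widehat{\Lambda}$ equipped with their mixed boundary conditions, followed by the usual symmetrisation/desymmetrisation transfer of eigenfunctions, exactly as in Lemma~\ref{lem:sym} and Corollary~\ref{cor:L3}.

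For the geometric step, note that $\sX$ is invariant under each reflection $r_j\colon x_j\mapsto-1-x_j$ about $\sP^j$, $j=1,2$. A direct computation gives $\sX\cap\{x_1>-\tfrac12,\ x_2>-\tfrac12\}=\widehat{\Gamma}=\tfrac12\Gamma$, and along $\partial\widehat{\Gamma}$ the part that lies on $\partial\sX$ is the inner boundary (the faces $\{x_j=0\}$, i.e.\ $\partial\Gamma\cap\partial\widehat{\Gamma}$) while the part that lies on $\sP^1\cup\sP^2$ is the outer boundary (the faces $\{x_j=-\tfrac12\}$). Hence, folding the reflection group generated by $r_1,r_2$, the restriction-to-$\widehat{\Gamma}$ map identifies the totally symmetric sector of the Dirichlet operator $\cL_\sX$ with $\cL_{\widehat{\Gamma}}$. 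The same computation in $\R^3$ gives $\sY\cap\{x_1>-\tfrac12,\ x_2>-\tfrac12,\ x_3>-\tfrac12\}=\widehat{\Lambda}=\tfrac12\Lambda$, with $\partial\sY$ meeting $\partial\widehat{\Lambda}$ along $\partial\Lambda\cap\partial\widehat{\Lambda}$ and the planes $\sP^j$ meeting it along the outer faces $\{x_j=-\tfrac12\}$; folding the reflection group generated by $r_1,r_2,r_3$ identifies the totally symmetric sector of $\cL_\sY$ with $\cL_{\widehat{\Lambda}}$.

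For i), extending a ground state of $\cL_{\widehat{\Gamma}}$ by the reflections $r_1,r_2$ produces—using the Neumann conditions on $\{x_j=-\tfrac12\}$ for $H^1$-regularity and the Dirichlet conditions on $\partial\Gamma\cap\partial\widehat{\Gamma}$ for vanishing on $\partial\sX$—an admissible trial function for $\cL_\sX$ with Rayleigh quotient $\lambda_1(\widehat{\Gamma})$, so $\lambda_1(\sX)\le\lambda_1(\widehat{\Gamma})$. Conversely, by Lemma~\ref{lem:redu-DNcross}\,i) a ground state $u$ of $\cL_\sX$ satisfies $\partial_n u=0$ on $\sP^1\cap\sX$ and $\sP^2\cap\sX$, so its restriction to $\widehat{\Gamma}$ lies in $\Dom(\cQ_{\widehat{\Gamma}})$ with Rayleigh quotient $\lambda_1(\sX)$, whence $\lambda_1(\widehat{\Gamma})\le\lambda_1(\sX)$; equality follows. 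For ii), the preceding theorem gives $\sigma_{\ess}(\cL_\sY)=[\lambda_1(\sX),+\infty)$, and the same form-decomposition argument as in the proof of point~i) of Theorem~\ref{thm:main} (the sections of $\widehat{\Lambda}$ by the planes $x_3=R$ being rescaled finite broken guides, whose first eigenvalue is monotone in $R$ and converges to $\lambda_1(\widehat{\Gamma})$ by Corollary~\ref{cor:conv}) gives $\sigma_{\ess}(\cL_{\widehat{\Lambda}})=[\lambda_1(\widehat{\Gamma}),+\infty)$; by i) these thresholds coincide. Therefore $\sigma_{\dis}(\cL_\sY)$ and $\sigma_{\dis}(\cL_{\widehat{\Lambda}})$ are, respectively, the sets of eigenvalues of $\cL_\sY$ and $\cL_{\widehat{\Lambda}}$ lying in $[0,\lambda_1(\sX))$. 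By Lemma~\ref{lem:redu-DNcross}\,ii) every eigenvector of $\cL_\sY$ below that threshold is totally symmetric, hence is the reflection of an eigenvector of $\cL_{\widehat{\Lambda}}$ for the same eigenvalue, and conversely reflecting any eigenvector of $\cL_{\widehat{\Lambda}}$ below that threshold produces an eigenvector of $\cL_\sY$; this correspondence preserves multiplicities, so $\sigma_{\dis}(\cL_\sY)=\sigma_{\dis}(\cL_{\widehat{\Lambda}})$.

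The only genuinely delicate point is the geometric bookkeeping of the folding: one has to verify that cutting $\sX$ (resp.\ $\sY$) along the planes $\{x_j=-\tfrac12\}$ reproduces the half-scale broken guide (resp.\ Fichera layer) and—crucially—that the Dirichlet part of $\partial\sX$ (resp.\ $\partial\sY$) ends up precisely on the inner faces $\{x_j=0\}$, so that the folded operator is the mixed operator $\cL_{\widehat{\Gamma}}$ (resp.\ $\cL_{\widehat{\Lambda}}$) rather than some other boundary realisation. Once this is settled, the reflection-extension argument and the coincidence of the essential-spectrum thresholds are routine, running word for word as in Section~\ref{sec:spectrum}.
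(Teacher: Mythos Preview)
Your approach is the one the paper intends: the paper only says the corollary ``can be proved using symmetries of the eigenfunctions'' in the manner of Corollary~\ref{cor:L3}, and your restriction/reflection transfer via Lemma~\ref{lem:redu-DNcross} is exactly that. The geometric identification $\sX\cap\{x_1>-\tfrac12,\,x_2>-\tfrac12\}=\widehat{\Gamma}$ and its three-dimensional analogue are correct, as is the boundary bookkeeping.

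One imprecision worth fixing: in your justification of $\sigma_{\ess}(\cL_{\widehat{\Lambda}})=[\lambda_1(\widehat{\Gamma}),\infty)$, the sections of $\widehat{\Lambda}$ itself by $\{x_3=R\}$ for $R>0$ are the full guide $\widehat{\Gamma}$, not finite ones; finite guides appear only after the diagonal partition into the arms $\widehat{\Omega}^j_R$, and those carry the mixed (Neumann on the outer face) boundary conditions of $\widehat{\Gamma}$, so Corollary~\ref{cor:conv} does not apply verbatim. A cleaner route is to note that $\cL_{\widehat{\Lambda}}$ is unitarily equivalent to the restriction of $\cL_\sY$ to the totally symmetric subspace, whence $\sigma_{\ess}(\cL_{\widehat{\Lambda}})\subset\sigma_{\ess}(\cL_\sY)=[\lambda_1(\sX),\infty)$, while symmetrising the Weyl sequences of $\cL_\sY$ (the transverse profile, the ground state of $\cL_\sX$, is already symmetric by Lemma~\ref{lem:redu-DNcross}\,i)) gives the reverse inclusion. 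This closes the only loose end in your argument.
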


The bounded versions of $\widehat{\Gamma}$ and $\widehat{\Lambda}$ are defined as $\widehat{\Gamma}_R=\widehat{\Gamma}\cap\Box_{R+1}$ and $\widehat{\Lambda}_R=\widehat{\Lambda}\cap\Box_{R+1}$. Boundary conditions on $\partial\widehat{\Gamma}_R$ and $\partial\widehat{\Lambda}_R$ on the common part with $\widehat{\Gamma}$ and $\widehat{\Lambda}$ are the same as mentioned above, whereas on the remaining part of their boundaries, we take Dirichlet or Neumann, thus defining $\lambda_\ell^\Dir(\widehat{\Gamma}_R)$, $\lambda_\ell^\Mix(\widehat{\Gamma}_R)$, and $\lambda_\ell^\Dir(\widehat{\Lambda}_R)$, $\lambda_\ell^\Mix(\widehat{\Lambda}_R)$.

\begin{figure}[ht]
\includegraphics[scale=0.52]{./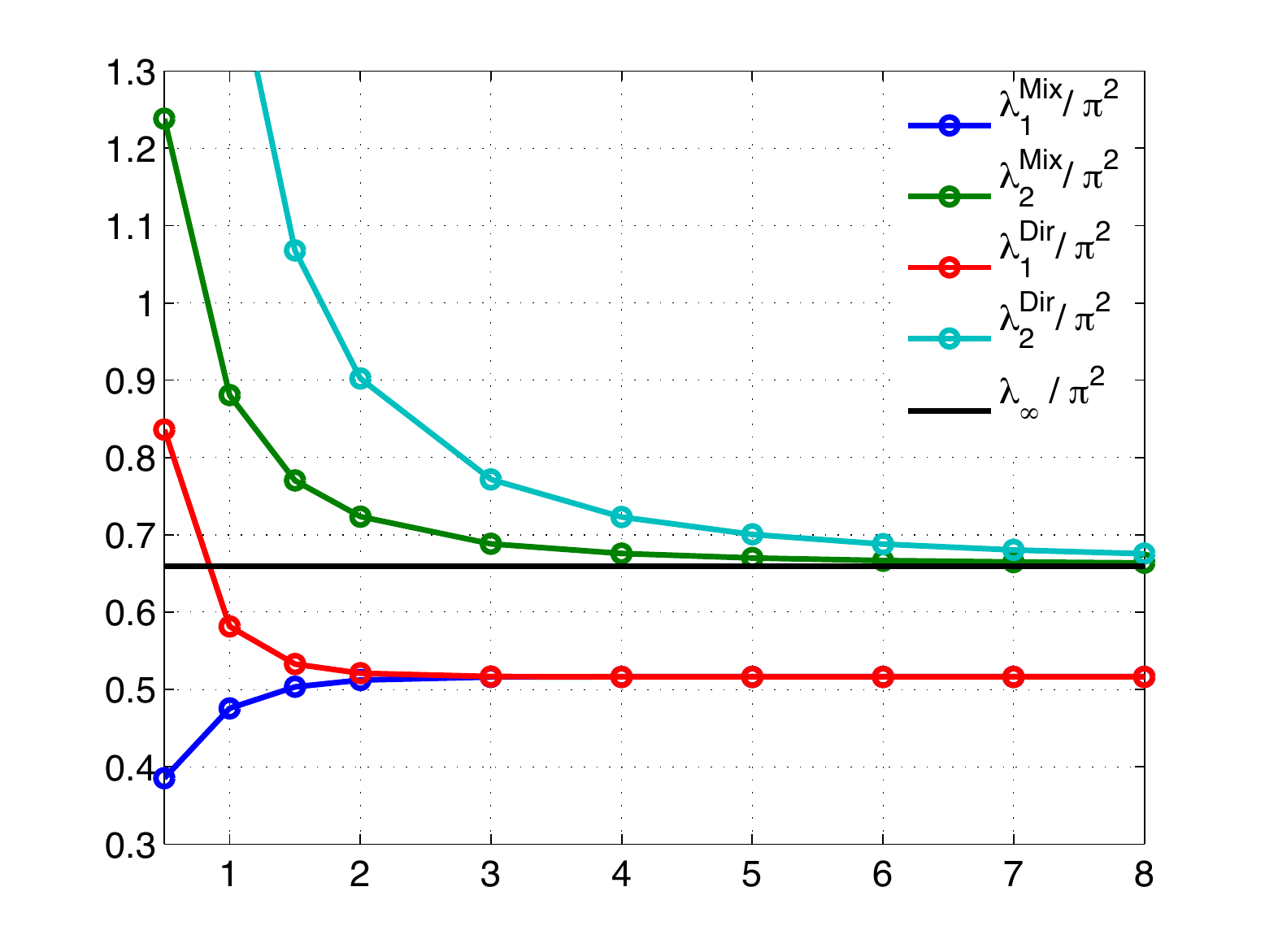}
\includegraphics[scale=0.52]{./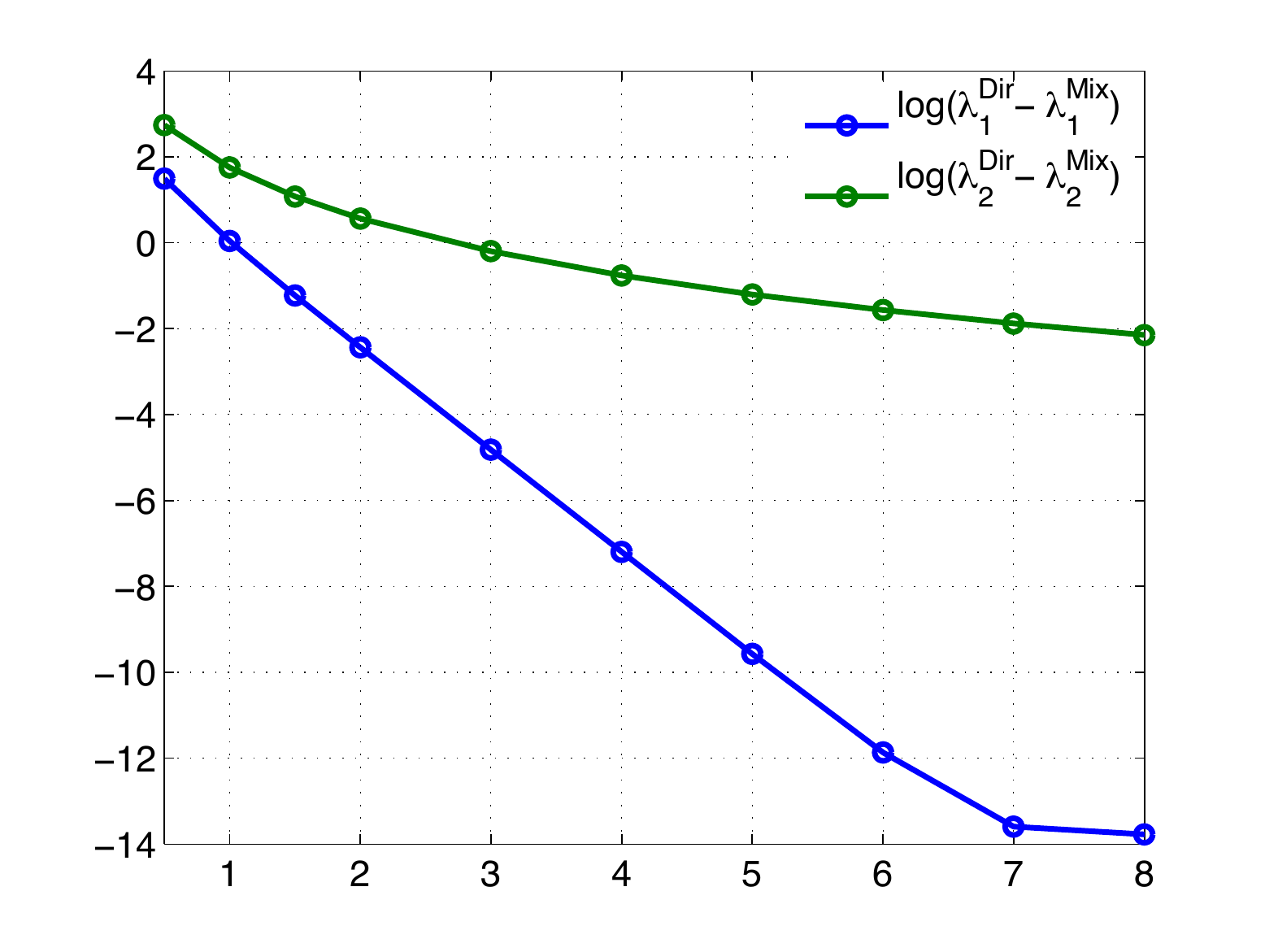}
   \caption{Computed eigenvalues $\lambda_\ell^\Dir(\widehat{\Lambda}_R)$, $\lambda_\ell^\Mix(\widehat{\Lambda}_R)$ as functions of $R$ (left), and $\log$ of their difference (right), for $\ell=1,2$.}
\label{fig:3DX}
\end{figure}

Then an approximation of $\lambda_1(\sX):=\lambda_\infty$ is given, for $R$ large enough, by the mean value of $\lambda_\ell^\Dir(\widehat{\Gamma}_R)$ and $\lambda_\ell^\Mix(\widehat{\Gamma}_R)$. For $R = 12$ we obtain
$$
	\lambda_1(\sX) \simeq 0.6596\pi^2.
$$
In the same way as stated in Corollary \ref{cor:conv} and Corollary \ref{cor:conv2}, an exponential convergence of the truncated problems toward $\lambda_1(\sX)$ can be exhibited.

For the three-dimensional domain, we compute the first two eigenvalues $\lambda_\ell^\Dir(\widehat{\Lambda}_R)$, $\lambda_\ell^\Mix(\widehat{\Lambda}_R)$, for $R$ ranging from $0.5$ to $8$ (see Figure \ref{fig:3DX}).
Computations have been performed on a rather coarse mesh with interpolation degree $4$, which is sufficient to exhibit the existence of a unique bound state with approximate value
$$
	\lambda_1(\sY) \simeq 0.5165\pi^2.
$$
The relative gap between the bound state and the bottom of the essential spectrum is
\begin{equation}
\label{eq:gapX}
   g(\sX) := \frac{\lambda_1(\sY)-\lambda_1(\sX)}{\lambda_1(\sX)} \simeq  
   \frac{0.6596-0.5165}{0.5165} \simeq 0.277\,.
\end{equation}

%%%%%%%%%%%%%%%%%%%%%%%%%%%%%%%%%%%%%%%%%%%%%%%%%%%%%%%%%%%%%%%%%%%%%%%%%%%%%%%%%%%%%%%%%%%%%%%%%%%%%%%%%%%%%%%%
\section{Conclusion}\label{sec:conc}
%%%%%%%%%%%%%%%%%%%%%%%%%%%%%%%%%%%%%%%%%%%%%%%%%%%%%%%%%%%%%%%%%%%%%%%%%%%%%%%%%%%%%%%%%%%%%%%%%%%%%%%%%%%%%%%%
We have investigated spectral properties of the Dirichlet Laplacian $\cL_\Lambda$ on the Fichera layer $\Lambda$ and we have given hints that these properties are shared with three-dimensional layers of a more general structure. This suggests the definition of a family $\gF$ of ``generalized Fichera layers'' in which the following main spectral features of $\cL_\Lambda$ subsist:
\begin{enumerate}[{\em i)}]
\item The bottom of the essential spectrum is driven by the first eigenvalue of associated two-dimen\-sional quantum wave guides;
\item The number of independent bound states is finite.
\end{enumerate}
This contrasts with the family of smooth conical layers, denoted here as $\gC$, investigated in \cite{OBP16}, in which the Dirichlet Laplacian satisfies:
\begin{enumerate}[{\em i)}]
\item The bottom of the essential spectrum is driven by the first eigenvalue of a one-dimensional problem;
\item The number of bound states is infinite, and their counting function satisfy a $\cO(|\log E|)$ estimate, with $E$ being the distance to the essential spectrum.
\end{enumerate}

\begin{figure}[ht]
\includegraphics[scale=0.15]{./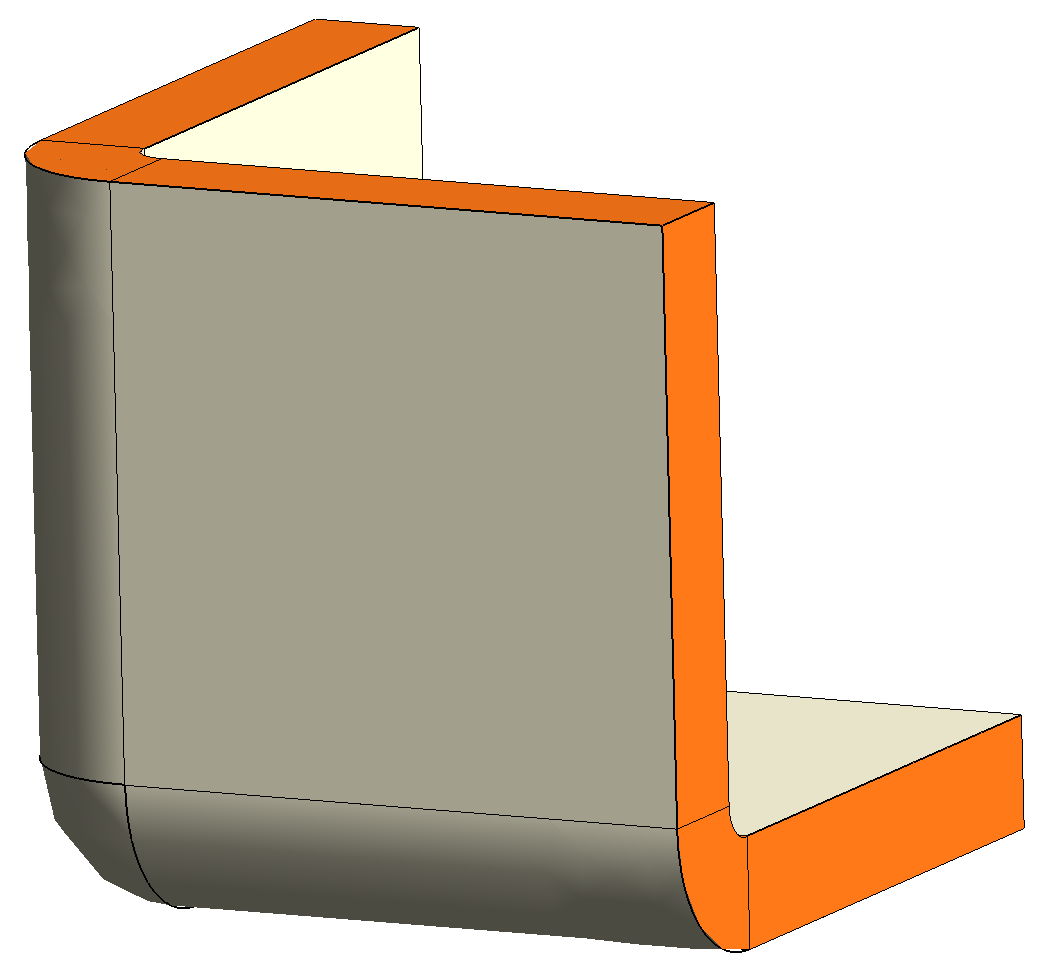}\hskip20pt
\includegraphics[scale=0.15]{./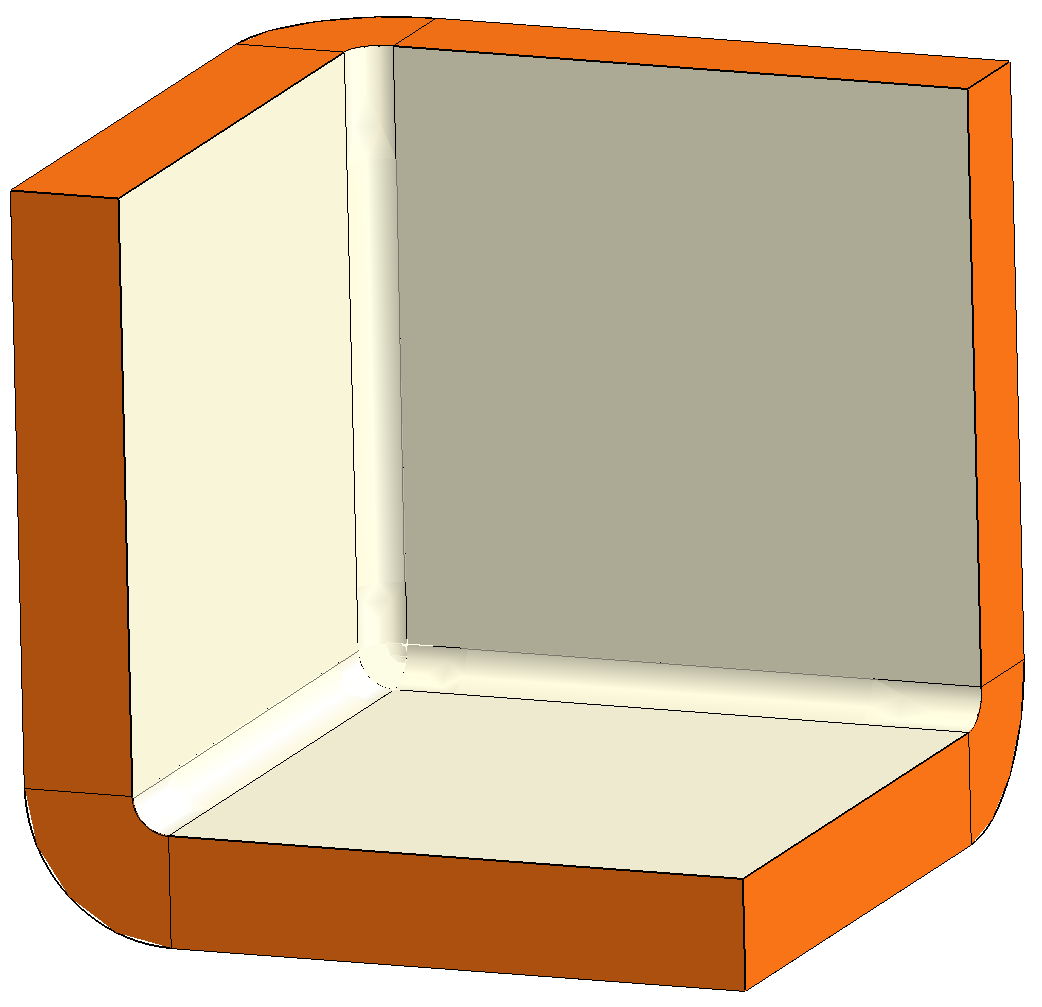}%
   \caption{Two views of the layer $\sL[\varepsilon]$ for $\varepsilon=\frac35$ for which the internal and external maximal radii of curvature are $\frac15$ and $\frac45$.}
\label{fig:FCC}
\end{figure}

Even though it was our initial motivation, it turns out that it is not particularly the existence of edges that generates these different spectral features between elements of $\gC$ and the Fichera layer $\Lambda$. Indeed, consider for instance the smooth 
surface $\sS^\sharp$ (a slice of which is drawn in Figure \ref{fig:round}) and define $\sL[\varepsilon]$ as the set of points at distance $\varepsilon/2$ from $\sS^\sharp$, see Figure \ref{fig:FCC}. Then, for any $\varepsilon\in(0,1)$, the layer $\sL[\varepsilon]$ has a smooth boundary but shares the spectral properties of the Fichera layer $\Lambda$. Actually, the discriminating feature between $\gF$ and $\gC$ is related to ``conical invariance properties'', which characterizes the structure of the layer at infinity. By this, we mean the following:
\begin{quote}
There exists a partition of $\R^3$ in a finite number of axisymmetric cones $\cC_j$ such that:
\begin{enumerate}[(a)]
\item If $\sL$ belongs to $\gF$, the sections of $\sL\cap\cC_j$ across the perpendicular planes $\Pi_j(R)$, $R>0$, to the axis of $\cC_j$, are translation invariant: This means that there exists a two-dimensional guide $\sG_j$ such that $(\sL\cap\cC_j)\cap\Pi_j(R)$ is isomorphic to a part $\sG_j(R)$ of $\sG_j$, and $\sG_j(R)$ tends to $\sG_j$ as $R\to\infty$.
\item If $\sL$ belongs to $\gC$, the sections of its midsurface $\sS\cap\cC_j$ across the spheres $R\,\mathbb{S}_j$, $R>0$, centered at the tip of $\cC_j$, are homothetic.
\end{enumerate}
\end{quote}

%%%%%%%%%%%%%%%%%%%%%%%%%%%%%%%%%%%%%%%%%%%%%%%%%%%%%%%%%%%%%%%%%%%%%%%%%%%%%%%%%%%%%%%%%%%%%%%%%%%%%%%%%%%%%%%%
\section*{Acknowledgements}
%%%%%%%%%%%%%%%%%%%%%%%%%%%%%%%%%%%%%%%%%%%%%%%%%%%%%%%%%%%%%%%%%%%%%%%%%%%%%%%%%%%%%%%%%%%%%%%%%%%%%%%%%%%%%%%%
The authors thank Pavel Exner who gave the impulse to write this paper, as well as Konstantin Pankrashkin for fruitful discussions and Adrien Semin for his help in mesh processing with Gmsh.

The first two authors belong to the Centre Henri Lebesgue ANR-11-LABX-0020-01 and the third author is supported by a public grant as part of the ``Investissement d'avenir" project, reference ANR-11-LABX-0056-LMH, LabEx LMH.

\appendix

%%%%%%%%%%%%%%%%%%%%%%%%%%%%%%%%%%%%%%%%%%%%%%%%%%%%%%%%%%%%%%%%%%%%%%%%%%%%%%%%%%%%%%%%%%%%%%%%%%%%%%%%%%%%%%%%
\section{Three-dimensional meshes}\label{sec:app1}
%%%%%%%%%%%%%%%%%%%%%%%%%%%%%%%%%%%%%%%%%%%%%%%%%%%%%%%%%%%%%%%%%%%%%%%%%%%%%%%%%%%%%%%%%%%%%%%%%%%%%%%%%%%%%%%%
We describe here the 3D meshes used for the computations presented in \S \ref{subsec:FFiclay} and Section \ref{sec:ext}.

\smallskip
For the finite Fichera layers (\S \ref{subsec:FFiclay}), the mesh of the domain $\Lambda_R$ is deduced from a 3D tensor product based on a specific subdivision of the interval $[-1, R]$, leading to a mesh made of hexahedra whose faces are parallel to the Cartesian planes. By a 3D tensor product, we get a mesh in $[-1, R]^3$, from which the cube $[0, R]^3$ is removed. This process allows to build the meshes, called hexahedral grids $\gG_1$, $\gG_2$, and $\gG_3$ in \S \ref{subsec:FFiclay}.

The choice of the subdivision allows the refinement of the mesh on some particular parts of the domain, namely the internal corner $(0,0,0)$ and the edges starting from this point. The subdivisions $S_k$ corresponding to the grids $\gG_k, k=1,2,3$ are defined as follows. Starting from $ S_1 = \{-1, -\tfrac12, -\tfrac14, 0, \tfrac14,  \tfrac12, 1, 2, \tfrac{(R+2)}2, R\}$, the subdivision $S_{k+1}$ is deduced from $S_k$ by adding the midpoint of each interval of $S_k$. As an example, we show on figure \ref{fig:mesh3DFL}, left, the grid $\gG_1$ for $R=4$.

\smallskip
For the 3D cross (\S \ref{subsec:3DX}), the computations have been performed on the same kind of mesh, based on the subdivision $\{-1, -\tfrac{1}{10}, 0, \tfrac{1}{10}, 1, a, b\}$ with $a=\min(4,R)$ and $b=\max(a,R)$ (with the convention that $a$ or $b$ should be removed if it is equal to the preceding abscissa in the list).

\smallskip
For the 3D Fichera layer with exterior rounded edges (\S \ref{sss:3DRoundFiclay}), the mesh of the domain $\sLs_R$ has been created with Gmsh \cite{GR09}. At the corner $(0,0,0)$, there is one eighth of sphere, extended across its three plane faces by quarters of cylinder; three parallelepipeds (the walls) complete the domain. The mesh is made of tetrahedra of order 2; moreover it is non uniform: elements are densified inside the spherical part and along the internal edges. An example is shown for $R=4$ on figure \ref{fig:mesh3DFL}, right.

\begin{figure}[ht]
\includegraphics[scale=0.80]{./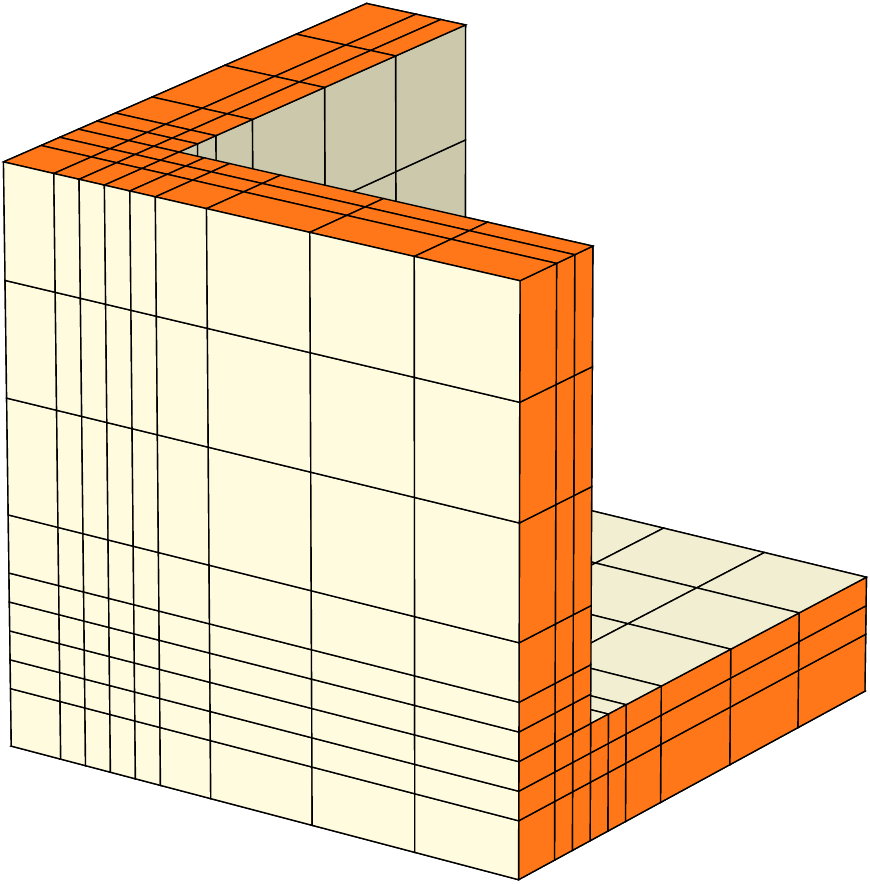}\hskip20pt
\includegraphics[scale=0.23]{./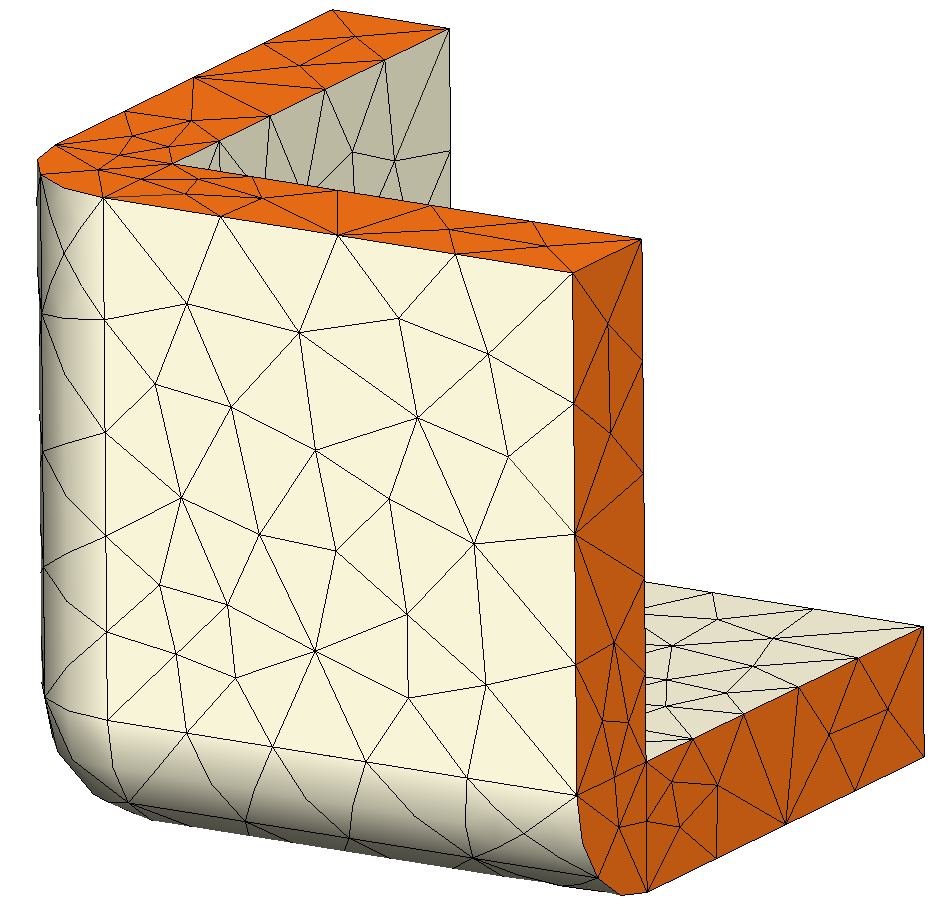}%
   \caption{Meshes of the 3D Fichera layer (left), with exterior rounded edges (right).}
\label{fig:mesh3DFL}
\end{figure}

%%%%%%%%%%%%%%%%%%%%%%%%%%%%%%%%%%%%%%%%%%%%%%%%%%%%%%%%%%%%%%%%%%%%%%%%%%%%%%%%%%%%%%%%%%%%%%%%%%%%%%%%%%%%%%%%
\section{Existence of a bound state for the guide $\sGs$}\label{sec:app2}
%%%%%%%%%%%%%%%%%%%%%%%%%%%%%%%%%%%%%%%%%%%%%%%%%%%%%%%%%%%%%%%%%%%%%%%%%%%%%%%%%%%%%%%%%%%%%%%%%%%%%%%%%%%%%%%%
The aim of this appendix is to prove Theorem \ref{thm:guideround}. 

Recall that the guide $\sGs$ (Figure \ref{fig:round}, right) is the union of the quarter disk of radius $1$, $\sGs_0 = \sGs\cap\Box_1$, and of the two infinite strips $[0,\infty)\times\cI$ and $\cI\times[0,\infty)$, with $\cI=(-1,0)$. The broken guide $\Gamma$ is the union of the square $\Gamma_0=\cI\times\cI$ and of the same strips. So we see that $\Gamma$ and $\sGs$ coincide outside $\Box_1$ and that we have the inclusion $\sGs\subset\Gamma$. As a consequence of this and Theorem \ref{th:GR}, we obtain immediately
\begin{enumerate}[{\em i)}]
\item The essential spectra of the Dirichlet Laplacian on $\sGs$ and $\Gamma$ coincide,
\item The number of bound states of the Dirichlet Laplacian in $\sGs$ is smaller than that of $\cL$ in $\Gamma$, thus is less than $1$.
\end{enumerate}
Hence, to prove Theorem \ref{thm:guideround}, it remains to prove that there exists at least one bound state. For this it is enough to construct a function $\psi\in H_0^1(\sGs)$ such that
\begin{equation}
\label{eq:psi}
	\|\nabla \psi\|_{L^2(\sGs)}^2 < \pi^2 \|\psi\|_{L^2(\sGs)}^2.
\end{equation}
Our proof is inspired by \cite[\S A]{GJ92}. In the following, for the sake of completeness, we check that the arguments {\em loc. cit.} apply to the guide $\sGs$. We start with properties of the Helmholtz problem on $\sGs_0$.

\begin{lemma}\label{lem:soledp1} 
Let $g\in H^{1/2}(\partial\sGs_0)$. There exists a unique $\psi_0 \in H^1(\sGs_0)$ such that:
\begin{equation}
\label{eq:pbpsi0}
	\left\{\begin{array}{ll}
		\Delta \psi_0 + \pi^2\psi_0 = 0 & \text{in } \sGs_0,\\
		\psi_0 = g & \text{on } \partial\sGs_0.
	\end{array}\right.
\end{equation}
Define, for $\psi\in H^1(\sGs_0)$, the energy functional of the above problem
\begin{equation}
\label{eq:J}
   J(\psi) = \|\nabla{\psi}\|_{L^2(\sGs_0)}^2 -\pi^2\|{\psi}\|_{L^2(\sGs_0)}^2.
\end{equation}
Then $J(\psi_0)$ is the unique minimizer of $J$ on the space of functions with trace $g$, namely:
\begin{equation}\label{eqn:charact_min}
	J(\psi_0) = \min_{\psi\in H^1(\sGs_0),\, \psi = g \text{ on }\partial\sGs_0} J({\psi}).
\end{equation}
\end{lemma}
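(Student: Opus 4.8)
The plan is to observe that $\pi^2$ lies strictly below the bottom of the Dirichlet spectrum of $\sGs_0$, which makes the shifted energy form coercive on $H^1_0(\sGs_0)$ and reduces \eqref{eq:pbpsi0} to a textbook Lax--Milgram problem, while the variational identity \eqref{eqn:charact_min} follows by completing the square. Concretely, first I would check coercivity: the domain $\sGs_0$ is the quarter disk $\{x_1<0,\ x_2<0,\ x_1^2+x_2^2<1\}$, hence it is contained in the open unit square $(-1,0)\times(-1,0)$, whose first Dirichlet eigenvalue equals $2\pi^2$. By monotonicity of Dirichlet eigenvalues with respect to domain inclusion, $\lambda_1^\Dir(\sGs_0)\ge 2\pi^2>\pi^2$. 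Consequently the symmetric bilinear form $a(\phi,v):=\int_{\sGs_0}\big(\nabla\phi\cdot\nabla v-\pi^2\phi\, v\big)$ satisfies, for every $\phi\in H^1_0(\sGs_0)$,
\[
   a(\phi,\phi)=\|\nabla\phi\|_{L^2(\sGs_0)}^2-\pi^2\|\phi\|_{L^2(\sGs_0)}^2\ \ge\ \tfrac12\|\nabla\phi\|_{L^2(\sGs_0)}^2\ \ge\ c_0\,\|\phi\|_{H^1(\sGs_0)}^2
\]
for some $c_0>0$, the last inequality being the Poincaré inequality on the bounded set $\sGs_0$; $a$ is moreover obviously bounded on $H^1_0(\sGs_0)\times H^1_0(\sGs_0)$.

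Next, for existence and uniqueness in \eqref{eq:pbpsi0}, I would fix a lifting $G\in H^1(\sGs_0)$ with $G|_{\partial\sGs_0}=g$ — available because $\sGs_0$ is a bounded Lipschitz domain, so the trace map $H^1(\sGs_0)\to H^{1/2}(\partial\sGs_0)$ is onto — and look for $\psi_0=G+\phi$ with $\phi\in H^1_0(\sGs_0)$. Then \eqref{eq:pbpsi0} is equivalent, in the weak sense, to $a(\phi,v)=-\,a(G,v)$ for all $v\in H^1_0(\sGs_0)$, which admits a unique solution by Lax--Milgram thanks to the coercivity and boundedness just established. This produces a unique $\psi_0\in H^1(\sGs_0)$ solving \eqref{eq:pbpsi0}, and independence of the chosen lifting is immediate from uniqueness.

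Finally, for the characterization \eqref{eqn:charact_min}, I would write an arbitrary competitor with trace $g$ as $\psi=\psi_0+w$ with $w\in H^1_0(\sGs_0)$. Expanding the functional $J$ in \eqref{eq:J} and using the weak equation $a(\psi_0,w)=0$ for all $w\in H^1_0(\sGs_0)$, the cross term drops out, leaving $J(\psi)=J(\psi_0)+a(w,w)\ge J(\psi_0)$ by the coercivity estimate above, with equality if and only if $w=0$. Hence $\psi_0$ is the unique minimizer and the minimum value is $J(\psi_0)$. The only step that is not pure bookkeeping is the spectral inequality $\pi^2<\lambda_1^\Dir(\sGs_0)$; once one notices the enclosing unit square this is immediate, so there is no real obstacle here beyond carrying out the standard Dirichlet-principle computation for the shifted operator $-\Delta-\pi^2$.
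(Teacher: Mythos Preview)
Your proof is correct and follows essentially the same strategy as the paper's: bound $\lambda_1^{\Dir}(\sGs_0)$ from below via domain monotonicity to get coercivity of the shifted form on $H^1_0(\sGs_0)$, deduce well-posedness, and then expand $J(\psi_0+w)=J(\psi_0)+a(w,w)$ to obtain \eqref{eqn:charact_min}. The only difference is cosmetic: you compare with the enclosing unit square $(-1,0)^2$ (first Dirichlet eigenvalue $2\pi^2$), while the paper compares with an enclosing half-disk of radius $1$ (first Dirichlet eigenvalue $j_{1,1}^2$), which requires the extra numerical observation $j_{1,1}>\pi$.
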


\begin{proof} 
Consider a half-disk $\cH$ of radius $1$ containing $\sGs_0$. By monotonicity of Dirichlet eigenvalues
$$
	\lambda_1^\Dir(\cH) \leq \lambda_1^\Dir(\sGs_0).
$$
But it is known that $\lambda_1^\Dir(\cH) = j_{1,1}^2$, where $j_{1,1}$ is the first zero of the first Bessel function of first kind $J_1$. Remark that $j_{1,1}\simeq 3.8$ and in particular $j_{1,1} > \pi$, thus
\begin{equation}\label{eqn:majvp1sharp}
	\pi^2 < \lambda_1^\Dir(\sGs_0).
\end{equation}
Therefore, the operator $\Delta+\pi^2$ is an isomorphism from $H^1_0(\sGs_0)$ onto its dual space. This, combined with the fact that $H^{1/2}(\partial\sGs_0)$ is the trace space of $H^1(\sGs_0)$, provides existence and uniqueness for the solution of problem \eqref{eq:pbpsi0}.

The functional $J$ is the energy related with the bilinear form $a:(\psi,\widehat\psi)\mapsto\int_{\Gamma_0^\sharp}\big(\nabla\psi\cdot\nabla\widehat\psi -\pi^2\psi\widehat\psi\big)$ associated with problem \eqref{eq:pbpsi0}. The variational formulation of this problem is
\[
   \mbox{Find $\psi_0\in H^1(\sGs_0)$ with $\psi_0\big|_{\partial\sGs_0}=g$}\quad \mbox{s.t.}\quad
   \forall\widehat\psi\in H^1_0(\sGs_0),\quad
   a(\psi_0,\widehat\psi)=0.
\]
Let $\psi_1\in H^1(\sGs_0)$ such that $\psi_1=g$ on $\partial\sGs_0$. Since $\psi_0-\psi_1$ has zero trace on $\partial\sGs_0$, there holds $a(\psi_0,\psi_1-\psi_0)=0$. Hence
\[
   J(\psi_1) = J(\psi_0) + J(\psi_1-\psi_0).
\]
But, as a consequence of \eqref{eqn:majvp1sharp}, $J(\psi_1-\psi_0)$ is bounded from below by $\gamma\|{\psi_1-\psi_0}\|_{L^2(\sGs_0)}^2$ for a positive constant $\gamma$. This ends the proof of \eqref{eqn:charact_min}.
\end{proof}

We are ready to end the proof of Theorem \ref{thm:guideround}. 
To prove the existence of bound states, by the min-max principle, it is enough to construct a function $\psi\in H_0^1(\sGs)$ such that
$$
	\|\nabla \psi\|_{L^2(\sGs)}^2 < \pi^2 \|\psi\|_{L^2(\sGs)}^2.
$$
Let $\mu>0$ be a parameter that will be chosen thereafter. We set $\varphi(t) := \sqrt{2}\sin(\pi t)$ and define the following function
$$
	\psi(\bx) = \left\{\begin{array}{cl}
					e^{-\mu x_1}\varphi(x_2) & \text{ if }\ \bx \in \R_+\!\times\cI,\\
					e^{-\mu x_2}\varphi(x_1) & \text{ if }\ \bx \in \cI\times\R_+,\\
					\psi_0 & \text{ if }\ \bx \in \sGs_0,
				\end{array}\right.
$$
where $\psi_0$ is the solution of problem \eqref{eq:pbpsi0} with $g$ defined on $\partial\sGs_0$ as
\[
   g(\bx)=0 \ \mbox{ if }\ |\bx|=1,\quad 
   g(0,x_2) = \varphi(x_2),\ \forall x_2\in\cI\quad\mbox{and}\quad g(x_1,0) = \varphi(x_1),\ \forall x_1\in\cI.
\]
Note that $g$ is continuous on $\partial\sGs_0$, thus belongs to $H^{1/2}(\partial\sGs_0)$, and that $\psi$ belongs to $ H_0^1(\Gamma^\sharp)$.

A simple computation yields
\begin{equation}\label{eqn:rayleighround}
	\frac{\|\nabla\psi\|_{L^2(\sGs)}^2}{\|\psi\|_{L^2(\sGs)}^2} = \pi^2 +\frac{\mu}{1+\mu\|\psi_0\|_{L^2(\sGs_0)}^2}\big(\mu + J(\psi_0)\big).
\end{equation}

We claim that $J(\psi_0)<0$ and to prove this, we rely on the characterization \eqref{eqn:charact_min} of $J(\psi_0)$. So, it suffices to exhibit a specific function $\widehat{\psi}\neq\psi_0$, $\widehat{\psi}\in H^1(\sGs_0)$ such that $\widehat{\psi} = g$ on $\partial\sGs_0$, satisfying
\begin{equation}\label{eqn:negenergy}
	J(\widehat{\psi}) = 0.
\end{equation}
Consider the function $\widehat{\psi}(\bx) = -\sqrt{2}\sin\big(\pi\sqrt{x_1^2+x_2^2}\big)$. By definition $\widehat{\psi} \in H^1(\sGs_0)$ and coincide with $g$ on $\partial\sGs_0$. Using polar coordinates we get
\begin{align*}
	J(\widehat{\psi}) & = \pi\int_0^1 |\partial_r\big(\sin(\pi r)\big)|^2\,r \rd r 
	- \pi^3\int_0^1 |\sin(\pi r)|^2\, r\rd r\\
				& = \pi^3 \int_0^1\big(\cos^2(\pi r) - \sin^2(\pi r) \big)r \rd r\\
				& = \pi^3 \int_0^1\cos(2\pi r)\,r \rd r = 0.
\end{align*}
However, remark that 
$$
	(\Delta + \pi^2)\widehat{\psi}(\bx) = \frac{\pi}{|\bx|}\cos(\pi|\bx|)\neq 0.
$$
It proves \eqref{eqn:negenergy} because necessarily $\widehat{\psi} \neq \psi_0$ and consequently we get
$$
	J(\psi_0) < J(\widehat{\psi}) = 0.
$$
Set $\mu = \frac12 |J(\psi_0)|$, \eqref{eqn:rayleighround} becomes
$$
	\frac{\|\nabla \psi\|_{L^2(\sGs)}^2}{\|\psi\|_{L^2(\sGs)}^2} = \pi^2 - \frac{1}{2 + |J(\psi_0)|\|\psi_0\|_{L^2(\sGs_0)}^2}|J(\psi_0)| < \pi^2.
$$
In particular, by the min-max principle, we obtain
$$
	\lambda_1(\sGs) < \pi^2,
$$
which yields the existence of at least one bound state.

%%  Pour HAL, commenter les deux lignes suivantes
%%  et copier-coller le fichier bbl \`a la suite

%\bibliographystyle{siam}
%\bibliography{biblio}

\end{document}